\definecolor{darkgreen}{rgb}{0,0.75,0}
\definecolor{darkred}{rgb}{0.75,0,0}
\definecolor{darkmagenta}{rgb}{0.5,0,0.5}
\newtheorem{theorem}{Theorem}[section]
\newtheorem{cor}[theorem]{Corollary}
\newtheorem{lem}[theorem]{Lemma}
\newtheorem{prop}[theorem]{Proposition}
\theoremstyle{definition}
\newtheorem{definition}[theorem]{Definition}
\newtheorem{assumption}[theorem]{Assumption}
\newtheorem{remark}[theorem]{Remark}
\newtheorem{example}[theorem]{Example}
\newtheorem{notation}[theorem]{Notation}
\numberwithin{equation}{section}
\newcommand{\arxiv}[1]{{\tt \href{http://arxiv.org/abs/#1}{arXiv:#1}}}
\newcommand{\set}[1]{\left\{ #1 \right\}}
\newcommand{\abs}[1]{{\left\lvert #1\right\rvert}}
\newcommand\norm[1]{\left\lVert #1\right\rVert} %norm
\newcommand{\one}{\mathds{1}} %indicator
\newcommand{\loc}[0]{\operatorname{loc}}
\newcommand{\diag}[0]{\operatorname{diag}}
\newcommand{\od}[0]{\operatorname{od}}
\newcommand{\offdiag}[1]{#1^{2}_{\od}}
\newcommand{\offdiagp}[1]{(#1)^{2}_{\od}}
\DeclareMathOperator*{\esssup}{ess\,sup}
\DeclareMathOperator*{\essinf}{ess\,inf}
\DeclareMathOperator*{\osc}{osc}
\newcommand{\contfunc}{C}
\newcommand{\ambient}{\mathcal{X}} %Ambient metric space
\newcommand{\refmeas}{m} %Measure on ambient space
\newcommand{\unifdom}{U} %Uniform domain
\newcommand{\form}{\mathcal{E}}
\newcommand{\domain}{\mathcal{F}}
\newcommand{\bdrymeas}{\mu} %Measure on \partial U (Either harmonic measure or limit of harmonic measure)
\newcommand{\bdrypcaf}{A^{(\bdrymeas)}}%PCAF corresponding to \mu
\newcommand{\goodmeas}{\nu} %capacity good measure
\newcommand{\goodpcaf}{A^{(\goodmeas)}} %pcaf corresponding to the capacity good measure
\newcommand{\scdiff}{\Psi}
\newcommand{\scjump}{\Phi} 
\newcommand\hmeas[2]{\omega^{#1}_{#2}}  %Harmonic measure on \partial U: =:
\newcommand\gren[1]{g_{#1}}%Green function on U for the diffusion on ambient space:
\newcommand\grenref[1]{g^{\operatorname{ref}}_{#1}}%Green function on U for the diffusion on ambient space:
\newcommand{\harmprofile}{h}
\newcommand{\hprof}[1]{\harmprofile^{\unifdom}_{#1}} %Harmonic profile of an unbounded domain U
\newcommand{\formref}{\form^{\on{ref}}}
\newcommand{\domainref}{\domain(\unifdom)}  
\newcommand{\trform}{\widecheck{\form}}
\newcommand{\trdomain}{\widecheck{\domain}}
\newcommand{\trformc}{\widecheck{\Gamma}_{c}}
\newcommand{\diffref}{X^{\operatorname{ref}}}
\newcommand{\lawref}{\mathbb{P}^{\operatorname{ref}}} 
\newcommand{\expref}{\mathbb{E}^{\operatorname{ref}}} 
\newcommand{\hkref}{p^{\on{ref}}}
\newcommand{\naimker}{\Theta}
\newcommand{\martinker}{K}
\newcommand{\diff}{X} %Diffusion on the ambient space
\newcommand{\formtr}{\widecheck{\form}^{\operatorname{ref}}}
\newcommand{\formtrsl}{\widecheck{\form}^{\operatorname{ref},(c)}}
\newcommand{\domaintr}{\widecheck{\domain}(\unifdom)}
\newcommand{\diffreftr}{\widecheck{\diff}^{\on{ref}}}
\newcommand{\hkreftr}{\widecheck{p}^{\on{ref}}}
\newcommand{\jumpker}{j}
\newcommand{\dimeuc}{N}
\newcommand{\Borel}{\mathscr{B}}
\newcommand{\cemetery}{\partial}
\newcommand{\oneptcpt}[1]{#1_{\cemetery}}
\newcommand{\oneptcptp}[1]{(#1)_{\cemetery}}
\newcommand{\events}{\mathscr{M}}
\newcommand{\minaugfilt}{\mathscr{F}}
\newcommand{\lawdiff}{\mathbb{P}}
\newcommand{\expdiff}{\mathbb{E}}
\newcommand{\shiftdiff}{\theta}
\newcommand{\difftr}{\widecheck{\diff}} 
\newcommand{\formrefgen}[1]{\form^{\on{ref},#1}}
\newcommand{\jumpmeas}{J}
\newcommand{\jumpmeastr}{\widecheck{\jumpmeas}}
\newcommand{\jumpkertr}{\widecheck{\jumpker}}
\newcommand{\scjthres}[1]{M_{#1}}
\newcommand{\cdcthres}[1]{M_{#1}}
\newcommand{\Capa}{\operatorname{Cap}}
  \def\sL {{\mathcal L}}
\def\bD {{\mathbb D}}
 \def\bN {{\mathbb N}} 
  \def\bR {{\mathbb R}}
\def\ol{\overline}
\def\dint{\int\kern-.6em\int}
\def\grad{\nabla}
\newcommand\restr[2]{{% we make the whole thing an ordinary symbol
		\left.\kern-\nulldelimiterspace % automatically resize the bar with \right
		#1 % the function
		\vphantom{\big|} % pretend it's a little taller at normal size
		\right|_{#2} % this is the delimiter
}} %restriction of a function
\def\diam{{\mathop{{\rm diam }}}}
\def\dist{{\mathop {{\rm dist}}}}
\def\div{{\mathop {{\rm div\, }}}}
\def\supp{\mathop{{\rm supp}}\nolimits}
\newcommand{\on}[1]{\operatorname{ #1}}
\def\wt{\widetilde}
\def\wh{\widehat}
\definecolor{dgreen}{rgb}{0, 0.6, 0.1}
\definecolor{dblue}{rgb}{0, 0.0, 0.6}
\definecolor{vdblue}{rgb}{0,.08, 0.45}
\definecolor{dred}{rgb}{0.7, 0.0, 0.0}
\definecolor{vdblue}{rgb}{0,.08, 0.45}
\definecolor{purple}{rgb}{0.6, 0.0, 0.6}
\definecolor{mytext}{rgb}{0.1, 0.1, 0.1}
\begin{document}
	
	\font\titlefont=cmbx14 scaled\magstep1
	\title{\titlefont Heat kernel estimates for boundary traces of reflected diffusions on uniform domains\footnote{This work was supported by the Research Institute for Mathematical Sciences, an International Joint Usage/Research Center located in Kyoto University.}}    
	\author{Naotaka Kajino\footnote{Research partially supported by JSPS KAKENHI Grant Numbers JP22H01128, JP23K22399.} \ and Mathav Murugan\footnote{Research partially supported by NSERC and the Canada research chairs program.}}
	\date{February 21, 2025}
	\maketitle
	\vspace{-0.5cm}
%	\ver
	
	\begin{abstract}
		We study the boundary trace processes of reflected diffusions on uniform domains.
		We obtain stable-like heat kernel estimates for such a boundary trace process when the diffusion on the underlying ambient space satisfies sub-Gaussian heat kernel estimates.
		Our arguments rely on new results of independent interest such as sharp two-sided estimates and the volume doubling property of the harmonic measure, the existence of a continuous extension of the Na\"im kernel to the topological boundary,
		and the Doob--Na\"im formula identifying the Dirichlet form of the boundary trace process
		as the pure-jump Dirichlet form whose jump kernel with respect to the harmonic measure is exactly (the continuous extension of) the Na\"im kernel.
		
		\vskip.2cm
		
		\noindent {\it Keywords:} Boundary trace process, reflected diffusion, uniform domain, Na\"im kernel, capacity density condition, harmonic measure, Doob--Na\"im formula, sub-Gaussian heat kernel estimate, stable-like heat kernel estimate. %symmetric stable process.
		
		\vskip.2cm
		
		\noindent {\it Mathematical Subject Classification (2020):  } Primary 60J60, 60J76, 31C25, 31E05, 35K08;  Secondary: 31B25, 35J08, 60J45%, 45K05

	\end{abstract}
	
	\tableofcontents
	
	\section{Introduction} \label{sec:intro}
	
	The goal of this work is to study the  boundary trace of reflected diffusions on $\overline{\unifdom}$, where $\unifdom$ is a  `nice' domain.
	Given a reflected diffusion process on $\overline{\unifdom}$, the boundary trace process on $\partial \unifdom$ is obtained by removing the path of the reflection diffusion in the interior $\unifdom$ in a certain sense.
	The resulting boundary trace process is a jump process on $\partial \unifdom$.
	From an analytic viewpoint,  the generator of the boundary trace process  can be viewed as a non-local (integro-differential) operator on the boundary $\partial \unifdom$ associated to a local (differential) operator that is the generator of the corresponding diffusion process.
	For reflected Brownian motion on smooth domains, this non-local operator on the boundary is essentially the classical   Dirichlet-to-Neumann map.
	
	Although we are motivated by probabilistic considerations related to the boundary trace process mentioned above, the induced non-local operator on the boundary is also widely studied in the context of electrical impedance tomography and Calder\'on's inverse problem \cite{Uhl}.
	In a different direction, free boundary regularity for the obstacle problem was obtained for the fractional Laplacian by using the fact that it arises as an induced boundary operator corresponding to a degenerate elliptic (diffusion) operator \cite{CSS}.
	More generally, on the basis of such a correspondence between local (diffusion) operators on a domain and non-local operators on its boundary, properties of non-local operators can be understood by using  better knowledge of the corresponding local operators  \cite[\textsection 5]{CS}.
	
	A classical example of a trace process is the Cauchy process (rotationally symmetric $1$-stable process) on $\bR^\dimeuc$ that arises as the boundary trace process of the reflected Brownian motion on the upper half space $\bR^{\dimeuc} \times [0,\infty)$.
	The analytic version of this probabilistic fact is that the Dirichlet-to-Neumann map on the boundary of the $(\dimeuc+1)$-dimensional upper half-space is the square-root of the Laplacian on $\bR^{\dimeuc}$, the generator of the Cauchy process on $\bR^{\dimeuc}$.
	Given this classical example, the following natural guiding question motivates this work:
	\begin{quote}
		\emph{``Does the boundary operator behave like a fractional Laplace operator for more general diffusions (elliptic operators) and domains?''}
	\end{quote}
	Our main results answer this question affirmatively by obtaining quantitative versions of the following statement for a large class of reflected diffusions  and domains:
	\begin{quote}
		\emph{``The boundary trace process behaves like a  rotationally  symmetric stable  process. Equivalently, the induced non-local operator on the boundary behaves like a fractional Laplace operator." }
	\end{quote}
	
	In this work, we quantify the above statement on the boundary trace process in various ways by considering stable-like estimates of its jump kernel (equivalently, the integral kernel of the induced non-local operator on  the boundary), of its mean exit times from balls, and of its transition probability density (equivalently, the heat kernel associated to the non-local operator on  the boundary). 
	The significance of our results is that the boundary trace process shares many desirable properties of rotationally symmetric stable processes on $\bR^{\dimeuc}$ (or equivalently, the fractional Laplace operator) such as elliptic and parabolic Harnack inequalities.
	%These estimates are new even for reflected Brownian motion on  Lipschitz domains in $\bR^n$. 
	%Symmetric $\alpha$-stable processes and fractional Laplace operators enjoy many desirable properties. 
	%For instance, a new consequence of our results is that the Dirichlet-to-Neumann operator on the boundary of any   Lipschitz domain in $\mathbb{R}^{\dimeuc}, \dimeuc \ge 2$  satisfies the scale-invariant elliptic Harnack inequality. 
	%
	%In this paper, we give sharp two-sided estimates on the heat kernel of the boundary
	%trace process of a reflected symmetric diffusion on a uniform domain in a general state
	%space, assuming the scale-invariant elliptic Harnack inequality.
	We note that stable-like heat kernel estimates for jump processes have been extensively studied
	for the past two decades; see, e.g., \cite{BL,BGK-jump,CK03, CK08, CKW,GHL14, GHH-mv, GHH-lb, Mal, MS19}.
	Our heat kernel estimates for the boundary trace are new even for reflected Brownian motion on
	Lipschitz domains in $\bR^{\dimeuc}$ and for reflected diffusions on the upper half-space
	generated by uniformly elliptic divergence-form operators. Our results are applicable
	also to diffusions on nice fractals such as the Brownian motion on the standard
	Sierpi\'{n}ski carpet if we take as the domain $\unifdom$, e.g., the complement
	of the bottom line segment or that of the boundary of the unit square.
	
	More precisely, this paper is aimed at establishing the following results
	\eqref{it:result-hmeas-intro}, \eqref{it:result-dnformula-intro} and \eqref{it:result-shk-trace-intro}
	for a reflected diffusion on a uniform domain satisfying the capacity density
	condition (a natural condition guaranteeing that its boundary is thick enough everywhere
	in every scale), in the general setting of a strongly local regular symmetric Dirichlet space
	equipped with a complete metric and satisfying the volume doubling property and
	sub-Gaussian heat kernel estimates:
	\begin{enumerate}[\rm(i)]\setlength{\itemsep}{0pt}\vspace{-5pt}
		\item\label{it:result-hmeas-intro} Two-sided estimates on the harmonic measure and the associated elliptic measure
		at infinity that are sharp up to multiplicative constants (Theorem \ref{t:hmeas} and Proposition \ref{p:emeas}).
		\item\label{it:result-dnformula-intro} The identification of the Dirichlet form of the boundary trace process
		as the bilinear form given by the Doob--Na\"im formula, which in particular shows
		that the boundary trace process is a pure-jump process (Theorem \ref{t:dnformula}).
		Equivalently, this is an expression for the non-local operator on the boundary associated with a local (diffusion) operator on the domain.
		\item\label{it:result-shk-trace-intro} Two-sided heat kernel estimates for the boundary trace process
		that are similar to those for the rotationally symmetric stable processes on the Euclidean space
		(Theorem \ref{thm:shk-trace}).
	\end{enumerate}
	
	Let us first start in Subsection \ref{ssec:overview} with an overview of the most
	relevant results available in the literature and a summary of our main results. Then
	in Subsection \ref{ssec:results} we give the precise statements of our main results,
	introducing key notions needed for this purpose but referring to the main text
	for the technical details underlying their definitions.
	
	\subsection{Overview} \label{ssec:overview}
	
	A classical theorem of Spitzer \cite{Spi} (see also \cite{Mol}) implies that the boundary trace process of the reflected Brownian motion on the $(\dimeuc+1)$-dimensional upper half-space $\bR^{\dimeuc}\times[0,\infty)$ is the $\dimeuc$-dimensional Cauchy process.
	Molchanov and Ostrowski \cite{MO} discovered that one can realize every rotationally symmetric stable process on $\bR^{\dimeuc}$ as the trace process on the boundary of a reflected diffusion on the $(\dimeuc+1)$-dimensional upper half-space.
	This was later revisited in a celebrated work \cite{CS} by Caffarelli and Silvestre
	to analyze the fractional Laplace operator and is now known as the \emph{Caffarelli--Silvestre extension}.
	They demonstrated in \cite[\textsection 5]{CS} that properties of non-local operators could be understood by using corresponding properties of the associated local operators.
	The local and non-local operators in \cite{CS} are the generators of the diffusion in the upper half-space and its boundary trace process in \cite{MO}, respectively.
	Our present work is aimed at extending this idea to understand the behavior of the boundary trace process (a jump process) by using that of the associated diffusion process. 
	
	Let us examine the results of Molchanov--Ostrowski \cite{MO} and Caffarelli--Silvestre \cite{CS} in further detail
	to provide context. For $\alpha \in (0,2)$, we recall that the rotationally symmetric $\alpha$-stable process
	is generated by the \emph{fractional Laplace operator} $(-\Delta)^{\alpha/2}$ on $\bR^{\dimeuc}$, 
	\begin{equation*}
		(-\Delta)^{\alpha/2}f(x):= c_{\dimeuc,\alpha} \int_{\bR^{\dimeuc}} \frac{f(x)-f(y)}{\abs{x-y}^{\dimeuc+\alpha}}\,dy,
	\end{equation*}
	where $ c_{\dimeuc,\alpha} \in (0,\infty)$ is a normalizing constant.
	Writing $\bR^{\dimeuc+1}= \{(x,y): x\in \bR^{\dimeuc},\,y \in \bR\}$ as
	$\bR^{\dimeuc} \times \bR$, we consider the Dirichlet form
	\begin{equation*}
		\form(u,u):= \int_{\bR^{\dimeuc}} \int_{0}^{\infty} \abs{\grad u}^2(x,y) \abs{y}^{1-\alpha}\,dy \,dx
	\end{equation*}
	on $L^2(\bR^{\dimeuc}\times[0,\infty),\abs{y}^{1-\alpha}\,dy \,dx )$.
	The corresponding diffusion is generated by the degenerate elliptic operator
	\begin{equation} \label{e:cs-generator}
		L_\alpha u:= \Delta_x u + \frac{1-\alpha}{y} \partial_{y}u + \partial_{y}^{2}u.
	\end{equation}
	Gaussian heat kernel estimates for the diffusion generated by such a
	degenerate elliptic operator follow from results of \cite{FKS,Gri91,Sal}.
	To compute the Dirichlet form of the trace process on the boundary,
	we consider the Dirichlet boundary value problem
	\begin{equation} \label{e:csmo}
		L_\alpha u=0 \mbox{ on $\bR^{\dimeuc}\times (0,\infty)$}, \quad u(x,0)= f(x),
	\end{equation}
	where $f \colon \bR^{\dimeuc} \to \bR$ is a prescribed boundary value in a suitable function space. Then by \cite[\textsection 3.2]{CS}, the Dirichlet energy of the solution $u$ to \eqref{e:csmo} can be expressed in terms of the boundary data $f$ as 
	\begin{equation} \label{e:cse1}
		\int_{\bR^{\dimeuc}} \int_{0}^{\infty} \abs{\grad u}^2(x,y) \abs{y}^{1-\alpha}\,dy \,dx
		= \int_{\bR^{\dimeuc}} f(\xi)(-\Delta)^{\alpha/2}f(\xi) \,d\xi.
	\end{equation}
	The equality \eqref{e:cse1} implies that the boundary trace process of the reflected diffusion
	generated by $L_\alpha$ is the rotationally symmetric $\alpha$-stable process.
	We refer to \cite{Kwa} for a recent result in this direction
	characterizing the class of L\'{e}vy processes on $\bR$ arising as the boundary traces
	of translation-invariant diffusions on $\bR\times[0,R)$ for some $R\in(0,\infty]$.
	
	An earlier example of an expression analogous to \eqref{e:cse1} that relates a local
	operator on a domain to a non-local operator on its boundary is the \emph{Douglas formula}
	due to J.~Douglas \cite{Dou}, which states that the harmonic function $u$ on the unit disk
	$\bD:=\{x \in \bR^2 : \abs{x} < 1 \}$ with boundary value regarded as a function
	$f\colon [0,2\pi) \to \bR$ has Dirichlet energy given by
	\begin{equation} \label{e:douglas}
		\int_{\bD} \abs{\nabla u}^2(x)\,dx
		= \frac{1}{8 \pi} \int_{0}^{2\pi}\int_{0}^{2\pi} \frac{(f(\eta)-f(\xi))^2}{\sin^2((\eta-\xi)/2)}\,d\eta\,d\xi.
	\end{equation}
	The right-hand side of \eqref{e:douglas} can be viewed as the Dirichlet form of the
	boundary trace process corresponding to the reflected Brownian motion on the unit disk.
	This result was later extended to any finitely connected bounded domain $D$ in
	$\bR^2$ with smooth boundary $\partial D$ by Osborn \cite{Osb}.
	He proved there that, if $u$ is a harmonic function on such $D$
	with boundary value $f \colon \partial D \to \bR$, then
	%its Dirichlet energy is given by
	\begin{equation} \label{e:cse2}
		\int_{D} \abs{\nabla u}^2(x)\,dx
		= \frac{1}{2} \int_{\partial D} \int_{\partial D} (f(\eta)-f(\xi))^2 \frac{\partial^2 \gren{D}(\xi,\eta)}{\partial \vec{n}_{\xi} \partial \vec{n}_{\eta}}\, d\sigma(\xi)\,d\sigma(\eta),
	\end{equation}
	where $\sigma$ is the surface measure on $\partial D$,
	$\gren{D}(\cdot,\cdot)$ is the Green function on $D$ and
	$\frac{\partial}{\partial \vec{n}_{\xi}}, \frac{\partial}{\partial \vec{n}_{\eta}}$
	denote the inward-pointing normal derivatives in $\xi,\eta$, respectively.
	More generally, now \eqref{e:cse2} is known to hold for any bounded domain $D$
	with $\contfunc^{3}$-boundary in $\bR^{\dimeuc}$ with $\dimeuc \geq 2$ as stated, e.g., in
	\cite[(5.8.4)]{CF} and was also extended by M.~Fukushima \cite[\textsection 2]{Fuk} to uniformly
	elliptic divergence-form operators with $\contfunc^{3}$-coefficients on such domains.
	
	Soon after \cite{Osb}, J.~Doob \cite{Doo} found a remarkable extension of \eqref{e:douglas} and \eqref{e:cse2}
	to domains that are not necessarily smooth. He stated the result under an abstract potential
	theoretic setting of (locally Euclidean) Green spaces in the sense of Brelot and Choquet \cite{BC}, in which
	the boundary values of the harmonic functions are prescribed on the \emph{Martin boundary} $\partial_{M} D$ of the domain $D$.
	To describe Doob's result, we recall the \textbf{Na\"im kernel} $\naimker^{D}_{x_0}(\cdot,\cdot)$ defined by
	\begin{equation} \label{e:Naim-kernel}
		\naimker^{D}_{x_0}(\xi,\eta) = \lim_{x \to \xi} \lim_{y \to \eta} \frac{\gren{D}(x,y)}{\gren{D}(x_0,x)\gren{D}(x_0,y)}
		\qquad \mbox{for $\xi,\eta \in \partial_{M} D$, $\xi \neq \eta$,}
	\end{equation}
	where the limits are with respect to the \emph{fine topology}, $x_0 \in D$ is an arbitrary
	base point, and $\gren{D}(\cdot,\cdot)$ is the Green function on $D$ as before.
	The existence of the above limits in the setting of Green spaces follows from the fundamental work
	\cite{Nai} by L.~Na\"im. Then it was shown in \cite[Theorem 9.2]{Doo} that the \textbf{Doob--Na\"im formula}
	\begin{equation} \label{e:dnf}
		\int_{D} \abs{\nabla u}^2(x)\,dx
		= \frac{1}{2} \int_{\partial_{M} D} \int_{\partial_{M} D} (f(\xi)-f(\eta))^2 \naimker^{D}_{x_0}(\xi,\eta)\, d\hmeas{D}{x_0}(\xi)\, d\hmeas{D}{x_0}(\eta)
	\end{equation}
	holds if $u$ is a harmonic function on the domain $D$ with fine boundary value
	$f \colon \partial_{M} D \to \bR$, where $\hmeas{D}{x_0}$ denotes the \textbf{harmonic measure},
	i.e., the probability distribution of the position of the first hitting to $\partial_{M} D$,
	of the Brownian motion on $D$ started at $x_0$.
	%M.~Fukushima \cite{Fuk} gave an alternative construction of the Na\"im kernel for Green spaces.
	There is a version of the Doob--Na\"im formula for transient symmetric
	Markov chains on countable state spaces due to M.~Silverstein \cite[Theorem 3.5]{Sil};
	see also \cite[Theorem 6.4]{BGPW} for a simple proof of it for nearest-neighbor random walks on trees.
	The equality \eqref{e:cse1} from \cite{CS} mentioned above can be considered as an extension of \eqref{e:dnf}
	to the case of the reflected diffusion generated by $L_{\alpha}$ as in \eqref{e:cs-generator};
	see also Example \ref{x:mo-cs} for this connection.
	
	Our principal concern in this paper is to establish nice two-sided heat kernel estimates
	for jump-type Dirichlet forms which arise from the Dirichlet forms of symmetric diffusions
	in the same way the right-hand side of \eqref{e:dnf} does from the Dirichlet form
	$\int_{D} \abs{\nabla(\cdot)}^2\,dx$ of the reflected Brownian motion on $D$.
	We consider a general symmetric diffusion with general locally compact state space,
	or more precisely, an $\refmeas$-symmetric diffusion associated with a strongly local
	regular symmetric Dirichlet form $(\form, \domain)$ on $L^2(\ambient, \refmeas)$
	where $(\ambient,d)$ is a metric space which contains at least two elements and whose every
	bounded closed set is compact and $\refmeas$ is a Radon measure on $\ambient$ with full support.
	We call $(\ambient, d, \refmeas, \form, \domain)$ a \textbf{metric measure Dirichlet space},
	or a \textbf{MMD space} for short. We refer to \cite{FOT,CF} for the theory of regular symmetric Dirichlet forms.
	
	The only essential a priori requirement on the MMD space for the purpose of this paper is
	that it satisfy the metric doubling property and the elliptic Harnack inequality.
	The MMD space $(\ambient, d, \refmeas, \form, \domain)$ is said to satisfy the
	\hypertarget{md-intro}{\textbf{metric doubling property} (MD)} if there exists $N \in \bN$
	such that any open ball $B$ in $(\ambient,d)$ can be covered by $N$ balls with radii half of
	that of $B$, and to satisfy the \textbf{(scale-invariant) elliptic Harnack inequality} (EHI) if
	\begin{equation} \tag*{$\on{EHI}$} \label{eq:EHI-intro}
		\esssup_{B(x,\delta r)} h \le C \essinf_{B(x,\delta r)} h
	\end{equation}
	for any open ball $B(x,r)$ in $(\ambient,d)$ and any non-negative
	harmonic function $h$ on $B(x,r)$ for some $C \in (1,\infty)$ and $\delta \in (0,1)$.
	The metric doubling property is the weakest possible requirement to guarantee decent
	behavior of the geometry of $(\ambient,d)$ in relation to heat kernel estimates, and
	is easily seen to follow from the well-known \textbf{volume doubling property} (VD) of
	$\refmeas$ (or $(\ambient, d, \refmeas)$), i.e., the existence of some $C \in (1,\infty)$ with
	\begin{equation} \tag*{$\on{VD}$} \label{eq:VD-intro}
		\refmeas(B(x,2r))\leq C \refmeas(B(x,r)) \quad \mbox{for all $x \in \ambient$ and all $r \in (0,\infty)$.}
	\end{equation}%
	It is also reasonable to assume \ref{eq:EHI-intro} because, in view of
	\eqref{e:Naim-kernel} and \eqref{e:dnf} above, we should need to have good control
	on the quantitative behavior of the Green function $\gren{D}(x_{0},\cdot)$ and
	the harmonic measure $\hmeas{D}{x_0}$, which are indeed non-negative harmonic
	functions on $D \setminus \{x_{0}\}$ and in $x_{0} \in D$, respectively.
	As established in \cite{Stu96,BGK,GHL15}, \ref{eq:EHI-intro} is implied by the
	conjunction of \ref{eq:VD-intro} and  \textbf{Gaussian} or \textbf{sub-Gaussian heat kernel estimates}.
	Our setting therefore includes diffusions with Gaussian heat kernel estimates as considered
	in \cite{Gri91,Sal,Stu96} such as Brownian motion on the Euclidean space or Riemannian manifolds
	with non-negative Ricci curvature, diffusions generated by uniformly elliptic divergence-form
	operators on $\bR^{\dimeuc}$ \cite{Mos} or degenerate elliptic operators \cite{FKS},
	diffusions on connected nilpotent Lie groups associated with left-invariant
	Riemannian metrics or with sub-Laplacians of the form $\Delta=\sum_{i=1}^{k} X_{i}^{2}$
	for a family $\{X_{i}\}_{i=1}^{k}$ of left-invariant vector fields satisfying
	H\"ormander's condition \cite{VSC}, and weighted Euclidean spaces and Riemannian
	manifolds \cite{GrS,Gri09}. Another significant class of examples arise from diffusions
	on fractals such as the Sierpi\'nski gasket, the Sierpi\'nski carpet and their variants
	\cite{Bar98,BB89, BB92, BB99, BP, BH, FHK, Kum}, where Gaussian heat kernel estimates
	are no longer true but sub-Gaussian ones do hold.
	
	As mentioned above, our goal is to prove heat kernel estimates for the Dirichlet forms obtained from
	general symmetric diffusions through the counterpart of the Doob--Na\"im formula \eqref{e:dnf}.
	A first observation to be made toward this aim is that such a jump-type Dirichlet form
	should be viewed as a quadratic form corresponding to a self-adjoint non-local operator
	with respect to a reference measure $\bdrymeas$ that is mutually absolutely continuous
	with respect to the harmonic measure $\hmeas{D}{x_0}$.
	Due to \eqref{e:cse2}, in the case of the reflected Brownian motion on a bounded domain
	with smooth boundary, this reference measure $\bdrymeas$ is usually taken to be the
	surface measure on the boundary, and then the generator of the boundary trace process is an
	integro-differential operator and can be identified as the Dirichlet-to-Neumann
	(or voltage-to-current) map as shown in \cite[Section 4]{Hsu}.
	In general, however, even for uniformly elliptic operators on smooth domains,
	the harmonic measure might differ significantly from
	the surface measure, and in fact can be singular as proved in \cite{CFK,MM}.
	It is worth mentioning that our results on the stable-like heat kernel estimates
	for boundary trace processes apply also to situations where the harmonic
	measure is singular with respect to the surface measure (see Example \ref{x:ba-qc}).
	
	By virtue of the nice characterizations of heat kernel estimates for jump-type Dirichlet
	forms established in \cite{CKW, GHH-mv, GHH-lb}, the proof of heat kernel estimates
	for boundary trace processes is reduced to verifying a set of quantitative bounds on
	the reference measure $\bdrymeas$ and on the jump kernel $\jumpkertr_{\bdrymeas}(\xi,\eta)$
	with respect to $d\bdrymeas(\xi)\,d\bdrymeas(\eta)$ of our jump-type Dirichlet form.
	Those quantitative bounds include the volume doubling property \ref{eq:VD-intro} of $\bdrymeas$
	and matching two-sided estimates on the jump kernel $\jumpkertr_{\bdrymeas}(\xi,\eta)$,
	which, for our jump-type Dirichlet form analogous to the right-hand side of \eqref{e:dnf}, is given by
	\begin{equation} \label{e:jump-kernel-intro}
		\jumpkertr_{\bdrymeas}(\xi,\eta):=\naimker^{D}_{x_0}(\xi,\eta)\frac{d\hmeas{D}{x_0}}{d\bdrymeas}(\xi)\frac{d\hmeas{D}{x_0}}{d\bdrymeas}(\eta).
	\end{equation}
	Since the Na\"im kernel $\naimker^{D}_{x_0}(\cdot,\cdot)$ is defined in terms of
	a ratio of the Green function $\gren{D}$ by \eqref{e:Naim-kernel}, a natural
	choice of the setting for trying to prove such bounds on $\bdrymeas$ and
	$\jumpkertr_{\bdrymeas}$ would be a domain $D$ for which we could expect both
	the volume doubling property \ref{eq:VD-intro} of the harmonic measure
	$\hmeas{D}{x_0}$ and some good control on the boundary behavior of the Green
	function $\gren{D}$. Arguably the most general class of domains $D$ known
	in the literature to satisfy these requirements is that of \textbf{uniform domains}
	satisfying the \textbf{capacity density condition} (CDC).
	Indeed, this class of domains in $\bR^{\dimeuc}$ was shown by Aikawa and Hirata
	\cite{AH} to satisfy nice two-sided bounds on the harmonic measure which imply
	its volume doubling property, and by Aikawa \cite{Aik} to satisfy the
	\textbf{(scale-invariant) boundary Harnack principle} (BHP), which is a well-established
	analogue of \ref{eq:EHI-intro} for the ratios of positive harmonic functions with zero
	Dirichlet boundary condition along domain boundary. BHP for uniform domains is in fact
	available in our setting of an MMD space with \hyperlink{md-intro}{MD} and \ref{EHI}
	as proved in \cite{Lie,BM19,Che} and allows us to extend the Na\"im kernel
	$\naimker^{D}_{x_0}(\cdot,\cdot)$ continuously to the domain boundary,
	and as one of our main results we extend the two-sided
	bounds on the harmonic measure as in \cite{AH} to any uniform domain satisfying CDC
	in any MMD space with \hyperlink{md-intro}{MD} and \ref{EHI}.
	
	Uniform domains were introduced independently by Martio and Sarvas \cite{MS} and
	Jones \cite{Jon81}. This class includes Lipschitz domains, and more generally
	non-tangentially accesible (NTA) domains introduced by Jerison and Kenig \cite{JK}.
	We note that, due to the similarity in the definitions, uniform domains are also
	referred to as \emph{one-sided NTA domains} in, e.g., \cite{AHMT, HMM}.
	Uniform domains are relevant in various contexts such as extension property \cite{BS, Jon81, HeK},
	Gromov hyperbolicity \cite{BHK}, boundary Harnack principle \cite{Aik,GS}, geometric
	function theory \cite{MS,GeHa,Geh}, and heat kernel estimates \cite{GS, CKKW, Lie22, Mur24}.
	One reason for the importance of uniform domains is their close connection to Gromov
	hyperbolic spaces \cite{BHK}. Another reason is their abundance;
	in fact, by \cite[Theorem 1.1]{Raj} every bounded domain is arbitrarily close
	to a uniform domain in a large class of metric spaces. 
	
	The NTA domains introduced in \cite{JK} are examples of uniform domains satisfying CDC.
	CDC guarantees that every boundary point is regular for the associated diffusion
	and can be viewed as a stronger version of Wiener's test of regularity.
	Uniform domains satisfying CDC provide a fruitful setting to study various aspects of the harmonic
	measure \cite{Anc,AH,AHMT,AHMT2,CDMT}. For Brownian motion on the Euclidean space,
	CDC for a domain $D$ is formulated as the following estimate:
	\begin{equation} \label{e:CDC-intro}
		\Capa_{B(\xi,2r)}(B(\xi,r)) \lesssim \Capa_{B(\xi,2r)}( B(\xi,r) \setminus D) \quad \mbox{for all $\xi \in \partial D$, $0< r \lesssim \diam(D)$,}
	\end{equation}
	where $\Capa_{B(\xi,2r)}(K) $ denotes the capacity between the sets $K$ and $B(\xi,2r)^c$.
	The fact that uniform domains with CDC \eqref{e:CDC-intro} satisfy good properties of the
	harmonic measure was recognized by Aikawa and Hirata \cite{AH}. As we will see later,
	estimates on the harmonic measure play an important role in our work.
	
	We are thus led naturally to the setting of a uniform domain $\unifdom$ satisfying CDC
	in an MMD space $(\ambient, d, \refmeas, \form, \domain)$ with \hyperlink{md-intro}{MD}
	and \ref{eq:EHI-intro}. We could state our main results under this setting, but for the
	sake of simplicity of their statements and proofs, in most parts of this paper we will
	assume that $(\ambient, d, \refmeas, \form, \domain)$ satisfies \ref{eq:VD-intro} and
	sub-Gaussian heat kernel estimates instead of \hyperlink{md-intro}{MD} and \ref{eq:EHI-intro}.
	There is essentially no loss of generality in assuming so, because it was proved in
	\cite[Theorem 7.9]{BCM} (see also \cite[Theorem 5.4]{BCM} and \cite[Theorem 4.5]{KM23}) that \hyperlink{md-intro}{MD} and \ref{eq:EHI-intro} hold if and
	only if there exist a metric $\theta$ on $\ambient$ quasisymmetric to $d$ and
	an $\form$-smooth Radon measure $\nu$ on $\ambient$ with full $\form$-quasi-support
	such that the time-changed MMD space $(\ambient, \theta, \nu, \form^{\nu}, \domain^{\nu})$ satisfies
	\ref{eq:VD-intro} and sub-Gaussian heat kernel estimates. Here the quasisymmetry of $\theta$
	to $d$ means that every annulus in $\theta$ is comparable to one in $d$ in a uniform fashion,
	and the assumed properties of $\nu$ guarantees that an MMD space $(\ambient, \theta, \nu, \form^{\nu}, \domain^{\nu})$
	over $(\ambient, \theta, \nu)$ can be uniquely defined in such a way that
	$\domain^{\nu}\cap\contfunc_{\mathrm{c}}(\ambient)=\domain\cap\contfunc_{\mathrm{c}}(\ambient)$
	and $\form^{\nu}(u,u)=\form(u,u)$ for any $u\in\domain\cap\contfunc_{\mathrm{c}}(\ambient)$.
	In particular, $(\ambient, \theta, \nu, \form^{\nu}, \domain^{\nu})$ shares the same
	harmonic functions, Green functions, harmonic measures, and boundary trace processes
	as $(\ambient, d, \refmeas, \form, \domain)$, and hence studying these objects for
	$(\ambient, d, \refmeas, \form, \domain)$ is equivalent to doing so for
	$(\ambient, \theta, \nu, \form^{\nu}, \domain^{\nu})$. Similarly, for the notion of
	uniform domain, we adopt a particular formulation of it due to \cite{Mur24} which is
	stable under the change of the metric to one quasisymmetric to the original one.
	Therefore by considering $(\ambient, \theta, \nu, \form^{\nu}, \domain^{\nu})$
	instead of $(\ambient, d, \refmeas, \form, \domain)$, we may assume without loss
	of generality that our MMD space $(\ambient, d, \refmeas, \form, \domain)$ satisfies
	\ref{eq:VD-intro} and sub-Gaussian heat kernel estimates.
	
	The last missing piece for our study of boundary traces of reflected diffusions to make sense
	in this setting is the \emph{existence} of a nice reflected diffusion on $\unifdom$.
	This existence in the present generality
	has recently been proved by the second-named author in \cite{Mur24}. More specifically, it was proved that the
	\textbf{reflected Dirichlet space} $(\overline{\unifdom},d,\refmeas|_{\overline{\unifdom}},\formref,\domainref)$
	on $\unifdom$ defined in a standard way is an MMD space satisfying sub-Gaussian
	heat kernel estimates of the same form as $(\ambient, d, \refmeas, \form, \domain)$.
	Since $(\overline{\unifdom},d,\refmeas|_{\overline{\unifdom}})$ is easily seen to satisfy
	\ref{eq:VD-intro} as well, the problems of the validity of the Doob--Na\"im formula
	analogous to \eqref{e:dnf} and of obtaining heat kernel estimates for the resulting jump-type
	Dirichlet form now make perfect sense, and can be studied on the basis of the nice properties of
	the reflected Dirichlet space $(\overline{\unifdom},d,\refmeas_{\overline{\unifdom}},\formref,\domainref)$
	proved in \cite{Mur24}. It is precisely in this setting that we prove our main results
	\eqref{it:result-hmeas-intro}, \eqref{it:result-dnformula-intro} and \eqref{it:result-shk-trace-intro}
	summarized before the beginning of this Subsection \ref{ssec:overview}.
	
	\begin{remark} \label{rmk:differences-CaoChan-intro}
	In fact, very similar results have been obtained independently in a recent preprint
	\cite{CC24b} by Cao and Chen. Their result on the jump kernel of the boundary trace
	Dirichlet form gives only two-sided estimates on it and not its exact identification
	as the Na\"im kernel $\naimker^{\unifdom}_{x_0}(\cdot,\cdot)$. On the other hand,
	their framework is slightly more general than ours, mainly in that they assume the
	validity of CDC \eqref{e:CDC-intro} only for $0< r \lesssim \diam(\partial \unifdom)$
	instead of $0< r \lesssim \diam(\unifdom)$, and they have also proved that this weaker version
	of CDC is equivalent to \ref{eq:VD-intro} of $(\partial \unifdom,d,\hmeas{\unifdom}{x_0})$;
	see \cite[Theorem 6.1 and Remark 6.2]{CC24b}.
	It is possible to extend most of our results in Sections \ref{sec:harm-ell-meas} and \ref{sec:boundary-trace}
	under this weaker version of CDC with very few changes to the statements and the proofs,
	but we refrain from doing so in the main text of this paper for the sake of better presentation
	of our results, and just briefly explain the necessary changes to the statements
	and the proofs under this more general setting in Subsection \ref{ss:wCDC}.
	We also provide a more detailed comparison of our results with
	those of \cite{CC24b} in Remark \ref{r:compare}.
	\end{remark}
	
	To illustrate the generality of our result on stable-like heat kernel estimates for boundary
	trace processes, we list a few examples of diffusions and domains to which the result applies.
	The most classical ones among such are reflected Brownian motion on Lipschitz and
	more generally NTA domains in $\bR^{\dimeuc}$, which in particular include
	domains with fractal boundaries such as the von Koch snowflake domain.
	More generally, reflected Brownian motion could be replaced with a reflected diffusion
	generated by a uniformly elliptic divergence-form operator as in \cite{Mos} or
	degenerate elliptic operators corresponding to $A_2$-weights as in \cite{FKS}. Another
	class of examples is given by NTA domains in the Heisenberg group equipped
	with the Carnot--Carth\'eodory distance and the diffusion generated by the corresponding
	left-invariant sub-Laplacian satisfying the H\"ormander condition treated in \cite{VSC}
	as mentioned before. Specific examples of NTA domains in this setting are given in \cite{CG,CGN, Gre}.
	Our result on the stable-like heat kernel estimates for boundary trace processes applies
	also to the Brownian motion on the Sierpi\'nski carpet constructed in \cite{BB89}, which
	can be seen from \cite[Theorem 2.9]{Mur24} to be identified as the reflected diffusion
	on the complement of the outer square boundary and on that of the bottom line. These
	sets are indeed uniform domains in the Sierpi\'nski carpet (see \cite[Proposition 4.4]{Lie22}
	and \cite[Proposition 2.4]{CQ}) and easily seen to satisfy the capacity density condition
	with respect to the diffusion, and for them our other results on the two-sided estimates
	of the harmonic measure and on the identification of the boundary trace Dirichlet form
	through the Doob--Na\"im formula are also certainly new.
	
	We conclude this subsection with a description of the relation of our result on the
	Doob--Na\"im formula to the well-established theory of characterizing the trace Dirichlet
	forms of regular symmetric Dirichlet forms in terms of the \emph{Feller measures},
	developed in \cite{FHY,CFY} and \cite[Sections 5.4--5.7]{CF} by following an old idea
	of M.~Fukushima in \cite{Fuk}. The definition of the Feller measure appearing in this
	theory is entirely different from that of the Na\"im kernel as presented in
	\eqref{e:Naim-kernel}. Even though Fukushima \cite{Fuk} proved that they give rise to
	the same jump-type Dirichlet form in the (locally) Euclidean setting, this coincidence
	is not at all obvious from the definitions of the Feller measure and the Na\"im kernel,
	and it is not clear to us to what generality it could be extended. Our proof of the
	Doob--Na\"im formula is in fact completely independent of the theory of the Feller
	measures in \cite{Fuk,FHY,CFY,CF}. It is based on direct calculations of the jump,
	killing and strongly local parts of the trace form, and our argument for the jump
	part  is much simpler than Doob's in \cite{Doo} thanks to the volume doubling property
	\ref{eq:VD-intro} of the harmonic measure $\hmeas{D}{x_0}$ and the continuity of
	the Na\"im kernel $\naimker^{D}_{x_0}(\cdot,\cdot)$ up to the domain boundary
	(\emph{in the usual topology} rather than in the fine topology as considered by Na\"im \cite{Nai}).

	\subsection{Summary of the setting and statement of the main results} \label{ssec:results}
	
	As mentioned in Subsection \ref{ssec:overview}, in most parts of this paper we consider a metric space
	$(\ambient, d)$ which contains at least two elements and whose every bounded closed set is compact,
	a Radon measure $\refmeas$ on $\ambient$ with full support, and a strongly local
	regular symmetric Dirichlet form $(\form, \domain)$ on $L^2(\ambient, \refmeas)$.
	We call $(\ambient, d, \refmeas, \form, \domain)$ a \textbf{metric measure Dirichlet space},
	or a \textbf{MMD space} for short, set $B(x,r):=\{y\in \ambient \mid d(x,y)<r\}$,
	$\diam(A):=\sup_{x,y\in A}d(x,y)$ ($\sup\emptyset:=0$) and
	$\dist(x,A):=\inf_{y\in A}d(x,y)$ ($\inf\emptyset:=\infty$)
	for $x\in\ambient$, $r\in(0,\infty)$ and $A\subset \ambient$, and
	write $\overline{A}$ and $\partial A$ for the closure and boundary,
	respectively, of $A\subset\ambient$ in $\ambient$.
	The strongly continuous contraction semigroup on $L^2(\ambient,\refmeas)$
	associated with $(\form, \domain)$ is denoted by $(T_t)_{t > 0}$.
	We refer to the first and second paragraphs of Subsection \ref{ss:DF} below
	for a brief summary of the definitions adopted here from the theory of
	regular symmetric Dirichlet forms presented in \cite{FOT,CF}. We collect in
	Section \ref{sec:preliminaries} plenty of other relevant definitions and results from
	the potential theory and heat kernel estimates for regular symmetric Dirichlet forms.
	
	To keep the presentation of the main results simple, throughout this subsection we assume
	that $(\ambient, d, \refmeas, \form, \domain)$ satisfies the volume doubling property
	\ref{eq:VD-intro} (Definition \ref{dfn:volume-doubling}) and the heat kernel estimates
	\hyperlink{hke}{$\on{HKE(\scdiff)}$} (Definition \ref{d:HKE})
	\begin{equation}\label{e:hke-intro}
		\frac{c_{3}}{\refmeas\bigl(B(x,\scdiff^{-1}(t))\bigr)} \one_{[0,\delta]}\biggl(\frac{d(x,y)}{\scdiff^{-1}(t)}\biggr)
		\leq p_t(x,y) \leq \frac{c_{1}}{\refmeas\bigl(B(x,\scdiff^{-1}(t))\bigr)} \exp \biggl( -c_{2} t \wt{\scdiff}\biggl( \frac{d(x,y)}{t} \biggr) \biggr)
	\end{equation}
	for $\refmeas$-a.e.\ $x,y \in \ambient$ for each $t \in (0,\infty)$ for some $c_{1},c_{2},c_{3},\delta \in (0,\infty)$.
	Here $\{ p_t \}_{t>0}$ denotes the \emph{heat kernel} of $(\ambient, \refmeas, \form, \domain)$,
	i.e., a family of Borel measurable functions $p_{t} \colon \ambient\times\ambient \to [0,\infty]$
	such that $p_{t}$ is an integral kernel for $T_{t}$ with respect to $\refmeas$ for each $t \in (0,\infty)$,
	$\scdiff$ is a \emph{scale function}, i.e., a homeomorphism from $[0,\infty)$ to itself satisfying
	\eqref{e:reg} for any $r,R \in (0,\infty)$ with $r \leq R$ for some
	$\beta_{1},\beta_{2},C\in(1,\infty)$ with $\beta_{1} \leq \beta_{2}$, and
	$\wt{\scdiff}\colon[0,\infty)\to[0,\infty)$ is defined by \eqref{e:defPhi};
	for example, if $\beta\in(1,\infty)$ and $\scdiff(r)=r^{\beta}$ for any $r\in[0,\infty)$,
	then $\widetilde{\scdiff}(s)=\beta^{-\frac{\beta}{\beta-1}}(\beta-1)s^{\frac{\beta}{\beta-1}}$
	for any $s\in[0,\infty)$. Our main results summarized in \eqref{it:result-hmeas-intro},
	\eqref{it:result-dnformula-intro} and \eqref{it:result-shk-trace-intro} above indeed
	require $(\ambient, d, \refmeas, \form, \domain)$ to satisfy \ref{eq:VD-intro} and
	\hyperlink{hke}{$\on{HKE(\scdiff)}$} as part of their assumptions.
	In this setting, as stated in Proposition \ref{p:feller}, $\ambient$ is connected,
	$(\ambient, \refmeas, \form, \domain)$ is \emph{irreducible}
	(i.e., $\refmeas(A)\refmeas(\ambient \setminus A)=0$ for any Borel subset $A$ of $\ambient$
	that is \emph{$\form$-invariant}, i.e., satisfies $T_{t}(\one_{A}f)=0$ $\refmeas$-a.e.\ on $\ambient \setminus A$
	for any $f\in L^{2}(\ambient,\refmeas)$ and any $t\in(0,\infty)$),
	a (unique) continuous version $p=p_{t}(x,y)\colon(0,\infty)\times\ambient\times\ambient\to[0,\infty)$
	of the heat kernel of $(\ambient, \refmeas, \form, \domain)$ exists, and the following holds:
	the Markovian transition function $P_{t}(x,dy):=p_{t}(x,y)\,\refmeas(dy)$ is conservative
	(i.e., $P_{t}(x,\ambient)=1$ for any $(t,x)\in(0,\infty)\times\ambient$) and has the \emph{Feller}
	and \emph{strong Feller properties}, so that there exists a conservative diffusion process
	$\diff = (\Omega, \events, \{\diff_{t}\}_{t\in[0,\infty]},\{\lawdiff_{x}\}_{x \in \ambient})$
	on $\ambient$ such that $\lawdiff_{x}( \diff_{t} \in dy ) = p_{t}(x,y)\,\refmeas(dy)$
	for \emph{any} $(t,x)\in(0,\infty)\times\ambient$.
	
	Let us recall several basic notions from the theory of regular symmetric Dirichlet forms.
	Let $\domain_e$ denote the \emph{extended Dirichlet space} of $(\ambient, \refmeas, \form, \domain)$
	(Definition \ref{d:ExtDiriSp}), i.e., the linear space of $m$-equivalence classes of
	$\refmeas$-a.e.\ pointwise limits $f$ of sequences $\{f_{n}\}_{n \in \mathbb{N}} \subset \domain$
	with $\lim_{k\wedge l\to\infty}\form(f_{k}-f_{l},f_{k}-f_{l})=0$, so that setting
	$\form(f,f):=\lim_{n\to\infty}\form(f_{n},f_{n})\in\mathbb{R}$ gives a canonical extension of
	$\form$ to $\domain_e \times \domain_e$ and $\domain=\domain_{e}\cap L^{2}(\ambient,\refmeas)$.
	The Dirichlet form $(\form,\domain)$ is said to be \emph{transient} if $\{f \in \domain_e \mid \form(f,f)=0\}=\{0\}$,
	in which case $(\domain_e,\form)$ is a Hilbert space (see \cite[Theorem 1.6.2]{FOT}).
	For each $f\in\domain_{e}$, let $\wt{f}$ denote any \emph{$\form$-quasi-continuous} $\refmeas$-version of $f$,
	which exists by \cite[Theorem 2.1.7]{FOT} and is unique \emph{$\form$-q.e.}\ (i.e., up to sets of capacity zero)
	by \cite[Lemma 2.1.4]{FOT}; see \cite[Section 2.1]{FOT} and \cite[Sections 1.2, 1.3 and 2.3]{CF} for the
	definition and basic properties of $\form$-quasi-continuous functions with respect to a regular symmetric Dirichlet form.
	
	Let $D$ be a non-empty open subset of $\ambient$, and let $\refmeas|_D$ denote the restriction of
	$\refmeas$ to the Borel $\sigma$-algebra of $D$. The \emph{part process} of $\diff = \{ \diff_{t} \}_{t \ge 0}$
	killed upon exiting $D$ is denoted by $\diff^{D} = \{ \diff^{D}_{t} \}_{t \ge 0}$ (Definition \ref{d:part-form-process}).
	It is an $\refmeas|_D$-symmetric diffusion process on $D$, its Dirichlet form $(\form^D,\domain^0(D))$
	is a strongly local regular symmetric Dirichlet form on $L^{2}(D,\refmeas|_D)$ and identified as
	the \emph{part Dirichlet form} of $(\form,\domain)$ on $D$ given by
	\begin{equation}\label{eq:partDF-intro}
		\domain^0(D)=\{f\in\domain\mid\textrm{$\wt{f}=0$ $\form$-q.e.\ on $\ambient\setminus D$}\}
		\qquad\textrm{and}\qquad
		\form^D=\form|_{\domain^0(D)\times\domain^0(D)},
	\end{equation}
	and the extended Dirichlet space $\domain^{0}(D)_{e}$ of
	$(D,\refmeas|_D,\form^D,\domain^0(D))$ is identified similarly as
	\begin{equation}\label{eq:partDF-extended-intro}
		\domain^{0}(D)_{e}=\{f\in\domain_e\mid\textrm{$\wt{f}=0$ $\form$-q.e.\ on $\ambient\setminus D$}\}.
	\end{equation}
	As stated in Proposition \ref{p:feller}-\eqref{it:HKE-part-CHK-sFeller}, it follows from \ref{eq:VD-intro}
	and \hyperlink{hke}{$\on{HKE(\scdiff)}$} for $(\ambient, d, \refmeas, \form, \domain)$
	and the Feller and strong Feller properties of $\diff = \{ \diff_{t} \}_{t \ge 0}$
	that $\diff^{D} = \{ \diff^{D}_{t} \}_{t \ge 0}$ has the strong Feller property and
	a continuous heat kernel $p^D=p^D_t(x,y)\colon D\times D\to[0,\infty)$ and satisfies
	$\lawdiff_{x}( \diff^{D}_{t} \in dy ) = p^{D}_{t}(x,y)\,\refmeas|_D(dy)$ for \emph{any} $(t,x)\in(0,\infty)\times D$.
	Furthermore if in addition $(\form^D,\domain^0(D))$ is transient, then the \emph{($0$-order) capacity}
	$\Capa_D(A)$ of $A \subset D$ in $D$ is defined by \eqref{e:defCapD}, and the \textbf{Green function}
	$\gren{D}\colon D\times D\to[0,\infty]$ of $(\form,\domain)$ on $D$ is defined by
	\begin{equation} \label{e:green-dfn-intro}
		\gren{D}(x,y):=\int_{0}^{\infty} p^D_t(x,y)\,dt
	\end{equation}
	and satisfies Proposition \ref{p:goodgreen}-\eqref{it:goodgreen-sym},\eqref{it:goodgreen-cont},\eqref{it:goodgreen-odf},\eqref{it:goodgreen-excessive},\eqref{it:goodgreen-harm},\eqref{it:goodgreen-max}
	with $\mathcal{N} =\emptyset$ by Lemma \ref{l:green}. Note that $(\form^D,\domain^0(D))$ is transient if $\diam(D)<\diam(\ambient)$,
	and in particular if $D=B(x,r)$ for some $(x,r) \in \ambient \times (0,\diam(\ambient)/2)$,
	by the irreducibility of $(\ambient,\refmeas,\form,\domain)$ from 
	Proposition \ref{p:feller}-\eqref{it:HKE-conn-irr-cons} and \cite[Proposition 2.1]{BCM}.
	
	In the rest of this subsection, we fix a \textbf{uniform domain} $\unifdom$ in $(\ambient,d)$ (Definition \ref{d:uniform}),
	i.e., a non-empty open subset $\unifdom$ of $\ambient$ with $\unifdom\not=\ambient$
	such that for some $c_{\unifdom}\in(0,1)$ and $C_{\unifdom}\in(1,\infty)$ the following holds:
	for every $x,y\in\unifdom$ there exists a continuous map $\gamma\colon[0,1]\to\unifdom$
	with $\gamma(0)=x$ and $\gamma(1)=y$ such that $\diam(\gamma([0,1]))\leq C_{\unifdom}d(x,y)$ and
	\begin{equation}\label{eq:uniform-intro}
		\delta_{\unifdom}(\gamma(t)) := \dist(\gamma(t),\ambient\setminus\unifdom) \geq c_{\unifdom} \min\{ d(x,\gamma(t)), d(y,\gamma(t)) \}
		\quad\textrm{for any $t\in[0,1]$.}
	\end{equation}
	This formulation of the notion of uniform domain is much less restrictive than that of
	\textbf{length uniform domain}, the usual one in the literature, which requires instead
	the last two inequalities with $\diam(\gamma([0,1])), d(x,\gamma(t)), d(y,\gamma(t))$
	replaced by the lengths of $\gamma,\gamma|_{[0,t]},\gamma|_{[t,1]}$ in $(\ambient,d)$, respectively.
	An advantage of the present formulation is that it is stable under the change of
	the metric to one quasisymmetric to $d$. An immediate but useful consequence of
	\eqref{eq:uniform-intro} is that for any $\xi \in \partial \unifdom$ and any
	$r \in (0,\diam(\unifdom)/4)$ there exists $\xi_r \in \unifdom$ such that
	\begin{equation}\label{eq:xir-intro}
		d(\xi,\xi_r)=r \quad \textrm{and} \quad \delta_\unifdom(\xi_r) > \frac{1}{2}c_{\unifdom} r
	\end{equation}
	(Lemma \ref{l:xir}). Throughout this paper, $\xi_{r}$ always denotes an arbitrary element of $\unifdom$
	satisfying \eqref{eq:xir-intro} for each given $(\xi,r) \in \partial \unifdom \times (0,\diam(\unifdom)/4)$.
	
	The most important feature of uniform domains is that they have been proved to satisfy the
	\hypertarget{bhp-intro}{\textbf{(scale-invariant) boundary Harnack principle} (BHP)} (Definition \ref{d:bhp} and Theorem \ref{t:bhp}).
	Namely, there exist $A_{0},A_{1},C_{1}\in(1,\infty)$ such that for any $\xi \in \partial \unifdom$,
	any $r\in(0,\diam(U)/A_{1})$ and any non-negative \emph{$\form$-harmonic functions $u,v$
		on $\unifdom\cap B(\xi,A_{0}r)$ with Dirichlet boundary condition relative to $\unifdom$}
	(Definitions \ref{dfn:harmonic} and \ref{d:dbdry}) such that $v>0$ $\refmeas$-a.e.\ on $\unifdom \cap B(\xi,r)$,
	\begin{equation}\label{eq:BHP-intro}
		\esssup_{x\in \unifdom \cap B(\xi,r)}\frac{u(x)}{v(x)}\leq C_{1}\essinf_{x\in \unifdom \cap B(\xi,r)}\frac{u(x)}{v(x)}.
	\end{equation}
	For length uniform domains in MMD spaces, \hyperlink{bhp-intro}{BHP} was proved first
	by Lierl \cite{Lie} under the assumption of \ref{eq:VD-intro} and \hyperlink{hke}{$\on{HKE(\scdiff)}$},
	and then by Barlow and the second-named author \cite{BM19} under the assumption of \ref{eq:EHI-intro}
	and some mild technical conditions. \hyperlink{bhp-intro}{BHP} for uniform domains in MMD spaces
	(Theorem \ref{t:bhp}) has been proved in a recent work \cite{Che} by Aobo Chen. 
	%under the weaker assumption that the metric doubling property \hyperlink{md-intro}{MD} and \ref{eq:EHI-intro} hold.
	
	As an important consequence of \hyperlink{bhp-intro}{BHP}, we have the local H\"{o}lder
	continuity of the ratios of $(0,\infty)$-valued $\form$-harmonic functions with Dirichlet boundary
	condition relative to $\unifdom$ (Lemma \ref{l:bhpholder}), which is an analogue of Moser's
	\ref{eq:EHI-intro}-based oscillation lemma \cite[\textsection 5]{Mos}. This fact leads to
	our first observation on the existence of a continuous extension of the Na\"im kernel to
	$\ol{\unifdom}$ stated in the following proposition.
	%(, which we will prove in Proposition \ref{p:naim} under the weaker assumption that \hyperlink{md-intro}{MD} and \ref{eq:EHI-intro} hold).
	We remark that, if the part Dirichlet form $(\form^{\unifdom},\domain^{0}(\unifdom))$ on $\unifdom$ is transient,
	then for each $x_{0} \in \unifdom$, the Green function $\gren{\unifdom}(x_{0},\cdot)\colon \unifdom \setminus \{x_{0}\} \to [0,\infty)$
	is continuous by Proposition \ref{p:goodgreen}-\eqref{it:goodgreen-cont}, $(0,\infty)$-valued by Lemma \ref{l:green}
	and the connectedness of $\unifdom$, and an $\form$-harmonic function on $\unifdom \setminus \{x_{0}\}$
	with Dirichlet boundary condition relative to $\unifdom$ by Proposition \ref{p:goodgreen}-\eqref{it:goodgreen-harm}
	and Lemma \ref{l:dbdy}, so that \hyperlink{bhp-intro}{BHP} is indeed applicable to $\gren{\unifdom}(x_{0},\cdot)$.
	For a set $A$, we define $A_{\diag}:=\{(x,x)\mid x\in A\}$ and
	$\offdiag{A}:=(A\times A)\setminus A_{\diag}$ (``od'' stands for ``off-diagonal'').

	\begin{prop}[Part of Proposition \ref{p:naim}]\label{prop:Naim-intro}
		Assume that the part Dirichlet form $(\form^{\unifdom},\domain^{0}(\unifdom))$ on $\unifdom$ is transient.
		Then for each $x_{0} \in \unifdom$, there exists a unique continuous function
		$\naimker^{\unifdom}_{x_{0}}\colon\offdiagp{\overline{\unifdom}\setminus\{x_{0}\}}\to(0,\infty)$,
		called the \emph{\textbf{Na\"im kernel} of $\unifdom$ with base point $x_{0}$}, such that
		\begin{equation} \label{eq:Naim-kernel-intro}
			\naimker^{\unifdom}_{x_{0}}(x,y)
			=\frac{\gren{\unifdom}(x,y)}{\gren{\unifdom}(x_0,x)\gren{\unifdom}(x_0,y)}
			\qquad\textrm{for any $(x,y) \in \offdiagp{\unifdom\setminus\{x_{0}\}}$.}
		\end{equation}
		Moreover, there exist $c_{0},C_{1}\in(0,\infty)$ such that for any $x_{0}\in\unifdom$ and any $(\xi,\eta)\in\offdiagp{\partial\unifdom}$,
		with $r:=r_{x_{0},\xi,\eta}:=c_{0}\min\{d(x_0,\eta),d(x_0,\xi),d(\eta,\xi)\}$,
		\begin{equation} \label{eq:Naim-kernel-estimates-intro}
			C_{1}^{-1} \frac{\gren{\unifdom}(\eta_{r},\xi_{r})}{\gren{\unifdom}(x_{0},\eta_{r}) \gren{\unifdom}(x_{0},\xi_{r})}
			\leq \naimker^{\unifdom}_{x_{0}}(\eta,\xi)
			\leq C_{1} \frac{\gren{\unifdom}(\eta_{r},\xi_{r})}{\gren{\unifdom}(x_{0},\eta_{r}) \gren{\unifdom}(x_{0},\xi_{r})}.
		\end{equation}
	\end{prop}
	
	In the rest of this subsection (and throughout Sections \ref{sec:harm-ell-meas} and \ref{sec:boundary-trace} below),
	we assume that the uniform domain $\unifdom$ satisfies the \textbf{capacity density condition} (CDC)
	(Definition \ref{d:cdc}), i.e., that there exist $A_{0} \in (8K,\infty)$ and $A_{1}, C \in (1,\infty)$
	such that for any $\xi \in \partial \unifdom$ and any $R \in (0,\diam(U)/A_{1})$,
	\begin{equation} \tag*{$\on{CDC}$} \label{eq:CDC-intro}
		\Capa_{B(\xi,A_{0} R)}(B(\xi,R)) \leq C \Capa_{B(\xi,A_{0} R)}(B(\xi,R)\setminus \unifdom).
	\end{equation}
	Here $K \in (1,\infty)$ is chosen so that $(\ambient,d)$ is \textbf{$K$-relatively ball connected}
	(Definition \ref{d:chain-rbc}-\eqref{it:rbc}); the existence of such $K$ follows from \ref{eq:VD-intro} and
	\hyperlink{hke}{$\on{HKE(\scdiff)}$} (see Remark \ref{r:ehi-hke} and Lemma \ref{l:chain}-\eqref{it:EHI-MD-RBC}),
	and allows us to apply \ref{eq:EHI-intro} in a nicely controlled manner and in particular to
	extend \ref{eq:CDC-intro} from one $A_{0} \in (8K,\infty)$ to \emph{any} $A_{0} \in (1,\infty)$
	(with different $A_{1},C$ for each $A_{0}$) (Lemma \ref{l:cdc1}-\eqref{it:cdc1-anyA0}).
	As already mentioned in Subsection \ref{ssec:overview}, \ref{eq:CDC-intro} is known to
	guarantee good quantitative behavior of the harmonic measure in the case of uniform domains
	in $\mathbb{R}^{\dimeuc}$ as proved by Aikawa and Hirata in \cite[Lemmas 3.5 and 3.6]{AH},
	which generalized earlier results by Dahlberg \cite[Lemma 1]{Dah} for Lipschitz domains and
	Jerison and Kenig \cite[Lemma 4.8]{JK} for NTA domains. As the first main theorem of this paper, we extend
	\cite[Lemmas 3.5 and 3.6]{AH} to our present general setting by proving the following theorem in Subsection \ref{ssec:hmeas}.
	Note that, since $\ambient$ is connected and $(\ambient,\refmeas,\form,\domain)$ is irreducible
	by Proposition \ref{p:feller}-\eqref{it:HKE-conn-irr-cons}, we have
	$\partial \unifdom \not= \emptyset$ by $\emptyset\not=\unifdom\not=\ambient$,
	$\ambient \setminus \unifdom$ has positive capacity with respect to $(\form,\domain)$
	by \ref{eq:CDC-intro} and \cite[Theorem 4.4.3-(ii)]{FOT}, and hence
	$(\form^{\unifdom},\domain^{0}(\unifdom))$ is transient by \cite[Proposition 2.1]{BCM}.
	
	\begin{theorem}[Theorem \ref{t:hmeas} and Corollary \ref{c:asdouble}]\label{thm:hmeas-intro}
		Define the \emph{\textbf{$\form$-harmonic measure} $\hmeas{\unifdom}{x_{0}}$ of $\unifdom$ with base point $x_{0}\in\unifdom$} (Definition \ref{d:hmeas})
		by $\hmeas{\unifdom}{x_{0}}(A):=\lawdiff_{x_{0}}( \textrm{$\diff_{ \tau_{\unifdom} } \in A$, $\tau_{\unifdom}<\infty$} )$
		for each Borel subset $A$ of $\ambient$, where $\tau_{\unifdom}:=\inf\{t\in[0,\infty)\mid \diff_{t}\not\in\unifdom\}$ ($\inf\emptyset:=\infty$).
		Then there exist $C,A \in (1,\infty)$ such that for any $\xi \in \partial \unifdom$, any $x_{0} \in \unifdom$ and any $r \in (0,d(\xi,x_{0})/A)$,
		\begin{align} \label{eq:hmeas-estimate-intro}
			C^{-1} \gren{\unifdom}(x_{0}, \xi_{r}) \Capa_{B(\xi,2r)}(B(\xi,r))
			&\leq \hmeas{\unifdom}{x_{0}}(B(\xi,r) \cap \partial \unifdom)
			\leq C \gren{\unifdom}(x_{0}, \xi_{r}) \Capa_{B(\xi,2r)}(B(\xi,r)), \\
			\hmeas{\unifdom}{x_{0}}(B(\xi, r) \cap \partial \unifdom) &\leq C \hmeas{\unifdom}{x_{0}}(B(\xi,r/2) \cap \partial \unifdom).
			\label{eq:hmeas-doubling-intro}
		\end{align}
		In particular, the topological support $\supp_{\ambient}[\hmeas{\unifdom}{x_{0}}]$
		of $\hmeas{\unifdom}{x_{0}}$ in $\ambient$ is $\partial \unifdom$.
	\end{theorem}
	
	While our proof of the lower bound in \eqref{eq:hmeas-estimate-intro} follows the same line of reasoning as \cite{AH},
	for the upper bound in \eqref{eq:hmeas-estimate-intro} we give a new proof avoiding the delicate iteration argument
	(the so-called box argument) in \cite{AH}. Then \eqref{eq:hmeas-doubling-intro} follows by combining
	\eqref{eq:hmeas-estimate-intro} with Lemma \ref{l:hchain} (implied by \ref{eq:EHI-intro} and $\unifdom$
	being a uniform domain), Remark \ref{r:ehi-hke} and \cite[Lemma 5.23]{BCM}.
	
	Note that \eqref{eq:hmeas-doubling-intro} means the validity of the volume doubling property
	of $\hmeas{\unifdom}{x_{0}}$ only up to the scale of $\dist(x_{0},\partial \unifdom)$,
	which still gives \ref{eq:VD-intro} of $(\partial \unifdom,d,\hmeas{\unifdom}{x_{0}})$
	when $\unifdom$ is bounded (i.e., $\diam(\unifdom)<\infty$) but may \emph{not} when $\unifdom$
	is unbounded (i.e., $\diam(\unifdom)=\infty$). Since, as mentioned slightly before
	\eqref{e:jump-kernel-intro}, the general results on heat kernel estimates for
	jump-type Dirichlet forms in \cite{CKW, GHH-mv, GHH-lb} require the global version
	\ref{eq:VD-intro} of the volume doubling property of the reference measure, the
	$\form$-harmonic measure $\hmeas{\unifdom}{x_{0}}$ is not a good candidate for our choice of
	the reference measure for the boundary trace Dirichlet form when $\unifdom$ is unbounded.
	In fact, as stated in the next proposition and proved in Subsection \ref{ssec:elliptic-meas},
	in this case one can construct a canonical Radon measure on $\partial \unifdom$ which is
	mutually absolutely continuous with respect to $\hmeas{\unifdom}{x_{0}}$ and satisfies
	\ref{eq:VD-intro}, by utilizing \hyperlink{bhp-intro}{BHP} to take the limit of a suitably
	normalized version of $\hmeas{\unifdom}{x_{0}}$ as $x_{0}$ tends to infinity.
	The consideration of such a measure dates back to Kenig and Toro \cite[Corollary 3.2]{KT},
	who first studied it for NTA domains in $\mathbb{R}^{\dimeuc}$, and we call such a measure
	on an unbounded uniform domain the \textbf{$\form$-elliptic measure at infinity} of the domain,
	following \cite[Lemma 3.5]{BTZ}. Assuming that $\unifdom$ is unbounded, for each $x_{0}\in\unifdom$
	let $\hprof{x_{0}}$ denote the \textbf{$\form$-harmonic profile} of $\unifdom$
	with base point $x_{0}$, i.e., a $(0,\infty)$-valued continuous $\form$-harmonic function
	on $\unifdom$ with Dirichlet boundary condition relative to $\unifdom$ such that
	$\hprof{x_{0}}(x_{0})=1$, whose existence (Proposition \ref{p:hprofile})
	and uniqueness (Lemma \ref{l:uniqueprofile}) are well-known consequences of \hyperlink{bhp-intro}{BHP}.
	
	\begin{prop}[Part of Proposition \ref{p:emeas}] \label{prop:elliptic-meas-intro}
		Assume that $\unifdom$ is unbounded, and let $x_{0} \in \unifdom$. Then there exists
		a unique Radon measure $\nu^{\unifdom}_{x_0}$ on $\overline{\unifdom}$, called the
		\emph{\textbf{$\form$-elliptic measure at infinity} of $\unifdom$ with base point $x_{0}$},
		such that $\gren{\unifdom}(x_{0},x_{n})^{-1}\hmeas{\unifdom}{x_{n}}\big|_{\overline{\unifdom}}$
		converges in total variation on any compact subset of $\overline{\unifdom}$ to $\nu^{\unifdom}_{x_0}$ as $n\to\infty$
		for any $\{ x_{n} \}_{n\in\mathbb{N}}\subset \unifdom \setminus \{x_{0}\}$ with $\lim_{n\to\infty}d(x_{0},x_{n})=\infty$.
		Moreover, $\nu^{\unifdom}_{x_0}(\unifdom)=0$, $\nu^{\unifdom}_{y}=(\hprof{x_{0}}(y))^{-1}\nu^{\unifdom}_{x_0}$
		for any $y \in \unifdom$, and the following hold:
		\begin{enumerate}[\rm(a)]\setlength{\itemsep}{0pt}\vspace{-5pt}
			\item\label{it:elliptic-meas-density-intro} $\nu^\unifdom_{x_{0}}$ and
			$\hmeas{\unifdom}{x_{0}}\big|_{\overline{\unifdom}}$ are mutually absolutely continuous,
			a $(0,\infty)$-valued continuous version of the Radon--Nikodym derivative $d\nu^\unifdom_{x_{0}}/d \hmeas{\unifdom}{x_{0}}$
			on $\partial \unifdom$ exists, and there exist $C,A \in (1,\infty)$ independent of $x_{0}$ such that
			for any $\xi \in \partial U$, any $R \in (0,d(\xi,x_0)/A)$ and any $\eta \in B(\xi,R) \cap \partial U$,
			\begin{equation} \label{eq:elliptic-meas-density-bounds-intro}
				C^{-1} \frac{\hprof{x_{0}}(\xi_{R})}{\gren{\unifdom}(x_{0},\xi_{R})}
				\leq \frac{d\nu^\unifdom_{x_0}}{d\hmeas{\unifdom}{x_0}}(\eta)
				\leq C \frac{\hprof{x_{0}}(\xi_{R})}{\gren{\unifdom}(x_0,\xi_{R})}.
			\end{equation}
			\item\label{it:elliptic-meas-bounds-intro} There exists $C \in (0,\infty)$ independent
			of $x_{0}$ such that for any $\xi \in \partial \unifdom$ and any $R \in (0,\infty)$,
			\begin{equation} \label{eq:elliptic-meas-bounds-intro}
				C^{-1} \hprof{x_0}(\xi_{R}) \Capa_{B(\xi,2R)}(B(\xi,R))
				\leq \nu^\unifdom_{x_0}(B(\xi,R) \cap \partial \unifdom)
				\leq C \hprof{x_0}(\xi_{R}) \Capa_{B(\xi,2R)}(B(\xi,R)).
			\end{equation}
			In particular, $\supp_{\overline{\unifdom}}[\nu^\unifdom_{x_0}] = \partial \unifdom$
			and $(\partial \unifdom, d, \nu^\unifdom_{x_{0}})$ satisfies \ref{eq:VD-intro}.
		\end{enumerate}
	\end{prop}
	
	Lastly, we introduce the reflected Dirichlet form $(\formref,\domainref)$ on $\unifdom$
	and its trace Dirichlet form $(\formtr,\domaintr)$ to $\partial \unifdom$, and state our
	version of the Doob--Na\"im formula expressing $\formtr$ in terms of the Na\"im kernel
	$\naimker^{\unifdom}_{x_{0}}$ (Theorem \ref{t:dnformula}) and stable-like heat kernel estimates for
	$(\formtr,\domaintr)$ (Theorem \ref{thm:shk-trace}). First, we define the \textbf{reflected Dirichlet form}
	$(\formref,\domainref)$ of $(\form,\domain)$ on $\unifdom$ (Definition \ref{dfn:reflected-form}) by
	\begin{align} \label{eq:domainref-intro}
		\domainref &:= \biggl\{f \in \domain_{\loc}(\unifdom) \biggm| \int_{\unifdom} f^{2}\,dm + \int_{\unifdom} \one_{\unifdom} \, d\Gamma_{\unifdom}(f,f) < \infty \biggr\}, \\
		\formref(f,g) &:= \frac{1}{4}\biggl(\int_{\unifdom} \one_{\unifdom} \, d\Gamma_{\unifdom}(f+g,f+g) - \int_{\unifdom} \one_{\unifdom} \, d\Gamma_{\unifdom}(f-g,f-g)\biggr), \quad f,g \in \domainref,
		\label{eq:formref-intro}
	\end{align}
	where $\domain_{\loc}(\unifdom)$ denotes the \emph{space of functions on $\unifdom$ locally in $\domain$}
	(\eqref{e:Floc} in Definition \ref{d:domain-local}) and $\Gamma_{\unifdom}(f,f)$
	the \emph{$\form$-energy measure} of $f \in \domain_{\loc}(\unifdom)$
	(Definitions \ref{d:EnergyMeas} and \ref{d:domain-local}). Thanks to the assumption that $\unifdom$ is a
	uniform domain in $(\ambient,d)$, it turns out that $(\ol{U},d,\refmeas|_{\unifdom},\formref,\domainref)$
	is an MMD space satisfying \ref{eq:VD-intro} and \hyperlink{hke}{$\on{HKE(\scdiff)}$} 
	(\cite[Theorem 2.8]{Mur24}; Theorem \ref{thm:hkeunif}-\eqref{it:hkeunif}), and that the $1$-capacity (see \eqref{e:defCap1})
	of a subset $A$ of $\ol{\unifdom}$ with respect to $(\ol{U},\refmeas|_{\unifdom},\formref,\domainref)$
	is comparable to the $1$-capacity of $A$ with respect to $(\ambient, \refmeas, \form, \domain)$
	(\cite[Proposition 5.11-(i)]{Mur24}; Theorem \ref{thm:hkeunif}-\eqref{it:formref-cap-equiv}).
	Furthermore the sets $\bigl\{ \widetilde{u}^{\on{ref}} \bigm| u \in \domain(\unifdom) \bigr\}$
	and $\bigl\{ \widetilde{u}^{\on{ref}} \bigm| u \in \domain(\unifdom)_{e} \bigr\}$
	of $\formref$-quasi-continuous $\refmeas|_{\overline{\unifdom}}$-versions $\widetilde{u}^{\on{ref}}$
	of $u \in \domainref$ and of $u \in \domainref_{e}$ coincide with
	$\bigl\{ \widetilde{u}|_{\overline{\unifdom}} \bigm| u \in \domain \bigr\}$ and
	$\bigl\{ \widetilde{u}|_{\overline{\unifdom}} \bigm| u \in \domain_{e} \bigr\}$,
	respectively, with any two functions defined $\form$-q.e.\ on $\overline{\unifdom}$
	and equal $\form$-q.e.\ on $\overline{\unifdom}$ identified
	(\cite[Proposition 5.11-(iii)]{Mur24} and Theorem \ref{thm:hkeunif}-\eqref{it:formref-quasi-cont-equiv}).
	In particular, a \textbf{reflected diffusion} on $\unifdom$, a diffusion process
	$\diffref = ( \{ \diffref_{t} \}_{t \ge 0}, \{ \lawref_{x} \}_{x \in \ol{\unifdom}})$ on $\ol{\unifdom}$
	satisfying $\lawref_{x}( \diffref_{t} \in dy ) = \hkref_{t}(x,y)\,\refmeas|_{\unifdom}(dy)$
	for any $(t,x) \in (0,\infty) \times \ol{\unifdom}$ for the continuous heat kernel
	$\hkref=\hkref_{t}(x,y)$ of $(\ol{U},\refmeas|_{\unifdom},\formref,\domainref)$, exists by
	\ref{eq:VD-intro}, \hyperlink{hke}{$\on{HKE(\scdiff)}$} and Proposition \ref{p:feller}-\eqref{it:HKE-CHK},\eqref{it:HKE-Feller},
	and defines exactly the same harmonic measure $\hmeas{\unifdom}{x_{0}}$
	as the diffusion $\diff = ( \{ \diff_{t} \}_{t \ge 0}, \{ \lawdiff_{x} \}_{x \in \ambient} )$ on $\ambient$
	for any $x_{0} \in \unifdom$ by Lemma \ref{l:harmonicm}-\eqref{it:hmeas-quasi-support}.
	Moreover, with respect to $(\ol{U},\refmeas|_{\unifdom},\formref,\domainref)$,
	$\hmeas{\unifdom}{x_{0}}$ clearly charges no set of zero capacity (Lemma \ref{l:harmonicm}-\eqref{it:hmeas-smooth-support}),
	in particular $\partial \unifdom$ has positive capacity by $\hmeas{\unifdom}{x_{0}}(\partial \unifdom) > 0$ from \eqref{eq:hmeas-estimate-intro},
	$\partial \unifdom$ is an $\formref$-quasi-support of $\hmeas{\unifdom}{x_{0}}\big|_{\overline{\unifdom}}$ by \ref{eq:EHI-intro} and \cite[Exercise 4.6.1]{FOT}
	(Definition \ref{d:quasisupport} and Lemma \ref{l:harmonicm}-\eqref{it:hmeas-quasi-support}),
	and all these hold also for the $\form$-elliptic measure $\nu^{\unifdom}_{x_{0}}$ at infinity by
	Proposition \ref{prop:elliptic-meas-intro}-\eqref{it:elliptic-meas-density-intro} when $\unifdom$ is unbounded.
	Setting $x_{0}:=\wh{\xi}_{\diam(\unifdom)/5}$ and $\bdrymeas:=\hmeas{\unifdom}{x_{0}}$
	for arbitrarily chosen $\wh{\xi} \in \partial \unifdom$ when $\unifdom$ is bounded,
	and $\bdrymeas:=\nu^{\unifdom}_{x_{0}}$ for arbitrarily chosen $x_{0} \in \unifdom$
	when $\unifdom$ is unbounded, we can now apply the general theory of traces of regular
	symmetric Dirichlet forms in \cite[Corollary 5.2.10]{CF} and obtain a regular symmetric
	Dirichlet form $(\formtr,\domaintr)$ on $L^{2}(\partial \unifdom,\bdrymeas)$, called the
	\textbf{trace Dirichlet form} of $(\formref,\domainref)$ on $L^{2}(\partial \unifdom,\bdrymeas)$,
	defined by
	\begin{gather}\label{eq:trace-domain-intro}
		\widetilde{\domainref_{e}}|_{\partial \unifdom}:=\bigl\{ \widetilde{f}|_{\partial \unifdom} \bigm| f \in \domainref_{e} \bigr\}, \qquad
		\domaintr:=\widetilde{\domainref_{e}}|_{\partial \unifdom} \cap L^{2}(\partial \unifdom,\bdrymeas), \\
		\formtr(u,u):=\formref(H^{\on{ref}}_{\partial \unifdom}u,H^{\on{ref}}_{\partial \unifdom}u),
		\quad u \in \widetilde{\domainref_{e}}|_{\partial \unifdom}.
		\label{eq:trace-form-intro}
	\end{gather}
	Here $\widetilde{f}$ denotes any $\formref$-quasi-continuous $\refmeas|_{\overline{\unifdom}}$-version of $f \in \domainref_{e}$,
	and any two functions equal $\formref$-q.e.\ on $\partial \unifdom$ are identified; since
	two $\formref$-quasi-continuous functions on $\ol{\unifdom}$ are equal $\formref$-q.e.\ on $\partial \unifdom$
	if and only if they are equal $\bdrymeas$-q.e.\ on $\partial \unifdom$ by $\partial \unifdom$
	being an $\formref$-quasi-support of $\bdrymeas$ and \cite[Theorem 3.3.5]{CF}, we can canonically consider
	$\widetilde{\domainref_{e}}|_{\partial \unifdom}$ as a linear space of $\mu$-equivalence classes of
	$\mathbb{R}$-valued Borel measurable functions on $\partial \unifdom$. Then for $u \in \widetilde{\domainref_{e}}|_{\partial \unifdom}$,
	$H^{\on{ref}}_{\partial \unifdom}u$ denotes the function defined $\formref$-q.e.\ on $\ol{\unifdom}$ by
	$(H^{\on{ref}}_{\partial \unifdom}u)(x):=\expref_{x}\bigl[u(\diffref_{\tau_{\unifdom}})\one_{\{\tau_{\unifdom}<\infty\}}\bigr]$
	(Definition \ref{d:hmeas}), so that $H^{\on{ref}}_{\partial \unifdom}u\in\domainref_{e}$ by \cite[Theorem 3.4.8]{CF} and
	$\formtr(u,u)$ can be defined by \eqref{eq:trace-form-intro}, and any $u\in\widetilde{\domainref_{e}}|_{\partial \unifdom}$
	is $\formtr$-quasi-continuous on $\partial \unifdom$ by \cite[Theorem 5.2.6]{CF}.
	Also by \cite[Theorem 5.2.15]{CF}, the extended Dirichlet space $\domaintr_{e}$ of
	$(\partial \unifdom,\bdrymeas,\formtr,\domaintr)$ is identified as $\domaintr_{e}=\widetilde{\domainref_{e}}|_{\partial \unifdom}$
	and the canonical extension of $\formtr|_{\domaintr\times\domaintr}$ to $\domaintr_{e}$
	coincides with \eqref{eq:trace-form-intro}.
	
	Now we can state our version of the Doob--Na\"im formula, which expresses $\formtr$ in terms of the
	Na\"im kernel $\naimker^{\unifdom}_{x_{0}}$ introduced in Proposition \ref{prop:Naim-intro}, as follows.
	
	\begin{theorem}[Doob--Na\"im formula; Proposition \ref{prop:bdry-trace-pure-jump} and Theorem \ref{t:dnformula}] \label{thm:Doob-Naim-intro}
		For any $u \in \domaintr_{e}$,
		\begin{equation} \label{eq:Doob-Naim-intro}
			\formtr(u,u) =  \frac{1}{2} \int_{\offdiagp{\partial \unifdom}} (u(\xi)-u(\eta))^{2}\, \naimker^{\unifdom}_{x_{0}}(\xi,\eta) \,d\hmeas{\unifdom}{x_{0}}(\xi) \,d\hmeas{\unifdom}{x_{0}}(\eta).
		\end{equation}
		In particular, the trace Dirichlet form $(\formtr,\domaintr)$ on $L^{2}(\partial \unifdom,\bdrymeas)$ is of pure jump type.
	\end{theorem}
	
	Recall that $(\formtr,\domaintr)$ can be written as the sum of its strongly local,
	jump and killing parts by \cite[Theorem 4.5.2]{FOT} (see \eqref{e:Beurling-Deny}), so that
	Theorem \ref{thm:Doob-Naim-intro} can be rephrased as the identification of its jumping measure as
	$\naimker^{\unifdom}_{x_{0}}(\xi,\eta) \,d\hmeas{\unifdom}{x_{0}}(\xi) \,d\hmeas{\unifdom}{x_{0}}(\eta)$
	combined with the vanishing of its strongly local and killing parts. The latter claim is
	a simple consequence of the known characterization of these parts of trace Dirichlet forms
	in \cite[Theorems 5.6.2 and 5.6.3]{CF}, but we give alternative elementary arguments
	for each of these parts in Propositions \ref{prop:trace-slocal} and \ref{p:killing}, respectively.
	The former claim is the more interesting, and we prove it by an explicit evaluation of the jumping measure
	based on the continuity of the Na\"im kernel $\naimker^{\unifdom}_{x_{0}}$ from Proposition \ref{prop:Naim-intro}
	and the volume doubling property \eqref{eq:hmeas-doubling-intro} of the $\form$-harmonic measure $\hmeas{\unifdom}{x_{0}}$.
	
	We conclude this subsection with stating our third main theorem on stable-like heat kernel estimates
	for the trace Dirichlet form $(\formtr,\domaintr)$ in Theorem \ref{thm:shk-trace-intro} below.
	A key observation for its statement is the following fact implied by \ref{eq:EHI-intro},
	\hyperlink{bhp-intro}{BHP} and $\unifdom$ being a uniform domain in $(\ambient,d)$ (Lemma \ref{l:scale}):
	there exists $\scjump\colon \partial \unifdom \times  [0,\infty) \to [0,\infty)$ such that
	$\scjump(\xi,\cdot) \colon [0,\infty) \to [0,\infty)$ is a homeomorphism for any $\xi\in\partial \unifdom$,
	\eqref{e:Psireg1} holds for any $x,y \in \partial \unifdom$ and any $s,r\in(0,\infty)$ with $s\leq r\leq \diam(\unifdom)$
	(see Definition \ref{d:regscale}), and
	\begin{equation} \label{eq:scjump-intro}
		C_{2}^{-1} \wt{\scjump}(\xi,r) \leq \scjump(\xi,r) \leq C_{2} \wt{\scjump}(\xi,r) \quad \textrm{for any $\xi \in \partial \unifdom$ and any $r \in (0,\diam(\unifdom)/A_{1})$}
	\end{equation}
	for some $C_{2},A_{1}\in(1,\infty)$, where
	\begin{equation} \label{eq:pre-scjump-intro}
		\wt{\scjump}(\xi,r):=
		\begin{cases}
			\gren{\unifdom}(x_{0},\xi_r) & \textrm{if $\unifdom$ is bounded,} \\
			\hprof{x_{0}}(\xi_{r}) & \textrm{if $\unifdom$ is unbounded.}
		\end{cases}
	\end{equation}
	
	\begin{theorem}[Non-probabilistic part of Theorem \ref{thm:shk-trace}]\label{thm:shk-trace-intro}
		%Set $B_{\partial \unifdom}(\xi,r):= B(\xi,r) \cap \partial \unifdom$ for $(\xi,r)\in \partial \unifdom \times (0,\infty)$, and
		Assume that $(\partial \unifdom,d)$ is \emph{uniformly perfect} (Definition \ref{d:UP}).
		%i.e., there exists $K_{0}\in(1,\infty)$ such that
		%$B_{\partial \unifdom}(\xi,r) \setminus B_{\partial \unifdom}(\xi,K_0^{-1}r) \not=\emptyset \emptyset$
		%for any $(\xi,r)\in(\partial \unifdom)\times(0,\infty)$ with $B_{\partial \unifdom}(\xi,r) \not= \partial \unifdom$.
		Then there exist $C_{1} \in (1,\infty)$ and a continuous heat kernel
		$\hkreftr=\hkreftr_{t}(\xi,\eta)\colon (0,\infty) \times \partial \unifdom \times \partial \unifdom \to [0,\infty)$
		of the trace Dirichlet form $(\formtr,\domaintr)$ of $(\formref,\domainref)$ on $L^{2}(\partial \unifdom,\bdrymeas)$
		such that for any $(t,\xi,\eta) \in (0,\infty) \times \partial \unifdom \times \partial \unifdom$,
		\begin{align}\label{eq:shk-trace-upper-intro}
			\hkreftr_{t}(\xi,\eta) &\leq C_{1} \biggl( \frac{1}{\bdrymeas\bigl(B(\xi,\scjump^{-1}(\xi,t)) \cap \partial \unifdom \bigr)} \wedge \frac{t}{\bdrymeas\bigl(B(\xi,d(\xi,\eta)) \cap \partial \unifdom \bigr) \scjump(\xi,d(\xi,\eta))} \biggr), \\
			\hkreftr_{t}(\xi,\eta) &\geq C_{1}^{-1} \biggl( \frac{1}{\bdrymeas\bigl(B(\xi,\scjump^{-1}(\xi,t)) \cap \partial \unifdom \bigr)} \wedge \frac{t}{\bdrymeas\bigl(B(\xi,d(\xi,\eta)) \cap \partial \unifdom \bigr) \scjump(\xi,d(\xi,\eta))} \biggr),
			\label{eq:shk-trace-lower-intro}
		\end{align}
		where $\scjump^{-1}(\xi,t):=(\scjump(\xi,\cdot))^{-1}(t)$  and $B(\xi,0):=\emptyset$.
		Moreover, $(\partial \unifdom,\bdrymeas,\formtr,\domaintr)$ is irreducible and conservative, and
		$\domaintr$ considered as a linear subspace of $L^{2}(\partial \unifdom,\bdrymeas)$ is identified as
		\begin{equation}\label{eq:shk-trace-intro-domain}
			\domaintr=\biggl\{u\in L^{2}(\partial \unifdom,\bdrymeas) \biggm|
			\int_{\offdiagp{\partial \unifdom}} (u(\xi)-u(\eta))^{2}\, \naimker^{\unifdom}_{x_{0}}(\xi,\eta) \,d\hmeas{\unifdom}{x_{0}}(\xi) \,d\hmeas{\unifdom}{x_{0}}(\eta)<\infty \biggr\}.
		\end{equation}
	\end{theorem}
	
	Recall that \eqref{eq:shk-trace-upper-intro} and \eqref{eq:shk-trace-lower-intro} with
	$\scjump(\xi,r)=r^{\alpha}$ for $\alpha\in(0,2)$ are the well-known form of the heat
	kernel estimates for the rotationally symmetric $\alpha$-stable process on $\mathbb{R}^{\dimeuc}$
	with $\bdrymeas$ and $d$ replaced by the Lebesgue measure and the Euclidean metric on $\mathbb{R}^{\dimeuc}$,
	respectively. The estimates \eqref{eq:shk-trace-upper-intro} and \eqref{eq:shk-trace-lower-intro}
	for the present case of the trace Dirichlet form $(\formtr,\domaintr)$ are of exactly the same form as
	these classical ones, except that the scaling relation between the space and time
	variables changes according to \eqref{eq:scjump-intro} and \eqref{eq:pre-scjump-intro},
	and for this reason we call \eqref{eq:shk-trace-upper-intro} and \eqref{eq:shk-trace-lower-intro}
	\textbf{stable-like heat kernel estimates}. Thanks to the recent characterization of stable-like heat
	kernel estimates obtained in \cite{CKW, GHH-mv, GHH-lb} and adapted for the present case
	in Theorem \ref{t:shkchar}, the proof of Theorem \ref{thm:shk-trace-intro}
	is reduced to verifying natural two-sided estimates \eqref{e:jphi} on the jump kernel
	$\jumpkertr_{\bdrymeas}(\xi,\eta):=\naimker^{\unifdom}_{x_{0}}(\xi,\eta) \frac{d\hmeas{\unifdom}{x_{0}}}{d\bdrymeas}(\xi) \frac{d\hmeas{\unifdom}{x_{0}}}{d\bdrymeas}(\eta)$
	and an exit time lower estimate $\expref_{\xi}[ \tau_{B(\xi,r) \cap \partial \unifdom} ] \geq C^{-1} \scjump(\xi,r)$;
	here $\expref_{\xi}[\cdot]$ denotes the mean with respect to the Hunt process $\diffreftr=\{\diffreftr_{t}\}_{t\geq 0}$
	associated with $(\partial \unifdom,\bdrymeas,\formtr,\domaintr)$. The former estimates follow by combining
	\eqref{eq:Naim-kernel-estimates-intro}, \eqref{eq:hmeas-estimate-intro}, \eqref{eq:elliptic-meas-density-bounds-intro},
	\eqref{eq:elliptic-meas-bounds-intro} and \eqref{eq:scjump-intro}, whereas the latter can be proved
	by using \hyperlink{hke}{$\on{HKE(\scdiff)}$} for $(\ol{U},d,\refmeas|_{\unifdom},\formref,\domainref)$
	due to \cite[Theorem 2.8]{Mur24} and the fundamental feature of $(\formtr,\domaintr)$
	as a trace Dirichlet form of $(\formref,\domainref)$ that its Green function is precisely
	the restriction of the Green function of $(\formref,\domainref)$
	(Proposition \ref{prop:greeninv}-\eqref{it:greeninv}).
	As the probabilistic part of Theorem \ref{thm:shk-trace}, in the setting of
	Theorem \ref{thm:shk-trace-intro} we prove also that a version of the Hunt process
	$\diffreftr=\{\diffreftr_{t}\}_{t\geq 0}$ with continuous transition density
	$\hkreftr_{t}(\xi,\eta)$ for \emph{any} starting point $\xi\in\partial\unifdom$
	can be obtained as the time change of the reflected diffusion
	$\diffref = ( \{ \diffref_{t} \}_{t \ge 0}, \{ \lawref_{x} \}_{x \in \ol{\unifdom}})$
	by its positive continuous additive functional (PCAF) in the strict sense with Revuz measure $\bdrymeas$;
	see Subsection \ref{ss:capgood} and Theorem \ref{thm:shk-trace}-\eqref{it:shk-trace-Feller} for details.
	
	As mentioned in Remark \ref{rmk:differences-CaoChan-intro} above, the extensions
	of our results in Sections \ref{sec:harm-ell-meas} and \ref{sec:boundary-trace}
	to the case where ``$R \in (0,\diam(\unifdom)/A_1)$'' in \ref{eq:CDC-intro}
	is weakened to ``$R \in (0,\diam(\partial \unifdom)/A_1)$'' are described
	in some detail in Subsection \ref{ss:wCDC}. We then conclude this paper with
	brief discussions of several concrete examples in Subsection \ref{ssec:examples},
	illustrating in particular various possibilities of the quantitative behavior of
	the space-time scaling function $\scjump$, the reference measure $\bdrymeas$ and
	the jump kernel $\jumpkertr_\bdrymeas$ of the boundary trace Dirichlet form.
	
	\begin{notation} \label{ntn:intro}
		Throughout this paper, we use the following notation and conventions.
		\begin{enumerate}[\rm(a)]\setlength{\itemsep}{0pt}\vspace{-5pt}
			\item\label{it:set-inclusion} The symbols $\subset$ and $\supset$ for set inclusion \emph{allow} the case of the equality.
			\item\label{it:ineq-const} For $[0,\infty]$-valued quantities $A$ and $B$, we write $A \lesssim B$
			to mean that there exists an implicit constant $C \in (0,\infty)$
			depending on some unimportant parameters such that $A \leq CB$.
			We write $A \asymp B$ if $A \lesssim B$ and $B \lesssim A$.
			\item\label{it:natural-numbers} $\mathbb{N}:=\{n\in\mathbb{Z}\mid n>0\}$, i.e., $0\not\in\mathbb{N}$.
			\item\label{it:cardinality} The cardinality (the number of elements) of a set $A$ is denoted by $\#A\in\mathbb{N}\cup\{0,\infty\}$.
			\item\label{it:max-min} We set $\sup\emptyset := 0$, $\inf\emptyset := \infty$ and $0^{-1}:=\infty$.
			We set $a\vee b:=\max\{a,b\}$, $a\wedge b:=\min\{a,b\}$, $a^{+}:=a\vee 0$ and
			$a^{-}:=-(a\wedge 0)$ for $a,b\in[-\infty,\infty]$, and use the same notation
			also for $[-\infty,\infty]$-valued functions and equivalence classes of them.
			All numerical functions in this paper are assumed to be $[-\infty,\infty]$-valued.
			\item\label{it:Euclidean-norm} For $\dimeuc\in\mathbb{N}$, the Euclidean inner product and norm on $\mathbb{R}^{\dimeuc}$
			are denoted by $\langle\cdot,\cdot\rangle$ and $\abs{\cdot}$, respectively.
			\item\label{it:diag-offdiag} For a set $A$, we define $A_{\diag}:=\{(x,x)\mid x\in A\}$ and
			$\offdiag{A}:=(A\times A)\setminus A_{\diag}$ (``od'' stands for ``off-diagonal'').
			\item\label{it:indicator-sup-norm} Let $\mathcal{X}$ be a non-empty set. We define $\one_{A}=\one_{A}^{\mathcal{X}} \in \mathbb{R}^{\mathcal{X}}$ for $A \subset \mathcal{X}$ by
			$\one_{A}(x):=\one_{A}^{\mathcal{X}}(x):=\bigl\{\begin{smallmatrix} 1 &\textrm{if $x \in A$,} \\ 0 &\textrm{if $x \not\in A$,} \end{smallmatrix}$
			and set $\norm{u}_{\sup} := \norm{u}_{\sup,\mathcal{X}} := \sup_{x \in \mathcal{X}}\abs{u(x)}$ for $u \colon \mathcal{X} \to [-\infty,\infty]$
			and $\osc_{\mathcal{X}}u := \sup_{x,y \in \mathcal{X}}\abs{u(x)-u(y)}$ for $u \colon \mathcal{X} \to \mathbb{R}$.
			We say that $u \colon \mathcal{X} \to [-\infty,\infty]$ is \emph{bounded} if $\norm{u}_{\sup} < \infty$.
			\item\label{it:measure-restr-AC} Let $(\mathcal{X},\mathscr{B})$ be a measurable space
			and let $\mu,\nu$ be measures on $(\mathcal{X},\mathscr{B})$.
			The $\mu$-completion of $\mathscr{B}$ is denoted by $\mathscr{B}^{\mu}$, and we set
			$\mathscr{B}^{*}:=\bigcap_{\textrm{$\lambda$:\ a $\sigma$-finite measure on $(\mathcal{X},\mathscr{B})$}}\mathscr{B}^{\lambda}$.
			We also set $\mathscr{B}|_{A} := \{ B \cap A \mid B \in \mathscr{B} \}$
			and $m|_{A} := m|_{\mathscr{B}|_{A}}$ for $A \in \mathscr{B}$,
			and we write $\nu \ll \mu$ to mean that $\nu$ is absolutely continuous with respect to $\mu$.
			When $\mu$ is $\sigma$-finite, the product measure space of $(\mathcal{X},\mathscr{B},\mu)$
			and itself is denoted by $(\mathcal{X} \times \mathcal{X},\mathscr{B} \otimes \mathscr{B},\mu \times \mu)$.
			\item\label{it:topology} Let $\mathcal{X}$ be a topological space. For $A \subset \mathcal{X}$, the closure and boundary of $A$ in $\mathcal{X}$ are denoted by $\overline{A}$ and $\partial A$, respectively,
			and we say that $A$ is \emph{relatively compact in $D \subset \mathcal{X}$}, and write $A \Subset D$, if and only if $A$ is included in a compact subset of $D$.
			We set $\contfunc(\mathcal{X}) := \{ u \in \mathbb{R}^{\mathcal{X}} \mid \textrm{$u$ is continuous} \}$, $\supp_{\mathcal{X}}[u] := \overline{\mathcal{X} \setminus u^{-1}(0)}$ for $u \in \contfunc(\mathcal{X})$, 
			$\contfunc_{\mathrm{c}}(\mathcal{X}) := \{ u \in \contfunc(\mathcal{X}) \mid \textrm{$\supp_{\mathcal{X}}[u]$ is compact} \}$, and
			$\contfunc_{0}(\mathcal{X}) := \{ u \in \contfunc(\mathcal{X}) \mid \textrm{$u^{-1}(\mathbb{R} \setminus (-\varepsilon,\varepsilon))$ is compact for any $\varepsilon \in (0,\infty)$}\}$.
			%	The one-point compactification of $\mathcal{X}$ is denoted by $\oneptcpt{\mathcal{X}}=\mathcal{X}\cup\{\cemetery_{\mathcal{X}}\}$, and
			The Borel $\sigma$-algebra of $\mathcal{X}$ is denoted by $\Borel(\mathcal{X})$,
			and we set $\Borel^{*}(\mathcal{X}):=\Borel(\mathcal{X})^{*}$ and call
			$\Borel^{*}(\mathcal{X})$ the \emph{universal $\sigma$-algebra} of $\mathcal{X}$.
			\item\label{it:support-meas} Let $\mathcal{X}$ be a topological space having
			a countable open base, and let $m$ be a Borel measure on $\mathcal{X}$.
			The \emph{(topological) support} of $m$ in $\mathcal{X}$, that is, the smallest
			closed subset $F$ of $\mathcal{X}$ such that $m(\mathcal{X} \setminus F) = 0$,
			is denoted by $\supp_{\mathcal{X}}[m]$. For a $\Borel(\mathcal{X})^{m}$-measurable
			function $f \colon \mathcal{X} \to [-\infty,\infty]$ or an $m$-equivalence class $f$
			of such functions, we set $\supp_{m}[f] := \supp_{\mathcal{X}}[\abs{f}\cdot m]$,
			where $\abs{f}\cdot m$ denotes the Borel measure on $\mathcal{X}$ defined by
			$(\abs{f}\cdot m)(A):=\int_{A}\abs{f}\,dm$.
			\item\label{it:ball-diam-dist} Let $(\mathcal{X},d)$ be a metric space.
			We set $B(x,r) := B_{d}(x,r) := \{ y \in \mathcal{X} \mid d(x,y) < r \}$ and
			$S(x,r) := S_{d}(x,r) := \partial B(x,r)$ for $(x,r) \in \mathcal{X} \times (0,\infty)$
			and call each such $B(x,r)$ a \emph{ball} in $(\mathcal{X},d)$.
			We also set $\diam(A):=\diam(A,d):=\sup_{x,y\in A}d(x,y)$ for $A \subset \mathcal{X}$
			and $\dist(A,B) := \dist_{d}(A,B) := \inf_{(x,y)\in A\times B}d(x,y)$
			and $\dist(x,A) := \dist_{d}(x,A) := \dist(\{x\},A)$ for $A,B\subset X$ and $x\in\mathcal{X}$.
			We say that a subset $A$ of $\mathcal{X}$ is \emph{bounded} if $\diam(A)<\infty$,
			and \emph{unbounded} if $\diam(A)=\infty$.
		\end{enumerate}
	\end{notation}

	\section{Preliminaries} \label{sec:preliminaries}
	
	In this section, we recall basic notions and results from metric geometry and the
	theory of regular symmetric Dirichlet forms, and prove some general results applied
	later in Section \ref{sec:boundary-trace} to the case of the boundary traces of reflected
	diffusions on uniform domains. Subsections \ref{ss:doubling} and \ref{ss:Uniform}
	concern purely metric-measure properties of the underlying state space,
	introducing the metric doubling and volume doubling properties and the definition
	and some basic features of uniform domains. Subsection \ref{ss:DF} summarizes some
	basics of the theory of regular symmetric Dirichlet forms and associated symmetric
	Hunt processes as presented in \cite{FOT,CF}. In Subsection \ref{ss:HKE}, we
	give the definition and some probabilistic consequences of sub-Gaussian
	heat kernel estimates for MMD spaces (strongly local regular Dirichlet spaces
	in which every bounded closed set is compact) and state the second-named author's
	result in \cite{Mur24} on the regularity and sub-Gaussian heat kernel estimates for
	reflected Dirichlet forms on uniform domains. Subsection \ref{ssec:harm-func-EHI}
	is devoted to formulating harmonic functions, the elliptic Harnack inequality and
	Dirichlet boundary condition relative to open sets, and presenting some related facts.
	In Subsection \ref{ss:trace}, we introduce trace Dirichlet forms and relevant notions,
	and give new elementary proofs of the identification of the strongly local part of
	trace Dirichlet forms as in \cite[Theorem 5.6.2]{CF} and of the vanishing of their
	killing part under a natural non-escape assumption, a simple consequence of
	\cite[Theorem 5.6.3]{CF}. In Subsection \ref{s:stablehke} we formulate the
	stable-like heat kernel estimates for pure-jump Dirichlet forms and state
	a nice characterization of them, following \cite{CKW, GHH-mv, GHH-lb}.
	Lastly, Subsection \ref{ss:capgood} presents a sufficient condition for a Borel measure $\goodmeas$
	on an MMD space to be a Radon measure corresponding to a positive continuous additive functional (PCAF)
	$\goodpcaf=\{\goodpcaf_{t}\}_{t\in[0,\infty)}$ \emph{in the strict sense} of the associated diffusion $\diff$
	(i.e., a PCAF of $\diff$ defined $\lawdiff_{x}$-a.s.\ for \emph{every} point $x$ of the state space)
	and for the support of $\goodmeas$ to coincide with the support of $\goodpcaf$.
	We also prove that, under the same condition on $\goodmeas$, the time-changed process
	$\difftr$ of $\diff$ by $\goodpcaf$ is a Hunt process on the support of $\goodmeas$
	sharing the same Green functions as $\diff$ under \emph{every} choice of the starting point
	(Proposition \ref{prop:greeninv}). We will see later in Section \ref{sec:boundary-trace}
	that our boundary trace processes are special cases of these general results.
	
	\subsection{Metric doubling and volume doubling properties} \label{ss:doubling}
	
	In much of this work, we will be in the setting of a metric doubling metric space equipped with a volume doubling measure.
	
	\begin{definition}[Metric doubling property (MD)]\label{dfn:metric-doubling}
		Let $(\ambient,d)$ be a metric space. The metric $d$, or the metric space $(\ambient,d)$,
		is said to be \textbf{(metric) doubling}, or to satisfy the \textbf{metric doubling property},
		abbreviated as \hypertarget{MD}{\textup{MD}}, if there exists $N \in \bN$ such that
		$B(x,R)$ is included in the union of some $N$ balls of radii $R/2$ in $(\ambient,d)$
		for any $(x,R) \in \ambient \times (0,\infty)$.
	\end{definition}
	
	Next, we recall the closely related volume doubling property on subsets of $\ambient$
	for Borel measures on $\ambient$. The pair $(\ambient,d,\refmeas)$ of a metric space
	$(\ambient,d)$ and a Borel measure $\refmeas$ on $\ambient$ is termed a \emph{metric measure space}.
	
	\begin{definition}[Volume doubling property (VD)] \label{dfn:volume-doubling}
		Let $(\ambient,d,\refmeas)$ be a metric measure space and let $V \subset \ambient$.
		The measure $\refmeas$, or the metric measure space $(\ambient,d,\refmeas)$,
		is said to be \textbf{(volume) doubling on $V$}, or to satisfy the
		\textbf{volume doubling property on $V$}, if there exists $D_0 \in [1,\infty)$ such that
		\[
		0 < m(B(x,2r) \cap V) \le D_0 m(B(x,r)\cap V) < \infty \quad
		\textrm{for all $x \in V$ and all $r>0$.}
		\]
		We say that $\refmeas$ or $(\ambient,d,\refmeas)$ is \textbf{(volume) doubling},
		or satisfies the \textbf{volume doubling property}, abbreviated as \hypertarget{VD}{\textup{VD}},
		if $(\ambient,d,\refmeas)$ is volume doubling on $\ambient$.
	\end{definition}
	
	The basic relationship between these notions is that if there exists a volume doubing measure on a metric space $(\ambient,d)$, then $(\ambient,d)$ is  metric doubling.
	Conversely, every complete, metric doubling metric space admits a volume doubling measure; see \cite[Chapter 13]{Hei}.
	By iterating the volume doubling condition, it is easy to see that for any metric measure space $(\ambient,d,\refmeas)$ satisfying \hyperlink{VD}{\textup{VD}},
	there exist $C \in (1,\infty)$ and $\beta \in (0,\infty)$ such that 
	\begin{equation}\label{e:vd}
		\frac{\refmeas(B(y,R))}{\refmeas(B(x,r))} \le C \biggl( \frac{d(x,y)+R}{r} \biggr)^\beta
		\quad \textrm{for all $x,y \in \ambient$ and all $0<r\le R$.}
	\end{equation}
	
	We further recall another closely related property known as the
	\emph{reverse volume doubling property} in the literature, to which
	the following definition is relevant.
	
	\begin{definition} \label{d:UP}
		We say that a metric space $(\ambient,d)$ is \emph{uniformly perfect} if there exists $K_0 \in (1,\infty)$ such that for all $x \in \ambient, r>0$ such that $B(x,r) \neq \ambient$, we have 
		\[
		B(x,r) \setminus B(x,K_0^{-1}r) \neq \emptyset.
		\]
	\end{definition}
	%	We record the following result for later use.
	\begin{lem}[{\cite[Exercise 13.1]{Hei}}] \label{l:rvd}
		Let $\refmeas$ be a volume doubling measure on a uniformly perfect metric space $(\ambient,d)$.
		Then the measure $\refmeas$ satisfies the following \emph{reverse volume doubling property}, abbreviated as
		\hypertarget{RVD}{\textup{RVD}}: there exist $C \in (1,\infty)$ and $\alpha \in (0,\infty)$
		such that for all $x \in \ambient$ and all $0<r\leq R<\diam(\ambient)$,
		\begin{equation} \label{e:rvd}
			\frac{\refmeas(B(x,R))}{\refmeas(B(x,r))} \ge C^{-1} \biggl( \frac{R}{r} \biggr)^\alpha.
		\end{equation}
	\end{lem}
	\subsection{Uniform domains} \label{ss:Uniform}
	Let $(\ambient,d)$ be a metric space and let $\unifdom \subset \ambient$ be an open set.
	A \emph{curve in $\unifdom$} is a continuous map $\gamma\colon[a,b] \to \unifdom$, and
	such $\gamma$ is said to be \emph{from $x$ to $y$} or to \emph{join $x$ and $y$}, where $x,y \in \unifdom$,
	if $\gamma(a)=x$ and $\gamma(b)=y$. We sometimes identify $\gamma$ with its image $\gamma([a,b])$,
	so that $\gamma \subset \unifdom$. The \emph{length} (in $(\ambient,d)$) of a curve $\gamma\colon [a,b] \to \ambient$ is defined as
	\begin{equation} \label{eq:length-dfn}
		\ell(\gamma):= \sup \Biggl\{ \sum_{i=0}^{n-1} d(\gamma(t_i),\gamma(t_{i+1})) \Biggm| a \le t_0 < t_1 \ldots < t_n \le b \Biggr\}.
	\end{equation}
	We say that $(\ambient,d)$ is a length space if $d(x,y)$ is equal to
	the infimum of the lengths of curves in $\ambient$ from $x$ to $y$ for any $x,y\in\ambient$.
	
	\begin{definition}[Uniform domain] \label{d:uniform}
		Let $(\ambient,d)$ be a metric space, $\unifdom$ a non-empty open subset of $\ambient$ with $\unifdom \not= \ambient$,
		$c_{\unifdom} \in (0,1)$ and $C_{\unifdom} \in (1,\infty)$.
		Set $\delta_{\unifdom}(z) := \dist(z,\ambient \setminus \unifdom)$ for $z \in \unifdom$.
		\begin{enumerate}[\rm(a)]\setlength{\itemsep}{0pt}\vspace{-5pt}
			\item We say that $\unifdom$ is a \textbf{length $(c_{\unifdom},C_{\unifdom})$-uniform domain} in $(\ambient,d)$ if for every pair of points $x,y \in \unifdom$, there exists a curve $\gamma$ in $\unifdom$ from $x$ to $y$ such that its length $\ell(\gamma) \le C_{\unifdom} d(x,y)$ and for all $z \in \gamma$, 
			\begin{equation} \label{eq:length-uniform-domain-dfn}
				\delta_{\unifdom}(z) \ge c_{\unifdom} \min\{ \ell(\gamma_{x,z}), \ell(\gamma_{z,y}) \},
			\end{equation}
			where $\gamma_{x,z}, \gamma_{z,y}$ denote the subcurves of $\gamma$ from $x$ to $z$ and from $z$ to $y$, respectively.
			Such a curve $\gamma$ is called a \emph{length $(c_{\unifdom},C_{\unifdom})$-uniform curve} in $\unifdom$.
			\item We say that $\unifdom$ is a \textbf{$(c_{\unifdom},C_{\unifdom})$-uniform domain} in $(\ambient,d)$ if for every pair of points $x,y \in \unifdom$, there exists a curve $\gamma$ in $\unifdom$ from $x$ to $y$ such that its diameter $\diam(\gamma) \le C_{\unifdom} d(x,y)$ and for all $z \in \gamma$, 
			\begin{equation} \label{eq:uniform-domain-dfn}
				\delta_{\unifdom}(z) \ge c_{\unifdom} \min\{ d(x,z), d(y,z) \}.
			\end{equation}
			Such a curve $\gamma$ is called a \emph{$(c_{\unifdom},C_{\unifdom})$-uniform curve} in $\unifdom$.
		\end{enumerate}
	\end{definition}
	There are different definitions of uniform domains in the literature \cite{Mar,Vai};
	note that our definition of length uniform domain is what is usually called uniform domain in the literature.
	The above definition of uniform domain was %has been
	introduced in \cite{Mur24} because of the advantage that this notion of uniform domain is preserved under quasisymmetric changes of the metric on the underlying space.
	Furthermore, this definition also allows us to consider metric spaces that do not have non-constant rectifiable curves. 
	
	The following is a variant of \cite[Lemma 3.20]{GS}.
	\begin{lem} \label{l:xir}
		Let $(\ambient,d)$ be a metric space, let $c_{\unifdom} \in (0,1)$,
		and let $\unifdom \subset \ambient$ be a $(c_{\unifdom},C_{\unifdom})$-uniform domain for some $C_{\unifdom} \in (1,\infty)$. Then for any $\xi \in \partial \unifdom$ and any $r \in (0,\diam(\unifdom)/4)$, there exists $\xi_r \in \unifdom$ such that 
		\begin{equation} \label{eq:xir}
			d(\xi,\xi_r)=r \quad \textrm{and} \quad \delta_\unifdom(\xi_r) > \frac{c_{\unifdom} r}{2}.
		\end{equation}
	\end{lem}
	\begin{proof}
		Since $r<\diam(\unifdom)/4$ we can choose a point $y \in \unifdom$ such that $d(\xi,y)>2r$,
		and by $\xi \in \partial \unifdom$ we can choose a point $x \in B(\xi,r/2) \cap \unifdom$.
		By considering a $(c_{\unifdom},C_{\unifdom})$-uniform curve $\gamma$ in $\unifdom$ from $x$ to $y$
		and the continuity of $d(\xi,\cdot)$ along $\gamma$, there exists $\xi_r \in \gamma$ such that $d(\xi,\xi_r)=r$, and then
		\begin{align*}
			\delta_{\unifdom}(\xi_r) &\geq  c_{\unifdom} \min\{ d(x,\xi_r), d(y,\xi_r) \} \\
			&\geq  c_{\unifdom} \min\{ d(\xi,\xi_r)-d(\xi,x), d(\xi,y) - d(\xi,\xi_r) \}> \frac{c_{\unifdom} r}{2}.
			\qedhere\end{align*}
	\end{proof}
	
	\begin{notation} \label{ntn:xir}
		Throughout this paper, given $(\ambient,d),c_{\unifdom},\unifdom$ as in Lemma \ref{l:xir},
		$\xi_{r}$ always denotes an arbitrary element of $\unifdom$ satisfying \eqref{eq:xir}
		for each $(\xi,r) \in \partial \unifdom \times (0,\diam(\unifdom)/4)$.
	\end{notation}

	We recall that the volume doubling property of measures is inherited by uniform domains.
	
	\begin{lem}[{\cite[Theorem 2.8]{BS}}, {\cite[Lemma 3.5]{Mur24}}] \label{l:doubling-unif}
		Let $(\ambient,d,\refmeas)$ be a metric measure space satisfying \hyperlink{VD}{\textup{VD}},
		and let $\unifdom$ be a uniform domain in $(\ambient,d)$. Then 
		\begin{equation} \label{eq:volume-doubling-unif-bdry-zero}
			\refmeas(\partial \unifdom)=0,
		\end{equation}
		and $(\unifdom,d,\refmeas|_{\unifdom})$ and $(\overline{\unifdom},d,\refmeas|_{\overline{\unifdom}})$
		satisfy \hyperlink{VD}{\textup{VD}}.
	\end{lem}

	\subsection{Regular Dirichlet space and symmetric Hunt process} \label{ss:DF}
	
	We now recall some basics of the theory of regular symmetric Dirichlet forms
	as presented in \cite{FOT,CF}. Throughout this subsection, we consider a locally compact
	separable metrizable topological space $\ambient$, a Radon measure $\refmeas$ on
	$\ambient$ with full support, i.e., a Borel measure $\refmeas$ on $\ambient$
	which is finite on any compact subset of $\ambient$ and strictly positive on any
	non-empty open subset of $\ambient$, and a \emph{symmetric Dirichlet form}
	$(\form,\domain)$ on $L^{2}(\ambient,\refmeas)$;
	that is, $\domain$ is a dense linear subspace of $L^{2}(\ambient,\refmeas)$, and
	$\form \colon \domain\times\domain \to \mathbb{R}$
	is a non-negative definite symmetric bilinear form which is \emph{closed}
	($\mathcal{F}$ is a Hilbert space under the inner product $\mathcal{E}_{1}:= \mathcal{E}+ \langle \cdot,\cdot \rangle_{L^{2}(\ambient,\refmeas)}$)
	and \emph{Markovian} ($f^{+}\wedge 1\in\mathcal{F}$ and $\mathcal{E}(f^{+}\wedge 1,f^{+}\wedge 1)\leq \mathcal{E}(f,f)$ for any $f\in\mathcal{F}$).
	We say that $(\mathcal{E},\mathcal{F})$ is \emph{regular} if
	$\mathcal{F} \cap \contfunc_{\mathrm{c}}(\ambient)$ is dense both in $(\mathcal{F},\mathcal{E}_{1})$
	and in $(\contfunc_{\mathrm{c}}(\ambient),\norm{\cdot}_{\mathrm{sup}})$, and that
	$(\mathcal{E},\mathcal{F})$ is called \emph{strongly local} if $\mathcal{E}(f,g)=0$
	for any $f,g\in\mathcal{F}$ with $\supp_{\refmeas}[f]$, $\supp_{\refmeas}[g]$ compact and
	$\supp_{\refmeas}[f-a\one_{\ambient}]\cap\supp_{\refmeas}[g]=\emptyset$ for some $a\in\mathbb{R}$;
	here $\contfunc_{\mathrm{c}}(\ambient)$ and $\supp_{\refmeas}[f]$ are as defined in
	Notation \ref{ntn:intro}-\eqref{it:topology},\eqref{it:support-meas}, and
	note that $\supp_{\refmeas}[f] = \overline{\ambient\setminus f^{-1}(0)}$
	if $f \colon \ambient \to [-\infty,\infty]$ is continuous.
	The quadruple $(\ambient,\refmeas,\form,\domain)$ is termed a
	\emph{regular Dirichlet space} if $(\mathcal{E},\mathcal{F})$ is regular, and
	a \emph{strongly local regular Dirichlet space} if $(\mathcal{E},\mathcal{F})$
	is regular and strongly local. In particular, if $(\ambient,\refmeas,\form,\domain)$
	is a regular Dirichlet space and $d$ is a metric on $\ambient$ compatible with the
	topology of $\ambient$ such that $B(x,r) := \{ y \in \ambient \mid d(x,y) < r \}$
	is relatively compact in $\ambient$ for every $(x,r) \in \ambient \times (0,\infty)$,
	then the quintuple $(\ambient,d,\refmeas,\form,\domain)$ is termed
	a \textbf{not-necessarily-local metric measure Dirichlet space},
	or a \textbf{NLMMD space} in abbreviation. If $(\ambient,d,\refmeas,\form,\domain)$
	is a NLMMD space such that $\# \ambient \geq 2$ and $(\form,\domain)$ is strongly local,
	then $(\ambient,d,\refmeas,\form,\domain)$ is termed a
	\textbf{metric measure Dirichlet space}, or a \textbf{MMD space} in abbreviation.
	
	Associated with a symmetric Dirichlet form is a \emph{strongly continuous contraction semigroup} $(T_t)_{t > 0}$; that is, a family of symmetric bounded linear operators $T_t:L^2(\ambient,\refmeas) \to L^2(\ambient,\refmeas)$ such that
	\begin{equation*}
		T_{t+s}f=T_t(T_s f), \quad \norm{T_t f}_2 \le \norm{f}_{L^{2}(\ambient,\refmeas)}, \quad \lim_{t \downarrow 0} \norm{T_t f-f}_{L^{2}(\ambient,\refmeas)} =0, 
	\end{equation*}
	for all $t,s\in(0,\infty)$ and all $f \in L^2(\ambient,\refmeas)$. In this case, as stated in
	\cite[Lemma 1.3.4-(i)]{FOT} we can express $(\form,\domain)$ in terms of the semigroup as
	\begin{equation} \label{e:semigroup}
		\begin{split}
			\domain &= \biggl\{f \in L^{2}(\ambient,\refmeas) \biggm| \lim_{t \downarrow 0} \frac{1}{t}\langle f - T_{t} f, f \rangle_{L^{2}(\ambient,\refmeas)} < \infty \biggr\}, \\
			\form&(f,f) = \lim_{t \downarrow 0} \frac{1}{t}\langle f - T_{t} f, f \rangle_{L^{2}(\ambient,\refmeas)} \qquad \textrm{for all $f \in \domain$.}
		\end{split}
	\end{equation}
	As is well known, $T_t$ restricted to $L^2(\ambient,\refmeas) \cap L^\infty(\ambient,\refmeas)$ canonically
	extends to a positivity-preserving linear contraction on $L^\infty(\ambient,\refmeas)$ (see, e.g., \cite[pp.~6 and 7]{CF}).
	We say that $(\form,\domain)$ or $(\ambient,\refmeas,\form,\domain)$ is \emph{conservative}
	if $T_{t} \one_{\ambient} = \one_{\ambient}$ $\refmeas$-a.e.\ for any $t \in (0,\infty)$,
	and that $(\form,\domain)$ or $(\ambient,\refmeas,\form,\domain)$ is \emph{irreducible}
	if $\refmeas(A)\refmeas(\ambient \setminus A)=0$ for any $A \in \Borel(\ambient)$
	that is \emph{$\form$-invariant}, i.e., satisfies $T_{t}(\one_{A}f)=0$ $\refmeas$-a.e.\ on
	$\ambient \setminus A$ for any $f \in L^{2}(\ambient,\refmeas)$ and any $t\in(0,\infty)$.

	We next introduce a few notions relevant to the global behavior of $(T_t)_{t > 0}$.
	
	\begin{definition}[Extended Dirichlet space]\label{d:ExtDiriSp}
		We define the \emph{extended Dirichlet space} $\domain_e$ of $(\ambient,\refmeas,\form,\domain)$
		as the space of $\refmeas$-equivalence classes of functions $f\colon \ambient \to \bR$
		such that $\lim_{n\to \infty} f_{n}= f$ $\refmeas$-a.e.\ on $\ambient$ for some
		$\{ f_{n} \}_{n \in \mathbb{N}} \subset \domain$ with $\lim_{k\wedge l\to\infty}\form(f_{k}-f_{l},f_{k}-f_{l})=0$.
		Then the limit $\form(f,f):=\lim_{n\to\infty}\form(f_{n},f_{n})\in\mathbb{R}$
		exists and is independent of a choice of such $\{ f_{n} \}_{n \in \mathbb{N}}$ for each
		$f \in \domain_{e}$, so that $\form$ is canonically extended to $\domain_e \times \domain_e$
		and satisfies $\lim_{n\to\infty}\form(f-f_{n},f-f_{n})=0$ for any such $(f_n)_{n \in \bN}$
		for each $f \in \domain_e$, and $\domain=\domain_{e}\cap L^{2}(\ambient,\refmeas)$; see \cite[Definition 1.1.4 and Theorem 1.1.5]{CF}.
	\end{definition}
	
	We say that $(\form, \domain)$ or $(\ambient,\refmeas,\form,\domain)$ is \emph{transient}
	if there exists $g \in L^{1}(\ambient, \refmeas) \cap L^{\infty}(\ambient, \refmeas)$
	that is strictly positive $\refmeas$-a.e.\ on $\ambient$ and satisfies
	\[
	\int_{\ambient} \abs{u(x)} g(x)\, \refmeas(dx) \leq \form(u, u)^{1/2} \quad \hbox{for every } u\in \domain.
	\]
	By \cite[Theorem 2.1.5-(i)]{CF}, $(\ambient,\refmeas,\form,\domain)$ is transient
	if and only if there exists $g \in L^{1}(\ambient, \refmeas) \cap L^{\infty}(\ambient, \refmeas)$
	that is strictly positive $\refmeas$-a.e.\ on $\ambient$ and satisfies 
	\begin{equation} \label{e:transient}
		\int_{\ambient} g Gg \,dm < \infty, \qquad \textrm{where $Gg:=\lim_{N\to\infty}\int_{0}^{N} T_t g\,dt$ $\refmeas$-a.e.}
	\end{equation}
	The transience of $(\ambient,\refmeas,\form,\domain)$ is equivalent to $\{f \in \domain_e \mid \form(f,f)=0\}=\{0\}$,
	in which case $(\domain_e, \form)$ is a Hilbert space (see \cite[Theorem 2.1.9]{CF}).
	On the other hand, we say that $(\form, \domain)$ or $(\ambient,\refmeas,\form,\domain)$
	is \emph{recurrent} if $Gf\in\{0,\infty\}$ $\refmeas$-a.e.\ on $\ambient$ for any
	$f\in L^{1}(\ambient,\refmeas)$ with $f\geq 0$ $\refmeas$-a.e.\ on $\ambient$,
	which is equivalent to the property that $\one_{\ambient}\in\domain_{e}$ and
	$\form(\one_{\ambient},\one_{\ambient})=0$ (see \cite[Theorem 2.1.8]{CF}).
	By \cite[Proposition 2.1.3-(iii)]{CF}, if $(\ambient,\refmeas,\form,\domain)$ is irreducible,
	then $(\ambient,\refmeas,\form,\domain)$ is either transient or recurrent.
	
	In the rest of this subsection, we assume that $(\ambient,\refmeas,\form,\domain)$
	is a regular Dirichlet space. As indispensable pieces of the theory of regular symmetric
	Dirichlet forms, we now recall some potential-theoretic notions from \cite[Section 2.1]{FOT}
	and \cite[Sections 1.2, 1.3 and 2.3]{CF}. First, we define the \emph{$1$-capacity}
	$\Capa_1(A)$ of $A \subset \ambient$ with respect to $(\ambient,\refmeas,\form,\domain)$ by
	\begin{equation} \label{e:defCap1}
		\Capa_1(A) := \inf \bigl\{ \form_{1}(f,f) \bigm\vert \textrm{$f \in \domain$, $f \geq 1$ $\refmeas$-a.e.\ on a neighborhood of $A$} \bigr\},
	\end{equation}
	where $\form_{1}:=\form+\langle\cdot,\cdot\rangle_{L^{2}(\ambient,\refmeas)}$ as defined before.
	Note that $\Capa_1$ is countably subadditive by \cite[Lemma 2.1.2 and Theorem A.1.2]{FOT}.
	A subset $\mathcal{N}$ of $\ambient$ is said to be \emph{$\form$-polar} if $\Capa_{1}(\mathcal{N})=0$.
	For $A \subset \ambient$ and a statement $\mathcal{S}(x)$ on $x \in A$, we say that
	$\mathcal{S}$ holds \emph{$\form$-quasi-everywhere on $A$} (\emph{$\form$-q.e.\ on $A$} for short),
	or $\mathcal{S}(x)$ holds for \emph{$\form$-quasi-every} $x \in A$ (\emph{$\form$-q.e.}\ $x \in A$ for short),
	if $\mathcal{S}(x)$ holds for any $x \in A \setminus \mathcal{N}$ for some $\form$-polar $\mathcal{N} \subset \ambient$.
	When $A=\ambient$, we often write just ``\emph{$\form$-q.e.}''\ instead of ``$\form$-q.e.\ on $\ambient$''.
	A non-decreasing sequence $\{F_{k}\}_{k\in\mathbb{N}}$ of closed subsets of $\ambient$
	is called an \emph{$\form$-nest} if $\lim_{k\to\infty}\Capa_1(K \setminus F_{k})=0$
	for any compact subset $K$ of $\ambient$, or equivalently (see \cite[Theorem 1.3.14-(ii)]{CF}),
	if $\bigcup_{k\in\mathbb{N}} \domain_{F_{k}} $ is dense in $(\domain,\form_{1})$, where
	\begin{equation*}
		\domain_{F_{k}}:=\{f \in \domain \mid \textrm{$f=0$ $\refmeas$-a.e.\ on $\ambient \setminus F_{k}$} \}.
	\end{equation*}
	A function $f \colon D \setminus \mathcal{N} \to [-\infty,\infty]$, defined $\form$-q.e.\ on
	an open subset $D$ of $\ambient$ for some $\form$-polar $\mathcal{N} \subset \ambient$,
	is said to be \emph{$\form$-quasi-continuous on $D$} if there exists an $\form$-nest
	$\{F_{k}\}_{k\in\mathbb{N}}$ such that $F_{k} \cap \mathcal{N} = \emptyset$ and $f|_{D \cap F_{k}}$
	is an $\mathbb{R}$-valued continuous function on $D \cap F_{k}$ for any $k \in \mathbb{N}$
	(again, when $D=\ambient$, we often omit ``on $\ambient$''). For each $f \in \domain_e$,
	an $\form$-quasi-continuous $\refmeas$-version $\widetilde{f}$ of $f$ exists by \cite[Theorem 2.1.7]{FOT}
	(see also \cite[Theorem 1.3.14-(iii)]{CF}) and is unique $\form$-q.e.\ by \cite[Lemma 2.1.4]{FOT}.
	
	According to the fundamental theorem of M.\ Fukushima \cite[Theorem 7.2.1]{FOT},
	the assumption of the regularity of $(\ambient,\refmeas,\form,\domain)$ allows us to
	associate to $(\ambient,\refmeas,\form,\domain)$ an $\refmeas$-symmetric Hunt process on $\ambient$
	in the manner described below.
	
	Let $\diff = (\Omega, \events, \{\diff_{t}\}_{t\in[0,\infty]},\{\lawdiff_{x}\}_{x \in \oneptcpt{\ambient}})$
	be a \emph{Hunt process} on $\ambient$, i.e., a right-continuous strong Markov process on $(\oneptcpt{\ambient},\Borel(\oneptcpt{\ambient}))$
	which has the left limit $\diff_{t-}(\omega):=\lim_{s\uparrow t}\diff_{s}(\omega)$ in $\oneptcpt{\ambient}$
	for any $(t,\omega)\in(0,\infty)\times\Omega$ and is quasi-left-continuous on $(0,\infty)$
	(see \cite[Definition A.1.23-(ii) and Theorem A.1.24]{CF}),
	where $\oneptcpt{\ambient} = \ambient \cup \{ \cemetery \}$ denotes
	the one-point compactification of $\ambient$. We always consider each function
	$f\colon \ambient \to [-\infty,\infty]$ as being defined also at $\cemetery$
	by setting $f(\cemetery):=0$. Let $\minaugfilt_{*}=\{\minaugfilt_{t}\}_{t \in [0,\infty]}$
	denote the minimum augmented admissible filtration of $\diff$ in $\Omega$ as defined in \cite[p.\ 397]{CF},
	so that $\minaugfilt_{*}$ is right-continuous, i.e.,
	$\minaugfilt_{t}=\bigcap_{s \in (t,\infty)}\minaugfilt_{s}$ for any $t \in [0,\infty)$ by \cite[Theorem A.1.18]{CF}.
	Let $\zeta$ denote the life time of $\diff$, i.e., a $[0,\infty]$-valued function on $\Omega$
	satisfying $\{ \diff_{t} = \cemetery \} = \{ \zeta \leq t \}$ for any $t \in [0,\infty]$,
	and for each $t \in [0,\infty]$ let $\shiftdiff_{t}$ denote the shift operator
	of $\diff$ by time $t$, i.e., a map $\shiftdiff_{t} \colon \Omega \to \Omega$
	satisfying $\diff_{s} \circ \shiftdiff_{t} = \diff_{s+t}$ for any $s \in [0,\infty]$;
	the existence of $\zeta$ and $\shiftdiff_{t}$ is part of the definition of $\diff$ being
	a Hunt process on $\ambient$. It then turns out (see, e.g., \cite[Exercise A.1.20-(i)]{CF})
	that the function $\oneptcpt{\ambient} \ni x \mapsto \lawdiff_{x}(A)$ is
	$\Borel^{*}(\oneptcpt{\ambient})$-measurable for any $A \in \minaugfilt_{\infty}$
	(recall Notation \ref{ntn:intro}-\eqref{it:measure-restr-AC},\eqref{it:topology} for $\Borel^{*}(\oneptcpt{\ambient})$),
	so that for each $\sigma$-finite Borel measure $\nu$ on $\oneptcpt{\ambient}$
	a $\sigma$-finite measure $\lawdiff_{\nu}$ on $\minaugfilt_{\infty}$
	is defined by $\lawdiff_{\nu}(A) := \int_{\oneptcpt{\ambient}} \lawdiff_{x}(A)\,\nu(dx)$.
	For each $B \subset \oneptcpt{\ambient}$, we define
	$\sigma_{B},\dot{\sigma}_{B},\hat{\sigma}_{B} \colon \Omega \to [0,\infty]$ by
	\begin{equation} \label{eq:hitting-times}
		\begin{split}
			\sigma_{B}(\omega) &:= \inf\{ t \in (0,\infty) \mid \diff_{t}(\omega) \in B \}, \\
			\dot{\sigma}_{B}(\omega) &:= \inf\{ t \in [0,\infty) \mid \diff_{t}(\omega) \in B \}, \\
			\hat{\sigma}_{B}(\omega) &:= \inf\{ t \in (0,\infty) \mid \diff_{t-}(\omega) \in B \},
		\end{split}
	\end{equation}
	so that $\sigma_{B},\dot{\sigma}_{B},\hat{\sigma}_{B}$ are $\minaugfilt_{*}$-stopping times
	if $B \in \Borel(\oneptcpt{\ambient})$ by \cite[Theorem A.1.19 and Exercise A.1.26-(ii)]{CF}
	(see also \cite[Theorem A.2.3]{FOT}). A set $B \subset \oneptcpt{\ambient}$ is said to be
	\emph{$\diff$-nearly Borel measurable} if for any Borel probability measure
	$\nu$ on $\oneptcpt{\ambient}$ there exist $B_{1},B_{2} \in \Borel(\oneptcpt{\ambient})$
	such that $B_{1} \subset B \subset B_{2}$ and 
	\begin{equation} \label{eq:nearly-Borel}
		\lawdiff_{\nu}( \dot{\sigma}_{B_{2} \setminus B_{1} < \infty )
			=}\lawdiff_{\nu}( \textrm{$X_{t} \in B_{2} \setminus B_{1}$ for some $t \in [0,\infty)$} )=0.
	\end{equation}
	Then $\Borel^{\diff}(\oneptcpt{\ambient}):=\{B \subset \ambient \mid \textrm{$B$ is $\diff$-nearly Borel measurable}\}$
	is a $\sigma$-algebra in $\oneptcpt{\ambient}$ included in $\Borel^{*}(\oneptcpt{\ambient})$,
	and $\sigma_{B},\dot{\sigma}_{B},\hat{\sigma}_{B}$ are easily seen to be
	$\minaugfilt_{*}$-stopping times for any $B \in \Borel^{\diff}(\oneptcpt{\ambient})$
	by the definition of $\minaugfilt_{*}$ and \cite[Theorem A.2.3]{FOT}.
	A $\Borel^{*}(\ambient)$-measurable function $u \colon \ambient \to [0,\infty]$
	is said to be \emph{$\diff$-excessive} if
	$[0,\infty) \ni t \mapsto \expdiff_{x}[u(\diff_{t})] \in [0,\infty]$ is non-increasing
	and $\lim_{t\downarrow 0}\expdiff_{x}[u(\diff_{t})]=u(x)$ for any $x \in \ambient$.
	
	We say that the Hunt process $X$ on $\ambient$ is \emph{$\refmeas$-symmetric}
	if its Markovian transition function $P_{t}(x,dy):=\lawdiff_{x}(\diff_{t} \in dy)$,
	$(t,x) \in (0,\infty) \times \ambient$, is $\refmeas$-symmetric, i.e., if
	\begin{equation}\label{eq:Hunt-process-symmetry}
		\int_{\ambient}(P_{t}f)(x)g(x)\,\refmeas(dx) = \int_{\ambient}f(x)(P_{t}g)(x)\,\refmeas(dx)
	\end{equation}
	for any Borel measurable functions $f,g\colon \ambient \to [0,\infty]$ for each $t \in (0,\infty)$.
	In this case, an $\diff$-nearly Borel measurable subset $\mathcal{N}$ of $\ambient$
	is said to be \emph{properly exceptional} for $\diff$ if $\refmeas(\mathcal{N})=0$ and 
	\begin{equation}\label{eq:properly-exceptional}
		\lawdiff_{x}( \dot{\sigma}_{\mathcal{N}} \wedge \hat{\sigma}_{\mathcal{N}} = \infty )=1
		\quad \textrm{for any $x \in \ambient \setminus \mathcal{N}$.}
	\end{equation}
	For any such $\mathcal{N}$, we define the \emph{restriction}
	$\diff|_{\ambient \setminus \mathcal{N}}$ of $\diff$ to $\ambient \setminus \mathcal{N}$ by
	\begin{equation}\label{eq:Hunt-process-restriction}
		\Omega_{\ambient \setminus \mathcal{N}} := \{ \dot{\sigma}_{\mathcal{N}} \wedge \hat{\sigma}_{\mathcal{N}} = \infty\}, \quad
		\diff|_{\ambient \setminus \mathcal{N}}
		:= \bigl(\Omega_{\ambient \setminus \mathcal{N}}, \minaugfilt_{\infty}|_{\Omega_{\ambient \setminus \mathcal{N}}}, \bigl\{\diff_{t}|_{\Omega_{\ambient \setminus \mathcal{N}}}\bigr\}_{t\in[0,\infty]},\{\lawdiff_{x}\}_{x \in \oneptcpt{\ambient} \setminus \mathcal{N}}\bigr),
	\end{equation}
	which is a Hunt process on $\ambient \setminus \mathcal{N}$ by \cite[Lemma A.1.27]{CF}.
	We sometimes assume the following \emph{absolute continuity condition}, abbreviated as \ref{eq:AC}:
	\begin{equation} \tag*{\textup{AC}} \label{eq:AC}
		\textrm{$P_{t}(x, \cdot) \ll m$ (as Borel measures on $\ambient$)}
		\qquad \textrm{for any $(t,x) \in (0,\infty) \times \ambient$.}
	\end{equation}
	If $\diff$ is $\refmeas$-symmetric and satisfies \ref{eq:AC},
	then by \cite[Proof of Theorem 3.8]{BCM} there exists a unique
	Borel measurable function $p=p_{t}(x,y)\colon(0,\infty)\times\ambient\times\ambient\to[0,\infty]$
	such that for any $t,s\in(0,\infty)$ and any $x,y\in\ambient$,
	\begin{equation}\label{eq:AC-transition-density}
		P_{t}(x,dy)=p_{t}(x,z)\,\refmeas(dz),\quad
		p_{t}(x,y)=p_{t}(y,x),\quad
		p_{t+s}(x,y)=\int_{\ambient}p_{t}(x,z)p_{s}(z,y)\,\refmeas(dz).
	\end{equation}
	
	If $\diff$ is $\refmeas$-symmetric, then by \cite[(1.4.13) and Lemma 1.4.3]{FOT}
	the Markovian transition function $P_{t}(x,dy)$ of $\diff$ induces
	a strongly continuous contraction semigroup $(T^{\diff}_{t})_{t>0}$ on 
	$L^{2}(\ambient,\refmeas)$ such that $T^{\diff}_{t}f=P_{t}f$ $\refmeas$-a.e.\ for any
	$\Borel^{*}(\ambient)$-measurable $\refmeas$-version of any $f \in L^{2}(\ambient,\refmeas)$ for each $t \in (0,\infty)$,
	so that a symmetric Dirichlet form $(\form^{(\diff)},\domain^{(\diff)})$ on $L^{2}(\ambient,\refmeas)$,
	called the \emph{Dirichlet form of $\diff$}, is defined by \eqref{e:semigroup} with $(T^{\diff}_{t})_{t>0}$
	in place of $(T_{t})_{t>0}$. Fukushima's theorem \cite[Theorem 7.2.1]{FOT} states that
	\emph{any} regular symmetric Dirichlet form on $L^{2}(\ambient,\refmeas)$ is realized in this manner,
	namely that \emph{any regular symmetric Dirichlet form on $L^{2}(\ambient,\refmeas)$ is the
		Dirichlet form $(\form^{(\diff)},\domain^{(\diff)})$ of some $\refmeas$-symmetric Hunt process
		$\diff$ on $\ambient$}. Moreover, by \cite[Theorem 4.2.8]{FOT}, such a Hunt process
	on $\ambient$ is essentially unique for each given regular symmetric Dirichlet form
	on $L^{2}(\ambient,\refmeas)$ in the following sense:
	if $X$ and $X'$ are $\refmeas$-symmetric Hunt processes on $\ambient$ whose Dirichlet
	forms coincide and are regular, then there exists a common properly exceptional set
	for $X$ and $X'$ outside which the Markovian transition functions of $X$ and $X'$ coincide.
	
	In the rest of this subsection, we assume that
	$(\ambient,\refmeas,\form,\domain)$ is a regular Dirichlet space and that
	$\diff = (\Omega, \events, \{\diff_{t}\}_{t\in[0,\infty]},\{\lawdiff_{x}\}_{x \in \oneptcpt{\ambient}})$
	is a Hunt process on $\ambient$ whose Dirichlet form is $(\form,\domain)$.
	Then by \cite[Theorems 4.2.1-(ii) and 4.1.1]{FOT},
	\begin{equation}\label{eq:cap-zero-properly-exceptional}
		\begin{minipage}{375pt}
			any properly exceptional set $\mathcal{N} \subset \ambient$ for $\diff$
			is $\form$-polar, and any $\form$-polar subset of $\ambient$ is included in
			some properly exceptional set $\mathcal{N} \in \Borel(\ambient)$ for $\diff$.
		\end{minipage}
	\end{equation}
	Furthermore by \cite[Theorem 4.2.3-(i)]{FOT}, for any $[0,\infty]$-valued Borel
	measurable $f\in L^{2}(\ambient,\refmeas)$ and any $t \in (0,\infty)$,
	the Borel measurable function $P_{t}f \colon \ambient \to [0,\infty]$ given by
	\begin{equation} \label{eq:transition-func}
		(P_{t}f)(x) = \int_{\ambient}f(y)\,P_{t}(x,dy) = \expdiff_{x}[ f(\diff_{t}) ]
	\end{equation}
	is an $\form$-quasi-continuous $\refmeas$-version of $T_{t}f$.
	Note also that by \cite[Theorem 4.5.3]{FOT}, $(\form,\domain)$ is strongly local
	if and only if $\diff$ is a diffusion with no killing inside for
	$\form$-q.e.\ starting point, i.e.,
	\begin{equation} \label{eq:sample-path-cont}
		\lawdiff_{x}\bigl(\textrm{$[0,\infty)\ni t \mapsto \diff_{t} \in \oneptcpt{\ambient}$ is continuous}\bigr)=1
	\end{equation}
	for $\form$-a.e.\ $x \in \ambient$, and that by \cite[Theorem 4.5.4-(iii)]{FOT},
	if $\diff$ satisfies \ref{eq:AC}, then $(\form,\domain)$ is strongly local
	if and only if \eqref{eq:sample-path-cont} holds for \emph{any} $x \in \ambient$.
	
	The rest of this subsection is devoted to discussions of the Dirichlet forms on open
	subsets of $\ambient$ induced from $(\form,\domain)$ by assigning boundary conditions.
	We first consider those resulting from Dirichlet boundary condition and their associated
	Hunt processes given as follows.
	
	\begin{definition}[Part Dirichlet form and part process] \label{d:part-form-process}
		Let $D$ be an open subset of $\ambient$.
		\begin{enumerate}[\rm(a)]\setlength{\itemsep}{0pt}\vspace{-5pt}
			\item The \emph{part Dirichlet form} $(\form^{D},\domain^{0}(D))$ of $(\form,\domain)$
			on $D$ is defined by
			\begin{equation}\label{e:FD}
				\domain^{0}(D) :=  \{f \in \domain \mid \textrm{$\wt{f}=0$ $\form$-q.e.\ on $\ambient \setminus D$} \}
				\quad \textrm{and} \quad
				\form^{D} := \form|_{\domain^{0}(D) \times \domain^{0}(D)}.
			\end{equation}
			\item The \emph{part process}
			$\diff^{D} = (\Omega, \minaugfilt_{\infty}, \{\diff^{D}_{t}\}_{t\in[0,\infty]},\{\lawdiff_{x}\}_{x \in \oneptcpt{D}})$
			of $\diff$ on $D$ (killed upon exiting $D$) is defined by
			\begin{equation}\label{eq:part-process}
				\diff^{D}_{t}:=
				\begin{cases}
					\diff_{t} & \textrm{if $t < \tau_{D}$,} \\
					\cemetery_{D} & \textrm{if $t \geq \tau_{D}$,}
				\end{cases}
				\qquad t \in [0,\infty]
			\end{equation}
			and $\lawdiff_{\cemetery_{D}}:=\lawdiff_{\cemetery}$,
			where $\oneptcpt{D} = D \cup \{ \cemetery_{D} \}$ denotes the one-point compactification
			of $D$ and $\tau_{D} := \dot{\sigma}_{\oneptcpt{\ambient} \setminus D} = \inf \{t \in [0,\infty) \mid \diff_{t} \not\in D\}$.
		\end{enumerate}
	\end{definition}
	
	Let $D$ be a non-empty open subset of $\ambient$. By \cite[Theorem 4.4.3]{FOT},
	$(\form^{D},\domain^{0}(D))$ is a regular symmetric Dirichlet form on $L^{2}(D,\refmeas|_{D})$,
	a subset $\mathcal{N}$ of $D$ is $\form^{D}$-polar if and only if $\mathcal{N}$ is $\form$-polar,
	and a $[-\infty,\infty]$-valued function $f$ defined $\form$-q.e.\ on $D$
	is $\form^{D}$-quasi-continuous on $D$ if and only if $f$ is $\form$-quasi-continuous on $D$.
	By \cite[Theorem 3.4.9]{CF}, the extended Dirichlet space $\domain^{0}(D)_{e}$ of
	$(D,\refmeas|_{D},\form^{D},\domain^{0}(D))$ is identified as
	\begin{equation} \label{e:extdirpart}
		\domain^{0}(D)_{e} = \{ f \in \domain_{e} \mid \textrm{$\wt{f}=0$ $\form$-q.e.\ on $\ambient \setminus D$} \}.
	\end{equation}
	Also, $\diff^{D}$ is an $\refmeas|_{D}$-symmetric Hunt process on $D$ by
	\cite[Theorem A.2.10 and Lemma 4.1.3]{FOT} (see also \cite[Exercise 3.3.7-(ii) and (3.3.4)]{CF}),
	and the Dirichlet form of $\diff^{D}$ is $(\form^{D},\domain^{0}(D))$ by \cite[Theorem 4.4.2]{FOT}.
	
	Assume that the part Dirichlet form $(\form^{D},\domain^{0}(D))$ on $D$ is transient, and
	let $A \subset D$. We define the \emph{($0$-order) capacity} $\Capa_{D}(A)$ of $A$ in $D$ by
	\begin{equation} \label{e:defCapD}
		\Capa_{D}(A) := \inf \bigl\{ \form(f,f) \bigm\vert \textrm{$f \in \domain^{0}(D)_{e}$, $f \geq 1$ $\refmeas$-a.e.\ on a neighborhood of $A$} \bigr\},
	\end{equation}
	so that $\Capa_{D}$ is countably subadditive by \cite[the $0$-order version of Lemma 2.1.2 and Theorem A.1.2]{FOT}.
	Then $\Capa_{D}(A)=0$ if and only if $A$ is $\form$-polar (i.e., $\Capa_{1}(A)=0$)
	by \cite[Theorems 2.1.6-(i) and 4.4.3-(ii)]{FOT}.
	By \cite[the $0$-order version of Theorem 2.1.5-(i),(ii)]{FOT}, we have
	\begin{equation}\label{e:defCapD-alt}
		\Capa_{D}(A) = \inf \bigl\{ \form(f,f) \bigm\vert \textrm{$f \in \domain^{0}(D)_{e}$, $\wt{f} \ge 1$ $\form$-q.e.\ on $A$} \bigr\}
	\end{equation}
	and if $\Capa_{D}(A)<\infty$ then there exists a unique function $e_{A,D} \in \domain^{0}(D)_{e}$,
	called the \emph{equilibrium potential} of $A$ in $D$, that attains the infimum in \eqref{e:defCapD-alt}.
	We describe the corresponding equilibrium measures in the following lemma, assuming
	the strong locality of $(\form,\domain)$. The equality \eqref{e:eqmeas} below was
	claimed in \cite[(2.7)]{Fit} without a proof, and here we provide a detailed proof
	of it since it plays an important role in this paper.
	
	\begin{lem} \label{l:eqmeas}
		Assume that $(\ambient,\refmeas,\form,\domain)$ is strongly local, and
		let $D$ be a non-empty open subset of $\ambient$ such that the part Dirichlet form $(\form^{D},\domain^{0}(D))$
		on $D$ is transient. Let $A \Subset D$ be a relatively compact open subset of $D$.
		\begin{enumerate}[\rm(a)]\setlength{\itemsep}{0pt}\vspace{-5pt}
			\item\label{it:eqmeas-inner} There exists a unique $e_{A,D} \in \domain^{0}(D)_{e}$ and a unique Radon measure
			$\lambda^1_{A,D}$ on $\ambient$ charging no $\form$-polar set \textup{(see Definition \ref{d:smooth} below)} such that 
			\begin{equation} \label{e:eqpot1}
				\Capa_{D}(A)=\form( e_{A,D}, e_{A,D}), \quad \wt{e}_{A,D} = 1 \ \textrm{$\form$-q.e.\ on $A$,} \quad \form(u,e_{A,D})= \int_{\ambient} \wt{u}  \,d\lambda^1_{A,D}
			\end{equation}
			for all $u \in \domain^{0}(D)_{e}$.
			Furthermore $\supp_{\ambient}[\lambda_{A,D}^1] \subset \partial A$ and
			\begin{equation} \label{e:eqmass}
				\lambda_{A,D}^1(\ambient)=\lambda_{A,D}^1(\partial A)= \Capa_D(A).
			\end{equation} 
			\item\label{it:eqmeas-outer} Assume in addition that $D$ is relatively compact in $\ambient$. Then there exists a unique
			Radon measure $\lambda^0_{A,D}$ on $\ambient$ charging no $\form$-polar set such that
			\begin{equation} \label{e:eqmeas}
				\form(e_{A,D}, u)= \int_{\partial A}  \wt{u} \, d\lambda^1_{A,D} - \int_{\ambient} \wt{u}\,d\lambda^0_{A,D}
			\end{equation}
			for any $u \in \domain_e \cap L^{\infty}(\ambient,\refmeas)$, where $\lambda^1_{A,D}$ is the measure in part (a).
			Furthermore $\supp_{\ambient}[\lambda_{A,D}^0] \subset \partial D$ and
			\begin{equation} \label{e:eqmass1}
				\lambda_{A,D}^0(\ambient) = \lambda_{A,D}^0(\partial D)=\Capa_D(A).
			\end{equation}
		\end{enumerate}
	\end{lem}
	
	\begin{proof}
		\begin{enumerate}[\rm(a)]\setlength{\itemsep}{0pt}
			\item 
			Note that $(\domain^{0}(D)_{e},\form)$ is a Hilbert space by \cite[Theorem 1.5.3]{FOT}.
			Since $A \Subset D$, the regularity of $(\form,\domain)$ along with \cite[Theorem 2.3.4]{CF} implies that the set
			\[
			\sL_{A,D}:=\bigl\{f \in \domain^{0}(D)_{e} \bigm| \textrm{$\wt{f} \ge 1$ $\form$-q.e.\ on $A$}\bigr\}
			\]
			is non-empty, closed, convex subset of the Hilbert space  $(\domain^{0}(D)_{e},\form)$. Hence there exists a unique element $	\wt{e}_{A,D} \in \sL_{A,D}$ such that 	$\Capa_D(A)=\form(	{e}_{A,D},	{e}_{A,D})$.
			Since $1 \wedge e_{A,D} \in \sL_{A,D}$ and $\form(1 \wedge e_{A,D},1 \wedge e_{A,D}) \le \form(e_{A,D},e_{A,D})$, we conclude $\wt{e}_{A,D}=1 \wedge \wt{e}_{A,D}$ $\form$-q.e.\ and hence $\wt{e}_{A,D} = 1$ $\form$-q.e.\ on $A$.
			
			Let $v \in \domain^{0}(D)_{e}$ such that $v \ge 0$ $\refmeas$-a.e. Then for any $t>0$,  $	{e}_{A,D} + t v \in \sL_{A,D}$ and hence $\form({e}_{A,D} + t v , {e}_{A,D} + t v ) \ge \form({e}_{A,D},{e}_{A,D})$ or equivalently $\form({e}_{A,D} , v ) + (t/2)\form(v,v) \ge 0$. Letting $t \downarrow 0$, we conclude 
			\[
			\form({e}_{A,D} , v ) \ge 0, \quad \mbox{for all $v \in \domain^{0}(D)_{e}$ such that $v \ge 0$ $\refmeas$-a.e. }
			\]
			The existence of a Radon measure $\lambda^1_{A,D}$ on $D$ satisfying the last equality in \eqref{e:eqpot1} now follows from by applying \cite[Theorem 2.2.5 and Lemma 2.2.10]{FOT} to the Dirichlet form $(\form^D,\domain^0(D))$. We also consider it as a Radon measure on $\ambient$ by setting $\lambda^1_{A,D}(\cdot):= \lambda^1_{A,D}(\cdot \cap D)$. This concludes the proof of all claims in \eqref{e:eqpot1}. 
			
			By the strong locality and $\wt{e}_{A,D} = 1$ $\form$-q.e.~on $A$, we conclude that ${e}_{A,D}$ is $\form$-harmonic on $A$.
			By the energy minimizing property of $e_{A,D}$, we have that $e_{A,D}$ is $\form$-harmonic on $D \setminus \ol{A}$.
			Therefore any $u \in \domain \cap (\contfunc_{\mathrm{c}}(A) \cup \contfunc_{\mathrm{c}}(D \setminus \ol{A}))$ we have $\form(u,e_{A,D})=0$, which implies that 
			$\lambda^1_{A,D}(A \cup (D \setminus \ol{A}))=0$, namely $\supp_{\ambient}[\lambda^1_{A,D}] \subset \partial A$. The proof of \eqref{e:eqmass} is contained in \cite[Proof of Proposition 5.21]{BCM}.
			
			\item Let $\phi \in \domain \cap \contfunc_{\mathrm{c}}(\ambient)$ satisfy $\supp_{\ambient}[\phi] \cap A = \emptyset$ and $\phi \le 0$.
			Choose $\psi \in \domain \cap \contfunc_{\mathrm{c}}(\ambient)$ so that $0 \le \psi \le 1$ and $\restr{\psi}{V}=1$, where $V$ is a neighborhood of $\supp_{\ambient}[\phi]$.
			Since $e_{A,D}\psi$ is $\form$-harmonic on $(D^c \setminus A) \cap V$ and $\wt{e}_{A,D}\psi - (\wt{e}_{A,D} \psi + \phi)^{+}=0$ $\form$-q.e.\ on $((D^c \setminus A) \cap V)^c$,
			we have $\form(e_{A,D}\psi, e_{A,D}\psi) = \form(e_{A,D} \psi , (\wt{e}_{A,D} \psi + \phi)^{+} )$ and therefore
			\begin{align*}
				0 & \le \form(({e}_{A,D} \psi + \phi)^{+}-e_{A,D}\psi, ({e}_{A,D} \psi + \phi)^{+}-e_{A,D}\psi) \\
				&= \form(({e}_{A,D} \psi + \phi)^{+}, ({e}_{A,D} \psi + \phi)^{+}) -2 \form(({e}_{A,D} \psi + \phi)^{+}, e_{A,D}\psi) + \form( e_{A,D}\psi, e_{A,D}\psi) \\
				&= \form(({e}_{A,D} \psi + \phi)^{+}, ({e}_{A,D} \psi + \phi)^{+})-\form( e_{A,D}\psi, e_{A,D}\psi) \\
				&\le \form({e}_{A,D} \psi + \phi, {e}_{A,D} \psi + \phi)-\form( e_{A,D}\psi, e_{A,D}\psi) \quad \textrm{(by the Markov property)}\\
				&= \form(\phi,\phi)+2 \form(e_{A,D}\psi,\phi)= \form(\phi,\phi)+2 \form(e_{A,D},\phi) \quad \textrm{(by the strong locality).}
			\end{align*}
			By replacing $\phi$ with $t\phi$ and letting $t \downarrow 0$, we obtain 
			\begin{equation} \label{e:eqm0}
				\form(e_{A,D},\phi) \ge 0 \quad \textrm{for all $\phi \in \domain \cap \contfunc_{\mathrm{c}}(\ambient)$ such that $\phi \le 0$ and $\supp_{\ambient}[\phi] \subset A^c$.}
			\end{equation}
			It follows that there exists a Radon measure $\lambda_{A,D}^0$ on $A^c$ such that for all $\phi \in \domain\cap \contfunc_{\mathrm{c}}(\ambient)$ with $\supp_{\ambient}[\phi] \subset A^c$, we have 
			\begin{equation} \label{e:eqm1}
				\form(\phi,e_{A,D}) = -\int_{A^c} \phi \, d\lambda_{A,D}^0.
			\end{equation}
			Furthermore by the strong locality of $(\form,\domain)$, the $\form$-harmonicity of $e_{A,D}$ on $D^c \setminus A$ and the compactness of $\partial D$, we have 
			\begin{equation} \label{e:eqm2}
				\lambda_{A,D}^0(A^c)= \lambda_{A,D}^0(\partial D) < \infty.
			\end{equation}
			We consider $\lambda_{A,D}^0$ as a finite Borel measure on $\ambient$ by setting $\lambda_{A,D}^0(\cdot):= \lambda_{A,D}^0(\cdot \cap A^c)$,
			and then the equality in \eqref{e:eqm2} means that $\supp_{\ambient}[\lambda_{A,D}^0] \subset \partial D$.
			
			%   Arguing similarly as the proof of \eqref{e:eqm0}, for all $\phi \in \domain \cap \contfunc_{\mathrm{c}}(\ambient)$ with $\supp_{\ambient}[\phi] \subset B$ and $\phi \ge 0$, we have 
			%   \[
			%   \form(e_{A,D},\phi) \ge 0.
			%   \]
			%   Thus there exists a Radon measure $\lambda^1_{A,D}$ on $B$ such that for all 
			%   $\phi \in \domain \cap \contfunc_{\mathrm{c}}(\ambient)$ with $\supp_{\ambient}[\phi] \subset B$ and $\phi \ge 0$, we have 
			%   \begin{equation} \label{e:eqm3}
				%   	\form(e_{A,D},\phi) = \int_{B} \phi \, d\lambda_{A,D}^1.
				%   \end{equation}
			As before, we can consider $\lambda_{A,D}^1$ as a Borel measure on $\ambient$ such that 
			\begin{equation} \label{e:eqm4}
				\lambda_{A,D}^1(\ambient)= \lambda_{A,D}^1(\partial A)<\infty.
			\end{equation}
			
			Now let $\phi \in \domain \cap \contfunc_{\mathrm{c}}(\ambient)$ and let $\psi \in \domain \cap \contfunc_{\mathrm{c}}(\ambient)$ satisfy 
			$\restr{\psi}{\unifdom} =1$ for some neighborhood $\unifdom$ of $A$, $0\le \psi \le 1$ on $\ambient$ and $\supp_{\ambient}[\psi] \subset D$. Then 
			\begin{align}
				\form(\phi, e_{A,D}) &= \form(\phi-  \phi\psi, e_{A,D}) + \form(\phi\psi, e_{A,D}) \nonumber \\
				&= -\int_{\partial D} (\phi - \phi \psi) \,d\lambda^0_{A,D} + \int_{\partial A} \phi\psi \, d\lambda^1_{A,D} \nonumber \\
				&=-\int_{\partial D} \phi \,d\lambda^0_{A,D} +\int_{\partial A} \phi \, d\lambda^1_{A,D} \label{e:eqm5}  \quad \mbox{(by \eqref{e:eqm1},\eqref{e:eqm2}, \eqref{e:eqmass},\eqref{e:eqm4}).}
			\end{align}
			Also by \cite[Theorem 4.4.3-(i),(ii) and Lemma 2.2.3]{FOT}, $\lambda_{A,D}^0,\lambda_{A,D}^1$ charge no $\form$-polar set.
			Finally, for any $u \in \domain_e \cap L^\infty(\ambient,\refmeas)$, by \cite[Theorem 2.1.7 and Corollary 1.6.3]{FOT},
			there exists $\{u_n\}_{n \in \bN} \subset \domain \cap \contfunc_{\mathrm{c}}(\ambient)$ with $\norm{u_n}_{\sup} \le \norm{u}_{L^\infty(\ambient,\refmeas)}$,
			$u_n \to \wt{u}$ $\form$-q.e.\ on $\ambient$ and $\lim_{n\to \infty}\form(u-u_n,u-u_n)=0$.
			This along with \eqref{e:eqm5} applied to the sequence $\{u_n\}_{n \in \mathbb{N}}$,
			$\lambda_{A,D}^0(\partial D)<\infty$, $\lambda_{A,D}^1(\partial A)<\infty$ and
			the dominated convergence theorem implies the desired equality \eqref{e:eqmeas}.
			\qedhere\end{enumerate}
	\end{proof}
	
	We close this subsection by introducing the Dirichlet form induced by assigning
	reflected (Neumann) boundary condition, which requires the notion of $\form$-energy measure
	and the space of functions locally in $\domain$ defined as follows. Note that $fg \in \domain$ for any $f,g \in \domain \cap L^{\infty}(\ambient,\refmeas)$
	by \cite[Theorem 1.4.2-(ii)]{FOT}, that $\{(-n)\vee(f\wedge n)\}_{n=1}^{\infty} \subset \domain$
	and $\lim_{n\to\infty}(-n)\vee(f\wedge n)=f$ in norm in $(\domain,\form_{1})$
	for any $f \in \domain$ by \cite[Theorem 1.4.2-(iii)]{FOT}, and that these
	two claims with $(\domain,\form_{1})$ replaced by $(\domain_e,\form)$ hold by
	\cite[Corollary 1.6.3]{FOT}.
	
	\begin{definition}[Energy measure; {\cite[(3.2.13), (3.2.14) and (3.2.15)]{FOT}}]\label{d:EnergyMeas}
		The \emph{$\form$-energy measure} $\Gamma(f,f)$ of $f \in \domain_{e}$
		%associated with $(\ambient,\refmeas,\form,\domain)$
		is defined, first for $f \in \domain \cap L^{\infty}(\ambient,\refmeas)$
		as the unique ($[0,\infty]$-valued) Radon measure on $\ambient$ such that
		\begin{equation}\label{e:EnergyMeas}
			\int_{\ambient} g \, d\Gamma(f,f)= \form(f,fg)-\frac{1}{2}\form(f^{2},g) \qquad \textrm{for all $g \in \domain \cap \contfunc_{\mathrm{c}}(\ambient)$,}
		\end{equation}
		next by
		$\Gamma(f,f)(A):=\lim_{n\to\infty}\Gamma\bigl((-n)\vee(f\wedge n),(-n)\vee(f\wedge n)\bigr)(A)$
		for each $A \in \Borel(\ambient)$ for $f \in \domain$, and then by
		$\Gamma(f,f)(A):=\lim_{n\to\infty}\Gamma(f_{n},f_{n})(A)$
		for each $A \in \Borel(\ambient)$ for $f \in \domain_{e}$;
		here $\{f_{n}\}_{n \in \mathbb{N}} \subset \domain$ is any sequence such that
		$\lim_{k\wedge l\to\infty}\form(f_{k}-f_{l},f_{k}-f_{l})=0$ and
		$\lim_{n\to \infty}f_{n}= f$ $\refmeas$-a.e.\ on $\ambient$
		(recall Definition \ref{d:ExtDiriSp}). We remark that, if $(\form,\domain)$ is strongly local,
		then $\Gamma(f,f)(\ambient)=\form(f,f)$ for any $f \in \domain_{e}$ by \cite[Lemma 3.2.3]{FOT}.
	\end{definition}
	
	\begin{definition}[Function locally in the form domain and its energy measure] \label{d:domain-local}
		Assume that $(\ambient,\refmeas,\form,\domain)$ is strongly local,
		and let $D$ be an open subset of $\ambient$. We define the
		\emph{space $\domain_{\loc}(D)$ of functions on $D$ locally in $\domain$} as
		\begin{equation}\label{e:Floc}
			\domain_{\loc}(D) := \Biggl\{ f \Biggm|
			\begin{minipage}{285pt}
				$f$ is an $m$-equivalence class of $\mathbb{R}$-valued Borel measurable functions
				on $D$ such that $f = f^{\#}$ $m$-a.e.\ on $V$ for some $f^{\#}\in\mathcal{F}$
				for each relatively compact open subset $V$ of $D$
			\end{minipage}
			\Biggr\},
		\end{equation}
		and define the \emph{$\form$-energy measure} $\Gamma_{D}(f,f)$ of $f \in \domain_{\loc}(D)$
		%		associated with $(\ambient,\refmeas,\form,\domain)$
		as the unique Radon measure on $D$ such that
		$\Gamma_{D}(f,f)(A)=\Gamma(f^{\#},f^{\#})(A)$ for any relatively compact
		Borel subset $A$ of $D$ and any $V,f^{\#}$ as in \eqref{e:Floc} with $A\subset V$;
		note that $\Gamma(f^{\#},f^{\#})(A)$ is independent of a particular choice of such $V,f^{\#}$
		by \cite[Corollary 3.2.1]{FOT}.
	\end{definition}
	
	Now we can define the reflected Dirichlet form on an open set as follows.
	
	\begin{definition}[Reflected Dirichlet form] \label{dfn:reflected-form}
		Assume that $(\ambient,\refmeas,\form,\domain)$ is strongly local, and let $D$ be an open
		subset of $\ambient$. We define a linear subspace $\domain(D)$ of $L^{2}(D,\refmeas|_{D})$ by
		\begin{equation} \label{e:defFD}
			\domain(D) := \biggl\{f \in \domain_{\loc}(D) \biggm| \int_{D} f^{2}\,dm + \int_{D} \one_{D} \, d\Gamma_{D}(f,f) < \infty \biggr\},
		\end{equation}
		and a non-negative definite symmetric bilinear form $\formrefgen{D} \colon \domain(D) \times \domain(D) \to \mathbb{R}$ by
		\begin{equation} \label{e:neumann}
			\formrefgen{D}(f,g) := \frac{1}{4}\biggl(\int_{D} \one_{D} \, d\Gamma_{D}(f+g,f+g) - \int_{D} \one_{D} \, d\Gamma_{D}(f-g,f-g)\biggr).
		\end{equation}
		We call $(\formrefgen{D},\domain(D))$ the \textbf{reflected Dirichlet form} of $(\form,\domain)$ on $D$.
	\end{definition}
	
	The form $(\formrefgen{D},\domain(D))$ need not be a regular symmetric Dirichlet form
	on $L^{2}(\overline{D},\refmeas|_{\overline{D}})$ in general. A sufficient condition
	for this to hold is provided in Theorem \ref{thm:hkeunif}-\eqref{it:hkeunif} below.
	
	\subsection{Sub-Gaussian heat kernel estimates} \label{ss:HKE}
	Let $\scdiff \colon [0,\infty) \to [0,\infty)$ be a homeomorphism such that
	\begin{equation} \label{e:reg}
		C^{-1} \Bigl( \frac{R}{r} \Bigr)^{\beta_1} \le \frac{\scdiff(R)}{\scdiff(r)} \le C \Bigl( \frac{R}{r} \Bigr)^{\beta_2}
	\end{equation}
	for all $0 < r \le R$ for some $C,\beta_{1},\beta_{2}\in(1,\infty)$ with $\beta_{1} \leq \beta_{2}$.
	If necessary, we extend $\scdiff$ by setting $\scdiff(\infty) := \infty$.
	Such a function $\scdiff$ is termed a \emph{scale function}.
	For such $\scdiff$, we define $\wt{\scdiff} \colon [0,\infty) \to [0,\infty]$ by
	\begin{equation} \label{e:defPhi}
		\wt{\scdiff}(s)= \sup_{r \in (0,\infty)} \biggl( \frac{s}{r}-\frac{1}{\scdiff(r)} \biggr),
	\end{equation}
	so that $\wt{\scdiff}(0)=0$ and $\wt{\scdiff}(s) \in (0,\infty)$ for any $s\in(0,\infty)$ by \cite[Remark 3.16]{GT12}.
	For example, if $\beta\in(1,\infty)$ and $\scdiff(r)=r^{\beta}$ for any $r\in[0,\infty)$,
	then $\widetilde{\scdiff}(s)=\beta^{-\frac{\beta}{\beta-1}}(\beta-1)s^{\frac{\beta}{\beta-1}}$
	for any $s\in[0,\infty)$.
	
	\begin{definition}[\hypertarget{hke}{$\on{HKE(\scdiff)}$}]\label{d:HKE}
		Let $(\ambient,\refmeas,\form,\domain)$ be a regular Dirichlet space, and
		let $(T_{t})_{t>0}$ denote its associated strongly continuous contraction semigroup.
		A family $\set{p_t}_{t>0}$ of $[0,\infty]$-valued Borel measurable
		functions on $\ambient \times \ambient$ is called the
		\emph{heat kernel} of $(\ambient,\refmeas,\form,\domain)$, if $p_t$ is an integral kernel
		of the operator $T_{t}$ for any $t \in (0,\infty)$, that is, for any $t \in (0,\infty)$ and any $f \in L^{2}(\ambient,\refmeas)$,
		\begin{equation*}
			T_{t} f(x) = \int_X p_t (x, y) f (y)\, dm (y) \qquad \mbox{for $m$-a.e.\ $x \in \ambient$.}
		\end{equation*}
		
		Assuming further that $(\ambient,d,\refmeas,\form,\domain)$ is an MMD space,
		we say that $(\ambient,d,\refmeas,\form,\domain)$ satisfies the \textbf{heat kernel estimates}
		\hyperlink{hke}{$\on{HKE(\scdiff)}$}, if there exist $C_{1},c_{1},c_{2},c_{3},\delta\in(0,\infty)$
		and a heat kernel $\set{p_t}_{t>0}$ of $(\ambient,\refmeas,\form,\domain)$ such that for each $t \in (0,\infty)$,
		\begin{align}\label{e:uhke}
			p_t(x,y) &\le \frac{C_{1}}{m\bigl(B(x,\scdiff^{-1}(t))\bigr)} \exp \biggl( -c_{1} t \wt{\scdiff}\biggl( c_{2}\frac{d(x,y)} {t} \biggr) \biggr)
			\qquad \mbox{for $m$-a.e.\ $x,y \in \ambient$,}\\
			p_t(x,y) &\ge \frac{c_{3}}{m\bigl(B(x,\scdiff^{-1}(t))\bigr)}
			\qquad \mbox{for $m$-a.e.\ $x,y\in \ambient$ with $d(x,y) \le \delta\scdiff^{-1}(t)$,}
			\label{e:nlhke}
		\end{align}
		where $\wt{\scdiff}$ is as defined in \eqref{e:defPhi}.
		%	\begin{equation} \label{e:defPhiRt}
			%		\Phi(R,t) := \Phi_{\scdiff}(R,t) := \sup_{r>0} \biggl(\frac{R}{r}-\frac{t}{\scdiff(r)}\biggr),
			%		\qquad (R,t)\in[0,\infty)\times(0,\infty).
			%	\end{equation}
	\end{definition}
	
	We recall the following results obtained by the second-named author in \cite{Mur24}
	on the regularity, heat kernel estimates, $1$-capacity and descriptions of the domain
	and the extended Dirichlet space for reflected Dirichlet forms on uniform domains.
	
	\begin{theorem} \label{thm:hkeunif}
		Let $\scdiff$ be a scale function, let $(\ambient,d,\refmeas,\form,\domain)$
		be an MMD space satisfying \hyperlink{VD}{\textup{VD}} and \hyperlink{hke}{$\on{HKE(\scdiff)}$},
		and let $\unifdom$ be a uniform domain in $(\ambient,d)$. Then the following hold:
		\begin{enumerate}[\rm(a)]\setlength{\itemsep}{0pt}\vspace{-5pt}
			\item\label{it:hkeunif} \textup{(\cite[Theorem 2.8]{Mur24})}
			$(\overline{\unifdom},d,\refmeas|_{\overline{\unifdom}},\formrefgen{\unifdom},\domain(\unifdom))$
			is an MMD space satisfying \hyperlink{VD}{\textup{VD}} and \hyperlink{hke}{$\on{HKE(\scdiff)}$},
			where $\domain(\unifdom)$ is considered as a linear subspace of
			$L^{2}(\overline{\unifdom},\refmeas|_{\overline{\unifdom}})$
			via \eqref{eq:volume-doubling-unif-bdry-zero}.
			\item\label{it:formref-cap-equiv} \textup{(\cite[Proposition 5.11-(i)]{Mur24})}
			There exists $C \in (1,\infty)$ such that for any $A \subset \overline{\unifdom}$,
			\begin{equation} \label{eq:formref-cap-equiv}
				\Capa^{\on{ref},\unifdom}_{1}(A) \leq \Capa_{1}(A) \leq C \Capa^{\on{ref},\unifdom}_{1}(A),
			\end{equation}
			where $\Capa^{\on{ref},\unifdom}_{1}(A)$ denotes the $1$-capacity of $A$
			with respect to $(\overline{\unifdom},\refmeas|_{\overline{\unifdom}},\formrefgen{\unifdom},\domain(\unifdom))$.
			\item\label{it:formref-quasi-cont-equiv} \textup{(Cf.\ \cite[Proposition 5.11-(iii)]{Mur24})}
			For each $u \in \domain(\unifdom)_{e}$, let $\widetilde{u}^{\on{ref},\unifdom}$ denote an
			$\formrefgen{\unifdom}$-quasi-continuous $\refmeas|_{\overline{\unifdom}}$-version of $u$. Then
			\begin{align}\label{eq:formref-quasi-cont-equiv}
				\bigl\{ \widetilde{u}^{\on{ref},\unifdom} \bigm| u \in \domain(\unifdom) \bigr\}
				&= \bigl\{ \widetilde{u}|_{\overline{\unifdom}} \bigm| u \in \domain \bigr\}, \\
				\bigl\{ \widetilde{u}^{\on{ref},\unifdom} \bigm| u \in \domain(\unifdom)_{e} \bigr\}
				&= \bigl\{ \widetilde{u}|_{\overline{\unifdom}} \bigm| u \in \domain_{e} \bigr\}
				\label{eq:formref-quasi-cont-equiv-ext}
			\end{align}
			with any two functions defined $\form$-q.e.\ on $\overline{\unifdom}$
			and equal $\form$-q.e.\ on $\overline{\unifdom}$ identified.
			\item\label{it:formref-energy-meas-zero} \textup{(Cf.\ \cite[Theorem 2.9]{Mur24})}
			$\Gamma(u,u)(\partial \unifdom) = 0$ for any $u \in \domain_{e}$.
			In particular, if $\overline{\unifdom} = \ambient$, then
			$(\formrefgen{\unifdom},\domain(\unifdom))=(\form,\domain)$.
		\end{enumerate}
	\end{theorem}
	
	\begin{proof}
		\eqref{it:hkeunif}, \eqref{it:formref-cap-equiv} and \eqref{eq:formref-quasi-cont-equiv}
		are proved in \cite[Theorem 2.8, Proposition 5.11-(i) and Proof of Proposition 5.11-(iii)]{Mur24},
		respectively, and we also have $\Gamma(u,u)(\partial \unifdom) = 0$ for any $u \in \domain$
		by \cite[Theorem 2.9]{Mur24} and then for any $u \in \domain_{e}$ by the definition
		of $\Gamma(u,u)$ presented in Definition \ref{d:EnergyMeas}.
		In particular, if $\overline{\unifdom} = \ambient$, then
		$\domain(\unifdom)=\domain$ by (\eqref{it:formref-cap-equiv} and)
		\eqref{eq:formref-quasi-cont-equiv}, and
		$\formrefgen{\unifdom}(u,u)=\Gamma_{\unifdom}(u,u)(\unifdom)
			=\Gamma(u,u)(\unifdom)+\Gamma(u,u)(\partial \unifdom)=\form(u,u)$
		for any $u \in \domain$.
		
		It thus remains to prove \eqref{eq:formref-quasi-cont-equiv-ext}.
		If $u \in \domain_{e}$, then $\widetilde{u}|_{\overline{\unifdom}}$ is $\formrefgen{\unifdom}$-quasi-continuous
		since $\{ F_{k} \cap \overline{\unifdom} \}_{k \geq 1}$ is an $\formrefgen{\unifdom}$-nest
		for any $\form$-nest $\{ F_{k} \}_{k \geq 1}$ by the lower inequality in \eqref{eq:formref-cap-equiv},
		we see from Definitions \ref{d:ExtDiriSp}, \ref{d:domain-local} and \ref{dfn:reflected-form}
		that $u|_{\unifdom} \in \domain(\unifdom)_{e}$, and therefore $\widetilde{u}|_{\overline{\unifdom}}$ 
		is an $\formrefgen{\unifdom}$-quasi-continuous $\refmeas|_{\overline{\unifdom}}$-version of
		$u|_{\unifdom} \in \domain(\unifdom)_{e}$ by \eqref{eq:volume-doubling-unif-bdry-zero}.
		If $\diam(\unifdom)<\infty$, then the converse inclusion claimed in \eqref{eq:formref-quasi-cont-equiv-ext}
		follows from \eqref{eq:formref-quasi-cont-equiv} and the fact that $\domain(\unifdom)_{e}=\domain(\unifdom)$
		by \eqref{it:hkeunif}, \cite[Proof of Lemma 6.49]{KM23} and \cite[Proof of Proposition 2.9]{HiKu}.
		
		Assume $\diam(\unifdom)=\infty$, let $u \in \domain(\unifdom)_{e}$ and, recalling
		Definition \ref{d:ExtDiriSp}, choose $\{ u_{n} \}_{n\in\mathbb{N}} \subset \domain(\unifdom)$
		so that $\lim_{k\wedge l\to\infty}\formrefgen{\unifdom}(u_{k}-u_{l},u_{k}-u_{l})=0$
		and $\lim_{n\to\infty}u_{n}=u$ $\refmeas$-a.e.\ on $\overline{\unifdom}$.
		Let $E_{Q} \colon L^{2}(\unifdom,\refmeas|_{\unifdom}) \to L^{2}(\ambient,\refmeas)$
		be the linear map defined by \cite[(5.4)]{Mur24} (see also \cite[Lemma 5.6]{Mur24}),
		so that $E_{Q}(f)|_{\unifdom}=f$ for any $f\in L^{2}(\unifdom,\refmeas|_{\unifdom})$
		and by \cite[Proposition 5.8-(c)]{Mur24} and $\diam(\unifdom)=\infty$
		there exists $C_{1} \in (1,\infty)$ such that
		\begin{equation}\label{eq:unifdom-extension-op}
			E_{Q}(f) \in \domain \quad \textrm{and} \quad
			\form(E_{Q}(f),E_{Q}(f)) \leq C_{1} \formrefgen{\unifdom}(f,f)
			\quad\textrm{for any $f \in \domain(\unifdom)$.}
		\end{equation}
		Moreover, since $(\ambient,d,\refmeas,\form,\domain)$ satisfies the
		\emph{Poincar\'{e} inequality} $\on{PI}(\scdiff)$ by \cite[Proof of Theorem 1.2]{GHL15}
		or \cite[Proof of Theorem 3.2]{Lie} (see also \cite[Remark 2.9-(b)]{KM20}),
		it follows from $\lim_{k\wedge l\to\infty}\formrefgen{\unifdom}(u_{k}-u_{l},u_{k}-u_{l})=0$
		and \cite[Proof of Lemma 4.4, the first paragraph]{KM23} that for any
		$(x,r) \in \unifdom \times (0,\infty)$ with $\overline{B(x,r)} \subset \unifdom$,
		\begin{equation}\label{eq:Fe-approx-L2-conv}
			u|_{B(x,r)} \in L^{2}(B(x,r),\refmeas|_{B(x,r)})
			\quad \textrm{and} \quad
			\lim_{n\to\infty}\int_{B(x,r)}(u-u_{n})^{2}\,d\refmeas=0.
		\end{equation}
		By \eqref{eq:Fe-approx-L2-conv}, the definition \cite[(5.4)]{Mur24} of $E_{Q}$
		and \cite[Proposition 3.2-(d)]{Mur24} we can define an extension
		$E_{Q}(u)$ of $u$ to $\ambient$ by \cite[(5.4)]{Mur24} and obtain
		$\lim_{n\to\infty}E_{Q}(u_{n})(x)=E_{Q}(u)(x)\in\mathbb{R}$ for any
		$x \in \ambient \setminus \overline{\unifdom}$ and hence for $\refmeas$-a.e.\ $x\in\ambient$,
		and $\{ E_{Q}(u_{n}) \}_{n\in\mathbb{N}} \subset \domain$ and
		$\lim_{k\wedge l\to\infty}\form(E_{Q}(u_{k})-E_{Q}(u_{l}),E_{Q}(u_{k})-E_{Q}(u_{l}))=0$
		by $\lim_{k\wedge l\to\infty}\formrefgen{\unifdom}(u_{k}-u_{l},u_{k}-u_{l})=0$
		and \eqref{eq:unifdom-extension-op}. Thus $E_{Q}(u) \in \domain_{e}$ and
		$\lim_{n\to\infty}\form(E_{Q}(u_{n}),E_{Q}(u_{n}))=\form(E_{Q}(u),E_{Q}(u))$
		by Definition \ref{d:ExtDiriSp}, hence letting $n\to\infty$ in the inequality
		\eqref{eq:unifdom-extension-op} for $f=u_{n}$ yields the same inequality with
		$u$ in place of $f$, and $\widetilde{E_{Q}(u)}\big|_{\overline{\unifdom}}$ is an
		$\formrefgen{\unifdom}$-quasi-continuous $\refmeas|_{\overline{\unifdom}}$-version of
		$E_{Q}(u)|_{\unifdom} = u \in \domain(\unifdom)_{e}$ by the first paragraph of this proof,
		whence $\widetilde{u}^{\on{ref},\unifdom}=\widetilde{E_{Q}(u)}\big|_{\overline{\unifdom}}$
		$\form$-q.e.\ on $\overline{\unifdom}$ by the $\formrefgen{\unifdom}$-q.e.\ uniqueness of
		$\widetilde{u}^{\on{ref},\unifdom}$ from \cite[Lemma 2.1.4]{FOT} and \eqref{eq:formref-cap-equiv}.
	\end{proof}
	
	\begin{remark} \label{rmk:unifdom-extension-op-Fe}
		The above proof of \eqref{eq:formref-quasi-cont-equiv-ext} in
		Theorem \ref{thm:hkeunif}-\eqref{it:formref-quasi-cont-equiv}
		has shown also the following improvement on \cite[Proposition 5.8-(c)]{Mur24}: 
		\begin{equation}\label{eq:unifdom-extension-op-Fe}
			\mspace{-5mu}\begin{minipage}{416pt}
				\emph{If $\diam(\unifdom)=\infty$, then we can define an extension $E_{Q}(f)$
					of any $f \in \domain(\unifdom)_{e}$ to $\ambient$ by \textup{\cite[(5.4)]{Mur24}}
					and obtain a linear map $E_{Q} \colon \domain(\unifdom)_{e} \to \domain_{e}$
					such that $E_{Q}(\domain(\unifdom)) \subset \domain$ and the inequality in
					\eqref{eq:unifdom-extension-op} holds for any $f \in \domain(\unifdom)_{e}$
					for some $C_{1} \in (1,\infty)$.}
			\end{minipage}
		\end{equation}
		Note that the analogous statement is trivial when $\diam(\unifdom)<\infty$
		since $\domain(\unifdom)_{e}=\domain(\unifdom)$ in this case as we have seen
		in the second paragraph of the above proof of Theorem \ref{thm:hkeunif}.
	\end{remark}
	
	As recalled in Subsection \ref{ss:DF}, the general results \cite[Theorems 7.2.1 and 4.5.3]{FOT}
	from the theory of regular symmetric Dirichlet forms guarantee the existence
	of an associated diffusion with no killing inside which is unique only up to
	a properly exceptional set of starting points. On the other hand, under the
	assumption of \hyperlink{VD}{\textup{VD}} and \hyperlink{hke}{$\on{HKE(\scdiff)}$},
	a continuous heat kernel $p=p_{t}(x,y)$ exists and gives a Markovian transition function
	with the Feller and strong Feller properties, which allow us to define canonically
	an associated diffusion starting from \emph{every} $x \in \ambient$ as we recall below.
	Recall from Notation \ref{ntn:intro}-\eqref{it:topology} that $\contfunc_{0}(\ambient)$
	denotes the space of $\mathbb{R}$-valued continuous functions on $\ambient$ vanishing at infinity.
	
	\begin{prop}\label{p:feller}
		Let $(\ambient,d,\refmeas,\form,\domain)$ be an MMD space satisfying \hyperlink{VD}{\textup{VD}}
		and \hyperlink{hke}{$\on{HKE(\scdiff)}$} for some scale function $\scdiff$. Then the following hold.
		\begin{enumerate}[\rm(a)]\setlength{\itemsep}{0pt}\vspace{-5pt}
			\item\label{it:HKE-conn-irr-cons} $\ambient$ is connected and locally pathwise connected and
			$(\ambient,\refmeas,\form,\domain)$ is irreducible and conservative.
			\item\label{it:HKE-CHK}\textup{(\cite[Theorem 3.1]{BGK})} A (unique) continuous heat kernel
			$p = p_{t}(x,y) \colon (0,\infty) \times \ambient \times \ambient \to [0,\infty)$
			of $(\ambient,\refmeas,\form,\domain)$ exists.
			\item\label{it:HKE-Feller}\textup{(\cite[Proposition 3.2]{Lie})} The Markovian transition function
			$(P_{t})_{t>0}$ on $\ambient$ defined by $P_{t} (x, dy) := p_{t}(x,y) \, m(dy)$,
			$(t,x) \in (0,\infty) \times \ambient$, has the \emph{Feller property}:
			$P_{t}(\contfunc_{0}(\ambient)) \subset \contfunc_{0}(\ambient)$ for any $t \in (0,\infty)$
			and $\lim_{t \downarrow 0} \norm{P_{t} f - f}_{\sup} = 0$ for any $f \in \contfunc_{0}(\ambient)$,
			and the \emph{strong Feller property}: $P_{t}f \in \contfunc(\ambient)$
			for any bounded Borel measurable function $f \colon \ambient \to \mathbb{R}$.
			In particular, there exists a diffusion
			$\diff = (\Omega, \events, \{\diff_{t}\}_{t\in[0,\infty]},\{\lawdiff_{x}\}_{x \in \oneptcpt{\ambient}})$
			on $\ambient$ such that $\lawdiff_{x}(\diff_{t} \in dy)= p_{t}(x,y) \, m(dy)$
			for any $(t,x) \in (0,\infty) \times \ambient$, and $\diff$ is \emph{conservative},
			i.e., $\lawdiff_{x}(\diff_{t} \in \ambient)=1$ for any $(t,x)\in(0,\infty)\times\ambient$.
			\item\label{it:HKE-part-CHK-sFeller} Let $\diff$ be a diffusion on $\ambient$ as in \eqref{it:HKE-Feller},
			and let $D$ be a non-empty open subset of $\ambient$. Then a (unique) continuous heat kernel
			$p^{D} = p^{D}_{t}(x,y) \colon (0,\infty) \times D \times D \to [0,\infty)$
			of $(D,\refmeas|_{D},\form^{D},\domain^{0}(D))$ exists, and the part process
			$\diff^{D}$ of $\diff$ on $D$ satisfies the strong Feller property on $D$ and
			$\lawdiff_{x}(\diff^{D}_{t} \in dy)= p^{D}_{t}(x,y) \, m|_{D}(dy)$
			for any $(t,x) \in (0,\infty) \times D$. Moreover, if $D$ is connected, then
			$p^{D}_{t}(x,y) \in (0,\infty)$ for any $(t,x,y) \in (0,\infty) \times D \times D$.
		\end{enumerate}
	\end{prop}
	
	\begin{proof}
		\begin{itemize}\setlength{\itemsep}{0pt}
			\item[\eqref{it:HKE-conn-irr-cons}]$(\ambient,\refmeas,\form,\domain)$
			is irreducible by \eqref{e:nlhke} from \hyperlink{hke}{$\on{HKE(\scdiff)}$}, and
			$\ambient$ is connected and $(\ambient,\refmeas,\form,\domain)$ is conservative
			by \cite[Theorem 7.4 and Lemma 7.3-(a),(b)]{GT12} and \cite[Theorem 3.2]{Lie}.
			Since $(\ambient,d,\refmeas,\form,\domain)$ satisfies \hyperlink{ehi}{$\on{EHI}$}
			by its \hyperlink{VD}{\textup{VD}} and \hyperlink{hke}{$\on{HKE(\scdiff)}$}
			as noted in Remark \ref{r:ehi-hke} below, $\ambient$ is locally pathwise connected
			by \cite[Proposition 5.6]{GH14} and \cite[Remark 5.3 and Lemma 5.2-(a)]{BCM}
			(see also Lemma \ref{l:chain}-\eqref{it:EHI-MD-RBC} below).
			\item[\eqref{it:HKE-CHK}]This is proved in \cite[Theorem 3.1]{BGK}.
			\item[\eqref{it:HKE-Feller}]This is proved in \cite[Proposition 3.2]{Lie}.
			\item[\eqref{it:HKE-part-CHK-sFeller}]The first claim is proved in \cite[Theorem 3.1]{BGK}.
			To show the stated properties of $\diff^{D}$,
			%	For the process $\diff$ whose transition function is both Feller
			%	and strong Feller, \cite[p.~69, Section 1, Proof of Theorem]{Chu} shows
			%	that the part process $\diff^D$ has the semigroup strong Feller property
			%	(as a process on $D$).
			let $(P^{D}_{t})_{t>0}$ denote the Markovian transition function of $\diff^{D}$,
			which satisfies \ref{eq:AC} since $\diff$ satisfies \ref{eq:AC},
			and define a Markovian transition function $(Q^{D}_{t})_{t>0}$ on $D$ by
			$Q^{D}_{t}(x,dy):=p^{D}_{t}(x,y)\,\refmeas|_{D}(dy)$, $(t,x)\times(0,\infty)\times D$.
			Then since the Dirichlet form of $\diff^{D}$ is $(\form^{D},\domain^{0}(D))$
			as mentioned after \eqref{e:extdirpart}, we have
			$Q^{D}_{t}(f|_{D}) = P^{D}_{t}(f|_{D}) \leq P_{t}f$ $\refmeas$-a.e.\ on $D$
			for any $f \in L^{2}(\ambient,\refmeas)$ and any $t \in (0,\infty)$, and hence
			$p^{D}_{t}(x,y)\leq p_{t}(x,y)$ for any $(t,x,y)\in(0,\infty)\times D\times D$,
			which together with \hyperlink{VD}{\textup{VD}} and \hyperlink{hke}{$\on{HKE(\scdiff)}$}
			easily implies that $(Q^{D}_{t})_{t>0}$ has the strong Feller property on $D$.
			Now let $f \in \contfunc_{\mathrm{c}}(D)$. Then for any $s,t \in (0,\infty)$ and any $x \in D$,
			by the Markov property of $\diff^{D}$, $P^{D}_{t} f = Q^{D}_{t} f$ $\refmeas$-a.e.\ on $D$
			and $P^{D}_{s}(x,\cdot) \ll \refmeas|_{D}$ we obtain
			\begin{equation*}
				P^{D}_{t}( P^{D}_{s} f )(x) = (P^{D}_{t+s} f)(x) = P^{D}_{s}( P^{D}_{t} f )(x) = P^{D}_{s}( Q^{D}_{t} f )(x),
			\end{equation*}
			and letting $s \downarrow 0$ yields
			\begin{equation} \label{eq:diffD-pD-equal}
				( P^{D}_{t} f )(x) = ( Q^{D}_{t} f )(x)
			\end{equation}
			by the dominated convergence theorem since
			$\lim_{s \downarrow 0}(P^{D}_{s} f)(y) = f(y)$ for any $y \in D$ and
			$\lim_{s \downarrow 0}P^{D}_{s}( Q^{D}_{t} f )(x) = (Q^{D}_{t} f)(x)$
			by the sample-path right-continuity of $\diff^{D}$, $f \in \contfunc_{\mathrm{c}}(D)$,
			and $Q^{D}_{t}f \in \contfunc(D)$ implied by the strong Feller property of
			$Q^{D}_{t}$. We thus conclude from the validity of \eqref{eq:diffD-pD-equal}
			for any $f \in \contfunc_{\mathrm{c}}(D)$ that $P^{D}_{t}( x, \cdot ) = Q^{D}_{t}( x, \cdot )$
			for any $(t,x) \in (0,\infty) \times D$, which together with the strong Feller
			property of $Q^{D}_{t}$ proves the stated properties of $\diff^{D}$.
			%(Note that the continuity of $P_u f$ for $u > 0$ is NOT required in this argument.)
			%Therefore the law of the diffusion of $\diff^D$ is uniquely determined and the transition function defined by the continuous heat kernel on $D$ coincides with the transition function of $\diff^D$. 
			
			Lastly, assume that $D$ is connected, so that $D$ is pathwise connected
			since $\ambient$ is locally pathwise connected by \eqref{it:HKE-conn-irr-cons}.
			If $\diam(D)<\infty$, then $p^{D}_{t}(x,y)>0$ for any $(t,x,y)\in(0,\infty)\times D\times D$
			by \hyperlink{VD}{\textup{VD}}, \eqref{e:uhke} from \hyperlink{hke}{$\on{HKE(\scdiff)}$},
			the properties of $\diff^{D}$ just shown above and \cite[Proposition A.3-(2)]{Kaj10}.
			If $\diam(D)=\infty$, then since $D$ is connected and locally pathwise connected,
			for any $x,y\in D$ we can choose a pathwise connected open subset $D_{0}$ of $D$
			with $\diam(D_{0})<\infty$ so that $x,y\in D_{0}$ and thus
			$p^{D}_{t}(x,y)\geq p^{D_{0}}_{t}(x,y)>0$ for any $t \in (0,\infty)$, completing the proof.
			\qedhere\end{itemize}
	\end{proof}
	
	In view of Proposition \ref{p:feller}, we often impose the following assumption. 
	
	\begin{assumption}\label{a:feller}
		Let $\scdiff$ be a scale function, and let $(\ambient,d,\refmeas,\form,\domain)$
		be an MMD space satisfying \hyperlink{VD}{\textup{VD}} and \hyperlink{hke}{$\on{HKE(\scdiff)}$}.
		We assume that $p = p_{t}(x,y) \colon (0,\infty) \times \ambient \times \ambient \to [0,\infty)$
		is the continuous heat kernel of $(\ambient,\refmeas,\form,\domain)$
		as given in Proposition \ref{p:feller}-\eqref{it:HKE-CHK}, and that
		$\diff = (\Omega, \events, \{\diff_{t}\}_{t\in[0,\infty]},\{\lawdiff_{x}\}_{x \in \oneptcpt{\ambient}})$
		is a diffusion on $\ambient$ with minimum augmented admissible filtration
		$\minaugfilt_{*}=\{\minaugfilt_{t}\}_{t \in [0,\infty]}$,
		life time $\zeta$ and shift operators $\{ \shiftdiff_{t} \}_{t \in [0,\infty]}$
		such that $\lawdiff_{x}(\diff_{t} \in dy)= p_{t}(x,y) \, \refmeas(dy)$
		for any $(t,x)\in(0,\infty)\times\ambient$ as given in Proposition \ref{p:feller}-\eqref{it:HKE-Feller}.
		%	Furthermore, for any uniform domain $\unifdom$ in $\ambient$, we assume that the
		%	reflected diffusion $\diffref$ corresponding to the MMD space
		%	$(\ol{\unifdom},d,\restr{\refmeas}{\ol{\unifdom}},\formref,\domainref)$
		%	is also defined from every starting point in $\ol{\unifdom}$ with a continuous heat kernel, so that the transition
		%	function of $\diffref$ satisfies both the Feller and strong Feller properties as given in Proposition \ref{p:feller}.
	\end{assumption}
	
	\subsection{Harmonic functions and the elliptic Harnack inequality} \label{ssec:harm-func-EHI}
	
	We recall the definition of harmonic functions and the elliptic Harnack inequality.
	
	\begin{definition}[Harmonic function] \label{dfn:harmonic}
		Let $(\ambient,\refmeas,\form,\domain)$ be a strongly local regular Dirichlet space,
		and $D$ an open subset of $\ambient$. We say that a function
		$h \in \domain_{\on{loc}}(D)$ is \emph{$\form$-harmonic} on $D$  if  
		\begin{equation}\label{eq:harmonic} 
			\form(h, v)=0  \quad \textrm{for every $v \in \domain \cap \contfunc_{\mathrm{c}}(D)$.}
		\end{equation}
		Here by the strong locality of $(\form,\domain)$, we can unambiguously define $\form(h,v):=\form(h^\#,v)$ where $h^\# \in \domain$ and $h = h^\#$ $\refmeas$-a.e.\ on a neighborhood of  $\supp_{\refmeas}[v]$.
	\end{definition}
	
	\begin{definition}[Elliptic Harnack inequality (EHI)] \label{d:ehi}
		We say that an MMD space $(\ambient,d,\refmeas,\form,\domain)$ satisfies the \textbf{(scale-invariant) elliptic Harnack inequality},
		abbreviated as \hypertarget{ehi}{$\on{EHI}$}, if there exist $C_{H} \in (1,\infty)$ and $\delta \in (0,1)$
		such that for any $(x,r) \in \ambient \times (0,\infty)$ and any $h \in \domain_{\loc}(B(x,r))$
		that is non-negative $\refmeas$-a.e.\ on $B(x,r)$ and $\form$-harmonic on $B(x,r)$,
		\begin{equation} \label{EHI} \tag*{$\on{EHI}$}
			\esssup_{B(x,\delta r)} h \le C_{H} \essinf_{B(x,\delta r)} h.
		\end{equation}
	\end{definition}
	
	There is a close relationship between the heat kernel estimates \hyperlink{hke}{$\on{HKE(\scdiff)}$} and the elliptic Harnack inequality \hyperlink{ehi}{$\on{EHI}$} as we recall below.
	
	\begin{remark} \label{r:ehi-hke}
		If $(\ambient,d,\refmeas,\form,\domain)$ is an MMD space satisfying the volume doubling property \hyperlink{VD}{\textup{VD}} and \hyperlink{hke}{$\on{HKE(\scdiff)}$},
		then it satisfies (the metric doubling property \hyperlink{MD}{\textup{MD}} and) \hyperlink{ehi}{$\on{EHI}$} by \cite[Theorem 1.2]{GHL15} (see also \cite[Theorem 4.5]{KM23}).
		Conversely, if $(\ambient,d,\refmeas,\form,\domain)$ is an MMD space satisfying \hyperlink{MD}{\textup{MD}} and \hyperlink{ehi}{$\on{EHI}$},
		then by \cite[Theorem 7.9]{BCM} (see also \cite{BM18}) there exist a metric $\theta$ on $\ambient$ quasisymmetric to $d$
		and an $\form$-smooth Radon measure $\nu$ on $\ambient$ with full $\form$-quasi-support
		(see Definitions \ref{d:smooth} and \ref{d:quasisupport} below)
		such that the time-changed MMD space $(\ambient,\theta,\nu,\form^{\nu},\domain^{\nu})$,
		where $(\form^{\nu},\domain^{\nu}):=(\trform,\trdomain)$ is defined by \eqref{e:def-trace-domain}
		and \eqref{e:def-trace-form} below, satisfies \hyperlink{VD}{\textup{VD}} and \hyperlink{hke}{$\on{HKE(\scdiff)}$}
		for some scale function $\scdiff$.
	\end{remark}
	
	We are often interested in harmonic functions on an open set $V$ with zero (or Dirichlet)
	boundary condition ``along the boundary of a larger open set $U$'' as defined below.
	
	\begin{definition}[Function with Dirichlet boundary condition] \label{d:dbdry}
		Let $(\ambient,\refmeas,\form,\domain)$ be a strongly local regular Dirichlet space,
		and let $V \subset U$ be open subsets of $\ambient$. We define
		\begin{equation} \label{eq:dbdry}
			\domain^0_{\loc}(U,V) := \Biggl\{ f \Biggm|
			\begin{minipage}{312.5pt}
				$f$ is an $\refmeas$-equivalence class of $\mathbb{R}$-valued Borel measurable functions
				on $V$ such that $f = f^{\#}$ $\refmeas$-a.e.\ on $A$ for some $f^{\#}\in\domain^0(U)$
				for each open subset $A$ of $V$ with $\overline{A}$ compact and $\overline{A} \cap \overline{U\setminus V} =\emptyset$
			\end{minipage}\Biggr\},
		\end{equation}
		so that $\domain^0_{\loc}(U,V)$ is a linear subspace of $\domain_{\loc}(V)$,
		and call each $u \in \domain^0_{\loc}(U,V)$ a \emph{function on $V$ with Dirichlet boundary condition relative to $U$}.
		Each $u \in \domain^0_{\loc}(U,V)$ that is $\form$-harmonic on $V$ (recall Definition \ref{dfn:harmonic})
		is called an \emph{$\form$-harmonic function on $V$ with Dirichlet boundary condition relative to $U$}.
	\end{definition}
	
	The following lemma shows that harmonicity and Dirichlet boundary condition are preserved under local uniform convergence.
	
	\begin{lem} \label{l:harm-conv}
		Let $(\ambient,\refmeas,\form,\domain)$ be a strongly local regular Dirichlet space.
		\begin{enumerate}[\rm(a)]\setlength{\itemsep}{0pt}\vspace{-5pt}
			\item\label{it:harm-conv} Let $U \subset \ambient$ be open and let $h_n \in \domain_{\on{loc}}(U)$, $n \ge 1$
			be a sequence of locally bounded harmonic functions such that $h_n$ converges to $h$ uniformly on any compact subset of $U$.
			Then $h \in \domain_{\on{loc}}(U)$ and $h$ is $\form$-harmonic on $U$.
			\item\label{it:harm-conv-dbdry} Let $U,V$ be open subsets of $\ambient$ with $V \subset U$ and let $h_n \in \domain_{\on{loc}}^0(U,V), n \ge 1$ be a sequence of bounded harmonic functions on $V$ such that
			$h_n$ converges to $h$ uniformly on $A$ for any $A \subset V$ relatively compact in $\overline{U}$ with $\overline{A} \cap \overline{U \setminus V} = \emptyset$.
			Then $h \in \domain_{\on{loc}}^0(U,V)$ and $h$ is $\form$-harmonic on $V$.
		\end{enumerate}
	\end{lem}
	
	\begin{proof}
		\begin{enumerate}[\rm(a)]\setlength{\itemsep}{0pt}
			\item Let $V$ be relatively compact open subset of $U$.
			Since $\ambient$ is locally compact there is a compact neighborhood $W$ of $\ol{V}$ such that $\ol{V} \subset W \subset U$.
			Since $(\form,\domain)$ is a regular Dirichlet form, there exists $\phi \in \domain \cap \contfunc_{\mathrm{c}}(U)$ such that $0 \le \phi \le 1$, $\restr{\phi}{\ol{V}} \equiv 1$ and $\restr{\phi}{W^c} \equiv 0$.
			Since $h_i$ is locally bounded and $\supp_{\ambient}[\phi]$ is compact, by \cite[Theorem 1.4.2-(ii)]{FOT} we obtain $h_i \phi \in \domain$.
			Since $h_n \to h$ uniformly on compact subsets of $U$, we have that $\phi h_n$ converges to $\phi h$ in $L^2(\ambient,\refmeas)$.
			We claim that $\phi h_n, n \in \bN$ is an $\form_1$-Cauchy sequence that converges to $\phi h \in \domain$.
			To see this, note that by the Leibniz rule \cite[Lemma 3.2.5]{FOT} for $\Gamma$
			and the $\form$-harmonicity of $h_i-h_j$ on $U$,
			\begin{align} \label{e:ha1}
				\form(\phi (h_i-h_j),\phi (h_i-h_j)) &= \int_W (h_i-h_j)^2 \,d\Gamma(\phi,\phi)+ \form(h_i-h_j, \phi^2(h_i-h_j)) \nonumber\\
				&= \int_W (h_i-h_j)^{2} \,d\Gamma(\phi,\phi).
			\end{align}
			Since $h_i$ converges uniformly on $W$, we obtain that $\phi h_i$ is a $\form_1$-Cauchy sequence whose limit is $\phi h_i$.
			By \eqref{e:ha1} and $\lim_{i \to \infty} \phi h_i = h$ $\refmeas$-a.e.\ on $V$, we conclude that $h \in \domain_{\on{loc}}(U)$. 
			
			Let $\psi \in \domain \cap \contfunc_{\mathrm{c}}(U)$.
			Let $V$ be a relatively compact open subset of $U$ with $\supp_{\ambient}[\psi] \subset V$.
			Then choosing $\phi$ as above, by strong locality and harmonicity of $h_i$ we obtain
			\[
			\form(h,\psi)= \form(\phi h,\psi)= \lim_{i \to \infty} \form(\phi h_i,\psi)=\lim_{i \to \infty} \form( h_i,\psi)=0.
			\]
			Therefore $h$ is $\form$-harmonic on $U$.
			\item Let $A \subset V$ be open such that $A$ is relatively compact in $\overline{\unifdom}$ with
			$\overline{A} \cap \overline{U \setminus V} = \emptyset$. Since $\ambient$ is locally compact,
			there exists a neighborhood $W$ of $\ol{A}$ such that $\ol{W}$ is compact and satisfies $\overline{W} \cap \overline{U \setminus V} = \emptyset$.
			Therefore, there exists $\phi \in \domain \cap \contfunc_{\mathrm{c}}(\ambient)$ such that $\phi$ is $[0,1]$-valued, $\restr{\phi}{W} \equiv 1$ and $\supp_{\ambient}[\phi] \cap \overline{U \setminus V} = \emptyset$.
			Let $\wh{h_i} \in \domain^0(U)$ be such that $h_i = \wh{h_i}$ $\refmeas$-a.e.\ on $A$ for all $i \in \bN$.
			By replacing $\wh{h_i}$ with $(-M_i \vee \wh{h_i}) \wedge M_i$, where $M_i= \sup_{A} \abs{h_i}$, we may assume that $\wh{h_i} \in \domain^0(U) \cap L^{\infty}(\ambient,\refmeas)$.
			Therefore $\phi \wh{h_i} \in \domain^0(U)$ has an $\form$-quasi-continuous $\refmeas$-version which vanishes $\form$-a.e.\ on $V^c$ for all $i \in \bN$.
			Therefore $\phi \wh{h_i} \in \domain^0(V)$ for all $i \in \bN$.
			Using the harmonicity of $h_i$ in $V$ and the same argument as used in \eqref{e:ha1}, we conclude that the sequence $\phi \wh{h_i} \in \domain^0(V)$ is $\form_1$-Cauchy and converges to $\phi h \in \domain^0(V)$.
			Since $\phi h = h$ $\refmeas$-a.e.\ on $A$, we conclude that $h \in \domain^0(U,V)$.
			The assertion that $h$ is $\form$-harmonic on $V$ follows from (a).
			\qedhere\end{enumerate}
	\end{proof}
	
	\begin{remark} \label{rmk:harm-conv}
		Let $(\ambient,\refmeas,\form,\domain)$ be a strongly local regular Dirichlet space.
		The argument used in the above proof of Lemma \ref{l:harm-conv} implies also the following facts.
		\begin{enumerate}[\rm(a)]\setlength{\itemsep}{0pt}\vspace{-5pt}
			\item\label{it:rmk:harm-conv} If $U,h_n,h$ are as in Lemma \ref{l:harm-conv}-\eqref{it:harm-conv},
			then for any $\phi \in \domain \cap \contfunc_{\mathrm{c}}(U)$,
			the sequence $\phi h_n \in \domain, n \in \bN$ is $\form_1$-Cauchy and converges to $\phi h\in \domain$.
			\item\label{it:rmk:harm-conv-dbdry} Let $U,V,h_n,h$ be as in Lemma \ref{l:harm-conv}-\eqref{it:harm-conv-dbdry}, and
			extend $h_n,h$ to $V \cup U^{c}$ by setting $\restr{h_n}{U^c} \equiv 0$ for all $n \in \bN$ and $\restr{h}{U^c} \equiv 0$.
			Then for any $\phi \in \domain \cap \contfunc_{\mathrm{c}}(\ambient)$ such that
			$\supp_{\ambient}[\phi] \cap \overline{U \setminus V} = \emptyset$,
			we have $h_n \phi \in \domain$ for all $n \in \bN$ and $h_n \phi$ converges in $\form_1$-norm to $h \phi \in \domain$.
		\end{enumerate}
	\end{remark}
	
	Harnack inequalities are often used along a chain of balls.
	We recall the definition of Harnack chain -- see \cite[Section 3]{JK}.
	For a ball $B=B(x,r)$ in a metric space $(\ambient,d)$ and $\varepsilon \in (0,\infty)$,
	we let $\varepsilon B$ denote the ball $B(x,\varepsilon r)$.
	
	\begin{definition}[Harnack chain; relatively ball connected] \label{d:chain-rbc}
		Let $(\ambient,d)$ be a metric space.
		\begin{enumerate}[\rm(a)]\setlength{\itemsep}{0pt}\vspace{-5pt}
			\item\label{it:chain} Let $D$ be an open subset of $\ambient$ and $M \in (1,\infty)$.
			%For $x,y \in U$, a {\em $M$-Harnack chain from $x_1$ to $x_2$}
			For $x,y \in D$, an \emph{$M$-Harnack chain from $x$ to $y$}
			in $D$ is a sequence of balls $B_1,B_2,\ldots,B_n$ each contained in $D$ such that
			$x \in M^{-1}B_1$, $y \in M^{-1}B_n$ and $M^{-1}B_{i} \cap M^{-1}B_{i+1} \neq \emptyset$ for $i=1,2,\ldots,n-1$. %Note  that consecutive balls must have comparable radius.
			The number $n$ of balls in a Harnack chain is called the {\em length} of the Harnack chain.
			The infimum of the lengths of all $M$-Harnack chains from $x$ to $y$ in $D$ is denoted by $N_D(x,y;M)$.
			\item\label{it:rbc} (\cite[Definition 5.1-(i)]{BCM}) Let $K \in (1,\infty)$. We say that $(\ambient, d)$ is \textbf{$K$-relatively ball connected}
			if for each $\varepsilon \in (0,1)$ there exists $N = N(\varepsilon) \in \mathbb{N}$
			such that for any $(x_{0},R) \in \ambient \times (0,\infty)$ and any
			$x,y \in \overline{B}(x_{0},R):=\{ z \in \ambient \mid d(x_{0},z) \leq R\}$
			there exist $\{ z_{i} \}_{i=0}^{N} \subset \ambient$ such that $z_{0}=x$, $z_{N}=y$,
			$B(z_{i}, \varepsilon R) \subset B(x_{0}, KR)$ for any $i \in \{0,\ldots,N\}$
			and $d(z_{i-1}, z_{i}) <  \varepsilon R$ for any $i \in \{1,\ldots,N\}$.
		\end{enumerate}
	\end{definition}
	
	If $K \in (1,\infty)$ and a metric space $(\ambient,d)$ is $K$-relatively ball connected,
	then for any $\varepsilon \in (0,1)$, any $(x_{0},r) \in \ambient \times (0,\infty)$
	and any $x,y \in B(x_{0},r)$, by the triangle inequality we have
	\begin{equation} \label{e:rbc}
		N_{B(x_0,2Kr)}(x,y;\varepsilon^{-1}) \le N(\varepsilon),
	\end{equation}
	where $N(\varepsilon)$ is as given in Definition \ref{d:chain-rbc}-\eqref{it:rbc}.
	
	\begin{remark} \label{r:chain}
		Let $(\ambient,d,\refmeas,\form,\domain)$ be an MMD space satisfying
		\hyperlink{ehi}{$\on{EHI}$} with constants $C_H$ and $\delta$.
		If $u$ is a $[0,\infty)$-valued continuous $\form$-harmonic function
		on an open subset $D$ of $\ambient$, then for any $x_1$, $x_2 \in D$,
		\begin{equation} \label{e:hchain}
			C_H^{-N_D(x_1, x_2; \delta^{-1})} u(x_1) \le u(x_2) \le C_H^{N_D(x_1, x_2; \delta^{-1})} u(x_1).
		\end{equation}
	\end{remark}
	
	The following lemma lists some useful estimates on the lengths of Harnack chains.
	
	\begin{lem} \label{l:chain}
		\begin{enumerate}[\rm(a)]\setlength{\itemsep}{0pt}
			\item\label{it:EHI-MD-RBC} \textup{(\cite[Theorem 5.4]{BCM})} Let $(\ambient,d,\refmeas,\form,\domain)$
			be an MMD space satisfying \hyperlink{MD}{\textup{MD}} and \hyperlink{ehi}{$\on{EHI}$}.
			Then $(\ambient,d)$ is $K$-relatively ball connected for some $K \in (1,\infty)$.
			\item\label{it:chain-unifdom} Let $(\ambient,d)$ be a metric space satisfying
			the metric doubling property \hyperlink{MD}{\textup{MD}}, and let $\unifdom$
			be a $(c_{\unifdom},C_{\unifdom})$-uniform domain in $(\ambient,d)$.
			Then for each $M \in (1,\infty)$ there exists $C \in (0,\infty)$,
			depending only on $c_{\unifdom}$, $C_{\unifdom}$ and $M$, such that for any $x,y \in \unifdom$,
			\begin{equation} \label{e:hc}
				N_\unifdom (x,y;M) \le C \log \biggl( \frac{d(x,y)}{ \min\{\delta_\unifdom(x), \delta_\unifdom (y)\} } + 1 \biggr)  + C.
			\end{equation}
		\end{enumerate}
	\end{lem}
	
	\begin{proof}
		The conclusion in \eqref{it:EHI-MD-RBC} is contained in \cite[Theorem 5.4]{BCM}.
		
		To see \eqref{it:chain-unifdom}, let $\gamma$ be a $(c_U,C_U)$-uniform curve between $x,y \in U$.
		Without loss of generality, we may assume $\delta_U(x) \le \delta_U(y)$. Since
		\[
		\delta_U(z) \ge \max \left( c_U\min\{d(x,z),d(y,z)\}, \delta_{\unifdom}(x)-d(x,z), \delta_{\unifdom}(y)-d(y,z) \right) \mbox{ for any $z\in \gamma$,}
		\]
		we have 
		\begin{equation} \label{e:hc1}
			\delta_{\unifdom}(z) \ge c_U \delta_{\unifdom}(x)/2.
		\end{equation}
		If $d(x,y) \le 4 \delta_{\unifdom}(x)$, we  choose a maximal $M^{-1} c_U \delta_{\unifdom}(x)/2$ subset of $\gamma$. Observing that $\gamma \subset B(x,2C_Ud(x,y)) \subset B(x,8 C_U \delta_{\unifdom}(x))$ and using the metric doubling property we obtain the desired upper bound.
		
		For $i \in \bN$,   choose $z_i \in \gamma$ such that $d(x,z_i)=2^{-i} d(x,y)$ and such that $z_{i+1}$ lies on the subcurve from $x$ to $z_i$. Note that
		\[
		d(z_i,z_{i+1}) \le 2^{-i+1} d(x,y), \quad \delta_{\unifdom}(z_i) \ge  c_U 2^{-i} d(x,y) \quad  \mbox{for all $i \ge 1$.}
		\]
		First we show that 
		\[
		N_\unifdom(z_i,z_{i+1};M) \lesssim 1 \quad \mbox{for all $i \ge 1$.}
		\]
		To see this, we choose a maximal $M^{-1}c_U^2 2^{-i-2}d(x,y)$ subset $N_i$ of a $(c_U,C_U)$-uniform curve $\gamma_i$ from $z_i$ to $z_{i+1}$. 
		Since the balls $\{B(n,M^{-1}c_U^2 2^{-i-2}d(x,y)): n \in N_i\}$ cover $\gamma_i$ and $\diam(\gamma_i) \le C_U 2^{-i+1}d(x,y)$, and are contained in $U$ by \eqref{e:hc1},  the
		metric doubling property \cite[Exercise 10.17]{Hei} implies that 
		\begin{equation} \label{e:hc0}
			N_\unifdom(z_i,z_{i+1};M) \lesssim \#N_i \lesssim 1 \quad \mbox{for all $i \ge 1$.}
		\end{equation}
		
		Let $k\in \bN$ be the smallest number such that $z_{k+1} \in B(x, M^{-1} \delta_{\unifdom}(x))$, so that $k \asymp 1+ \log \left( \frac{d(x,y)}{\delta_{\unifdom}(x)}+1\right)$. By joining $M$-Harnack chains of length $N_\unifdom(z_i,z_{i+1};M)$ from $z_i$ to $z_{i+1}$ successively and using the ball $B(x, M^{-1} \delta_{\unifdom}(x))$, we obtain a $M$-Harnack chain from $x$ to $z_1$ they yields the estimate 
		\begin{equation} \label{e:hc2}
			N_U(x,z_1;M) \le 1 + \sum_{i=1}^k N_\unifdom(z_i,z_{i+1};M) \lesssim \log \left( \frac{d(x,y)}{\delta_\unifdom(x)} + 1 \right)  +1.
		\end{equation}
		
		Similarly for $i \in \bN$, choose    $w_i \in \gamma$ such that $d(y,w_i)=2^{-i} d(x,y)$ and such that $w_{i+1}$ lies on the subcurve from $w_i$ to $y$.  Similar to \eqref{e:hc2}, we obtain
		\begin{equation} \label{e:hc3}
			N_U(y,w_1;M) \le 1 + \sum_{i=1}^k N_\unifdom(w_i,w_{i+1};M) \lesssim \log \left( \frac{d(x,y)}{\delta_\unifdom(y)} + 1 \right)  +1.
		\end{equation}
		Since $\delta_{\unifdom}(z_1) \wedge \delta_{\unifdom}(w_1) \ge c_U d(x,y)/2$ and $d(z_1,w_1) \le 2 d(x,y)$, by the same argument as \eqref{e:hc0},
		we have 
		\begin{equation} \label{e:hc4}
			N_\unifdom(z_1,w_1;M) \lesssim 1.
		\end{equation}
		By \eqref{e:hc2}, \eqref{e:hc3} and \eqref{e:hc4}, we conclude \eqref{e:hc}.
	\end{proof}
	
	We record a few more consequences of Harnack chaining.
	\begin{lem} \label{l:hchain}
		Let $(\ambient,d,\refmeas,\form,\domain)$ be an MMD space satisfying
		\hyperlink{MD}{\textup{MD}} and \hyperlink{ehi}{$\on{EHI}$},
		and let $\unifdom$ be a uniform domain in $(\ambient,d)$.
		Then there exist $A_0,A_1,C_1 \in (1,\infty)$ and $\gamma \in (0,\infty)$
		such that for any $\xi \in \partial \unifdom$, any $0<r< R<\diam(\unifdom)/A_1$
		and any continuous function $h\colon\unifdom \cap B(\xi,A_{0} R) \to (0,\infty)$
		that is $\form$-harmonic on $\unifdom \cap B(\xi,A_{0} R)$, with $\xi_{R},\xi_{r}$ as in Lemma \ref{l:xir},
		\begin{equation} \label{e:hchain1}
			C_1^{-1} \Bigl(\frac{r}{R}\Bigr)^\gamma h(\xi_r) \le h(\xi_R) \le C_1 \Bigl(\frac{R}{r}\Bigr)^\gamma h(\xi_r).
		\end{equation}
		Furthermore if $\xi_R, \xi_R' \in \unifdom$ are two points that satisfy the conclusion of Lemma \ref{l:xir}, that is 
		\begin{equation*}
			d(\xi,\xi_R)=d(\xi,\xi_R')=R \quad \textrm{and} \quad \delta_{\unifdom}(\xi_R) \wedge  \delta_{\unifdom}(\xi_R')> \frac{c_{U}R}{2},
		\end{equation*}
		then 
		\begin{equation} \label{e:hchain2}
			C_1^{-1} h(\xi_R') \le h(\xi_R) \le C_1 h(\xi_R').
		\end{equation}
	\end{lem}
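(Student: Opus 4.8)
The plan is to deduce both inequalities from the two Harnack-chain ingredients already available: the logarithmic bound on the length of the shortest Harnack chain in a uniform domain, Lemma \ref{l:chain}(b), together with the fact from Lemma \ref{l:chain}(a) that $(\ambient,d)$ is $K$-relatively ball connected, and the quantitative consequence of the elliptic Harnack inequality recorded in Remark \ref{r:chain}, namely $u(x_2)\le C_H^{N_D(x_1,x_2;\delta^{-1})}u(x_1)$ for positive continuous harmonic $u$. The key point is that exponentiating a logarithmic Harnack-chain bound produces precisely a power of the ratio of the relevant distances, which is the form of \eqref{e:hchain1}.

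First I would fix $A_0$ large enough (say $A_0 = 4(C_U+1)$ or whatever makes the geometry work) and $A_1$ so that $A_0R < \diam(\unifdom,d)/4$ whenever $R < \diam(\unifdom,d)/A_1$, ensuring that the points $\xi_r,\xi_R$ from Lemma \ref{l:xir} exist and lie in $\unifdom\cap B(\xi,A_0R)$, and so that the relevant balls used in chaining stay inside $\unifdom\cap B(\xi,A_0R)$. Next I would estimate $N_{\unifdom\cap B(\xi,A_0R)}(\xi_r,\xi_R;\delta^{-1})$. By Lemma \ref{l:xir} we have $d(\xi,\xi_r)=r$, $d(\xi,\xi_R)=R$, and $\delta_\unifdom(\xi_r)>c_Ur/2$, $\delta_\unifdom(\xi_R)>c_UR/2$; since $\xi_r,\xi_R \in B(\xi,A_0R)$ and their distances to $\unifdom^c$ are at least of order $r$ and $R$ respectively, the restriction of the uniform-domain geometry to $\unifdom\cap B(\xi,A_0R)$ (which one checks is itself comparable to a uniform domain at the relevant scales, or more directly: the uniform curve between $\xi_r$ and $\xi_R$ stays inside $B(\xi,A_0R)$ by the diameter bound $\diam(\gamma)\le C_U d(\xi_r,\xi_R)\le C_U(r+R)\le 2C_UR$) lets us apply Lemma \ref{l:chain}(b) to get
\[
N_{\unifdom\cap B(\xi,A_0R)}(\xi_r,\xi_R;\delta^{-1}) \le C\log\!\left(\frac{d(\xi_r,\xi_R)}{\min(\delta_\unifdom(\xi_r),\delta_\unifdom(\xi_R))}+1\right)+C \le C'\log\!\left(\frac{R}{r}+1\right)+C'',
\]
using $\min(\delta_\unifdom(\xi_r),\delta_\unifdom(\xi_R)) \ge c_Ur/2$ and $d(\xi_r,\xi_R)\le r+R\le 2R$. (One has to be slightly careful that Lemma \ref{l:chain}(b) is stated for the uniform domain $\unifdom$ itself with $M$-Harnack chains in $\unifdom$, whereas here we need chains inside $\unifdom\cap B(\xi,A_0R)$; this is handled by the observation just made that the uniform curve realizing the chain already lies in $B(\xi,A_0R)$, so the chain produced by the proof of Lemma \ref{l:chain}(b) is automatically a chain in $\unifdom\cap B(\xi,A_0R)$ — alternatively invoke Lemma \ref{l:chain}(a) on the ball $B(\xi,A_0R)$.) Plugging this into Remark \ref{r:chain} gives
\[
h(\xi_R) \le C_H^{N} h(\xi_r) \le C_H^{C'\log(R/r+1)+C''}h(\xi_r) = C_H^{C''}\,(R/r+1)^{C'\log C_H}\,h(\xi_r) \le C_1\left(\frac{R}{r}\right)^\gamma h(\xi_r)
\]
for suitable $C_1,\gamma$ (absorbing the $+1$ since $R\ge r$), and symmetrically the lower bound $h(\xi_R)\ge C_1^{-1}(r/R)^\gamma h(\xi_r)$, which is \eqref{e:hchain1}.

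For \eqref{e:hchain2}, given $\xi_R,\xi_R'$ with $d(\xi,\xi_R)=d(\xi,\xi_R')=R$ and both $\delta_\unifdom$-values $> c_UR/2$, I would observe $d(\xi_R,\xi_R')\le 2R$ while $\min(\delta_\unifdom(\xi_R),\delta_\unifdom(\xi_R'))>c_UR/2$, so by the same reasoning (either Lemma \ref{l:chain}(b) applied to these two points, or directly $K$-relative ball connectedness from Lemma \ref{l:chain}(a) on $B(\xi,A_0R)$) the shortest Harnack chain between them inside $\unifdom\cap B(\xi,A_0R)$ has length bounded by an absolute constant $N_0=N_0(c_U,C_U,K)$; then Remark \ref{r:chain} gives $h(\xi_R)\le C_H^{N_0}h(\xi_R')$ and vice versa, so \eqref{e:hchain2} holds with $C_1 = C_H^{N_0}$ (enlarging the earlier $C_1$ if necessary).

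The main obstacle I anticipate is the bookkeeping needed to ensure all the Harnack chains and the balls comprising them stay within the prescribed region $\unifdom\cap B(\xi,A_0R)$ where $h$ is assumed harmonic and positive — i.e., choosing $A_0$ (and $A_1$) correctly and verifying that the uniform curve between $\xi_r$ and $\xi_R$, together with the tube of radius $\sim c_U(\text{local distance})$ around it used in the proof of Lemma \ref{l:chain}(b), does not escape $B(\xi,A_0R)$. This is where the constraint $r<R<\diam(\unifdom,d)/A_1$ and the $\diam(\gamma)\le C_U d(\cdot,\cdot)$ clause of the uniform condition get used; once that containment is pinned down, the rest is a routine exponentiation of the logarithmic chain bound.
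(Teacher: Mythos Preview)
Your proposal is correct and follows essentially the same approach as the paper's own proof: apply the logarithmic Harnack-chain bound of Lemma~\ref{l:chain}(b) to $\xi_r,\xi_R$, observe (as you do) that the chain constructed in the proof of Lemma~\ref{l:chain}(b) stays inside $\unifdom\cap B(\xi,A_0R)$ for a suitable $A_0$, and then exponentiate via Remark~\ref{r:chain}. The paper's proof is slightly terser but records exactly the same key inequality $N_{\unifdom\cap B(\xi,A_0R)}(\xi_r,\xi_R;\delta^{-1})\le C_1(1+\log(R/r))$ and derives both \eqref{e:hchain1} and \eqref{e:hchain2} from it.
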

	
	\begin{proof} 
		Let $\delta \in (0,1)$ denote the constant in \hyperlink{EHI}{EHI}.
		By Lemma \ref{l:chain}-\eqref{it:chain-unifdom}, for any $\xi \in \partial \unifdom$
		and any $0<r<R$ we have $N_{\unifdom} (\xi_r,\xi_R; \delta^{-1}) \le C_1$, where $C_1$
		depends only on $\delta$ and the constants associated to the uniformity of $\unifdom$.
		By Lemma \ref{l:xir} and the proof of Lemma \ref{l:chain}-\eqref{it:chain-unifdom}, there exist
		$A_0,A_1 \in (1,\infty)$ depending only on $\delta$ and the constants associated to the uniformity
		of $\unifdom$ such that for all $\xi \in \partial \unifdom$ and all $0<r<R<\diam(\unifdom)$,
		\begin{equation} \label{e:hch1}
			N_{\unifdom} (\xi_r,\xi_R; \delta^{-1}) \le N_{\unifdom \cap B(\xi,A_0 R)} (\xi_r,\xi_R; \delta^{-1}) \le C_1 (1+ \log(R/r)).
		\end{equation}
		The estimate \eqref{e:hchain1} now follows from \eqref{e:hch1} and Remark \ref{r:chain}.
		The estimate \eqref{e:hchain2} also follows from the same argument.
	\end{proof}
	
	\subsection{Trace Dirichlet form} \label{ss:trace}
	
	Throughout this subsection, we assume that $(\ambient,\refmeas,\form,\domain)$
	is a regular Dirichlet space. Recall that the $1$-capacity $\Capa_1(A)$ of $A \subset \ambient$
	with respect to $(\ambient,\refmeas,\form,\domain)$ is defined by \eqref{e:defCap1}.
	
	\begin{definition}[Smooth measure] \label{d:smooth}
		A Radon measure $\nu$ on $\ambient$, i.e., a Borel measure $\nu$ on $\ambient$ which is finite on
		any compact subset of $\ambient$, is said to be \emph{$\form$-smooth} if $\nu$ charges no $\form$-polar set
		(that is, $\nu(A)=0$ for any $A \in \Borel(\ambient)$ with $\Capa_{1}(A)=0$).
		%	We recall that every positive Radon measure charging no set of capacity zero is smooth. %This will be satisfied by our construction.
	\end{definition}
	
	For example, the $\form$-energy measure $\Gamma(f,f)$ of any $f \in \domain_{e}$ is $\form$-smooth by
	\cite[Lemma 3.2.4]{FOT}. An essential feature of an $\form$-smooth Radon measure $\nu$ on $\ambient$
	is that the $\nu$-equivalence class of each $f\in\domain_{e}$ is canonically determined
	by considering an $\form$-quasi-continuous $\refmeas$-version $\widetilde{f}$ of $f$,
	which is $\form$-q.e.\ unique by \cite[Lemma 2.1.4]{FOT} and thus indeed $\nu$-a.e.\ unique.
	
	We say that a subset $D$ of $\ambient$ is \emph{$\form$-quasi-open}
	if there exists an $\form$-nest $\{F_{k}\}_{k\in\mathbb{N}}$ such that $D \cap F_k$
	is an open subset of $F_{k}$ in the relative topology of $F_{k}$
	inherited from $\ambient$ for each $k \in \mathbb{N}$. The complement in
	$\ambient$ of an $\form$-quasi-open set is said to be \emph{$\form$-quasi-closed}.
	Now we recall the definition of an $\form$-quasi-support of an $\form$-smooth Radon measure.
	
	\begin{definition}[Quasi-support; {\cite[(4.6.3) and (4.6.4)]{FOT}}, {\cite[Definition 3.3.4]{CF}}] \label{d:quasisupport}
		Let $\nu$ be an $\form$-smooth Radon measure on $\ambient$.
		A subset $F$ of $\ambient$ is said to be an \emph{$\form$-quasi-support} of $\nu$
		if the following two conditions hold:
		\begin{enumerate}[\rm(a)]\setlength{\itemsep}{0pt}\vspace{-5pt}
			\item\label{it:quasisupport-1} $F$ is $\form$-quasi-closed and $\nu(\ambient \setminus F)=0$.
			\item\label{it:quasisupport-2} If $\widetilde{F} \subset \ambient$ is $\form$-quasi-closed and
			$\nu(\ambient \setminus \widetilde{F})=0$, then $\Capa_{1}(F \setminus \widetilde{F})=0$.
		\end{enumerate}
		By definition, an $\form$-quasi-support of $\nu$ is unique up to $\form$-q.e.\ equivalence;
		that is, if $F_{1}$ and $F_{2}$ are $\form$-quasi-supports of $\nu$, then
		$\Capa_{1}((F_{1} \setminus F_2) \cup (F_{2} \setminus F_{1}))=0$. Furthermore
		by \cite[Theorem 4.6.3]{FOT}, an $\form$-quasi-support of $\nu$ indeed exists.
	\end{definition}
	
	The $\form$-quasi-support of an $\form$-smooth Radon measure can be described more
	explicitly in terms of the corresponding positive continuous additive functional (PCAF)
	of a Hunt process $\diff$ associated with $(\form,\domain)$, as we recall below
	from \cite[Sections A.3 and 4.1]{CF} and \cite[Section 5.1]{FOT}.
	In the rest of this section, we fix an $\refmeas$-symmetric Hunt process 
	$\diff = (\Omega, \events, \{\diff_{t}\}_{t\in[0,\infty]},\{\lawdiff_{x}\}_{x \in \oneptcpt{\ambient}})$
	on $\ambient$ whose Dirichlet form is $(\form,\domain)$, with minimum augmented
	admissible filtration $\minaugfilt_{*}=\{\minaugfilt_{t}\}_{t \in [0,\infty]}$,
	life time $\zeta$ and shift operators $\{ \shiftdiff_{t} \}_{t \in [0,\infty]}$.
	
	A collection $A=\{A_{t}\}_{t\in[0,\infty)}$ of $[0,\infty]$-valued random variables
	on $\Omega$ is called a \emph{positive continuous additive functional} (\emph{PCAF} for short)
	of $\diff$, if the following three conditions hold:
	\begin{enumerate}[\rm(i)]\setlength{\itemsep}{0pt}\vspace{-5pt}
		\item\label{it:PCAF-adapted} $A_{t}$ is $\minaugfilt_{t}$-measurable for any $t \in [0,\infty)$.
		\item\label{it:PCAF-qe-as} There exist $\Lambda \in \minaugfilt_{\infty}$ and a properly exceptional set $\mathcal{N} \subset \ambient$ for $\diff$
		such that $\lawdiff_{x}(\Lambda)=1$ for any $x \in \ambient \setminus \mathcal{N}$ and $\shiftdiff_{t}(\Lambda) \subset \Lambda$ for any $t \in [0,\infty)$.
		\item\label{it:PCAF-sample-path-properties} For any $\omega\in\Lambda$, $[0,\infty) \ni t \mapsto A_{t}(\omega)$ is a $[0,\infty]$-valued continuous function
		with $A_{0}(\omega)=0$ such that for any $s,t \in [0,\infty)$,
		$A_{t}(\omega)<\infty$ if $t < \zeta(\omega)$,
		$A_{t}(\omega)=A_{\zeta(\omega)}(\omega)$ if $t \geq \zeta(\omega)$,
		and $A_{t+s}(\omega)=A_{t}(\omega)+A_{s}(\shiftdiff_{t}(\omega))$.
	\end{enumerate}
	The sets $\Lambda$ and $\mathcal{N}$ are referred to as a \emph{defining set} and
	an \emph{exceptional set}, respectively, of the PCAF $A$. Note that then
	$\Lambda \cap \{ \dot{\sigma}_{\mathcal{N}} \wedge \hat{\sigma}_{\mathcal{N}} = \infty \}$
	is easily seen to be a defining set of $A$ and belongs to $\minaugfilt_{0}$, and
	recall that $\mathcal{N} \subset \mathcal{N}_{1}$ for some properly exceptional set
	$\mathcal{N}_{1}\in\Borel(\ambient)$ for $\diff$ by \eqref{eq:cap-zero-properly-exceptional}.
	Thus by replacing $\mathcal{N}$ with such $\mathcal{N}_{1}$ and then $\Lambda$
	with $\Lambda \cap \{ \dot{\sigma}_{\mathcal{N}_{1}} \wedge \hat{\sigma}_{\mathcal{N}_{1}} = \infty \}$,
	we can always choose a defining set $\Lambda$ and an exceptional set $\mathcal{N}$
	of a given PCAF of $\diff$ so that $\Lambda \in \minaugfilt_{0}$, $\mathcal{N} \in \Borel(\ambient)$
	and $\Lambda \subset \{ \dot{\sigma}_{\mathcal{N}} \wedge \hat{\sigma}_{\mathcal{N}} = \infty \}$.
	If $\mathcal{N}$ can be taken to be the empty set $\emptyset$, then
	we say that $A$ is a \emph{PCAF in the strict sense} of $\diff$.
	
	% Given a PCAF $A$ and a Borel function $f\ge 0$, we denote $(fA)_t:=\int_0^tf(X_s)\,dA_s$, which is still a random variable.
	
	By \cite[Theorem A.3.5-(i) and Theorem 4.1.1-(i)]{CF} (see also \cite[Theorem 4.1.1-(iii)]{CF} and \cite[Theorem 5.1.3]{FOT}),
	for each PCAF $A$ of $\diff$ there exists a unique Borel measure $\nu$ on $\ambient$,
	called the \emph{Revuz measure} of $A$, such that
	\begin{equation}\label{Revuz}
		\int_{\ambient}f\,d\nu = \lim_{t \downarrow 0}\frac{1}{t}\expdiff_{\refmeas}\biggl[\int_{0}^{t}f(\diff_{s})\,dA_{s}\biggr]
		%		\expdiff_{h\cdot\mu} \biggl[ \int_0^t f(X_s(\omega))\,dA_s(\omega) \biggr] = \int_0^t\langle f\cdot\nu,P_s h\rangle \, ds,
	\end{equation}
	for any Borel measurable function $f \colon \ambient \to [0,\infty]$,
	%	where $P_s$ denotes the Markov semigroup corresponding to the Hunt process.
	and then this measure $\nu$ charges no $\form$-polar set and satisfies $\nu(F_{k})<\infty$
	for any $k\in\mathbb{N}$ for some $\form$-nest $\{F_{k}\}_{k\in\mathbb{N}}$.
	Conversely, by \cite[Lemma 5.1.8 and Theorem 5.1.3]{FOT} (see also \cite[Theorem 4.1.1-(ii)]{CF}),
	given an $\form$-smooth Radon measure $\nu$ on $\ambient$, there exists a PCAF $A$ of $\diff$
	whose Revuz measure is $\nu$, and any two such PCAFs
	$A=\{A_{t}\}_{t\in[0,\infty)},A'=\{A'_{t}\}_{t\in[0,\infty)}$ of $\diff$
	are \emph{equivalent}, i.e., have a common defining set $\Lambda$
	and a common exceptional set $\mathcal{N}$ such that
	$A_{t}(\omega)=A'_{t}(\omega)$ for any $(t,\omega)\in[0,\infty)\times\Lambda$.
	Moreover, if $\diff$ satisfies \ref{eq:AC}, then PCAFs in the strict sense of
	$\diff$ satisfy a pointwise analogue of \eqref{Revuz} as in the following proposition.
	
	\begin{prop}\label{prop:Revuz-correspondence-pointwise}
		Assume that $\diff$ satisfies \ref{eq:AC}, let $A=\{A_{t}\}_{t\in[0,\infty)}$ be
		a PCAF in the strict sense of $\diff$, let $\nu$ be the Revuz measure of $A$, and
		let $D$ be an open subset of $\ambient$. Then for any $(t,x)\in(0,\infty]\times D$
		and any Borel measurable function $f\colon D\to[0,\infty]$,
		\begin{equation}\label{eq:Revuz-correspondence-pointwise}
			\expdiff_{x}\biggl[\int_{0}^{t\wedge\tau_{D}}f(\diff_{s})\,dA_{s}\biggr]
			=\int_{0}^{t}\int_{D}p^{D}_{s}(x,y)f(y)\,\nu(dy)\,ds
		\end{equation}
		\textup{(note that $\nu$ is $\sigma$-finite)}, where $p^{D}$ denotes the unique Borel measurable function
		$p^{D}=p^{D}_{t}(x,y)\colon(0,\infty)\times D \times D\to[0,\infty]$
		satisfying \eqref{eq:AC-transition-density} for the part process $\diff^{D}$ of $\diff$ on $D$
		\textup{(recall that $\diff^{D}$ is an $\refmeas|_{D}$-symmetric Hunt process on $D$ and satisfies \ref{eq:AC})}.
	\end{prop}
	
	\begin{proof}
		Let $( P^{D}_{s} )_{s>0}$ denote the Markovian transition function of $\diff^{D}$. Then we obtain
		\begin{equation}\label{eq:Revuz-correspondence-pointwise-proof}
			\begin{split}
				&\expdiff_{x}\biggl[\int_{0}^{t\wedge\tau_{D}} f(\diff_{s})\,dA_{s}\biggr]
				= \lim_{\delta \downarrow 0} \expdiff_{x}\biggl[ \int_{\delta\wedge\tau_{D}}^{t\wedge\tau_{D}} f(\diff_{s}) \,dA_{s} \biggr] \\
				&= \lim_{\delta \downarrow 0} \expdiff_{p^{D}_{\delta}(x,\cdot)\cdot \refmeas|_{D}}\biggl[ \int_{0}^{(t-\delta)\wedge\tau_{D}} f(\diff_{s}) \,dA_{s} \biggr] \quad \textrm{(by the Markov property of $\diff$ and \ref{eq:AC})} \\
				&= \lim_{\delta \downarrow 0}\int_{0}^{t-\delta} \int_{D} \bigl( P^{D}_{s} p^{D}_{\delta}(x,\cdot) \bigr)(y)f(y) \,\nu(dy)\,ds \quad \textrm{(by \cite[(4.1.25)]{CF})} \\
				&= \lim_{\delta \downarrow 0}\int_{\delta}^{t} \int_{D} p^{D}_{s}(x,y) f(y) \,\nu(dy)\,ds
				= \int_{0}^{t} \int_{D} p^{D}_{s}(x,y)f(y) \,\nu(dy)\,ds.
				\qedhere\end{split}
		\end{equation}
	\end{proof}
	
	Now let $\nu$ be an $\form$-smooth Radon measure on $\ambient$ and
	let $A=\{A_{t}\}_{t\in[0,\infty)}$ be a PCAF of $\diff$ whose Revuz measure is $\nu$
	with a defining set $\Lambda \in \minaugfilt_{0}$ and an exceptional set $\mathcal{N} \in \Borel(\ambient)$
	such that $\Lambda \subset \{ \dot{\sigma}_{\mathcal{N}} \wedge \hat{\sigma}_{\mathcal{N}} = \infty \}$.
	Since $\{ A_{t}\one_{\Lambda} \}_{t\in[0,\infty)}$ is easily seen to be a PCAF
	equivalent to $A$ with defining set $\Lambda \cup \{\zeta = 0\} \in \minaugfilt_{0}$
	and exceptional set $\mathcal{N}$, we may and do assume without loss of generality that
	$A_{t}(\omega)=0$ for any $(t,\omega)\in[0,\infty)\times(\Omega\setminus\Lambda)$
	and that $\{ \zeta=0 \} \subset \Lambda$. Then the \emph{support} $F$ of $A$ defined by
	\begin{equation} \label{e:supportA}
		F:= \{x \in \ambient \setminus \mathcal{N} \mid \lawdiff_{x}(R=0)=1 \},
		\quad \textrm{where $R := \inf \{t \in (0,\infty) \mid A_{t} > 0\}$,}
	\end{equation}
	is $\diff$-nearly Borel measurable and $\form$-quasi-closed as shown in \cite[the paragraph of (5.2.1)]{CF}
	and in fact an $\form$-quasi-support of $\nu$ by \cite[Theorem 5.1.5]{FOT} or
	\cite[Theorem 5.2.1-(i)]{CF}. Moreover, the \textbf{time-changed process}
	$\difftr=\bigl(\widecheck{\Omega},\widecheck{\events},\{\difftr_{t}\}_{t\in[0,\infty]},\{\lawdiff_{x}\}_{x \in \oneptcpt{F}}\bigr)$
	of $\diff$ by the PCAF $A$, defined for $(t,\omega)\in[0,\infty]\times\Omega$ by
	\begin{align}
		\tau_{t}(\omega) &:= \inf\{ s \in (0,\infty) \mid A_{s}(\omega)>t\}, \mspace{12mu}
		\difftr_{t}(\omega):=\diff_{\tau_{t}(\omega)}(\omega), \mspace{12mu}
		\widecheck{\zeta}(\omega):=A_{\infty}(\omega):=\lim_{s\to\infty}A_{s}(\omega), \nonumber \\
		\widecheck{\Omega}&:= \Lambda \cap \bigl\{ \textrm{$\difftr_{s} \in \oneptcpt{F}$ for any $s\in[0,\infty)$} \bigr\}, \mspace{26mu}
		\widecheck{\events}:=\minaugfilt_{\infty}|_{\widecheck{\Omega}}, \mspace{26mu}
		\widecheck{\shiftdiff}_{t}(\omega):=\shiftdiff_{\tau_{t}(\omega)}(\omega),
		\label{eq:time-changed-process}
	\end{align}
	is a $\nu$-symmetric right-continuous strong Markov process on $(\oneptcpt{F},\Borel^{*}(\oneptcpt{F}))$
	with life time $\widecheck{\zeta}$ and shift operators $\bigl\{\widecheck{\shiftdiff}_{t}\bigr\}_{t\in[0,\infty]}$
	by \cite[Theorems A.3.9 and 5.2.1-(ii)]{CF}, where $\oneptcpt{F}:=F\cup\{\cemetery\}$.
	More precisely, from \cite[Proposition A.3.8-(iv),(vi)]{CF} we easily obtain
	\begin{equation}\label{eq:time-changed-process-in-support}
		\bigl\{ \textrm{$\difftr_{s} \in \oneptcpt{F}$ for any $s\in[0,\infty)$} \bigr\} \in \minaugfilt_{0}, \quad
		\lawdiff_{x}\bigl( \textrm{$\difftr_{s} \in \oneptcpt{F}$ for any $s\in[0,\infty)$} \bigr) = 1
		\mspace{10mu}\textrm{for any $x \in \oneptcpt{\ambient}$,}
	\end{equation}
	$\tau_{t}$ is an $\minaugfilt_{*}$-stopping time and $\difftr_{t}$ is
	$\minaugfilt_{\tau_{t}}/\Borel^{*}(\oneptcpt{\ambient})$-measurable
	for any $t\in[0,\infty]$ by \cite[Proposition A.3.8-(i) and Exercise A.1.20-(ii)]{CF}, the family
	$\widecheck{\minaugfilt}_{*}:=\bigl\{\widecheck{\minaugfilt}_{t}\bigr\}_{t\in[0,\infty]}$ defined by
	\begin{equation}\label{eq:time-changed-filtration}
		\widecheck{\minaugfilt}_{t}:=\minaugfilt_{\tau_{t}},\quad t\in[0,\infty],
	\end{equation}
	is a right-continuous filtration in $\Omega$ by \cite[Proposition A.3.8-(iii)]{CF},
	and $\difftr$ is strong Markov with respect to $\widecheck{\minaugfilt}_{*}$
	by \cite[Theorem A.3.9]{CF}.
	
	In this situation, it turns out that the Dirichlet form of the time-changed process
	$\difftr$ is identified as the trace Dirichlet form of $(\form,\domain)$ on
	$L^{2}(F,\nu)$, whose definition given in Definition \ref{d:traceDF} below involves
	the hitting distribution of $\diff$ to $F$ defined as follows.
	
	\begin{definition}[Hitting distribution; harmonic measure] \label{d:hmeas}
		Let $F$ be an $\diff$-nearly Borel measurable $\form$-quasi-closed subset of $\ambient$.
		Recalling the $\minaugfilt_{*}$-stopping time $\sigma_{F}$ from \eqref{eq:hitting-times},
		we define the \emph{($0$-order) hitting distribution} $H_{F}$ of $\diff$ to $F$ by
		\begin{equation} \label{e:defhitdist-meas}
			H_{F}(x,A):= \lawdiff_{x}( \textrm{$\diff_{\sigma_{F}} \in A$, $\sigma_{F}<\infty$} ), \quad \textrm{$x\in \ambient$, $A \in \Borel(\ambient)$.}
		\end{equation}
		Then by \cite[Theorem 3.4.8]{CF}, letting $\widetilde{u}$ denote any $\form$-quasi-continuous
		$\refmeas$-version of $u \in \domain_{e}$, we can define an $\form$-q.e.\ defined,
		$\form$-quasi-continuous function $H_{F} \wt{u} \in \domain_{e}$ by
		\begin{equation} \label{e:defhitdist}
			H_{F} \wt{u}(x) := \expdiff_{x}[\wt{u}(\diff_{\sigma_{F}}) \one_{ \{ \sigma_{F}<\infty \} }],
		\end{equation}
		which is independent of $\widetilde{u}|_{\ambient \setminus F}$ for $\form$-q.e.\ $x \in \ambient$
		since $\lawdiff_{x}( \textrm{$\diff_{\sigma_{F}} \in \oneptcpt{\ambient} \setminus F$, $\sigma_{F}<\infty$} ) = 0$
		for $\form$-q.e.\ $x \in \ambient$ by \cite[Theorem 3.3.3-(i)]{CF} and \cite[Lemma A.2.7]{FOT}, and
		$H_{F} \widetilde{u}$ is \emph{$\form$-harmonic} on $\ambient \setminus F$, i.e., satisfies
		\begin{equation} \label{eq:hit-dist-harm-test}
			\form(H_{F} \widetilde{u},f)=0 \qquad \textrm{for any $f\in\domain_e$ with $\widetilde{f}=0$ $\form$-q.e.\ on $F$.}
		\end{equation}
		Moreover, since $\lawdiff_{x}(\sigma_{F}=0)=1$ for $\form$-q.e.\ $x\in F$ by
		\cite[Theorems A.2.6-(i), 4.1.3 and 4.2.1-(ii)]{FOT}, it follows from
		\eqref{e:defhitdist} and \eqref{eq:hit-dist-harm-test} that
		\begin{equation} \label{eq:hit-dist-harm-var}
			H_{F}\widetilde{u}=\widetilde{u} \quad \textrm{$\form$-q.e.\ on $F$}
			\qquad \textrm{and} \qquad
			\form(H_{F} \widetilde{u},H_{F} \widetilde{u}) \leq \form(u,u).
		\end{equation}
		Lastly, for an open subset $D$ of $\ambient$, we define the
		\textbf{$\form$-harmonic measure} $\hmeas{D}{x}$ of $D$ with base point $x\in D$ by
		\begin{equation} \label{e:defharm}
			\hmeas{D}{x}(A):= H_{\ambient \setminus D}(x,A) \quad \textrm{for each $A \in \Borel(\ambient)$.}
		\end{equation}
	\end{definition}
	
	Before starting our discussion of trace Dirichlet forms, we recall
	some basic properties of the harmonic measure in the following lemma.
	
	\begin{lem} \label{l:harmonicm}
		Let an MMD space $(\ambient,d,\refmeas,\form,\domain)$ and a diffusion $\diff$ on $\ambient$
		satisfy Assumption \ref{a:feller}. Let $D$ be a non-empty open subset of $\ambient$.
		\begin{enumerate}[\rm(a)]\setlength{\itemsep}{0pt}\vspace{-5pt}
			\item\label{it:hmeas-smooth-support} \textup{(\cite[Lemma 3.2]{Lie})} For any $x \in D$,
			the measure $\hmeas{D}{x}$ charges no $\form$-polar set and $\supp_{\ambient}[\hmeas{D}{x}] \subset \partial D$.
			\item\label{it:hmeas-continuous} \textup{(\cite[Lemma 3.2]{Lie})}
			For any bounded Borel measurable function $f\colon \partial D \to \mathbb{R}$,
			the function $h \colon D \to \mathbb{R}$ defined by
			\begin{equation*}
				h(x):= \int_{\partial D} f(y)\,\hmeas{D}{x}(dy)
			\end{equation*}
			belongs to $\domain_{\on{loc}}(D)$ and is continuous on $D$ and $\form$-harmonic on $D$.
			\item\label{it:hmeas-AC} If $D$ is connected, then $\hmeas{D}{x} \ll \hmeas{D}{y}$ for any $x,y \in D$.
			\item\label{it:hmeas-prob1} If $D$ is relatively compact in $\ambient$ and $\ambient\setminus D$
			is not $\form$-polar, then $\hmeas{D}{x}(\partial D)=1$ for any $x\in D$.
			\item\label{it:hmeas-quasi-support} Let $\unifdom$ be a uniform domain in $(\ambient,d)$ and
			$\diffref=\bigl(\Omega^{\on{ref}},\events^{\on{ref}},\{\diffref_{t}\}_{t\in[0,\infty]},\{\lawref_{x}\}_{x\in\overline{\unifdom}\cup\{\cemetery\}}\bigr)$
			be a diffusion on $\overline{\unifdom}$ as in Assumption \ref{a:feller}
			for the MMD space $(\overline{\unifdom},d,\refmeas|_{\overline{\unifdom}},\formrefgen{\unifdom},\domain(\unifdom))$
			\textup{(recall Theorem \ref{thm:hkeunif}-\eqref{it:hkeunif})}. Then for any $x \in \unifdom$, the
			$\formrefgen{\unifdom}$-harmonic measure of $\unifdom$ with base point $x$
			coincides with $\hmeas{\unifdom}{x}\big|_{\overline{\unifdom}}$, and $\partial \unifdom$
			is an $\formrefgen{\unifdom}$-quasi-support of $\hmeas{\unifdom}{x}\big|_{\overline{\unifdom}}$.
		\end{enumerate}
	\end{lem}
	
	\begin{proof}
		\begin{enumerate}\setlength{\itemsep}{0pt}
			\item[(\ref{it:hmeas-smooth-support},\ref{it:hmeas-continuous})] We have
			$\supp_{\ambient}[\hmeas{D}{x}] \subset \partial D$ by the sample-path continuity
			\eqref{eq:sample-path-cont} of $\diff$, which holds for any $x \in \ambient$
			by \ref{eq:AC} of $\diff$. The remaining properties are proved in \cite[Lemma 3.2]{Lie}.
			Although \cite[Lemma 3.2]{Lie} assumes that $D$ is a relatively compact open subset of $\ambient$,
			the proofs presented there work for an arbitrary open subset.
			\item[\eqref{it:hmeas-AC}] Let $A \in \Borel( \partial D )$ satisfy $\hmeas{D}{y}(A)=0$.
			By \eqref{it:hmeas-continuous}, the function $h_A(z):= \int_{\partial D} \one_{A}(\xi) \hmeas{D}{z}(d\xi)= \hmeas{D}{z}(A)$ on $D$
			is continuous, non-negative, $\form$-harmonic on $D$ and belongs to $\domain_{\on{loc}}(D)$.
			Since $h_A(y)= \hmeas{D}{y}(A)=0$, we conclude from \hyperlink{ehi}{$\on{EHI}$}
			(recall Remark \ref{r:ehi-hke}) and the connectedness of $D$ that
			$h_A^{-1}(0)$ is non-empty, both closed and open in $D$ and thus coincides with $D$.
			In particular, $\hmeas{D}{x}(A)=h_A(x)=0$ and hence $\hmeas{D}{x} \ll \hmeas{D}{y}$.
			\item[\eqref{it:hmeas-prob1}] We have $\lawdiff_{x}(\tau_{D}<\infty)=1$ for $\form$-q.e.\ $x\in D$
			by \cite[Proposition 3.2]{BCM} and the irreducibility of $(\ambient,\refmeas,\form,\domain)$
			from Proposition \ref{p:feller}-\eqref{it:HKE-conn-irr-cons}, hence $\lawdiff_{x}(\tau_{D}<\infty)=1$
			for any $x\in D$ by \ref{eq:AC} of $\diff^{D}$ and the Markov property of $\diff$, and therefore
			$\hmeas{D}{x}(\partial D)=\lawdiff_{x}(\tau_{D}<\infty)=1$ for any $x\in D$
			by the sample-path continuity \eqref{eq:sample-path-cont} and \ref{eq:AC} of $\diff$.
			\item[\eqref{it:hmeas-quasi-support}] We see from \cite[Exercise 1.4.1 and Theorem 1.4.2-(ii)]{FOT}
			that $\domain(\unifdom) \cap \contfunc_{\mathrm{c}}(\unifdom) = \domain^{0}(\unifdom) \cap \contfunc_{\mathrm{c}}(\unifdom)$,
			from \cite[Corollary 3.2.1]{FOT} that $\formrefgen{\unifdom}(f,g) = \form^{\unifdom}(f,g)$
			for any $f,g \in \domain^{0}(\unifdom) \cap \contfunc_{\mathrm{c}}(\unifdom)$, and
			thus from the denseness of $\domain(\unifdom) \cap \contfunc_{\mathrm{c}}(\unifdom) = \domain^{0}(\unifdom) \cap \contfunc_{\mathrm{c}}(\unifdom)$
			in $\bigl(\domain(\unifdom),\formrefgen{\unifdom}+\langle\cdot,\cdot\rangle_{L^{2}(\overline{\unifdom},\refmeas|_{\overline{\unifdom}})}\bigr)$
			and in $\bigl(\domain^{0}(\unifdom),\form^{\unifdom}+\langle\cdot,\cdot\rangle_{L^{2}(\unifdom,\refmeas|_{\unifdom})}\bigr)$
			that the part Dirichlet form of $(\formrefgen{\unifdom},\domain(\unifdom))$
			on $\unifdom$ coincides with $(\form^{\unifdom},\domain^{0}(\unifdom))$.
			Namely, the Dirichlet form of the part process $(\diffref)^{\unifdom}$
			of $\diffref$ on $\unifdom$ coincides with that of $\diff^{\unifdom}$,
			which together with Proposition \ref{p:feller}-\eqref{it:HKE-part-CHK-sFeller}
			implies that $(\diffref)^{\unifdom}$ and $\diff^{\unifdom}$
			have the same Markovian transition function on $\unifdom$. It then follows
			by the Markov property of $(\diffref)^{\unifdom}$ and $\diff^{\unifdom}$ that
			for any $x \in \unifdom$, the law of $\{ (\diffref)^{\unifdom}_{t} \}_{t\in[0,\infty)}$
			under $\lawref_{x}$ and that of $\{ \diff^{\unifdom}_{t} \}_{t\in[0,\infty)}$
			under $\lawdiff_{x}$ as $\contfunc_{\cemetery}([0,\infty),\oneptcpt{\unifdom})$-valued
			random variables coincide, where
			\begin{equation*}
				\contfunc_{\cemetery}([0,\infty),\oneptcpt{\unifdom})
				:=\biggl\{ \gamma \colon [0,\infty) \to \oneptcpt{\unifdom} \biggm|
				\begin{minipage}{170pt}
					$\gamma$ is continuous, $\gamma(t) = \cemetery_{\unifdom}$
					for any $t \in [0,\infty)$ with $t \geq \inf\gamma^{-1}(\cemetery_{\unifdom})$
				\end{minipage}
				\biggr\},
			\end{equation*}
			equipped with the $\sigma$-algebra generated by its subsets of the form
			$\{ \gamma \in \contfunc_{\cemetery}([0,\infty),\oneptcpt{\unifdom}) \mid \gamma(t) \in A \}$
			for some $t \in [0,\infty)$ and $A \in \Borel(\oneptcpt{\unifdom})$.
			In particular, for any $x \in \unifdom$, any $f \in \contfunc(\oneptcpt{\ambient})$
			with $\norm{f}_{\sup}<\infty$ and any $\varepsilon \in (0,\infty)$, we have
			\begin{equation*}
				\expref_{x}\bigl[ f(\diffref_{(\tau_{\unifdom}-\varepsilon)^{+}})\one_{\{\tau_{\unifdom}<\infty\}} \bigr]
				=\expdiff_{x}\bigl[ f(\diff_{(\tau_{\unifdom}-\varepsilon)^{+}})\one_{\{\tau_{\unifdom}<\infty\}} \bigr],
			\end{equation*}
			and letting $\varepsilon \downarrow 0$ yields
			$\expref_{x}\bigl[ f(\diffref_{\tau_{\unifdom}})\one_{\{\tau_{\unifdom}<\infty\}} \bigr]
			=\expdiff_{x}\bigl[ f(\diff_{\tau_{\unifdom}})\one_{\{\tau_{\unifdom}<\infty\}} \bigr]$
			by \eqref{eq:sample-path-cont} and the dominated convergence theorem, whence
			$\lawref_{x}( \diffref_{\tau_{\unifdom}} \in dy ) = \lawdiff_{x}( \diff_{\tau_{\unifdom}} \in dy ) = \hmeas{\unifdom}{x}(dy)$,
			i.e., the $\formrefgen{\unifdom}$-harmonic measure of $\unifdom$ with base point $x$
			coincides with $\hmeas{\unifdom}{x}\big|_{\overline{\unifdom}}$.
			
			Next, let $x \in \unifdom$ and, to see that $\partial \unifdom$ is an $\formrefgen{\unifdom}$-quasi-support of $\hmeas{\unifdom}{x}\big|_{\overline{\unifdom}}$,
			define the $1$-order hitting distribution $H^{1}_{\partial \unifdom}$ of $\diffref$ to $\partial \unifdom$ by
			\[
			H^{1}_{\partial \unifdom}(y,B):=\mathbb{E}^{\on{ref}}_y \left[ e^{- \sigma_{\partial \unifdom}} \one_{B}(\diffref_{\sigma_{\partial \unifdom}}) \one_{\set{\sigma_{\partial \unifdom} < \infty}} \right],
			\quad \sigma_{\partial \unifdom}= \inf\{ t \in (0,\infty) \mid \diffref_t \in \partial \unifdom \}
			\]
			for $y \in \overline{\unifdom}$ and $B \in \Borel( \partial \unifdom)$.
			Then by the result of the previous paragraph and \eqref{it:hmeas-AC} we have
			$H^1_{\partial \unifdom}(y,\cdot) \ll \hmeas{\unifdom}{x}\big|_{\overline{\unifdom}}$ for any $y \in \unifdom$,
			which implies by \cite[Exercise 4.6.1]{FOT} that $\partial \unifdom$
			is an $\formrefgen{\unifdom}$-quasi-support of $\hmeas{\unifdom}{x}\big|_{\overline{\unifdom}}$
			since $\refmeas(\partial \unifdom)=0$ by Lemma \ref{l:doubling-unif}.
			\qedhere\end{enumerate}
	\end{proof}
	
	The rest of this subsection is devoted to a discussion of trace Dirichlet forms,
	which are the Dirichlet forms of the time-changed processes given by \eqref{eq:time-changed-process}
	and defined as follows.
	
	\begin{definition} [Trace Dirichlet form] \label{d:traceDF}
		Let $\nu$ be an $\form$-smooth Radon measure on $\ambient$, set $F^{*}:=\supp_{\ambient}[\nu]$,
		and let $F$ be an $\diff$-nearly Borel measurable $\form$-quasi-support of $\nu$.
		Since $\Capa_{1}(F \setminus F^{*})=0$ by Definition \ref{d:quasisupport}-\eqref{it:quasisupport-1},\eqref{it:quasisupport-2},
		replacing $F$ with $F \setminus \mathcal{N}$ for an arbitrary $\diff$-nearly Borel measurable
		$\form$-polar set $\mathcal{N}\subset\ambient$ including $F \setminus F^{*}$,
		we may and do assume that $F \subset F^{*}$. We define
		\begin{equation} \label{e:def-trace-domain}
			\trdomain := \biggl\{ \widetilde{u}|_{F} \biggm| \textrm{$u \in \domain_e$, $\int_{F} \widetilde{u}^{2} \, d\nu < \infty$} \biggr\},
		\end{equation}
		where we identify functions that coincide $\form$-q.e.\ on $F$; since,
		for each $u,v \in \domain_e$, $\widetilde{u}=\widetilde{v}$ $\form$-q.e.\ on $F$ if and only if
		$\widetilde{u}=\widetilde{v}$ $\nu$-a.e.\ on $\ambient$ by \cite[Theorem 3.3.5]{CF},
		and since $\nu(\ambient \setminus F)=0$ and $F \subset F^{*}$,
		we can canonically consider $\trdomain$ as a linear subspace of $L^{2}(F^{*},\nu)$.
		Then we further define a non-negative definite symmetric bilinear form
		$\trform \colon \trdomain \times \trdomain \to \mathbb{R}$ by
		\begin{equation} \label{e:def-trace-form}
			\trform(\widetilde{u}|_{F},\widetilde{v}|_{F}) := \form(H_{F} \widetilde{u}, H_{F} \widetilde{v})
			\qquad \textrm{for $u,v \in \domain_{e}$ with $\widetilde{u}|_{F},\widetilde{v}|_{F} \in \trdomain$,}
		\end{equation}
		and call $(\trform,\trdomain)$ the \textbf{trace Dirichlet form} of $(\form,\domain)$ on $L^{2}(F^{*},\nu)$.
	\end{definition}
	
	%We remark that the value of $\form^\mu(\phi,\phi)$ in \eqref{e:def-trace-form} does not choice of the quasi-support.
	
	Let $\nu,F^{*},F,\trdomain,\trform$ be as in Definition \ref{d:traceDF}, and assume that
	$\nu(\ambient)>0$, or equivalently, $\Capa_{1}(F)>0$. Then $(\trform,\trdomain)$ is indeed
	a regular symmetric Dirichlet form on $L^{2}(F^{*},\nu)$ and $F^{*}\setminus F$ is $\trform$-polar
	by \cite[Theorem 5.2.13-(i)]{CF}, a subset $\mathcal{N}_{1}$ of $F$ is $\trform$-polar
	if and only if $\mathcal{N}_{1}$ is $\form$-polar by \cite[Theorem 5.2.8 and Proof of Theorem 5.2.13-(ii)]{CF},
	and $f|_{F\setminus\mathcal{N}_{1}}$ is $\trform$-quasi-continuous on $F^{*}$
	for any $\form$-quasi-continuous function $f \colon \ambient \setminus \mathcal{N}_{1} \to [-\infty,\infty]$
	defined $\form$-q.e.\ for some $\form$-polar $\mathcal{N}_{1} \subset \ambient$
	by \cite[Theorem 5.2.6 and Proof of Theorem 5.2.13-(ii)]{CF}.
	Furthermore by \cite[Theorem 5.2.15]{CF}, the extended Dirichlet space
	$\trdomain_{e}$ of $(F^{*},\nu,\trform,\trdomain)$ and the values of
	$\trform$ on $\trdomain_{e}\times\trdomain_{e}$ are identified as
	\begin{equation} \label{e:trace-ExtDiriSp}
		\trdomain_{e} = \{ \widetilde{u}|_{F} \mid u \in \domain_{e} \}
		\mspace{30mu} \textrm{and} \mspace{30mu}
		\trform(\widetilde{u}|_{F},\widetilde{v}|_{F}) = \form(H_{F} \widetilde{u}, H_{F} \widetilde{v})
		\quad \textrm{for any $u,v \in \domain_{e}$.}
	\end{equation}
	In probabilistic terms, for the time-changed process $\difftr$ of $\diff$
	by a PCAF $A$ of $\diff$ with Revuz measure $\nu$, which is a $\nu$-symmetric right-continuous
	strong Markov process on $(\oneptcpt{F},\Borel^{*}(\oneptcpt{F}))$ defined by \eqref{eq:time-changed-process},
	its Dirichlet form is $(\trform,\trdomain)$ by \cite[Theorem 5.2.2]{CF}; here, since
	the support $F_{A}$ of $A$ defined by \eqref{e:supportA} is an $\diff$-nearly Borel measurable
	$\form$-quasi-support of $\nu$, as the sets $F$ and $\mathcal{N}$ in Definition \ref{d:traceDF}
	we can choose $F_{A}$ and an exceptional set $\mathcal{N}_{A}$ of $A$ including $F \setminus F^{*}$,
	respectively, and therefore we may and do assume that $F$ in Definition \ref{d:traceDF}
	is the support of $A$, on which we can define the time-changed process
	$\difftr$ by \eqref{eq:time-changed-process}.
	
	In order to analyze the trace Dirichlet form $(\trform,\trdomain)$, it is desirable to compute
	its \emph{Beurling--Deny decomposition} \cite[Theorems 3.2.1 and 4.5.2]{FOT} (see also \cite[Theorem 4.3.3]{CF}).
	For the regular Dirichlet space $(\ambient,\refmeas,\form,\domain)$, this decomposition
	can be stated as follows: there exists a unique triple $(\form^{(c)},\jumpmeas,\kappa)$
	of a strongly local non-negative definite symmetric bilinear form
	$\form^{(c)} \colon \domain_{e} \times \domain_{e} \to \mathbb{R}$,
	a symmetric Radon measure $\jumpmeas$ on $\offdiag{\ambient}$ and
	a Radon measure $\kappa$ on $\ambient$, such that
	$\jumpmeas((\ambient \times \mathcal{N}_{1}) \cap \offdiag{\ambient}) = 0 = \kappa(\mathcal{N}_{1})$
	for any $\form$-polar $\mathcal{N}_{1} \in \Borel(\ambient)$ and
	\begin{equation} \label{e:Beurling-Deny}
		\form(u,v) = \form^{(c)}(u,v) + \frac{1}{2} \int_{\offdiag{\ambient}} (\wt{u}(x)-\wt{u}(y))(\wt{v}(x)-\wt{v}(y))\, \jumpmeas(dx\,dy)
		+ \int_{\ambient} \wt{u}(x)\wt{v}(x)\,\kappa(dx)
	\end{equation}
	for any $u,v \in \domain_{e}$, where $\wt{u},\wt{v}$ denote $\form$-quasi-continuous $\refmeas$-versions of $u,v$ respectively.
	We call $\form^{(c)},\jumpmeas,\kappa$ the \emph{strongly local part}, the \emph{jumping measure}
	and the \emph{killing measure}, respectively, of $(\ambient,\refmeas,\form,\domain)$.
	Moreover, we can define the \emph{strongly local part $\Gamma_{c}(u,u)$ of the
	$\form$-energy measure $\Gamma(u,u)$ of $u\in\domain_{e}$} by replacing $\form$
	with $\form^{(c)}$ in the argument in Definition \ref{d:EnergyMeas} on the basis of
	\cite[(3.2.19), (3.2.20) and (3.2.21)]{FOT}, and $\Gamma_{c}(u,u)(\ambient)=\form^{(c)}(u,u)$
	for any $u\in\domain_{e}$ by \cite[Lemma 3.2.3]{FOT}.
	
	An identification of the Beurling--Deny decomposition of the trace Dirichlet form
	$(\trform,\trdomain)$ is given in \cite[Theorems 5.6.2 and 5.6.3]{CF}.
	In the following proposition, we provide a new simple proof of the result
	for the strongly local part of $(\trform,\trdomain)$ in \cite[Theorem 5.6.2]{CF}.
	
	\begin{prop}[{\cite[Theorem 5.6.2]{CF}}] \label{prop:trace-slocal}
		Let $(\ambient,\refmeas,\form,\domain)$ be a regular Dirichlet space,
		$\nu$ an $\form$-smooth Radon measure on $\ambient$ with $\nu(\ambient)>0$,
		set $F^{*}:=\supp_{\ambient}[\nu]$,
		let $F$ be an $\form$-quasi-support of $\nu$ satisfying $F \subset F^{*}$, and
		let $(\trform,\trdomain)$ be the trace Dirichlet form of $(\form,\domain)$
		on $L^{2}(F^{*},\nu)$ defined by \eqref{e:def-trace-domain} and \eqref{e:def-trace-form}.
		Let $\Gamma_{c}$ denote the strongly local part of the $\form$-energy measures, and
		$\trformc$ the strongly local part of the $\trform$-energy measures. Then
		\begin{equation} \label{eq:trace-slocal-energymeas}
			\trformc( \widetilde{u}|_{F}, \widetilde{u}|_{F} )(B) = \Gamma_{c}( u, u )(B \cap F)
			\qquad \textrm{for any $u \in \domain_{e}$ and any $B \in \Borel(F^{*})$.}
		\end{equation}
		In particular, the strongly local part $\trform^{(c)}$ of $(F^{*},\nu,\trform,\trdomain)$ is given by
		\begin{equation}\label{eq:trace-slocal}
			\trform^{(c)}( \widetilde{u}|_{F}, \widetilde{u}|_{F} ) = \Gamma_{c}( u, u )(F)
			\qquad \textrm{for any $u\in\domain_{e}$.}
		\end{equation}
	\end{prop}
	
	\begin{proof}
		Since $F$ is $\form$-quasi-closed, we can choose an $\form$-nest $\{F_{k}\}_{k\in\mathbb{N}}$
		so that $F\cap\bigcup_{k\in\mathbb{N}}F_{k}\in\Borel(\ambient)$, and therefore by replacing
		$F$ with $F\cap\bigcup_{k\in\mathbb{N}}F_{k}$ we may and do assume that $F \in \Borel(\ambient)$.
		For any $u \in \domain_{e}$, \eqref{eq:trace-slocal} follows from \eqref{eq:trace-slocal-energymeas} with $B=F^{*}$ and
		$\trform^{(c)}( \widetilde{u}|_{F}, \widetilde{u}|_{F} )=\trformc( \widetilde{u}|_{F}, \widetilde{u}|_{F} )(F^{*})$,
		and $\trformc( \widetilde{u}|_{F}, \widetilde{u}|_{F} )(F^{*} \setminus F) = 0$
		since $\trformc( \widetilde{u}|_{F}, \widetilde{u}|_{F} )$ charges no $\trform$-polar set
		by \cite[Lemma 3.2.4]{FOT} and $F^{*} \setminus F$ is $\trform$-polar.
		It thus suffices to prove \eqref{eq:trace-slocal-energymeas} for $B \in \Borel(F)$.
		Our simple proof of \eqref{eq:trace-slocal-energymeas} is based on Mosco's
		proof of the domination principle in \cite[p.~389, Proof of Proposition]{Mosco}
		and goes as follows. Let $u \in \domain \cap \contfunc_{\mathrm{c}}(\ambient)$.
		
		We write $\form(v):=\form(v,v)$, $\Gamma_{c}(v):=\Gamma_{c}(v,v)$,
		$\trform(\widetilde{v}|_{F}):=\trform(\widetilde{v}|_{F},\widetilde{v}|_{F})$
		and $\trformc(\widetilde{v}|_{F}):=\trform(\widetilde{v}|_{F},\widetilde{v}|_{F})$
		for $v\in\domain_{e}$ in this proof. Let $f \in \domain \cap \contfunc_{\mathrm{c}}(\ambient)$ and $\lambda \in (0,\infty)$.
		Computing both sides of the inequality (recall the second half of \eqref{eq:hit-dist-harm-var} and \eqref{e:def-trace-form})
		\begin{equation*}
			\trform( f \cos( \lambda u )|_F ) + \trform( f \sin( \lambda u )|_F )
			\leq \form( f \cos( \lambda u ) ) + \form( f \sin( \lambda u ) )
		\end{equation*}
		on the basis of the chain rule \cite[Theorem 3.2.2]{FOT} as in Mosco's argument
		in \cite[p.~389]{Mosco}, dividing the resulting inequality by $\lambda^2$ and
		letting $\lambda \to \infty$ via the dominated convergence theorem, we obtain
		\begin{equation} \label{eq:Mosco-argument-upper}
			\int_{F^{*}} f^2 \, d\trformc( u|_F, u|_F ) \leq \int_{\ambient} f^2 \, d\Gamma_c( u, u ).
		\end{equation}
		Then for any compact subset $K$ of $F$ and any open subset $G$ of $\ambient$
		with $K \subset G$, by \cite[Exercise 1.4.1]{FOT} we can choose
		$f \in \domain \cap \contfunc_{\mathrm{c}}(\ambient)$ so that
		$\one_{K} \leq f \leq \one_{G}$, thus from \eqref{eq:Mosco-argument-upper} we obtain
		\begin{equation*}
			\trformc( u|_{F}, u|_{F} )( K ) \leq \Gamma_{c}( u, u )( G ).
		\end{equation*}
		Since $\Gamma_{c}(u,u)$ and $\trformc( u|_F, u|_F )$ are outer and inner regular
		by \cite[Theorem 2.18]{Rud}, taking the infimum over the open subsets $G$ of
		$\ambient$ with $K \subset G$ yields
		\begin{equation*}
			\trformc( u|_F, u|_F )( K ) \leq \Gamma_c( u, u )( K ),
		\end{equation*}
		and now for any $B \in \Borel(F)$, taking the supremum over the compact subsets
		$K$ of $B$ shows
		\begin{equation} \label{e:slb1}
			\trformc( u|_{F}, u|_{F} )(B) \leq \Gamma_c( u, u )(B).
		\end{equation}
		
		Next, we show the lower bound matching the upper bound \eqref{e:slb1}.
		Let $K$ be any compact subset of $F$, $G$ any open subset of $\ambient$ with $K\subset G$,
		and choose $f \in \domain \cap \contfunc_{\mathrm{c}}(\ambient)$ so that
		$\one_K \leq f \leq \one_G$. For any $\lambda \in (0,\infty)$, since
		$\abs{\Gamma_{c}(v)(B)^{1/2}-\Gamma_{c}(g)(B)^{1/2}}\leq\Gamma_{c}(v-g)(B)^{1/2}=0$
		for any $v,g \in \domain_{e}$ and any $B \in \Borel(\ambient)$ with $B \subset (\widetilde{v}-\widetilde{g})^{-1}(0)$
		by \cite[Theorem 4.3.8]{CF}, we see from the first half of \eqref{eq:hit-dist-harm-var},
		$K \subset F$, \eqref{e:Beurling-Deny} and \eqref{e:def-trace-form} that
		\begin{align}
			&\Gamma_{c}( f \cos( \lambda u ) )(K) + \Gamma_{c}( f \sin( \lambda u ) )(K) \nonumber \\
			&= \Gamma_{c}\bigl( H_{F}( f \cos( \lambda u ) ) \bigr)(K) + \Gamma_{c}\bigl( H_{F}( f \sin( \lambda u ) ) \bigr)(K) \nonumber \\
			&\leq \form\bigl( H_{F}( f \cos( \lambda u ) ) \bigr) + \form\bigl( H_{F}( f \sin( \lambda u ) ) \bigr) \nonumber \\
			&= \trform( f \cos( \lambda u )|_{F} ) + \trform( f \sin( \lambda u )|_{F} ),
			\label{eq:Mosco-lower-slocal}
		\end{align}%
		and applying Mosco's argument in \cite[p.~389]{Mosco} to \eqref{eq:Mosco-lower-slocal}
		in the same way as the above proof of \eqref{eq:Mosco-argument-upper}, we obtain
		\begin{equation*}
			\Gamma_{c}( u, u )(K) = \int_{K} f^{2}\,d\Gamma_{c}(u,u)
			\leq \int_{F^{*}} f^{2} \, d\trformc( u|_{F}, u|_{F} )
			\leq \trformc( u|_{F}, u|_{F} )( G \cap F^{*} ).
		\end{equation*}
		Now since $\Gamma_{c}(u,u)$ and $\trformc( u|_{F}, u|_{F} )$
		are outer and inner regular by \cite[Theorem 2.18]{Rud}, by taking the infimum over
		the open subsets $G$ of $\ambient$ with $K \subset G$ and then the supremum
		over the compact subsets $K$ of any given $B \in \Borel(F)$, we obtain
		\begin{equation*}
			\Gamma_{c}( u, u )( B ) \leq \trformc( u|_{F}, u|_{F} )( B ),
		\end{equation*}
		which together with \eqref{e:slb1} proves \eqref{eq:trace-slocal-energymeas}
		for $u \in \domain \cap \contfunc_{\mathrm{c}}( \ambient )$.
		
		Lastly, for any $u \in \domain_{e}$, by \cite[Theorem 2.1.7]{FOT} we can choose
		$\{ u_{n} \}_{n \in \mathbb{N}} \subset \domain \cap \contfunc_{\mathrm{c}}( \ambient )$
		so that $\lim_{n\to\infty}\form(u-u_{n})=0$, hence
		$\lim_{n\to\infty}\trform(\widetilde{u}|_{F}-u_{n}|_{F})=0$
		by the second half of \eqref{eq:hit-dist-harm-var} and \eqref{e:trace-ExtDiriSp},
		and then \eqref{eq:trace-slocal-energymeas} for $u$ follows by
		letting $n\to\infty$ in \eqref{eq:trace-slocal-energymeas} for $u_{n}$
		on the basis of the triangle inequalities for $\Gamma_{c}(\cdot)(B\cap F)^{1/2}$ and
		$\trformc(\cdot)(B)^{1/2}$, $\Gamma_{c}(u-u_{n})(\ambient) \leq \form(u-u_{n})$ and
		$\trformc(\widetilde{u}|_{F}-u_{n}|_{F})(F^{*}) \leq \trform(\widetilde{u}|_{F}-u_{n}|_{F})$.
	\end{proof}
	
	We will use the following proposition to show that the killing measure of the boundary trace form is zero.
	This could alternatively be deduced from \cite[Theorem 5.6.3]{CF}, but we give a new
	self-contained proof that does not rely on the notion of supplementary Feller measure.
	
	\begin{prop} \label{p:killing}
		Let $(\ambient,\refmeas,\form,\domain)$ be a regular Dirichlet space,
		$\nu$ an $\form$-smooth Radon measure on $\ambient$ with $\nu(\ambient)>0$,
		set $F^{*}:=\supp_{\ambient}[\nu]$,
		let $F$ be an $\form$-quasi-support of $\nu$ satisfying $F \subset F^{*}$, and
		let $(\trform,\trdomain)$ be the trace Dirichlet form of $(\form,\domain)$ on
		$L^{2}(F^{*},\nu)$ defined by \eqref{e:def-trace-domain} and \eqref{e:def-trace-form}.
		Let $\kappa$ denote the killing measure of $(\ambient,\refmeas,\form,\domain)$,
		and $\widecheck{\kappa}$ the killing measure of $(F^{*},\nu,\trform,\trdomain)$.
		If $\kappa(\ambient)=0$ and
		\begin{equation} \label{eq:quasi-support-hit-as}
			\lawdiff_{x}(\sigma_{F} < \infty) = 1 \quad \textrm{for $\form$-q.e.\ $x \in \ambient$,}
		\end{equation}
		then $\widecheck{\kappa}(F^{*})=0$.
	\end{prop}
	
	\begin{proof}
		By \cite[Exercise 1.4.1]{FOT}, we can choose
		$\{ f_{n} \}_{n\in\mathbb{N}} \subset \domain \cap \contfunc_{\mathrm{c}}(\ambient )$
		so that for any $x \in \ambient$ we have $0 \leq f_{n}(x) \leq f_{n+1}(x) \leq 1$
		for any $n \in \mathbb{N}$ and $\lim_{n\to\infty}f_{n}(x)=1$.
		Let $u \in \domain \cap \contfunc_{\mathrm{c}}(\ambient )$.
		Then by \eqref{e:EnergyMeas}, \cite[(3.2.23), Lemma 3.2.3]{FOT} and
		\eqref{e:Beurling-Deny} applied to $(F^{*},\nu,\trform,\trdomain)$
		and the monotone convergence theorem, we have
		\begin{equation} \label{e:killm1}
			\trform( u|_{F}, f_{n}u|_{F} ) - \frac{1}{2} \trform( u^{2}|_{F}, f_{n}|_{F} )
			\xrightarrow{n\to\infty} \trform( u|_{F}, u|_{F} ) - \frac{1}{2} \int_{ F^{*} } u^{2} \, d\widecheck{\kappa}.
		\end{equation}
		
		On the other hand, for any $n \in \mathbb{N}$, by \eqref{e:def-trace-form},
		the first half of \eqref{eq:hit-dist-harm-var} and \eqref{eq:hit-dist-harm-test}
		applied to $H_{F}u,H_{F}f_{n}$, and the extension of \eqref{e:EnergyMeas}
		to $\form$-quasi-continuous $\refmeas$-versions of functions in
		$\domain_{e} \cap L^{\infty}(\ambient,\refmeas)$
		proved in \cite[Proof of Theorem 4.3.11]{CF}, we obtain
		\begin{align} \label{e:killm2}
			\trform( u|_{F}, f_{n}u|_{F} ) - \frac{1}{2} \trform( u^{2}|_{F}, f_{n}|_{F} ) 
			&= \form( H_{F}u, H_{F}(f_{n}u) ) - \frac{1}{2} \form( H_{F}(u^{2}),  H_{F}f_{n} ) \nonumber \\
			&= \form( H_{F}u, (H_{F}f_{n})(H_{F}u) ) - \frac{1}{2} \form( (H_{F}u)^{2}, H_{F}f_{n} ) \nonumber \\
			&= \int_{ \ambient } H_{F}f_{n} \,d\Gamma( H_{F}u, H_{F}u ).
		\end{align}
		Since $\lawdiff_{x}( \textrm{$\diff_{\sigma_{F}} =\cemetery$, $\sigma_{F}<\infty$} ) = 0$
		for any $x \in \ambient$, $\{ H_{F}f_{n}(x) \}_{n \in \mathbb{N}} \subset [0,1]$
		is non-decreasing and converges to $\lawdiff_{x}( \sigma_{F}<\infty ) = 1$
		for $\form$-q.e.\ $x \in \ambient$ by \eqref{e:defhitdist}
		and \eqref{eq:quasi-support-hit-as}, and hence
		\begin{align}
			\int_{ \ambient } H_{F}f_{n} \,d\Gamma( H_{F}u, H_{F}u )
			&\xrightarrow{n \to \infty} \Gamma ( H_{F}u, H_{F}u )( \ambient ) \nonumber \\
			&\mspace{28mu}= \form( H_{F}u,  H_{F}u ) - \frac{1}{2} \int_{\ambient} (H_{F}u)^{2} \, d\kappa
			= \trform( u|_{F}, u|_{F} )
			\label{e:killm3}
		\end{align}
		by the monotone convergence theorem and the fact that $\Gamma( H_{F}u, H_{F}u )$
		charges no $\form$-polar set by \cite[Lemma 3.2.4]{FOT}. Here the second equality in
		\eqref{e:killm3} follows from \eqref{e:def-trace-form} and $\kappa(\ambient)=0$,
		and the first one in \eqref{e:killm3} is a special case of the following general equality
		\begin{equation} \label{eq:energy-meas-total}
			\Gamma(v,v)(\ambient)=\form(v,v)-\frac{1}{2}\int_{\ambient}\widetilde{v}^{2}\,d\kappa
			\qquad\textrm{for any $v \in \domain_{e}$}
		\end{equation}
		(which holds for \emph{any} regular Dirichlet space $(\ambient,\refmeas,\form,\domain)$);
		we can verify \eqref{eq:energy-meas-total} first for $v \in \domain \cap \contfunc_{\mathrm{c}}(\ambient )$
		in the same way as \eqref{e:killm1} above, and for general $v \in \domain_{e}$
		by using \cite[Theorem 2.1.7]{FOT} to choose
		$\{ v_{n} \}_{n \in \mathbb{N}} \subset \domain \cap \contfunc_{\mathrm{c}}( \ambient )$
		so that $\lim_{n\to\infty}\form(v-v_{n},v-v_{n})=0$ and then by
		letting $n\to\infty$ in \eqref{eq:energy-meas-total} for $v_{n}$
		on the basis of the triangle inequalities for
		$\Gamma(\cdot,\cdot)(\ambient)^{1/2},\form(\cdot,\cdot)^{1/2},\norm{\cdot}_{L^{2}(\ambient,\kappa)}$
		and $\int_{\ambient}(\widetilde{v}-v_{n})^{2}\,d\kappa\leq\form(v-v_{n},v-v_{n})$
		implied by \eqref{e:Beurling-Deny}.
		
		It thus follows from \eqref{e:killm1}, \eqref{e:killm2} and \eqref{e:killm3} that
		\[
		\int_{ F^{*} } u^{2} d\widecheck{\kappa} = 0
		\qquad \textrm{for any $u \in \domain \cap \contfunc_{\mathrm{c}}( \ambient )$,}
		\]
		and hence $\widecheck{\kappa}( F^{*} ) = \lim_{n\to\infty}\int_{F^{*}}f_{n}^{2}\,d\kappa = 0$.
	\end{proof}
	\subsection{Stable-like heat kernel estimates} \label{s:stablehke}
	
	We recall a generalization of scale function considered in Subsection \ref{ss:HKE}
	from \cite[Defintion 7.2]{BCM} (see also \cite[Definition 5.4]{BM18}).
	
	\begin{definition} \label{d:regscale}
		Let $(\ambient,d)$ be a metric space. We say that a function $\scjump \colon \ambient \times [0,\infty) \to [0,\infty)$
		is a \emph{regular scale function on $(\ambient,d)$ with threshold $\scjthres{\scjump}\in(0,\infty]$}
		if $\scjump(x,\cdot) \colon [0,\infty)\to [0,\infty)$
		is a homeomorphism for all $x \in \ambient$, $\diam(\ambient)\leq\scjthres{\scjump}$
		and there exist $C_1,\beta_1,\beta_2 \in (0,\infty)$ such that for all $x,y \in \ambient$
		and all $s,r\in(0,\infty)$ with $s \leq r \leq \scjthres{\scjump}$,
		\begin{equation} \label{e:Psireg1}
			C_1^{-1} \biggl( \frac{r}{d(x,y) \vee r} \biggr)^{\beta_2} \biggl( \frac{d(x,y) \vee r}{s} \biggr)^{\beta_1}
			\leq \frac{ \scjump(x,r)}{\scjump(y,s)} 
			\leq C_1 \biggl( \frac{r}{d(x,y) \vee r} \biggr)^{\beta_1} \biggl( \frac{d(x,y) \vee r}{s} \biggr)^{\beta_2}.
		\end{equation}
	\end{definition}
	
	The definition in \cite[Defintion 7.2]{BCM} does not state that $\scjump(x,\cdot)\colon [0,\infty)\to [0,\infty)$ is a homeomorphism but this 
	condition can be achieved by replacing $\scjump$ with a comparable function if necessary as we will see in the proof of Lemma \ref{l:scale}.
	
	\begin{definition}
		Let $(\ambient,d,\refmeas,\form,\domain)$ be a NLMMD space, and let $\scjump \colon \ambient \times [0,\infty) \to [0,\infty)$
		be a regular scale function on $(\ambient,d)$ with threshold $\scjthres{\scjump}$.
		\begin{enumerate}[\rm(a)]\setlength{\itemsep}{0pt}\vspace{-5pt}
			\item (Jump kernel estimate) We say that $(\ambient,d,\refmeas,\form,\domain)$ satisfies
			the \emph{jump kernel estimate} \hypertarget{jphi}{$\on{J}(\scjump)$} if there exist
			a symmetric Borel measurable function $\jumpker \colon \offdiag{\ambient} \to (0,\infty)$ and $C \in (1,\infty)$ such that 
			\begin{equation} \label{eq:jump-kernel-estimate}
				\frac{C^{-1}}{\refmeas\bigl(B(x,d(x,y))\bigr) \scjump(x,d(x,y))} \le \jumpker(x,y)
				\le \frac{C}{\refmeas\bigl(B(x,d(x,y))\bigr) \scjump(x,d(x,y))}
			\end{equation}
			for all $(x,y) \in \offdiag{\ambient}$ and
			\begin{equation} \label{eq:DF-pure-jump-ker}
				\form(u,u)=\frac{1}{2}\int_{\ambient}\int_{\ambient} (u(x)-u(y))^{2} \jumpker(x,y)\, \refmeas(dx)\,\refmeas(dy)
			\end{equation}
			for all $u \in \domain$.
			\item (Exit time estimate) We say that $(\ambient,d,\refmeas,\form,\domain)$
			satisfies the \emph{exit time lower estimate} \hypertarget{exit}{$\on{E}(\scjump)_{\geq}$},
			if there exist $C,A \in (1,\infty)$ such that an $\refmeas$-symmetric Hunt process
			$\diff = (\Omega, \events, \{\diff_{t}\}_{t\in[0,\infty]},\{\lawdiff_{x}\}_{x \in \oneptcpt{\ambient}})$
			on $\ambient$ whose Dirichlet form is $(\form,\domain)$ satisfies
			\begin{equation} \label{eq:exit-times-lower-estimate}
				\expdiff_{x}[\tau_{B(x,r)}] \geq C^{-1} \scjump(x,r)
			\end{equation}
			for all $x \in \ambient \setminus \mathcal{N}$ and all $r \in (0,\diam(\ambient)/A)$
			for some properly exceptional set $\mathcal{N} \subset \ambient$ for $\diff$.
			We denote the corresponding upper estimate and the two-sided estimate by
			$\on{E}(\scjump)_{\le}$ and $\on{E}(\scjump)$, respectively.
			\item (Stable-like heat kernel estimates)  We say that $(\ambient,d,\refmeas,\form,\domain)$
			satisfies the \textbf{stable-like heat kernel estimates} \hypertarget{shk}{$\on{SHK}(\scjump)$}
			if there exist $C_1  \in (1,\infty)$ and a heat kernel $\{ p_{t} \}_{t>0}$ of
			$(\ambient,\refmeas,\form,\domain)$ such that for each $t \in (0,\infty)$,
			\begin{equation} \label{eq:shk-upper}
				p_{t}(x,y) \le C_1 \biggl( \frac{1}{\refmeas\bigl(B(x,\scjump^{-1}(x,t))\bigr)} \wedge \frac{t}{\refmeas\bigl(B(x,d(x,y))\bigr) \scjump(x,d(x,y))} \biggr)
			\end{equation}
			and
			\begin{equation} \label{eq:shk-lower}
				p_{t}(x,y) \ge C_1^{-1} \biggl( \frac{1}{\refmeas\bigl(B(x,\scjump^{-1}(x,t))\bigr)} \wedge \frac{t}{\refmeas\bigl(B(x,d(x,y))\bigr) \scjump(x,d(x,y))} \biggr)
			\end{equation}
			for $\refmeas$-a.e.\ $x,y \in \ambient$, where $\scjump^{-1}(x,\cdot)$ denotes the
			inverse of the homeomorphism $\scjump(x,\cdot) \colon [0,\infty)\to [0,\infty)$
			and $B(x,0):=\emptyset$.
		\end{enumerate}
	\end{definition}
	
	The following result plays a key role in our proof of heat kernel estimates for the boundary trace process.
	It characterizes stable-like heat kernel estimates \hyperlink{shk}{$\on{SHK}(\scjump)$}
	by the conjunction of the jump kernel estimate \hyperlink{jphi}{$\on{J}(\scjump)$}
	and exit time lower estimate \hyperlink{exit}{$\on{E}(\scjump)_{\geq}$} stated above.
	If $\ambient$ is unbounded then this characterization is essentially contained in \cite{CKW}. It is a slight modification of the equivalence between (1) and (2) in \cite[Theorem 1.15]{CKW}. If $\ambient$ is bounded, we argue using results in \cite{GHH-mv}.
	In Theorem \ref{t:shkchar}, we assume that $(\ambient,\refmeas,\form,\domain)$ is a
	regular Dirichlet space of \emph{pure jump type}, i.e., the strongly local part $\form^{(c)}$ and the
	killing measure $\kappa$ of $(\ambient,\refmeas,\form,\domain)$ in its
	Beurling--Deny decomposition \eqref{e:Beurling-Deny} are identically zero, or in other
	words, there exists a symmetric Radon measure $\jumpmeas$ on $\offdiag{\ambient}$ such that 
	\begin{equation} \label{eq:pure-jump-DF}
		\form(f,g) = \frac{1}{2} \int_{\offdiag{\ambient}} (\wt{f}(x)-\wt{f}(y))(\wt{g}(x)-\wt{g}(y))\, \jumpmeas(dx \, dy)
	\end{equation}
	for all $f,g \in \domain_{e}$, where $\wt{f},\wt{g}$ denote $\form$-quasi-continuous $\refmeas$-versions of $f,g$ respectively.
	
	\begin{theorem} \label{t:shkchar}
		Let $(\ambient,d,\refmeas,\form,\domain)$ be a NLMMD space of pure jump type
		satisfying \hyperlink{VD}{\textup{VD}}, and assume that $(\ambient,d)$ is uniformly perfect.
		Let $\scjump \colon \ambient \times [0,\infty) \to [0,\infty)$ be a regular scale function
		on $(\ambient,d)$ with threshold $\scjthres{\scjump}$. Then the following are equivalent:
		\begin{enumerate}[\rm(1)]\setlength{\itemsep}{0pt}
			\item\label{it:shkchar-shk} $(\ambient,d,\refmeas,\form,\domain)$ satisfies \hyperlink{shk}{$\on{SHK}(\scjump)$}.
			\item\label{it:shkchar-jphiE} $(\ambient,d,\refmeas,\form,\domain)$ satisfies \hyperlink{jphi}{$\on{J}(\scjump)$} and \hyperlink{exit}{$\on{E}(\scjump)_{\geq}$}.
		\end{enumerate}
		Furthermore, either of the above conditions implies that the following hold:
		\begin{enumerate}[\rm(a)]\setlength{\itemsep}{0pt}\vspace{-5pt}
			\item\label{it:shkchar-irr-conserv} $(\ambient,\refmeas,\form,\domain)$ is irreducible and conservative.
			\item\label{it:shkchar-CHK} A (unique) continuous heat kernel
			$p = p_{t}(x,y) \colon (0,\infty) \times \ambient \times \ambient \to [0,\infty)$
			of $(\ambient,\refmeas,\form,\domain)$ exists and satisfies \eqref{eq:shk-upper}
			and \eqref{eq:shk-lower} for any $(t,x,y) \in (0,\infty) \times \ambient \times \ambient$
			for some $C_{1} \in (1,\infty)$.
			\item\label{it:shkchar-Feller} Proposition \ref{p:feller}-\eqref{it:HKE-Feller}
			with ``Hunt process'' in place of ``diffusion'' holds.
			\item\label{it:shkchar-domain} Let $\jumpker \colon \offdiag{\ambient} \to (0,\infty)$
			be as given in \hyperlink{jphi}{$\on{J}(\scjump)$}. Then
			\begin{equation}\label{eq:shkchar-domain}
				\domain=\biggl\{u\in L^{2}(\ambient,\refmeas) \biggm| \int_{\ambient}\int_{\ambient} (u(x)-u(y))^{2} \jumpker(x,y)\, \refmeas(dx)\,\refmeas(dy)<\infty \biggr\}.
			\end{equation}
		\end{enumerate}
	\end{theorem}
	
	\begin{proof} 
		We note that uniform perfectness implies the reverse volume doubling property by Lemma \ref{l:rvd}. By a quasisymmetric change of metric as given in \cite[Proposition 5.2]{BM18} and \cite[(5.7), Proof of Lemma 5.7]{BM18}, it suffices to consider the case $\scjump(x,r)=r^\beta$ for all $x \in \ambient, r>0$, where $\beta>0$ (see also \cite{Kig-qs} where this kind of metric change first appeared). Therefore we will assume without loss of generality that  $\scjump(x,r)=r^\beta$ for all $x \in \ambient, r>0$, for some $\beta>0$.
		
		The implication from \eqref{it:shkchar-shk} to \eqref{it:shkchar-jphiE} follows
		from the same argument as \cite[Proof of $\textup{(1)}\Rightarrow\textup{(2)}$ of Theorem 1.15]{CKW}
		regardless of whether or not $\ambient$ is bounded. 
		
		For the converse implication from \eqref{it:shkchar-jphiE} to \eqref{it:shkchar-shk},
		the proof splits into two cases depending on whether or not $\ambient$ is bounded. \\
		\emph{Case 1: $\ambient$ is unbounded.} By \cite[Theorem 1.15]{CKW}, 
		it suffices to show the exit time upper bound \hyperlink{exit}{$\on{E}(\scjump)_\le$}.
		The exit time upper estimate \hyperlink{exit}{$\on{E}(\scjump)_\le$} follows from
		the Faber--Krahn inequality shown in \cite[Section 4.1]{CKW} along with \cite[Lemma 4.14]{CKW}.
		
		\noindent \emph{Case 2: $\ambient$ is bounded.} 
		The exit time upper estimate \hyperlink{exit}{$\on{E}(\scjump)_\le$} stated
		in the unbounded case also holds in the bounded case with almost the same proof.
		Since the proof of the Faber--Krahn inequality relies on the reverse volume doubling property,
		the statement of the Faber--Krahn inequality has to be modified so that
		it holds for all balls of radii $r \in (0,c\diam(\ambient))$,
		where $c \in (0,\infty)$ as given in \cite[Definition 2.4]{GHH-mv}. 
		
		Once the on-diagonal upper bound in the conclusion of \cite[Theorem 4.25]{CKW} is obtained,
		then the two-sided estimates on the jump kernel \hyperlink{jphi}{$\on{J}(\scjump)$} and
		exit time \hyperlink{exit}{$\on{E}(\scjump)$} imply the stable-like heat kernel estimates
		\hyperlink{shk}{$\on{SHK}(\scjump)$} by the arguments in \cite[Chapter 5]{CKW}
		with minor modifications to take into account that $\ambient$ is bounded.
		Therefore it is enough to prove the on-diagonal upper bound.
		
		In order to show the on-diagonal bound, by \cite[Theorems 2.10 and 2.12]{GHH-mv},
		it suffices to show the condition (Gcap) in \cite[Definition 2.3]{GHH-mv}, which
		in turn follows from \cite[Proposition 13.4 and Lemma 13.5]{GHH-lb} or \cite[Theorem 14.1]{GHH-lb}
		along with the two-sided exit time estimate \hyperlink{exit}{$\on{E}(\scjump)$},
		completing the proof that \eqref{it:shkchar-jphiE} implies \eqref{it:shkchar-shk}.
		
		We next assume \eqref{it:shkchar-shk} (and \eqref{it:shkchar-jphiE}) and prove \eqref{it:shkchar-irr-conserv},
		\eqref{it:shkchar-CHK}, \eqref{it:shkchar-Feller} and \eqref{it:shkchar-domain}.
		\begin{enumerate}[\rm(a)]\setlength{\itemsep}{0pt}\vspace{-5pt}
			\item[\eqref{it:shkchar-irr-conserv}]The irreducibility of $(\ambient,\refmeas,\form,\domain)$
			is immediate by \eqref{eq:shk-lower} from \hyperlink{shk}{$\on{SHK}(\scjump)$}
			or by \hyperlink{jphi}{$\on{J}(\scjump)$} and Lemma \ref{l:irreducible} below.
			For the conservativeness of $(\form,\domain)$, we consider two cases depending on whether or not $(\ambient,d)$ is bounded.
			If $(\ambient,d)$ is bounded, then $\one_{\ambient} \in \domain$ by the compactness of
			$\ambient$ and \cite[Exercise 1.4.1]{FOT}, $\form(\one_{\ambient},\one_{\ambient})=0$
			by \eqref{eq:pure-jump-DF} and thus $(\ambient,\refmeas,\form,\domain)$ is conservative.
			If $(\ambient,d)$ is unbounded, then \eqref{eq:shk-lower} from \hyperlink{shk}{$\on{SHK}(\scjump)$}
			implies that there exists $c_0 \in (0,\infty)$ such that $T_t \one_{\ambient}(x) \ge c_0$ $\refmeas$-a.e.\ on $\ambient$
			for each $t \in (0,\infty)$. This along with \cite[Proposition 3.1-(1)]{CKW}
			implies that $(\ambient,\refmeas,\form,\domain)$ is conservative.
			\item[\eqref{it:shkchar-CHK}]The existence of a continuous heat kernel $p = p_{t}(x,y)$
			of $(\ambient,\refmeas,\form,\domain)$ follows from \cite[Lemma 5.6]{CKW},
			and the validity of \eqref{eq:shk-upper} and \eqref{eq:shk-lower} for any $(t,x,y) \in (0,\infty) \times \ambient \times \ambient$
			is immediate from \hyperlink{shk}{$\on{SHK}(\scjump)$}, \hyperlink{VD}{\textup{VD}} and \eqref{e:Psireg1}.
			\item[\eqref{it:shkchar-Feller}] This is proved in the same way as \cite[Proposition 3.2]{Lie}
			on the basis of \hyperlink{VD}{\textup{VD}}, \eqref{it:shkchar-CHK} and the
			conservativeness of $(\ambient,\refmeas,\form,\domain)$ from \eqref{it:shkchar-irr-conserv}.
			\item[\eqref{it:shkchar-domain}]Using \eqref{e:semigroup} and
			the conservativeness of $(\ambient,\refmeas,\form,\domain)$, we obtain
			\begin{equation}\label{e:dmn1}
				\domain= \biggl\{ u \in L^2(\ambient,\refmeas) \biggm| \lim_{t \downarrow 0}\frac{1}{2t} \int_{\ambient}\int_{\ambient}(u(x)-u(y))^2 p_t(x,y) \,m(dx)\,m(dy) < \infty \biggr\},
			\end{equation}
			where $\{ p_{t} \}_{t>0}$ denotes a heat kernel of $(\ambient,\refmeas,\form,\domain)$.
			We then see from \hyperlink{shk}{$\on{SHK}(\scjump)$} and \hyperlink{jphi}{$\on{J}(\scjump)$}
			that there exists $C_1 \in (0,\infty)$ such that for each $t\in(0,\infty)$ we have
			\begin{equation}\label{e:dmn2}
				\jumpker(x,y) \le C_1 \frac{p_{t}(x,y)}{t} \quad \textrm{for $\refmeas$-a.e.\ $x,y \in \ambient$ with $d(x,y) \ge t^{1/\beta}$}
			\end{equation}
			and
			\begin{equation}\label{e:dmn3}
				\frac{p_{t}(x,y)}{t}\le C_1 \jumpker(x,y) \quad \textrm{for $\refmeas$-a.e.\ $x,y \in \ambient$.}
			\end{equation}
			The conclusion \eqref{eq:shkchar-domain} now follows from \eqref{e:dmn1}, \eqref{e:dmn2}, \eqref{e:dmn3} and the monotone convergence theorem.
			\qedhere\end{enumerate}
	\end{proof}
	
	\begin{remark} \label{r:bddhke}
		If $(\ambient,d)$ is unbounded, the on-diagonal upper bound in the proof of the implication from \eqref{it:shkchar-jphiE} to \eqref{it:shkchar-shk} above follows from \cite[Theorem 4.25]{CKW}.
		However, the proof there does not directly generalize to the case when $\ambient$ is bounded.
		This is because \cite[Proof of Theorem 4.25]{CKW} relies on \cite[Proposition 4.23]{CKW} which in turn uses \cite[Lemma 4.18]{CKW} on a sequence of radii going to infinity.
		However, the generalization of \cite[Lemma 4.18]{CKW}, which relies on the Faber--Krahn inequality, requires the radii to satisfy $r < c \diam(\ambient)$ for some $c \in (0,\infty)$,
		which seems insufficient for the argument in \cite[Proof of Proposition 4.23]{CKW}.
		See \cite[Remark 8.3]{CC24b} for a more direct argument to extend the main results
		of \cite{CKW} to the case where the state space is bounded.
	\end{remark}
	
	We also give a simple sufficient condition for the irreducibility of a pure-jump Dirichlet form,
	which in particular applies to any NLMMD space $(\ambient,d,\refmeas,\form,\domain)$ satisfying
	\hyperlink{jphi}{$\on{J}(\scjump)$} for some regular scale function $\scjump$ on $(\ambient,d)$.
	
	\begin{lem} \label{l:irreducible}
		Let $(\ambient,\refmeas,\form,\domain)$ be a regular Dirichlet space satisfying \eqref{eq:DF-pure-jump-ker}
		for any $u \in \domain$ for some symmetric Borel measurable function $\jumpker \colon \offdiag{\ambient} \to (0,\infty)$.
		Then $(\ambient,\refmeas,\form,\domain)$ is irreducible.
	\end{lem}
	
	\begin{proof}
		Let $A \in \Borel(\ambient)$ be $\form$-invariant.
		Then for any $u,v \in \domain$, by \cite[Theorem 1.6.1]{FOT}, we have $\one_{A}u, \one_{A}v, \one_{\ambient \setminus A}u, \one_{\ambient \setminus A}v \in \domain$ and 
		\begin{equation} \label{e:irr1}
			0 = \form(\one_{A}u, \one_{\ambient \setminus A}v )+ \form(\one_{\ambient \setminus A} u, \one_{A} v ).
		\end{equation}
		Let $K_1 \subset A$ and $K_2 \subset \ambient \setminus A$ be arbitrary compact subsets.
		By the regularity of $(\form,\domain)$ there exist $u,v \in \domain \cap \contfunc_{\mathrm{c}}(\ambient)$
		such that $u,v$ are $[0,1]$-valued, $u|_{K_1} \equiv 1$ and $v|_{K_2} \equiv 1$.
		By using \eqref{e:irr1}, we have
		\begin{equation}\label{e:irr2}
			0 = \int_{A} \int_{B} u(x) v(y) \jumpker(x,y)\,\refmeas(dy) \,\refmeas(dx)
			\ge \int_{K_1} \int_{K_2} \jumpker(x,y)\,\refmeas(dy) \,\refmeas(dx),
		\end{equation}
		which together with the strict positivity of $\jumpker$ shows that $\refmeas(K_1) \refmeas(K_2) = 0$
		for any compact sets $K_1,K_2$ with $K_1 \subset A$ and $K_2 \subset \ambient \setminus A$.
		By the inner regularity of $\refmeas$ (see, e.g., \cite[Theorem 2.18]{Rud}), we conclude
		$\refmeas(A) \refmeas(\ambient \setminus A) = 0$, which means the irreducibility of $(\form,\domain)$.  
	\end{proof}

	\subsection{Capacity good measures and their corresponding PCAFs} \label{ss:capgood}
	
	To define a trace process, we need an $\form$-smooth measure and need to identify the support of
	the corresponding positive continuous additive functional (PCAF). To this end, in this subsection,
	we provide a general sufficient condition for a measure to be $\form$-smooth in the strict sense
	and for its support to coincide with the support of the corresponding PCAF in the strict sense of $\diff$.
	We remark that the former notion of support can be larger by a non-$\form$-polar set than the latter
	for a general $\form$-smooth Radon measure in the strict sense; see \cite[Example 5.1.2]{FOT}
	for such an example, which is originally due to Sturm \cite[Section 9]{Stu92}.
	
	The class of measures we consider in this subsection are capacity good measures.
	The following definition is a slight variant of \cite[Definition 4.1]{BM18} and \cite[Definition 6.2]{BCM}.
	
	\begin{definition}[Capacity good measure]\label{d:goodmeas}
		Let $(\ambient,d,\refmeas,\form,\domain)$ be an MMD space that satisfies Assumption \ref{a:feller}.
		Let $\goodmeas$ be a Borel measure on $\ambient$ and let $F := \supp_{\ambient}[\goodmeas]$ denote its support.
		We say that $\goodmeas$ is \textbf{$\form$-capacity good} if $\goodmeas(\ambient)>0$ and
		there exist $C_0,A_0,A_1 \in (1,\infty)$ and a regular scale function
		$\scjump \colon F \times [0,\infty) \to [0,\infty)$ on $(F,d)$
		with threshold $\scjthres{\scjump}\in(0,\diam(\ambient)]$
		(in the sense of Definition \ref{d:regscale}) such that
		\begin{equation} \label{e:cgood}
			C_0^{-1} \scjump(x,r) \le
			\frac{\goodmeas (B(x,r))}{\Capa_{B(x,A_0r)} (B(x,r))} 
			\le C_0  \scjump(x,r) \quad \textrm{for all $(x,r) \in F \times (0,\scjthres{\scjump}/A_1)$.}
		\end{equation}
		By \cite[Lemmas 5.22 and 5.23]{BCM}, by changing $C_0,A_1 \in (1,\infty)$ if necessary, we may assume that $A_0=2$ in \eqref{e:cgood}. 
		%	Since $\goodmeas$ is locally finite, any capacity good measure $\goodmeas$ is a Radon measure on $D$, if $D$ is open in $\ambient$.
	\end{definition}
	
	We make the following assumption for the remainder of this subsection.
	
	\begin{assumption}\label{a:capgood}
		Let a scale function $\scdiff$, an MMD space $(\ambient,d,\refmeas,\form,\domain)$
		and a diffusion $\diff = (\Omega, \events, \{\diff_{t}\}_{t\in[0,\infty]},\{\lawdiff_{x}\}_{x \in \oneptcpt{\ambient}})$
		on $\ambient$ satisfy Assumption \ref{a:feller}.
		Let $\goodmeas$ be an $\form$-capacity good Borel measure on $\ambient$
		with support $F:=\supp_{\ambient}[\nu]$ and with a regular scale function
		$\scjump \colon F \times [0,\infty) \to [0,\infty)$ on $(F,d)$
		with threshold $\scjthres{\scjump}\in(0,\diam(\ambient)]$
		as given in Definition \ref{d:goodmeas}.
	\end{assumption}
	
	If $\goodmeas$ is as given in Assumption \ref{a:capgood}, then
	$(F,d,\goodmeas)$ satisfies \hyperlink{VD}{\textup{VD}} by \cite[Lemma 5.23]{BCM}.
	In particular by \eqref{e:vd}, there exist $C \in (1,\infty)$ and $\beta\in(0,\infty)$ such that
	\begin{equation} \label{e:cgvd}
		\frac{\goodmeas(B(\xi,R))}{\goodmeas(B(\xi,r))} \le C \biggl( \frac{R}{r} \biggr)^\beta \quad \textrm{for all $\xi \in F$ and all $0<r\leq R$.}
	\end{equation}
	
	The following lemma is an upper bound on the integral of the heat kernel with respect to
	an $\form$-capacity good measure $\goodmeas$. This upper bound is later used to show that
	any $\form$-capacity good measure is $\form$-smooth in the strict sense (Lemma \ref{l:strict})
	and to identify the support of the corresponding PCAF in the strict sense as
	the topological support of $\goodmeas$ (Proposition \ref{prop:goodpcafsupp}).
	
	\begin{lem} \label{l:hkbdy}
		Let $\scdiff,(\ambient,d,\refmeas,\form,\domain),\goodmeas,F$ be as in Assumption \ref{a:capgood}.
		Then there exists $C \in (1,\infty)$ such that for any $(t,x) \in (0,\infty) \times \ambient$,
		\begin{equation} \label{e:hkb}
			\int_{F}	p_t(x,y)\,\goodmeas(dy) \le C \frac{\goodmeas\left( B(\xi_x,\scdiff^{-1}(t))\right)}{\refmeas(B(\xi_x,\scdiff^{-1}(t))},
		\end{equation}
		where $\xi_x \in F$ is any point such that $\dist(x,F)=d(x,\xi_x)$.
	\end{lem}
	
	\begin{proof}
		By \hyperlink{hke}{$\on{HKE}(\scdiff)$}, \cite[Lemma 3.19]{GT12} and \eqref{e:reg},
		there exist $C_1 \in(1,\infty)$, $c_2 \in (0,1)$ and $0<\alpha_1 < \alpha_2<\infty$
		such that for all $x,y \in \ambient$, we have 
		\begin{equation} \label{e:hb1}
			p_t(x,y)=p_t(y,x) \le \frac{C_1}{\refmeas(B(x,\scdiff^{-1}(t)))} \exp \biggl( -c_2 \min\biggl\{ \biggl(\frac{d(x,y)}{\scdiff^{-1}(t)}\biggr)^{\alpha_1},\biggl(\frac{d(x,y)}{\scdiff^{-1}(t)}\biggr)^{\alpha_2}\biggr\}\biggr).
		\end{equation}
		If $\xi_x \in F$ satisfies $\dist(x,F)=d(x,\xi_x)$, then 
		\begin{equation} \label{e:hb2}
			d(\xi_x,y)\le d(x,y)+d(x,\xi_x) \le 2d(x,y) \quad 
			\mbox{for all $y \in F$.}
		\end{equation}
		By \eqref{e:hb1}, \eqref{e:hb2} and \eqref{e:vd}, there exist $C_2 \in(1,\infty)$ and $c_3 \in (0,1)$
		such that for all $x \in \ambient$ and all $y,\xi_{x} \in F$ with $\dist(x,F)=d(x,\xi_x)$, we have
		\begin{align} \label{e:hb3}
			p_t(x,y) &\le \frac{C_1}{\refmeas(B(y,\scdiff^{-1}(t)))} \exp \biggl( -c_3 \min\biggl\{ \biggl(\frac{d(\xi_x,y)}{\scdiff^{-1}(t)}\biggr)^{\alpha_1},\biggl(\frac{d(\xi_x,y)}{\scdiff^{-1}(t)}\biggr)^{\alpha_2}\biggr\}\biggr) \nonumber \\
			&\le \frac{C_2}{\refmeas(B(\xi_x,\scdiff^{-1}(t)))} \exp \biggl( -\frac{c_3}{2} \min\biggl\{ \biggl(\frac{d(\xi_x,y)}{\scdiff^{-1}(t)}\biggr)^{\alpha_1},\biggl(\frac{d(\xi_x,y)}{\scdiff^{-1}(t)}\biggr)^{\alpha_2}\biggr\}\biggr).
		\end{align}
		
		Now for all $(t,x) \in (0,\infty) \times \ambient$ and all $\xi_{x} \in F$ with
		$\dist(x,F)=d(x,\xi_x)$, using \eqref{e:hb3} and \eqref{e:vd}, we obtain
		\begin{align}
			\lefteqn{\int_{F} p_t(x,y)\,  \goodmeas(dy) }\nonumber \\
			&= \int_{B(\xi_x,\scdiff^{-1}(t))} p_t(x,\cdot)\,d\goodmeas   +  \sum_{k=1}^\infty \int_{B(\xi_x,2^k\scdiff^{-1}(t))\setminus B(\xi_x,2^{k-1}\scdiff^{-1}(t))}p_t(x,\cdot)\, d\goodmeas \nonumber \\
			&\overset{\eqref{e:hb3}}{\lesssim} \frac{\goodmeas(  B(x,\scdiff^{-1}(t)))}{\refmeas(B(\xi_x,\scdiff^{-1}(t))} + \sum_{k =1}^\infty \frac{\goodmeas(  B(\xi_x,2^k\scdiff^{-1}(t)))}{\refmeas(B(\xi_x,\scdiff^{-1}(t))} \exp(-c2^{\alpha_1 k})\nonumber  \\
			&\lesssim \frac{\goodmeas(  B(\xi_x,\scdiff^{-1}(t)))}{\refmeas(B(\xi_x,\scdiff^{-1}(t))} + \sum_{k =1}^\infty \frac{\goodmeas(  B(\xi_x,2^k\scdiff^{-1}(t)))}{\refmeas(B(\xi_x,\scdiff^{-1}(t))} \exp(-c2^{\alpha_1 k})\nonumber  \\
			&\lesssim \frac{\goodmeas( B(\xi_x,\scdiff^{-1}(t)))}{\refmeas(B(\xi_x,\scdiff^{-1}(t))} \left[1+ \sum_{k=1}^\infty 2^{k \beta} \exp(-c2^{\alpha_3 k}) \right] \quad \mbox{(by \eqref{e:cgvd})}
			\label{e:hb4} \\
			&\lesssim \frac{\goodmeas( B(\xi_x,\scdiff^{-1}(t)))}{\refmeas(B(\xi_x,\scdiff^{-1}(t))}. \nonumber
			\qedhere\end{align}
	\end{proof}
	
	Next, we show that $\goodmeas$ is an $\form$-smooth measure in the strict sense as defined in \cite[p. 238]{FOT}.
	
	\begin{lem} \label{l:strict}
		Let $\scdiff,(\ambient,d,\refmeas,\form,\domain),\goodmeas,F$ be as in Assumption \ref{a:capgood}.
		Then $\goodmeas$ is  $\form$-smooth in the strict sense.
	\end{lem}
	
	\begin{proof}
		%	We only consider the case when $\ambient$ is unbounded (the bounded case is similar and easier).
		Let $\scjump,\scjthres{\scjump},A_1$ be as in Assumption \ref{a:capgood} and Definition \ref{d:goodmeas}. By \cite[Theorem 1.2]{GHL15}
		and \cite[Lemmas 5.22 and 5.23]{BCM}, there exists $C_0 \in (1,\infty)$ such that 
		\begin{equation} \label{e:capest}
			C^{-1} \frac{m(B(x,r))}{\scdiff(r)} \le \Capa_{B(x,2r)}(B(x,r)) \le C \frac{m(B(x,r))}{\scdiff(r)}
			\quad \textrm{for all $x \in \ambient$ and all $r \in (0,\infty)$.}
		\end{equation}
		
		For $\xi \in F$ and $r \in (0,\scjthres{\scjump}/A_{1})$, we consider the measure
		$\goodmeas_{\xi,r}(\cdot):=  \goodmeas(\cdot \cap B(\xi,r))$. By the same argument as for \eqref{e:hb1}
		there exist $C_1 \in(1,\infty)$, $c_2 \in (0,1)$ and $0<\alpha_1 < \alpha_2<\infty$
		such that for all $x,y,z \in \ambient$ with $d(x,y) \le d(x,z)$, we have
		\begin{align} \label{e:str1}
			p_t (x,z) &\le \frac{C_1}{m(B(y,\scdiff^{-1}(t)))} \exp \biggl( - c_1\min\biggl\{ \biggl(\frac{d(x,y)}{\scdiff^{-1}(t)}\biggr)^{\alpha_1}, \biggl(\frac{d(x,y)}{\scdiff^{-1}(t)}\biggr)^{\alpha_2}\biggr\}\biggr).
		\end{align}
		Note that for any $x \in \ambient \setminus B(\xi,2r)$ and $z \in B(\xi,r)$,
		we have $d(x,z) \ge d(\xi,z)$. Hence by \eqref{e:str1} and the same argument as \eqref{e:hb4}, we obtain
		\begin{equation} \label{e:str2}
			\int_{F} p_s(x,\cdot)\, d\goodmeas_{\xi,r} \lesssim \frac{\goodmeas( B(\xi,\scdiff^{-1}(s)))}{m(B(\xi,\scdiff^{-1}(s))}
			\qquad \textrm{for all $x \in \ambient \setminus B(\xi,2r)$}.
		\end{equation}
		
		If $x \in B(\xi,2r)$, then by Lemma \ref{l:hkbdy},
		\begin{align} \label{e:str3}
			\int_{F} p_s(x,y) \goodmeas_{\xi,r}(dy) \le \int_{F} p_s(x,y) \goodmeas(dy) &\lesssim \frac{\goodmeas(  B(\xi_x,\scdiff^{-1}(s)))}{m(B(\xi_x,\scdiff^{-1}(s))},
		\end{align}
		where $\xi_x \in B(\xi, 3r) \cap F$ satisfies $\dist(x,F)=d(x,\xi_x)$.
		% 	where $\xi_x \in B(\xi, 3n) \cap F$ is chosen so that $d(x,\xi_x)= \delta_\unifdom(\xi_x)$. To obtain \eqref{e:str3} for $x, \xi_x$ as above, we use the sub-Gaussian heat kernel bounds for $p_s^{\on{ref}}(x,\cdot)$ and the volume doubling property for $\goodmeas$ and $\refmeas$ to estimate 
		% 	\begin{align*}
			% 		\lefteqn{\int_{F} p_s^{\on{ref}}(x,y)\, d\goodmeas_{\xi,r}(y)}\nonumber \\
			% 		&=	\int_{B(x,\scdiff^{-1}(s))} p_s^{\on{ref}}(\xi,\cdot)\,d\goodmeas(y)  +  \sum_{k=1}^\infty \int_{B(\xi,M^k\scdiff^{-1}(s))\setminus B(\xi,M^{k-1}\scdiff^{-1}(s))}p_s^{\on{ref}}(\xi,\cdot)\, d\goodmeas \\
			% 		& \lesssim \frac{\goodmeas(F \cap B(x,\scdiff^{-1}(s)))}{m(B(x,\scdiff^{-1}(s))} + \sum_{k =1}^\infty \frac{\goodmeas(F \cap B(x,M^k\scdiff^{-1}(s)))}{m(B(x,\scdiff^{-1}(s))} \exp(-cM^{\alpha k}) \\
			% 		& \lesssim \frac{\goodmeas(F \cap B(\xi_x,\scdiff^{-1}(s)))}{m(B(\xi_x,\scdiff^{-1}(s))} + \sum_{k =1}^\infty \frac{\goodmeas(F \cap B(\xi_x,M^k\scdiff^{-1}(s)))}{m(B(\xi_x,\scdiff^{-1}(s))} \exp(-cM^{\alpha k}) \\
			% 		&\lesssim  \frac{\goodmeas(F \cap B(\xi_x,\scdiff^{-1}(s)))}{m(B(\xi_x,\scdiff^{-1}(s))} \left[1+ \sum_{k=1}^\infty M^{k \beta} \exp(-cM^{\alpha k}) \right] \\
			% 		&\lesssim  \frac{\goodmeas(F \cap B(\xi_x,\scdiff^{-1}(s)))}{m(B(\xi_x,\scdiff^{-1}(s))}.
			% 	\end{align*}
		For all $\eta \in F$  using the doubling property of $\refmeas$ and $\goodmeas$, we have
		\begin{align} \label{e:str4}
			\int_0^1 e^{-s} \frac{\goodmeas(  B(\eta,\scdiff^{-1}(s)))}{m(B(\eta,\scdiff^{-1}(s))} \, ds &= \sum_{k=0}^\infty \int_{2^{-k}}^{2^{-k+1}} e^{-s} \frac{\goodmeas(  B(\eta,\scdiff^{-1}(s)))}{m(B(\eta,\scdiff^{-1}(s))} \, ds \nonumber \\
			&\asymp \sum_{k=0}^\infty  \frac{\goodmeas(  B(\eta,\scdiff^{-1}(2^{-k} )))}{m(B(\eta,\scdiff^{-1}(2^{-k} ))} 2^{-k}  \nonumber \\
			& {\asymp} \sum_{k=0}^\infty \scjump(\eta, \scdiff^{-1}(2^{-k} )) \quad \textrm{(by \eqref{e:cgood} and \eqref{e:capest})} \nonumber\\
			&\asymp \scjump(\eta, \scdiff^{-1}(1)) \quad \mbox{(by \eqref{e:Psireg1} and \cite[Lemma 3.19]{GT12})},
		\end{align}
		and
		\begin{align} \label{e:str5}
			\int_1^\infty e^{-s} \frac{\goodmeas( B(\eta,\scdiff^{-1}(s)))}{\refmeas(B(\eta,\scdiff^{-1}(s))} \, ds
			&= \sum_{k=1}^\infty \int_{2^{k-1}}^{2^k}  e^{-s} \frac{\goodmeas(B(\eta,\scdiff^{-1}(s)))}{\refmeas(B(\eta,\scdiff^{-1}(s))} \, ds \nonumber\\
			&\lesssim \sum_{k=1}^\infty  \frac{\goodmeas(  B(\eta,\scdiff^{-1}(2^{k} )))}{m(B(\eta,\scdiff^{-1}(2^{k} ))} 2^{ k}  e^{-2^{k-1}}\nonumber \\
			&\asymp \sum_{k=0}^\infty \scjump(\eta, \scdiff^{-1}(2^{k} ))e^{-2^{k-1}}  \quad \mbox{(by \eqref{e:capest} and \eqref{e:cgood})}\nonumber \\
			&\overset{\eqref{e:Psireg1}}{\lesssim}   \sum_{k=0}^\infty \scjump(\eta, \scdiff^{-1}(1 )) 2^{k \beta_2}e^{-2^{k-1}}   \lesssim \scjump(\eta, \scdiff^{-1}(1)).
		\end{align}
		Combining \eqref{e:str2}, \eqref{e:str3}, \eqref{e:str4},  \eqref{e:str5} and using \eqref{e:Psireg1}, we obtain
		\begin{equation} \label{e:str6}
			\int_{F} \int_{0}^\infty e^{-t} p_t(x,y) \,dt\, \goodmeas_{\xi,r}(dy) \lesssim \sup_{\eta \in F \cap B(\xi,3r)}  \scjump(\eta, \scdiff^{-1}(1)) \lesssim \scjump(\xi,r) 
		\end{equation}
		for all $x \in \ambient$. Since $\goodmeas_{\xi,r}$ is a finite measure such that the corresponding $1$-potential
		$x \mapsto \int_{F} \int_{0}^\infty e^{-t} p_t(x,y) \,dt\, \goodmeas_{\xi,r}(dy)$ is bounded,
		we conclude from \cite[Exercise 4.2.2]{FOT} that $\goodmeas_{\xi,r}$ is of finite energy integral
		for all $\xi\in F$ and all $r \in (0,\scjthres{\scjump}/A_{1})$.
		By covering the set $B(\xi,R) \cap F$ with finitely many balls centered at $F$
		and of radii less than $\scjthres{\scjump}/A_{1}$, we conclude that
		$\goodmeas_{\xi,R}(\cdot)=\goodmeas(\cdot \cap B(\xi,R))$ is a finite measure
		of finite energy integral and that the corresponding $1$-potential
		$x \mapsto \int_{F} \int_{0}^\infty e^{-t} p_t(x,y) \,dt\, \goodmeas_{\xi,R}(dy)$
		is bounded for all $\xi \in F$ and all $R \in (0,\infty)$.
		Therefore $\goodmeas$ is $\form$-smooth in the strict sense.
	\end{proof}
	
	We record another upper bound on an integral of heat kernel with respect to $\goodmeas$ similar to Lemma \ref{l:hkbdy}.
	
	\begin{lem} \label{l:ballbdy}
		Let $\scdiff,(\ambient,d,\refmeas,\form,\domain),\goodmeas,F,\scjump,\scjthres{\scjump}$
		be as in Assumption \ref{a:capgood}. Then there exist $C \in (1,\infty)$ and $A \in (4,\infty)$
		such that for all $(\xi,r) \in F \times (0,\scjthres{\scjump}/A)$ and all $x \in B(\xi,r)$,
		\begin{equation} \label{e:ubb1}
			\int_{{F} \cap B(\xi,r)} \int_0^\infty p^{B(\xi,r)}_{t}(x,y)\,dt\, \goodmeas(dy) \le   C \scjump(\xi,r),
		\end{equation}
		where $p^{B(\xi,r)} = p^{B(\xi,r)}_{t}(x,y) \colon (0,\infty) \times B(\xi,r) \times B(\xi,r) \to [0,\infty)$
		denotes the continuous heat kernel of $\bigl(B(\xi,r),\refmeas|_{B(\xi,r)},\form^{B(\xi,r)},\domain^{0}(B(\xi,r))\bigr)$
		as given in Proposition \ref{p:feller}-\eqref{it:HKE-part-CHK-sFeller}.
	\end{lem}
	\begin{proof}
		%Let $K_0 \in (1,\infty)$ be such that $\scdiff(K_0^{-1}r) \le \scdiff(r)/2$ for all $r >0$.
		By Fubini's theorem and Lemma \ref{l:hkbdy}, there exists $A_1 \in (1,\infty)$ such that
		for all $(\xi,r) \in F \times (0,\scjthres{\scjump}/A_1)$ and all $x \in B(\xi,r)$ we have 
		\begin{align} \label{e:bb1}
			\MoveEqLeft{\int_{{F} \cap B(\xi,r)} \int_0^{\scdiff(r)} p^{B(\xi,r)}_{t}(x,y)\,dt\, \goodmeas(dy)} \nonumber \\
			&\le \int_{F} \int_0^{\scdiff(r)} p_{t}(x,y)\,dt\, \goodmeas(dy) \quad\mbox{(since $ p^{B(\xi,r)}(\cdot,\cdot) \le  p(\cdot,\cdot)$)} \nonumber\\
			&\lesssim \int_0^{\scdiff(r)} \frac{\goodmeas\bigl( B(\xi_x,\scdiff^{-1}(t))\bigr)}{\refmeas\bigl(B(\xi_x,\scdiff^{-1}(t)\bigr)} \,dt=\sum_{k=0}^\infty \int_{\scdiff(2^{-k}r)}^{\scdiff(2^{-(k-1)}r)} \frac{\goodmeas\bigl( B(\xi_x,\scdiff^{-1}(t)) \bigr)}{\refmeas\bigl(B(\xi_x,\scdiff^{-1}(t) \bigr)} \,dt \quad\textrm{(by \eqref{e:hkb})} \nonumber\\
			&\lesssim \sum_{k=0}^\infty \frac{\goodmeas( B(\xi_x,2^{-k}r))}{\refmeas(B(\xi_x,2^{-k}r))} \scdiff(2^{-k}r) \overset{\eqref{e:cgood}, \eqref{e:capest}}{\asymp} \sum_{k=0}^\infty \scjump(\xi_x,2^{-k}r) \overset{\eqref{e:Psireg1}}{\asymp} \scjump(\xi_x, r)\overset{\eqref{e:Psireg1}}{\asymp} \scjump(\xi, r),
		\end{align}
		where $\xi_x \in F$ is chosen as in Lemma \ref{l:hkbdy}.
		
		By \cite[Proof of Theorem 2.5]{HS}, there exist $C_1,A_1 \in (1,\infty)$ such that
		for all $(x,r) \in \ambient \times (0,\diam(\ambient)/A_1)$, the first Dirichlet eigenvalue 
		\begin{equation*}
			\lambda_0(B(x,r)):= \inf \biggl\{\frac{\form(f,f)}{\int_{B(x,r)}f^2\,d\refmeas} \biggm| f \in \domain^{0}(B(x,r)) \biggr\}
		\end{equation*}
		satisfies
		\begin{equation} \label{e:bb2}
			\frac{C_1^{-1}}{\scdiff(r)} \le	\lambda_0(B(x,r)) \le \frac{C_1}{\scdiff(r)}.
		\end{equation} 
		Hence by \cite[Proof of Lemma 3.9-(3)]{HS} and   \eqref{e:bb2}, there exist
		$C_2,A_1 \in (1,\infty)$ and $c_1 \in (0,\infty)$ such that for all
		$(x,r) \in \ambient \times (0,\diam(\ambient)/A_1)$, all $y,z \in  B(x,r)$ and all $t \in [\scdiff(r),\infty)$, we have 
		\begin{equation}\label{e:bb3}
			p^{B(x,r)}_{t}(y,z) \le \frac{ C_2}{\refmeas(B(x,r))} \exp \biggl(-\frac{c_1t}{\scdiff(r)}\biggr).
		\end{equation}
		Therefore for all $(\xi,r) \in F \times (0,\scjthres{\scjump}/A_1)$ and all $x \in  B(\xi,r)$ we have 
		\begin{align} \label{e:bb4}
			\MoveEqLeft{\int_{{F} \cap B(\xi,r)} \int_{\scdiff(r)}^\infty p^{B(\xi,r)}_{t}(x,y)\,dt\, \goodmeas(dy)} \nonumber \\
			&\le \int_{{F} \cap B(\xi,r)}\int_{\scdiff(r)}^\infty \frac{C_2}{\refmeas(B(\xi,r)} \exp \biggl(-\frac{c_1t}{\scdiff(r)}\biggr) \,dt\, \goodmeas(dy) \quad\mbox{(by \eqref{e:bb3})} \nonumber\\
			&\lesssim \int_{{F} \cap B(\xi,r)} \frac{ \scdiff(r)}{\refmeas(B(\xi,r)} \, \goodmeas(dy) \asymp \frac{\goodmeas(B(\xi,r))\scdiff(r)}{\refmeas(B(\xi,r)} \overset{\eqref{eq:bdry-scale-estimate}}{\asymp} \scjump(\xi,r).
		\end{align}
		By \eqref{e:bb1} and \eqref{e:bb4}, we obtain the desired upper bound \eqref{e:ubb1}.
	\end{proof}
	
	Since $\goodmeas$ is an $\form$-smooth measure in the strict sense as proved in Lemma \ref{l:strict},
	it defines a PCAF in the strict sense due to the Revuz correspondence by \cite[Theorem 4.1.11]{CF} or \cite[Theorem 5.1.7]{FOT}.
	
	\begin{definition} \label{d:goodpcaf}
		Let $(\ambient,d,\refmeas,\form,\domain),\diff,\goodmeas$ be as in Assumption \ref{a:capgood}.
		We let $\goodpcaf = \{ \goodpcaf_{t} \}_{ t \in [0,\infty) }$ denote
		a positive continuous additive functional (PCAF) in the strict sense of $\diff$
		whose Revuz measure is $\goodmeas$, with a defining set $\Lambda \in \minaugfilt_{0}$ such that
		$\goodpcaf_{t}(\omega)=0$ for any $(t,\omega)\in[0,\infty)\times(\Omega\setminus\Lambda)$
		and $\{\zeta=0\} \subset \Lambda$; the existence of such $\goodpcaf$
		follows from Lemma \ref{l:strict} and \cite[Theorem 5.1.7]{FOT}.
	\end{definition} 
	
	The state space of the time-changed process $\difftr$ of $\diff$
	by the PCAF $\goodpcaf$ is the support of $\goodpcaf$ (recall \eqref{e:supportA},
	\eqref{eq:time-changed-process} and \eqref{eq:time-changed-process-in-support}).
	We now identify the support of $\goodpcaf$ as $F$ in the following proposition.
	
	\begin{prop} \label{prop:goodpcafsupp}
		Let $(\ambient,d,\refmeas,\form,\domain),\diff,\goodmeas,F$ be as in Assumption \ref{a:capgood}.
		Then the support of the PCAF $\goodpcaf$ is $F$, i.e.,
		\begin{equation} \label{eq:goodpcafsupp}
			F = \bigl\{x \in \ambient \bigm| \lawdiff_{x}\bigl( \textrm{$\goodpcaf_{t} > 0$ for any $t \in (0,\infty)$} \bigr) = 1 \bigr\}.
		\end{equation}
		In particular, the topological support $F$ of $\goodmeas$ is an $\form$-quasi-support of $\goodmeas$.
	\end{prop}
	
	\begin{remark}\label{rmk:goodmeas-use-Green-function}
		In the proofs of Propositions \ref{prop:goodpcafsupp} and \ref{prop:greeninv} given below
		we will use some basic properties of the Green functions of $(\form,\domain)$ from
		Proposition \ref{p:goodgreen} and Lemma \ref{l:green}; we are indeed allowed to do so,
		because the proofs of the latter results are independent of the former ones and their proofs below.
	\end{remark}
	
	\begin{proof}[Proof of Proposition \ref{prop:goodpcafsupp}]
		Set	 
		\[R := \inf\{ t \in (0,\infty) \mid \goodpcaf_t > 0 \}, \quad
		S(\goodmeas) := \{ x \in \ambient \mid \lawdiff_{x}( R = 0 ) = 1 \}
		.\]
		First we show that
		\begin{equation} \label{e:pc1}
			\lawdiff_{x}(R \ge \sigma_{F})=1 \quad \textrm{for all $x \in \ambient \setminus F$}.
		\end{equation}
		Indeed, for all $x \in \ambient \setminus F$, we see from \eqref{eq:Revuz-correspondence-pointwise}
		with $D=\ambient \setminus F$ and $f=\one_{\ambient \setminus F}$ and $\goodmeas(\ambient \setminus F)=0$
		that $\expdiff_{x}\bigl[\goodpcaf_{\tau_{\ambient \setminus F}}\bigr]=0$,
		%	\begin{align*}
			%		\expdiff_{x}\bigl[\goodpcaf_{\tau_{\ambient \setminus F}}\bigr]
			%		&= \lim_{t \downarrow 0} \expdiff_{x}\biggl[  \int_t^{\tau_{\ambient \setminus F}} d\goodpcaf_s \biggr] \quad \textrm{(by the monotone convergence theorem)} \\
			%		&= \lim_{t \downarrow 0} \expdiff_{p^{F^c}_t(x,\cdot) \cdot \refmeas}\biggl[ \int_0^{\tau_{\ambient \setminus F}} d\goodpcaf_s \biggr] =0 \quad \textrm{(by \cite[(4.1.25)]{CF})} \\
			%		&=\int_{0}^{\infty}\int_{\ambient \setminus F}p^{\ambient \setminus F}_{s}(x,y)\,\goodmeas(dy)\,ds=0.
			%	\end{align*}
		therefore $\lawdiff_{x}\bigl(\goodpcaf_{\tau_{\ambient \setminus F}}=0\bigr)=1$ and we thus obtain \eqref{e:pc1}.
		By the sample-path right-continuity of $\diff$, $\lawdiff_{x}(\tau_{\ambient \setminus F}>0)=1$ for all $x \in \ambient \setminus F$,
		and hence by \eqref{e:pc1} we conclude 
		\begin{equation} \label{e:pc2}
			F \supset \bigl\{x \in \ambient \bigm| \lawdiff_{x}\bigl( \textrm{$\goodpcaf_{t} > 0$ for any $t \in (0,\infty)$} \bigr) = 1 \bigr\}.
		\end{equation}
		Note that by \cite[(A.3.12) in Proposition A.3.6]{CF}, we have
		\begin{equation} \label{e:pc2a}
			\lawdiff_{x}( R = \sigma_{ S(\goodmeas) } ) = 1 \quad \textrm{for any $x \in \ambient$}.
		\end{equation}
		Therefore in order to obtain \eqref{eq:goodpcafsupp}, by \eqref{e:pc2} and \eqref{e:pc2a}
		it suffices to prove that
		\begin{equation} \label{e:hit1}
			\lawdiff_{x}( \sigma_{ S(\goodmeas) } = 0 ) = 1 \quad \textrm{for any $x \in F$.}
		\end{equation}

		We adapt \cite[Proof of Proposition 6.16]{BCM} to obtain \eqref{e:hit1}.
		Let $\scjump,\scjthres{\scjump}$ be as in Assumption \ref{a:capgood}.
		We collect a few preliminary estimates on the Green functions.
		By Lemma \ref{l:ballbdy}, there exist $C_1,A_1 \in (1,\infty)$ such that
		for all $(\xi,r) \in F \times (0,\scjthres{\scjump}/A_1)$,
		\begin{equation} \label{e:sm4}
			\int_{B(\xi,r)} \gren{B(\xi,r)}(y,z)\,\goodmeas(dz) \le C_1  \scjump(\xi,r) \quad\mbox{for all $y \in B(\xi,r)$.}
		\end{equation}
		By increasing $A_1$ if necessary and by \cite[Lemmas 5.10, 5.22 and 5.23]{BCM},
		there exist $C_2,A_0 \in (1,\infty)$ such that 
		for all $x \in \ambient$ and all $r \in (0,\diam(\ambient)/A_1)$, we have 
		\begin{equation}\label{e:sm1}
			C_2^{-1} \Capa_{B(x,2r)}(B(x,r))^{-1} \le	\gren{B(x,r)}(x,A_0^{-1}r) \le C_2 \Capa_{B(x,2r)}(B(x,r))^{-1}.
		\end{equation}
		We also recall the following inequality for capacity (\cite[p.~441, Solution to Exercise 2.2.2]{FOT};
		see also \cite[the $0$-order version of Exercise 4.2.2]{FOT} and \cite[Proof of Proposition 5.21]{BCM}):
		for any $(\xi,r) \in F \times (0,\scjthres{\scjump}/A_1)$, any $K \in \Borel(B(\xi,r))$
		and any Borel measure $\mu$ on $B(\xi,r)$ with $\mu(B(\xi,r))<\infty$ and
		$\int_{B(\xi,r)} \gren{B(\xi,r)}(\cdot,z)\,\mu(dz) \leq 1$ $\form$-q.e.\ on $B(\xi,r)$,
		\begin{equation}\label{eq:capacity-variational-formula}
			\Capa_{B(\xi,r)}(K) \geq \mu(K),
		\end{equation}
		which is applicable to the measure $\mu:=(C_1 \scjump(\xi,r))^{-1}\goodmeas|_{B(\xi,r)}$ by \eqref{e:sm4} and yields
		\begin{equation} \label{e:sm5}
			\Capa_{B(\xi,r)}(K) \geq C_{1}^{-1}\frac{\goodmeas(K)}{\scjump(\xi,r)}.
		\end{equation} 
		
		Next we show that
		\begin{equation} \label{e:hit2}
			\lawdiff_{x}( \sigma_{\oneptcpt{\ambient} \setminus \{x\}} = 0 ) = 1
			\quad \textrm{for all $x \in \ambient$.}
		\end{equation}
		Indeed, for any $x \in \ambient$ and any $t \in (0,\infty)$, since
		$(\ambient,d,\refmeas)$ satisfies \hyperlink{RVD}{\textup{RVD}} by
		\hyperlink{VD}{\textup{VD}}, Proposition \ref{p:feller}-\eqref{it:HKE-conn-irr-cons}
		and Lemma \ref{l:rvd}, we have $\refmeas( \{ x \} ) = 0$, hence
		$\lawdiff_{x}( \diff_{t} = x ) = 0$ by \ref{eq:AC} of $\diff$ from Assumption \ref{a:feller},
		thus $\lawdiff_{x}( \sigma_{\oneptcpt{\ambient} \setminus \{x\}} \leq t ) = 1$,
		and letting $t \downarrow 0$ yields \eqref{e:hit2}.
		
		Now fix any $\xi \in {F}$, and let $t,\varepsilon \in (0,\infty)$ be arbitrary. By \eqref{e:hit2}, we have
		\begin{equation} \label{e:fqs1}
			\lawdiff_{\xi}( T < t ) > 1-\varepsilon, \quad \textrm{where $T = \tau_{B(\xi,r)}$,}
		\end{equation}
		for some $r=r(\xi,t,\varepsilon) \in (0,\infty)$.
		By decreasing $r=r(\xi,t,\varepsilon)$ if necessary, we may assume that
		$r \in (0,\scjthres{\scjump}/A_1)$, where $A_1 \in (1,\infty)$ is as above.
		Fixing $r=r(\xi,t,\varepsilon)$ as above, we define
		\begin{equation*}
			K := B(\xi, A_0^{-1}r) \cap S(\goodmeas).
		\end{equation*}
		We show that there exists a constant $c_{0} \in (0,1)$ that depends only on the constants involved in the assumption such that
		\begin{equation} \label{e:sm2}
			\lawdiff_{\xi}( \sigma_{K} < T ) \geq c_{0}.
		\end{equation}
		Let $e$ denote the equilibrium measure for $K$ such that $e(\overline{K})=\Capa_{B}(K)$,
		where $B:=B(\xi,r)$. To prove \eqref{e:sm2}, we observe that
		\begin{equation} \label{e:sm3a}
			\lawdiff_{z}(\sigma_{K} < \tau_B)= \int_{\overline{K}} \gren{B}(z,y)\, e(dy)
			\quad \textrm{for all $z \in B$.}
		\end{equation}
		To see \eqref{e:sm3a}, we use \cite[Theorem 4.3.3 and the $0$-order version of Exercise 4.2.2]{FOT}
		to conclude that both sides of \eqref{e:sm3a} are $\form$-quasi-continuous versions of the $0$-order
		equilibrium potential for $K$ with respect to the part Dirichlet form $(\form^{B},\domain^{0}(B))$ on $B$.
		Since both sides of \eqref{e:sm3a} are $\diff^{B}$-excessive by \cite[Lemma A.2.4-(ii)]{CF} and
		Lemma \ref{l:green}, respectively, we obtain \eqref{e:sm3a} by \ref{eq:AC} of $\diff^{B}$ from
		Proposition \ref{p:feller}-\eqref{it:HKE-part-CHK-sFeller} and \cite[Theorem A.2.17-(iii)]{CF}.
		Then by \eqref{e:sm3a} and the maximum principle \eqref{e:max},
		\begin{equation} \label{e:sm3}
			\lawdiff_{\xi}(\sigma_{K} < T)= \int_{\overline{K}} \gren{B}(\xi,y)\,e(dy) \ge \gren{B}(\xi,A_0^{-1}r) \Capa_{B}(K).
		\end{equation}
		
		Recalling that $S(\goodmeas)$ is an $\form$-quasi-support of the Revuz measure
		$\goodmeas$ of $\goodpcaf$ by \cite[Theorem 5.1.5]{FOT} or \cite[Theorem 5.2.1-(i)]{CF},
		we have $\goodmeas(\ambient \setminus S(\goodmeas))=0$ and hence
		\begin{equation} \label{e:fqs2}
			\goodmeas(K) = \goodmeas\bigl(B(\xi,A_0^{-1}r)\bigr).
		\end{equation}
		Now \eqref{e:sm2} follows by estimating $\lawdiff_{\xi}(\sigma_{K} < T)$ as
		\begin{align*}
			\lawdiff_{\xi}(\sigma_{K} < T)
			&\overset{\eqref{e:sm3}}{\ge} \gren{B}(\xi,A_0^{-1}r) \Capa_{B}(K)
			\overset{\eqref{e:sm5},\eqref{e:fqs2}}{\ge} C_1^{-1} \frac{\gren{B}(\xi,A_0^{-1}r)\goodmeas\bigl(B(\xi,A_0^{-1}r)\bigr)}{\scjump(\xi,r)} \nonumber \\
			&\overset{\eqref{e:sm1}}{\ge} (C_1 C_2)^{-1} \frac{ \goodmeas\bigl(B(\xi,A_0^{-1}r)\bigr)}{\Capa_{B(\xi,2r)}(B(\xi,r))\scjump(\xi,r)}
			\overset{\eqref{e:cgood}}{\gtrsim} \frac{\goodmeas\bigl(B(\xi,A_0^{-1}r)\bigr)}{\goodmeas(B(\xi, r))}
			\overset{\eqref{e:cgvd}}{\gtrsim} 1. \nonumber
		\end{align*}
		Then by choosing $\varepsilon=\frac{1}{2}c_{0}$ and using
		$\{ \sigma_{K} < T \} \subset \{ \sigma_{S(\goodmeas)} \leq t \} \cup \{ T \geq t \}$, we obtain
		\begin{equation*}
			\lawdiff_{\xi}( \sigma_{S(\goodmeas)} \leq t )
			\geq \lawdiff_{\xi}(\sigma_{K}<T) - \lawdiff_{\xi}(T \geq t)
			\overset{\eqref{e:fqs1},\eqref{e:sm2}}{>} c_{0} -\varepsilon
			= {\textstyle\frac{1}{2}} c_{0},
		\end{equation*}
		hence $\lawdiff_{\xi}( \sigma_{S(\goodmeas)} = 0 ) > \frac{1}{2}c_{0} > 0$ since $t \in (0,\infty)$ is arbitrary,
		and thus $\lawdiff_{\xi}( \sigma_{S(\goodmeas)} = 0 ) = 1$ by the Blumenthal 0-1 law \cite[Lemma A.2.5]{CF},
		proving \eqref{e:hit1} and thereby \eqref{eq:goodpcafsupp}.
		
		Lastly by \eqref{eq:goodpcafsupp} and \cite[Theorem 5.1.5]{FOT}, $F$ is an $\form$-quasi-support of $\goodmeas$.
	\end{proof}
	
	It turns out that the time-changed process $\difftr$ of $\diff$ by the PCAF $\goodpcaf$
	is a Hunt process on $F$ and satisfies \ref{eq:AC} with respect to $\goodmeas$
	and the occupation density formula in \eqref{e:odtr} below.
	The latter formula means that the Green function of $\difftr$ is the same as that of the diffusion $\diff$,
	and we will use it in the proof of the exit time lower estimate \hyperlink{exit}{$\on{E}(\scjump)_{\geq}$}
	for the boundary trace process (Proposition \ref{p:bdry-trace-exit}). Note that the relative topology
	of $\oneptcpt{F}=F\cup\{\cemetery\}$ inherited from $\oneptcpt{\ambient}$ coincides with
	its topology as the one-point compactification of $F$.
	
	\begin{prop} \label{prop:greeninv}
		Let $(\ambient,d,\refmeas,\form,\domain),\diff,\goodmeas,F$ be as in Assumption \ref{a:capgood}, and let
		$\difftr=\bigl(\widecheck{\Omega},\widecheck{\events},\{\difftr_{t}\}_{t\in[0,\infty]},\{\lawdiff_{x}\}_{x \in \oneptcpt{F}}\bigr)$
		be the time-changed process of $\diff$ by the PCAF $\goodpcaf$ defined by \eqref{eq:time-changed-process}
		with $\goodpcaf=\{\goodpcaf_{t}\}_{t\in[0,\infty)}$ in place of $A=\{A_{t}\}_{t\in[0,\infty)}$.
		Then the following hold:
		\begin{enumerate}[\rm(a)]\setlength{\itemsep}{0pt}\vspace{-5pt}
			\item\label{it:time-change-Hunt-AC} The subset $\widecheck{\Omega}_{0}$ of $\widecheck{\Omega}$ defined by
			\begin{equation} \label{eq:time-changed-process-Hunt-sample-space}
				\widecheck{\Omega}_{0} := \widecheck{\Omega} \cap
				\Bigl( \bigl\{ \widecheck{\zeta} \in \{0,\infty\} \bigr\} \cup \Bigl\{ \lim_{s\to\infty}\diff_{s}=\cemetery \Bigr\} \Bigr)
			\end{equation}
			satisfies $\widecheck{\Omega}_{0} \in \minaugfilt_{0}$,
			$\lawdiff_{x}(\widecheck{\Omega}_{0})=1$ for any $x\in\oneptcpt{\ambient}$ and
			$\widecheck{\shiftdiff}_{t}(\widecheck{\Omega}_{0})\subset \widecheck{\Omega}_{0}$
			for any $t\in[0,\infty]$, and the time-changed process $\difftr$ with
			$\widecheck{\Omega}$ in \eqref{eq:time-changed-process} replaced by
			$\widecheck{\Omega}_{0}$ is a $\goodmeas$-symmetric Hunt process on $F$
			with life time $\widecheck{\zeta}$
			and shift operators $\bigl\{\widecheck{\shiftdiff}_{t}\bigr\}_{t\in[0,\infty]}$
			whose Dirichlet form is the regular symmetric Dirichlet form $(\trform,\trdomain)$
			on $L^{2}(F,\goodmeas)$ defined by \eqref{e:def-trace-domain} and \eqref{e:def-trace-form}.
			Moreover, $\difftr$ satisfies \ref{eq:AC}, i.e.,
			$\lawdiff_{x}(\difftr_{t} \in dy) \ll \goodmeas(dy)$ for any $(t,x)\in(0,\infty)\times F$.
			\item\label{it:greeninv}Let $B$ be a closed subset of $F$ such that the part Dirichlet form
			$(\form^{\ambient \setminus B},\domain^{0}(\ambient \setminus B))$ of $(\form,\domain)$
			on $\ambient \setminus B$ is transient. Then for any $x \in \ambient \setminus B$
			and any $\Borel^{*}(F \setminus B)$-measurable function $f \colon F \setminus B \to [0,\infty]$,
			$\int_{0}^{\widecheck{\tau}_{F \setminus B}} f(\difftr_{s})\, ds$
			is $\minaugfilt_{\infty}$-measurable and
			\begin{equation} \label{e:odtr}
				\expdiff_{x}\biggl[ \int_{0}^{\widecheck{\tau}_{F \setminus B}} f(\difftr_{s})\, ds \biggr]
				= \int_{F \setminus B} \gren{\ambient \setminus B}(x,y) f(y)\, \goodmeas(dy),
			\end{equation}
			%	\textup{(recall \eqref{eq:time-changed-process-in-support}),}
			where $\widecheck{\tau}_{F \setminus B} := \inf\bigl\{ t \in [0,\infty) \bigm| \difftr_{t} \not \in F \setminus B \bigr\}$
			and $\gren{\ambient \setminus B}(x,y):=\int_{0}^{\infty}p^{\ambient \setminus B}_{t}(x,y)\,dt$
			for the continuous heat kernel
			$p^{\ambient \setminus B} = p^{\ambient \setminus B}_{t}(x,y) \colon (0,\infty) \times (\ambient \setminus B) \times (\ambient \setminus B) \to [0,\infty)$
			of $\bigl(\ambient \setminus B,\refmeas|_{\ambient \setminus B},\form^{\ambient \setminus B},\domain^{0}(\ambient \setminus B)\bigr)$
			as given in Proposition \ref{p:feller}-\eqref{it:HKE-part-CHK-sFeller}.
		\end{enumerate}
	\end{prop}
	
	\begin{remark} \label{r:greeninv}
		A weaker version of \eqref{e:odtr} with \emph{every} $x$ replaced with
		\emph{$\form$-quasi-every} $x$ can be obtained by following
		\cite[Proof of Lemma 6.2.2]{FOT} (see in particular \cite[(6.2.10) and (6.2.11)]{FOT}).
	\end{remark}
	
	\begin{proof}[Proof of Proposition \ref{prop:greeninv}]
		\begin{enumerate}[\rm(a)]\setlength{\itemsep}{0pt}
			\item[\eqref{it:greeninv}]Note that $\widecheck{\tau}_{F \setminus B}$
			is an $\widecheck{\minaugfilt}_{*}$-stopping time by \cite[Proof of Proposition A.3.8-(vi)]{CF}
			(recall \eqref{eq:time-changed-filtration}). Set $D:=\ambient \setminus B$ and
			let $( P^{D}_{t} )_{t>0}$ denote the Markovian transition function of $\diff^{D}$.
			We easily see from \eqref{eq:time-changed-process-in-support},
			the sample path properties \eqref{it:PCAF-sample-path-properties} of $\goodpcaf$,
			the strong Markov property of $\diff$ (see, e.g., \cite[Theorem A.1.21]{CF}),
			$B \subset F$ and \eqref{eq:goodpcafsupp} that
			\begin{equation}\label{eq:exit-time-tcp}
				\lawdiff_{x}\bigl(\widecheck{\tau}_{F \setminus B}=\goodpcaf_{\tau_{D}}\bigr)=1 \quad \textrm{for any $x \in \ambient$.}
			\end{equation}
			
			Let $x \in D$, let $u \colon F \setminus B \to [0,\infty]$ be Borel measurable,
			and extend $u$ to $\ambient$ by setting $u|_{\ambient \setminus (F \setminus B)}:=0$.
			Then since $\bigl[0,\goodpcaf_{\tau_{D}}\bigr)=\{s\in[0,\infty)\mid\tau_{s}<\tau_{D}\}$
			on $\Omega$, we obtain
			\begin{equation}\label{eq:greeninv-proof}
				\begin{split}
					&\expdiff_{x}\biggl[ \int_{0}^{\widecheck{\tau}_{F \setminus B}} u(\difftr_{s}) \,ds \biggr]
					= \expdiff_{x}\biggl[ \int_{0}^{\goodpcaf_{\tau_{D}}} u(\diff_{\tau_{s}}) \,ds \biggr] \quad \textrm{(by \eqref{eq:exit-time-tcp})} \\
					&= \expdiff_{x}\biggl[ \int_{0}^{\infty} \one_{[0,\tau_{D})}(\tau_{s}) u(\diff_{\tau_{s}}) \,ds \biggr] \\
					&= \expdiff_{x}\biggl[ \int_{0}^{\tau_{D}} u(\diff_{s}) \,d\goodpcaf_{s} \biggr] \quad \textrm{(by \cite[Lemma A.3.7-(i)]{CF})} \\
					%		&= \lim_{\delta \downarrow 0} \expdiff_{x}\biggl[ \int_{\delta}^{\infty} u(\diff_{s\wedge \tau_{D}}) \,d\goodpcaf_{s} \biggr] \\
					%		&= \lim_{\delta \downarrow 0} \expdiff_{p^{D}_{\delta}(x,\cdot)\cdot \refmeas|_{D}}\biggl[ \int_{0}^{\infty} u(\diff_{s \wedge \tau_{D}}) \,d\goodpcaf_{s} \biggr] \\
					%		&= \lim_{\delta \downarrow 0}\int_{0}^{\infty} \int_{F \setminus B} \bigl( P^{D}_{s} p^{D}_{\delta}(x,\cdot) \bigr)(y)u(y) \,\goodmeas(dy)\,ds \quad \textrm{(by \cite[(4.1.25)]{CF})} \\
					%		&= \lim_{\delta \downarrow 0}\int_{\delta}^{\infty} \int_{F \setminus B} p^{D}_{s}(x,y) u(y) \,\goodmeas(dy)\,ds
					&= \int_{0}^{\infty} \int_{F \setminus B} p^{D}_{s}(x,y)u(y) \,\goodmeas(dy)\,ds \quad \textrm{(by \eqref{eq:Revuz-correspondence-pointwise} and $\goodmeas(\ambient \setminus F)=0$)} \\
					&= \int_{F \setminus B} \biggl( \int_{0}^{\infty} p^{D}_{s}(x,y) \,ds \biggr) u(y) \,\goodmeas(dy)
					= \int_{F \setminus B} \gren{D}(x,y) u(y)\, \goodmeas(dy).
				\end{split}
			\end{equation}
			
			Now let $f \colon F \setminus B \to [0,\infty]$ be $\Borel^{*}(F \setminus B)$-measurable.
			Then $\int_{0}^{\widecheck{\tau}_{F \setminus B}} f(\difftr_{s})\, ds$ is
			$\minaugfilt_{\infty}$-measurable by \cite[Chapter 0, Exercise 3.3]{BlGe}
			and Fubini's theorem (see also \cite[Proof of Theorem A.1.22]{CF}), and there exist
			Borel measurable functions $f_{1},f_{2}\colon F \setminus B \to [0,\infty]$
			such that $f_{1}\leq f\leq f_{2}$ on $F \setminus B$ and $f_{1}=f_{2}$ $\goodmeas$-a.e.\ on $F \setminus B$.
			It follows from \eqref{eq:greeninv-proof} for $u=\one_{\{y\in F\setminus B\mid f_{1}(y)<f_{2}(y)\}}$
			and Fubini's theorem that $f_{1}(\difftr_{s})=f_{2}(\difftr_{s})=f(\difftr_{s})$
			for a.e.\ $s\in[0,\widecheck{\tau}_{F \setminus B})$ $\lawdiff_{x}$-a.s.\ and hence that
			$\int_{0}^{\widecheck{\tau}_{F \setminus B}} f(\difftr_{s})\, ds=\int_{0}^{\widecheck{\tau}_{F \setminus B}} f_{1}(\difftr_{s})\, ds$
			$\lawdiff_{x}$-a.s., which, together with \eqref{eq:greeninv-proof} for $u=f_{1}$ and
			$f_{1}=f$ $\goodmeas$-a.e.\ on $F \setminus B$, shows \eqref{e:odtr}.
			\item[\eqref{it:time-change-Hunt-AC}]We first prove that for any $t\in[0,\infty)$ and any $A \in \Borel(F)$,
			\begin{equation}\label{eq:time-changed-process-Borel}
				\textrm{the function}\quad
				F \ni x\mapsto\lawdiff_{x}(\difftr_{t} \in A)
				\quad\textrm{is Borel measurable,}
			\end{equation}
			which for $t=0$ is immediate from \eqref{eq:goodpcafsupp}.
			If $\{x\}$ is $\form$-polar for any $x\in F$, then $\{x\}$ is also $\trform$-polar
			for any $x\in F$ by \cite[Theorem 5.2.6]{CF} and \eqref{eq:goodpcafsupp}, hence
			any function defined $\trform$-q.e.\ on $F$ and $\trform$-quasi-continuous on $F$
			is an $\mathbb{R}$-valued Borel measurable function on $F$, and
			\eqref{eq:time-changed-process-Borel} holds since the function in
			\eqref{eq:time-changed-process-Borel} is $\trform$-quasi-continuous on $F$
			for any $t\in(0,\infty)$ and any $A \in \Borel(F)$ with $\goodmeas(A)<\infty$
			by \cite[Theorem 6.2.1-(iv)]{FOT} or the conjunction of \cite[Theorem 5.2.7]{CF} and \eqref{eq:goodpcafsupp}.
			
			Thus we may and do assume that $\{x_{0}\}$ is $\form$-polar for some $x_{0}\in F$.
			Note that then $\ambient\setminus\{x_{0}\}$ is connected. Indeed, if
			$\ambient\setminus\{x_{0}\}$ were not connected, then it would easily follow from
			\cite[Exercise 1.4.1, Theorems 1.4.2-(ii) and 1.6.1]{FOT} and the regularity and strong locality of
			$(\ambient\setminus\{x_{0}\},\refmeas|_{\ambient\setminus\{x_{0}\}},\form^{\ambient\setminus\{x_{0}\}},\domain^{0}(\ambient\setminus\{x_{0}\}))$
			that this Dirichlet space would not be irreducible. This would contradict the fact that
			$(\form^{\ambient\setminus\{x_{0}\}},\domain^{0}(\ambient\setminus\{x_{0}\}))$
			coincides with $(\form,\domain)$ as symmetric Dirichlet forms on $L^{2}(\ambient,\refmeas)$
			by \eqref{e:FD} and $\Capa_{1}(\{x_{0}\})=0$ and is hence irreducible by
			the irreducibility of $(\ambient,\refmeas,\form,\domain)$ from
			Proposition \ref{p:feller}-\eqref{it:HKE-conn-irr-cons}.
			
			To show \eqref{eq:time-changed-process-Borel}, let $x_{1}\in F\setminus\{x_{0}\}$
			and, recalling that $\ambient$ is locally pathwise connected by Proposition \ref{p:feller}-\eqref{it:HKE-conn-irr-cons},
			let $D_{1}$ be a pathwise connected open neighborhood of $x_{1}$ in $\ambient$ with $x_{0}\not\in\overline{D_{1}}$.
			Then noting that $\goodmeas(F\cap B(x_{0},r))=\goodmeas(B(x_{0},r))\in(0,\infty)$ for any $r\in(0,\infty)$
			by \eqref{e:cgvd} and $x_{0}\in F=\supp_{\ambient}[\goodmeas]$ and that
			$\lim_{r\downarrow 0}\goodmeas(B(x_{0},r))=\goodmeas(\{x_{0}\})=0$
			by $\Capa_{1}(\{x_{0}\})=0$ and Lemma \ref{l:strict}, we can choose
			$x_{2}\in F\setminus(\overline{D_{1}}\cup\{x_{0}\})$, and see from
			$\ambient\setminus\{x_{0}\}$ being connected and locally pathwise connected that
			$D_{1}\cup\{x_{2}\}\subset D$ for some pathwise connected open subset $D$ of
			$\ambient$ with $x_{0}\not\in\overline{D}$.
			
			Let $r\in(0,\infty)$ satisfy $\overline{B(x_{0},r)}\cap D=\emptyset$,
			set $B:=B_{r}:=F\cap\overline{B(x_{0},r)}$, and let $\widecheck{\tau}_{F \setminus B}$
			be as in \eqref{it:greeninv}. We claim that for any $t\in[0,\infty)$ and any $A \in \Borel(D)$,
			\begin{equation}\label{eq:time-changed-process-killed-Borel}
				\textrm{the function}\quad
				\ambient\setminus B\ni x\mapsto\lawdiff_{x}(\textrm{$\difftr_{t} \in A$, $t<\widecheck{\tau}_{F \setminus B}$})
				\quad\textrm{is Borel measurable.}
			\end{equation}
			To see \eqref{eq:time-changed-process-killed-Borel}, for each $\alpha\in[0,\infty)$ and
			each $\Borel^{*}(\ambient)$-measurable function $f\colon \ambient \to [0,\infty]$, noting that
			$\int_{0}^{\widecheck{\tau}_{F \setminus B}} e^{-\alpha s} f(\difftr_{s})\, ds$ is
			$\minaugfilt_{\infty}$-measurable by \cite[Chapter 0, Exercise 3.3]{BlGe} and Fubini's theorem,
			define $\widecheck{R}^{F\setminus B}_{\alpha}f\colon \ambient \to [0,\infty]$ by
			\begin{equation}\label{eq:time-changed-process-killed-resolvent-dfn}
				\widecheck{R}^{F\setminus B}_{\alpha}f(x)
				:=\expdiff_{x}\biggl[ \int_{0}^{\widecheck{\tau}_{F \setminus B}} e^{-\alpha s} f(\difftr_{s})\, ds \biggr],
			\end{equation}
			so that $\widecheck{R}^{F\setminus B}_{\alpha}f$ is $\Borel^{*}(\ambient)$-measurable
			by \cite[Exercise A.1.20-(i)]{CF}. Then for any $\Borel^{*}(\ambient)$-measurable function
			$f\colon \ambient \to [0,\infty]$, any $\alpha,\beta\in[0,\infty)$ with $\alpha<\beta$ and any $x\in\ambient$,
			we easily see from \eqref{eq:time-changed-process-killed-resolvent-dfn} that
			\begin{equation}\label{eq:time-changed-process-killed-resolvent-zero-equiv}
				\widecheck{R}^{F\setminus B}_{\alpha}f(x)=0
				\quad \textrm{if and only if} \quad
				\widecheck{R}^{F\setminus B}_{\beta}f(x)=0,
			\end{equation}
			and from the strong Markov property of $\diff$ (see, e.g., \cite[Theorem A.1.21]{CF}) and Fubini's theorem that 
			\begin{equation}\label{eq:time-changed-process-killed-resolvent-eq}
				\widecheck{R}^{F\setminus B}_{\alpha}f(x)
				=\widecheck{R}^{F\setminus B}_{\beta}f(x)
				+(\beta-\alpha)\widecheck{R}^{F\setminus B}_{\alpha}(\widecheck{R}^{F\setminus B}_{\beta}f)(x).
			\end{equation}
			Choose $\delta\in(0,d(x_{1},x_{2})/2)$ so that $B(x_{1},\delta)\cup B(x_{2},\delta)\subset D$,
			and define $f_{B}\colon\ambient\to[0,1]$ by
			$f_{B}:=\min_{j\in\{1,2\}}\widecheck{R}^{F\setminus B}_{1}(\one_{B(x_{j},\delta)})$.
			Note that $\Capa_{1}(B)>0$ by $\goodmeas(B)>0$ and Lemma \ref{l:strict} and thus that
			$(\ambient \setminus B,\refmeas|_{\ambient \setminus B},\form^{\ambient \setminus B},\domain^{0}(\ambient \setminus B))$
			is transient by the irreducibility of $(\ambient,\refmeas,\form,\domain)$ from
			Proposition \ref{p:feller}-\eqref{it:HKE-conn-irr-cons} and \cite[Proposition 2.1]{BCM}.
			Therefore for any $x \in \ambient \setminus B$, by \eqref{eq:time-changed-process-killed-resolvent-eq},
			\eqref{e:odtr} from \eqref{it:greeninv}, $B(x_{1},\delta)\cup B(x_{2},\delta)\subset D\subset\ambient\setminus B$,
			\eqref{e:cgvd} and the finiteness and continuity of $\gren{\ambient \setminus B}|_{\offdiagp{\ambient \setminus B}}$ from Lemma \ref{l:green} we have
			\begin{equation}\label{eq:time-changed-process-killed-0-resolvent-finite}
				\widecheck{R}^{F\setminus B}_{0}f_{B}(x)
				\leq\min_{j\in\{1,2\}}\widecheck{R}^{F\setminus B}_{0}(\one_{B(x_{j},\delta)})(x)
				=\min_{j\in\{1,2\}}\int_{F\cap B(x_{j},\delta)} \gren{\ambient \setminus B}(x,y) \, \goodmeas(dy)
				<\infty,
			\end{equation}
			and hence by \eqref{eq:time-changed-process-killed-resolvent-eq} and \eqref{e:odtr} from \eqref{it:greeninv},
			for any $\alpha\in(0,\infty)$, any $\Borel^{*}(\ambient)$-measurable function
			$f\colon\ambient\to[0,\infty]$ and any $n\in\mathbb{N}$,
			\begin{align*}
				&\widecheck{R}^{F\setminus B}_{\alpha}(f\wedge(nf_{B}))(x)
				=\widecheck{R}^{F\setminus B}_{0}(f\wedge(nf_{B}))(x) - \alpha\widecheck{R}^{F\setminus B}_{0}\bigl(\widecheck{R}^{F\setminus B}_{\alpha}(f\wedge(nf_{B}))\bigr)(x) \\
				&=\int_{F\setminus B}\gren{\ambient \setminus B}(x,y)(f\wedge(nf_{B}))(y)\,\goodmeas(dy) - \alpha\int_{F\setminus B}\gren{\ambient \setminus B}(x,y)\widecheck{R}^{F\setminus B}_{\alpha}(f\wedge(nf_{B}))(y)\,\goodmeas(dy),
			\end{align*}
			which, as well as its limit $\widecheck{R}^{F\setminus B}_{\alpha}(f\one_{\{y\in\ambient\mid f_{B}(y)>0\}})(x)$ as $n\to\infty$,
			is Borel measurable in $x \in \ambient \setminus B$ by the Borel measurability of $\gren{\ambient \setminus B}$
			and Fubini's theorem. Moreover, since $D$ is a connected open subset of $\ambient\setminus B$,
			$\gren{\ambient \setminus B}|_{D\times D}$ is $(0,\infty]$-valued by Proposition \ref{p:feller}-\eqref{it:HKE-part-CHK-sFeller},
			and we obtain $f_{B}(x)>0$ for any $x\in D$ by combining the strict positivity of $\gren{\ambient \setminus B}|_{D\times D}$
			with the equality in \eqref{eq:time-changed-process-killed-0-resolvent-finite},
			$B(x_{1},\delta)\cup B(x_{2},\delta)\subset D$, $x_{1},x_{2}\in F=\supp_{\ambient}[\goodmeas]$
			and \eqref{eq:time-changed-process-killed-resolvent-zero-equiv}. Thus for each
			$[0,\infty)$-valued $f\in\contfunc_{\mathrm{c}}(\ambient)$ with $\supp_{\ambient}[f]\subset D$,
			$(\widecheck{R}^{F\setminus B}_{\alpha}f)|_{\ambient\setminus B}$ is Borel measurable
			for any $\alpha\in(0,\infty)$, and the right-hand side of \eqref{eq:time-changed-process-killed-resolvent-dfn}
			with the function $e^{-\alpha(\cdot)}$ replaced by any $[0,\infty)$-valued $\varphi\in\contfunc_{0}([0,\infty))$
			is also Borel measurable in $x\in\ambient\setminus B$ by the Stone--Weierstrass theorem
			(see, e.g., \cite[Corollary V.8.3]{Con}) applied to the subalgebra of $\contfunc_{0}([0,\infty))$
			generated by $\{e^{-\alpha(\cdot)}\mid\alpha\in(0,\infty)\}$. The same holds
			also when $e^{-\alpha(\cdot)}$ in \eqref{eq:time-changed-process-killed-resolvent-dfn}
			is replaced by $\varepsilon^{-1}\one_{(t,t+\varepsilon)}$ for any $t,\varepsilon\in[0,\infty)$ with $\varepsilon>0$,
			and letting $\varepsilon\downarrow 0$ shows the Borel measurability of
			$\ambient\setminus B\ni x\mapsto\expdiff_{x}\bigl[f(\difftr_{t})\one_{\{t<\widecheck{\tau}_{F \setminus B}\}}\bigr]$
			by the sample-path right-continuity of $\{f(\difftr_{s})\one_{\{s<\widecheck{\tau}_{F \setminus B}\}}\}_{s\in[0,\infty)}$
			and dominated convergence, whence \eqref{eq:time-changed-process-killed-Borel} follows
			since $f\in\contfunc_{\mathrm{c}}(\ambient)$ with $\supp_{\ambient}[f]\subset D$ is arbitrary.
			
			Now \eqref{eq:time-changed-process-Borel} follows from \eqref{eq:time-changed-process-killed-Borel}.
			Indeed, with $r$ as above, set $\tau_{n}:=\tau_{\ambient\setminus B_{r/n}}$ and
			$\widecheck{\tau}_{n}:=\widecheck{\tau}_{F \setminus B_{r/n}}$
			for each $n\in\mathbb{N}$, $\tau:=\sup_{n\in\mathbb{N}}\tau_{n}$ and
			$\widecheck{\tau}:=\sup_{n\in\mathbb{N}}\widecheck{\tau}_{n}$, so that
			$\{\tau_{n}\}_{n\in\mathbb{N}}$ is a non-decreasing sequence of $\minaugfilt_{*}$-stopping times and
			$\{\widecheck{\tau}_{n}\}_{n\in\mathbb{N}}$ is a non-decreasing sequence of $\widecheck{\minaugfilt}_{*}$-stopping times.
			Then by the sample-path right-continuity of $\diff,\difftr$, for any $n\in\mathbb{N}$ we have
			$\diff_{\tau_{n}}\in B_{r/n}\cup\{\cemetery\}$ on $\Omega$,
			$\diff_{\tau_{\widecheck{\tau}_{n}}}=\difftr_{\widecheck{\tau}_{n}}\in B_{r/n}\cup\{\cemetery\}$
			on $\widecheck{\Omega}$, hence $\tau_{n}\leq\tau_{\widecheck{\tau}_{n}}$ on $\widecheck{\Omega}$
			and $\tau\leq\tau_{\widecheck{\tau}}$ on $\widecheck{\Omega}$.
			Now let $x\in\ambient\setminus\{x_{0}\}$. Since $\lawdiff_{x}(\dot{\sigma}_{\{x_{0}\}}=\infty)=1$
			by \cite[Theorems 4.2.4 and 4.1.2]{FOT} (or \cite[Theorem A.2.17-(i),(ii)]{CF}),
			\ref{eq:AC} of $\diff$, $\Capa_{1}(\{x_{0}\})=0$ and \eqref{eq:cap-zero-properly-exceptional},
			the quasi-left-continuity on $(0,\zeta)$ of $\diff$ as in \cite[Definition A.1.23 and Theorem A.1.24]{CF}
			(or the sample-path continuity \eqref{eq:sample-path-cont} of $\diff$ along with \ref{eq:AC} of $\diff$)
			yields $\lawdiff_{x}(\tau\geq\zeta)=1$. Thus, recalling \eqref{eq:time-changed-process-in-support}, we have
			$1=\lawdiff_{x}(\tau\geq\zeta)\leq\lawdiff_{x}(\tau_{\widecheck{\tau}}\geq\zeta)\leq\lawdiff_{x}(\widecheck{\tau}\geq\goodpcaf_{\infty}=\widecheck{\zeta})\leq 1$,
			and therefore for any $t\in[0,\infty)$ and any $A \in \Borel(D)$,
			\begin{equation}\label{eq:time-changed-process-Borel-proof}
				\lim_{n\to\infty}\lawdiff_{x}(\textrm{$\difftr_{t} \in A$, $t<\widecheck{\tau}_{F \setminus B_{r/n}}$})
				=\lawdiff_{x}(\textrm{$\difftr_{t} \in A$, $t<\widecheck{\zeta}$})
				=\lawdiff_{x}(\difftr_{t} \in A),
			\end{equation}
			so that $\ambient\ni x\mapsto\lawdiff_{x}(\difftr_{t} \in A)$ is Borel measurable by
			\eqref{eq:time-changed-process-killed-Borel}, which proves \eqref{eq:time-changed-process-Borel}
			since $\lawdiff_{x}(\difftr_{t}=x_{0})=0$ for any $x\in\ambient\setminus\{x_{0}\}$
			by $\lawdiff_{x}(\dot{\sigma}_{\{x_{0}\}}=\infty)=1$ and
			$F\setminus\{x_{0}\}\subset\bigcup_{k\in\mathbb{N}}D_{k}$ for some sequence
			$\{D_{k}\}_{k\in\mathbb{N}}$ of pathwise connected open subsets of $\ambient$
			with $x_{0}\not\in\bigcup_{k\in\mathbb{N}}\overline{D_{k}}$.
			
			We next prove the stated properties of $\widecheck{\Omega}_{0}$. It is clear that
			$\widecheck{\shiftdiff}_{t}(\widecheck{\Omega}_{0})\subset \widecheck{\Omega}_{0}$
			for any $t\in[0,\infty]$, and $\widecheck{\Omega}_{0}\in\minaugfilt_{\infty}$
			by $\Lambda\in\minaugfilt_{0}$, \eqref{eq:time-changed-process},
			\eqref{eq:time-changed-process-in-support}, \eqref{eq:time-changed-process-Hunt-sample-space}
			and the sample-path right-continuity of $\diff$. Since $(\ambient,\refmeas,\form,\domain)$
			is irreducible by Proposition \ref{p:feller}-\eqref{it:HKE-conn-irr-cons},
			$(\ambient,\refmeas,\form,\domain)$ is either transient or recurrent by
			\cite[Proposition 2.1.3-(iii)]{CF} or \cite[Lemma 1.6.4-(iii)]{FOT}.
			If $(\ambient,\refmeas,\form,\domain)$ is transient, then
			$\lawdiff_{x}(\lim_{s\to\infty}\diff_{s}=\cemetery)=1$ for any $x\in\oneptcpt{\ambient}$
			by \cite[Theorem 3.5.2]{CF} combined with \ref{eq:AC} and the conservativeness of $\diff$
			from Proposition \ref{p:feller}-\eqref{it:HKE-Feller}. Otherwise
			$(\ambient,\refmeas,\form,\domain)$ is irreducible and recurrent, so the function
			$\ambient\ni x\mapsto 1-\expdiff_{x}\bigl[e^{-\goodpcaf_{\infty}}\bigr]$,
			which is $\diff$-excessive as noted in \cite[Proof of Proposition 3.5]{Kaj12},
			is constant on $\ambient$ by \cite[Lemma 3.5.5-(ii) and Theorem A.2.17-(i),(iii)]{CF}
			and \ref{eq:AC} of $\diff$. Its constant value is actually $1$ and thus
			$\lawdiff_{x}(\widecheck{\zeta}=\goodpcaf_{\infty}=\infty)=1$ for any $x\in\ambient$;
			indeed, since $\one_{\ambient}\in\domain_{e}$ and $\form(\one_{\ambient},\one_{\ambient})=0$
			by the recurrence of $(\ambient,\refmeas,\form,\domain)$, we have
			$\one_{F}\in\trdomain_{e}$ and $\trform(\one_{F},\one_{F})=0$
			by \eqref{e:trace-ExtDiriSp} and \eqref{eq:hit-dist-harm-var}, namely the Dirichlet form
			$(\trform,\trdomain)$ of $\difftr$ on $L^{2}(F,\goodmeas)$ is recurrent, and hence
			conservative by \cite[Proposition 2.1.10]{CF} or \cite[Lemma 1.6.5]{FOT}, so that
			$\lawdiff_{x}(\widecheck{\zeta}=\goodpcaf_{\infty}=\infty)=\lim_{t\to\infty}\lawdiff_{x}(t<\widecheck{\zeta})=1$
			and $\expdiff_{x}\bigl[e^{-\goodpcaf_{\infty}}\bigr]=0$ for $\goodmeas$-a.e.\ $x\in F$
			and in particular for some $x\in F$ by $\goodmeas(F)>0$. By these observations,
			$\lawdiff_{\cemetery}(\widecheck{\zeta}=0)=1$ and \eqref{eq:time-changed-process-in-support}
			we obtain $\lawdiff_{x}(\widecheck{\Omega}_{0})=1$ for any $x\in\oneptcpt{\ambient}$
			and thus $\widecheck{\Omega}_{0}\in\minaugfilt_{0}$.
			
			For any $(t,\omega) \in (0,\infty) \times \widecheck{\Omega}_{0}$, the left limit
			$\difftr_{t-}(\omega):=\lim_{s\uparrow t}\difftr_{s}(\omega)$ in $\oneptcpt{F}$ exists;
			indeed, setting $\tau_{t-}(\omega):=\lim_{s\uparrow t}\tau_{s}(\omega)$ and recalling
			\eqref{eq:time-changed-process} and \eqref{eq:time-changed-process-Hunt-sample-space}, we have
			$\lim_{s\uparrow t}\difftr_{s}(\omega)=\diff_{\tau_{t-}(\omega)-}(\omega) \in \oneptcpt{F}$
			if either $t=\widecheck{\zeta}(\omega)$ and $\tau_{t-}(\omega)<\infty$ or $t<\widecheck{\zeta}(\omega)$,
			$\lim_{s\uparrow t}\difftr_{s}(\omega)=\lim_{s\to\infty}\diff_{s}(\omega)=\cemetery$
			if $t=\widecheck{\zeta}(\omega)$ and $\tau_{t-}(\omega)=\infty$, and
			$\lim_{s\uparrow t}\difftr_{s}(\omega)=\cemetery$ if $t>\widecheck{\zeta}(\omega)$.
			
			To see the quasi-left-continuity on $(0,\infty)$ of $\difftr$,
			recalling \eqref{eq:time-changed-filtration}, let $\{\sigma_{n}\}_{n\in\mathbb{N}}$
			be a non-decreasing sequence of $\widecheck{\minaugfilt}_{*}$-stopping times,
			set $\sigma:=\lim_{n\to\infty}\sigma_{n}$, and let $\mu$ be a finite Borel measure on $\oneptcpt{\ambient}$.
			Then $\{\tau_{\sigma_{n}}\}_{n\in\mathbb{N}}$ is a non-decreasing sequence of
			$\minaugfilt_{*}$-stopping times by \cite[Proposition A.3.8-(v)]{CF} and, setting
			$\tau:=\lim_{n\to\infty}\tau_{\sigma_{n}}$, we see from the quasi-left-continuity on $(0,\infty)$
			(or the sample-path continuity \eqref{eq:sample-path-cont} and \ref{eq:AC}) of $\diff$ that
			\begin{equation}\label{eq:diff-quasi-left-cont-apply}
				\lawdiff_{\mu}\Bigl(\textrm{$\lim_{n\to\infty}\difftr_{\sigma_{n}}=\diff_{\tau}\in \oneptcpt{F}$, $\tau<\infty$}\Bigr)
				=\lawdiff_{\mu}(\tau<\infty).
			\end{equation}
			On the other hand, by \cite[Lemma A.3.7-(ii)]{CF} and
			$\goodpcaf_{\zeta}=\goodpcaf_{\infty}=\widecheck{\zeta}$ we have
			\begin{gather}\label{eq:difftr-limit-stopping-time-goodpcaf}
				\goodpcaf_{\tau}=\lim_{n\to\infty}\goodpcaf_{\tau_{\sigma_{n}}}=\lim_{n\to\infty}\sigma_{n}=\sigma
				\qquad\textrm{on $\{\sigma\leq\widecheck{\zeta}\}$,} \\
				\{\sigma<\widecheck{\zeta}\}\subset\{\tau<\zeta\}=\{\diff_{\tau}\in\ambient\}
				\subset\bigcap_{n\in\mathbb{N}}\{\sigma_{n}<\widecheck{\zeta}\}
				\subset\{\sigma\leq\widecheck{\zeta}\},
				\label{eq:difftr-limit-stopping-time-before-life-time}
			\end{gather}
			and it further follows from \eqref{eq:diff-quasi-left-cont-apply}, \eqref{eq:goodpcafsupp},
			the strong Markov property of $\diff$ at time $\tau$ (see, e.g., \cite[Theorem A.1.21]{CF})
			and the sample path properties \eqref{it:PCAF-sample-path-properties} of $\goodpcaf$ that
			$\lawdiff_{\mu}\bigl(\tau_{\goodpcaf_{\tau}}=\tau<\zeta\bigr)=\lawdiff_{\mu}(\tau<\zeta)$,
			which together with \eqref{eq:difftr-limit-stopping-time-before-life-time} and
			\eqref{eq:difftr-limit-stopping-time-goodpcaf} yields
			\begin{equation}\label{eq:difftr-limit-stopping-time-identify}
				\lawdiff_{\mu}(\tau_{\sigma}=\tau<\zeta)=\lawdiff_{\mu}(\tau<\zeta).
			\end{equation}
			By the first inclusion in \eqref{eq:difftr-limit-stopping-time-before-life-time}, we also obtain
			\begin{equation}\label{eq:difftr-limit-stopping-time-after-life-time}
				\{\tau\geq\zeta\}\subset\{\sigma\geq\widecheck{\zeta}\}
				\qquad\textrm{and hence}\qquad
				\diff_{\tau}=\cemetery=\difftr_{\sigma}\quad\textrm{on $\{\tau\geq\zeta\}$.}
			\end{equation}
			Combining \eqref{eq:diff-quasi-left-cont-apply}, \eqref{eq:difftr-limit-stopping-time-identify}
			and \eqref{eq:difftr-limit-stopping-time-after-life-time}, we conclude that
			\begin{equation}\label{eq:difftr-quasi-left-cont-non-explosion}
				\lawdiff_{\mu}\Bigl(\textrm{$\lim_{n\to\infty}\difftr_{\sigma_{n}}=\difftr_{\sigma}$, $\tau<\infty$}\Bigr)
				=\lawdiff_{\mu}(\tau<\infty).
			\end{equation}
			Moreover, on $\{\sigma<\infty=\tau\}$, which is equal to
			$\{\widecheck{\zeta}\leq\sigma<\infty=\tau\}$ by \eqref{eq:difftr-limit-stopping-time-after-life-time}, we have
			\begin{equation}\label{eq:difftr-quasi-left-cont-explosion}
				\lawdiff_{\mu}\Bigl(\textrm{$\lim_{n\to\infty}\difftr_{\sigma_{n}}=\difftr_{\sigma}$, $\sigma<\infty=\tau$}\Bigr)
				=\lawdiff_{\mu}(\sigma<\infty=\tau);
			\end{equation}
			indeed, clearly $\lim_{n\to\infty}\difftr_{\sigma_{n}}=\cemetery=\difftr_{\sigma}$
			on $\{\sigma>\widecheck{\zeta}\}\cup\{\sigma=\widecheck{\zeta}=0\}$,
			$\lawdiff_{\mu}(\Omega\setminus\widecheck{\Omega}_{0})=0$, and on
			$\widecheck{\Omega}_{0}\cap\{0<\sigma=\widecheck{\zeta}<\infty=\tau\}$
			we have $\lim_{s\to\infty}\diff_{s}=\cemetery$ by
			\eqref{eq:time-changed-process-Hunt-sample-space} and $\widecheck{\zeta}\in(0,\infty)$ and therefore
			$\difftr_{\sigma_{n}}=\diff_{\tau_{\sigma_{n}}}\xrightarrow{n\to\infty}\cemetery=\difftr_{\sigma}$
			by $\lim_{n\to\infty}\tau_{\sigma_{n}}=\tau=\infty$ and $\sigma=\widecheck{\zeta}$.
			Now \eqref{eq:difftr-quasi-left-cont-non-explosion} and \eqref{eq:difftr-quasi-left-cont-explosion}
			together imply \eqref{eq:difftr-quasi-left-cont-non-explosion} with $\sigma$ in place of $\tau$,
			i.e., that $\difftr$ is quasi-left-continuous on $(0,\infty)$ with respect to
			$\widecheck{\minaugfilt}_{*}$. Thus $\difftr$ with $\widecheck{\Omega}$ replaced by
			$\widecheck{\Omega}_{0}$ is a Hunt process on $F$, and the other stated properties
			of $\difftr$ except \ref{eq:AC} have been already noted in the paragraphs of
			\eqref{eq:time-changed-process} and \eqref{e:trace-ExtDiriSp}.
			
			Lastly, to see \ref{eq:AC} of $\difftr$, we first apply the same argument as \eqref{eq:greeninv-proof} above
			to show the absolute continuity of the Markovian resolvent kernel of $\difftr$.
			%	Recall the continuous heat kernel $p=p_{t}(x,y)$ of $(\ambient,\refmeas,\form,\domain)$
			%	from Assumption \ref{a:feller}, and let $( P_{t} )_{t>0}$ denote the Markovian transition function of $\diff$.
			Let $x \in F$, $\alpha \in (0,\infty)$, and let $B \in \Borel(F)$ satisfy $\goodmeas(B)=0$. Then
			\begin{equation}\label{eq:time-change-AC-resolvent}
				\begin{split}
					&\expdiff_{x}\biggl[ \int_{0}^{\infty} e^{-\alpha s} \one_{B}(\difftr_{s}) \,ds \biggr]
					= \expdiff_{x}\biggl[ \int_{0}^{\infty} e^{-\alpha s} \one_{B}(\diff_{\tau_{s}}) \,ds \biggr] \\
					&= \expdiff_{x}\biggl[ \int_{0}^{\infty} e^{-\alpha \goodpcaf_{s}} \one_{B}(\diff_{s}) \,d\goodpcaf_{s} \biggr] \quad \textrm{(by \cite[Lemma A.3.7-(i)]{CF})} \\
					&\leq \expdiff_{x}\biggl[ \int_{0}^{\infty} \one_{B}(\diff_{s}) \,d\goodpcaf_{s} \biggr]
					= 0 \quad \textrm{(by \eqref{eq:Revuz-correspondence-pointwise} with $D=\ambient$ and $\goodmeas(B)=0$).}
					%		&= \lim_{\delta \downarrow 0} \lim_{t \to \infty} \expdiff_{x}\biggl[ \int_{\delta}^{t+\delta} e^{-\alpha \goodpcaf_{s}} \one_{B}(\diff_{s}) \,d\goodpcaf_{s} \biggr] \\
					%		&\leq \lim_{\delta \downarrow 0} \lim_{t \to \infty} \expdiff_{x}\biggl[ \int_{\delta}^{t+\delta} \one_{B}(\diff_{s}) \,d\goodpcaf_{s} \biggr] \\
					%		&= \lim_{\delta \downarrow 0} \lim_{t \to \infty} \expdiff_{p_{\delta}(x,\cdot)\cdot \refmeas}\biggl[ \int_{0}^{t} \one_{B}(\diff_{s}) \,d\goodpcaf_{s} \biggr] \\
					%		&= \lim_{\delta \downarrow 0} \lim_{t \to \infty} \int_{0}^{t} \int_{F} \bigl( P_{s} p_{\delta}(x,\cdot) \bigr)(y) \one_{B}(y) \,\goodmeas(dy)\,ds \quad \textrm{(by \cite[(4.1.2)]{CF})} \\
					%		&= 0 \quad \textrm{(by $\goodmeas(B)=0$).}
				\end{split}
			\end{equation}
			Given \eqref{eq:time-change-AC-resolvent} and the fact that $\difftr$ is a $\goodmeas$-symmetric
			Hunt process on $F$ whose Dirichlet form $(\trform,\trdomain)$ on $L^{2}(F,\goodmeas)$ is regular,
			we obtain \ref{eq:AC} of $\difftr$ from \cite[Theorem 4.2.4]{FOT} or \cite[Proposition 3.1.11]{CF}.
			\qedhere\end{enumerate}
	\end{proof}
	
	\section{Green function,  Martin kernel, and Na\"im kernel} \label{sec:GreenMartinNaim}
	
	\subsection{Properties of Green function} \label{ssec:Green}
	
	The elliptic Harnack inequality implies the existence of Green functions as shown in \cite[Theorem 4.4]{BCM}, which we recall below.
	
	\begin{prop} \label{p:goodgreen}
		Let $(\ambient,d,\refmeas,\form,\domain)$ be an MMD space satisfying \hyperlink{ehi}{$\on{EHI}$},
		and let $\diff$ be an $\refmeas$-symmetric diffusion on $\ambient$ whose Dirichlet form is $(\form,\domain)$.
		Let $D$ be a non-empty open subset of $\ambient$ such that the part Dirichlet form $(\form^D, \domain^0(D))$ on $D$ is transient.
		Then there exist a Borel measurable function $\gren{D}\colon D \times D \to [0,\infty]$ and
		a Borel properly exceptional set $\mathcal{N}$ for $\diff$ such that the following hold:
		\begin{enumerate}[\rm(i)]\setlength{\itemsep}{0pt}\vspace{-5pt}
			\item\label{it:goodgreen-sym} (Symmetry) $\gren{D}(x,y)= \gren{D}(y,x)$ for all $(x,y) \in D \times D$.
			\item\label{it:goodgreen-cont} (Continuity) $\gren{D}|_{\offdiag{D}}$ is $[0,\infty)$-valued and continuous.
			\item\label{it:goodgreen-odf} (Occupation density formula) 
			For any Borel measurable function $f\colon D\to[0,\infty]$,
			\begin{equation}\label{e:odf}
				\expdiff_{x}\biggl[ \int_0^{\tau_{D}} f(\diff_s) ds \biggr] = \int_D \gren{D}(x,y) f(y)\, \refmeas(dy)
				\quad \textrm{for every $x\in D\setminus \mathcal{N}$.}
			\end{equation}
			\item\label{it:goodgreen-excessive} (Excessiveness)
			For each $y\in D$, $x\mapsto \gren{D}(x,y)$ is $\diff^D|_{D\setminus \mathcal{N}}$-excessive.
			\item\label{it:goodgreen-harm} (Harmonicity) For any fixed $y \in D$,
			the function $D \setminus \{y\} \ni x \mapsto \gren{D}(x,y)$
			belongs to $\domain_{\on{loc}}(D \setminus \{y\})$ and is $\form$-harmonic on $D \setminus \{y\}$, and 
			$\gren{D}(x,y)=\lawdiff_{x}[ \gren{D}(\diff^D_{\tau_{V}},y) ]$ for any open subset $V$ of $D$ with $y \not\in \overline{V}$
			and any $x\in D\setminus \mathcal{N}$, where we adopt the convention that
			$\gren{D}(x,\cemetery_{D})=\gren{D}(\cemetery_{D},x)=0$ for all $x \in D$.
			\item\label{it:goodgreen-max} (Maximum principles) If $V$ is a relatively compact open subset of $D$ and $x_0 \in V$, then
			\begin{equation}\label{e:max}
				\inf_{\overline{V} \setminus \{x_0\}} \gren{D}(x_0,\cdot) = \inf_{\partial V} \gren{D}(x_0,\cdot),
				\qquad \sup_{D \setminus V} \gren{D}(x_0, \cdot) = \sup_{\partial V}\gren{D}(x_0,\cdot).
			\end{equation}
		\end{enumerate}
		We call $\gren{D}$ the \emph{\textbf{Green function}} of $(\form, \domain)$ on $D$.
	\end{prop}
	
	\begin{proof}
		All parts except \eqref{it:goodgreen-harm} follows from \cite[Theorem 4.4]{BCM}. 
		
		The claims that $x \mapsto \gren{D}(x,y)$ belongs to $\domain_{\on{loc}}(D\setminus \{y\})$
		and is harmonic in $D \setminus \{y\}$ follow from \cite[Remark 2.7-(ii), Proposition 2.9-(iii) and Theorem 4.4]{BCM}.
		The remaining claims in \eqref{it:goodgreen-harm} are proved in \cite[Proof of Theorem 4.4]{BCM}.
	\end{proof}
	
	\begin{definition} \label{d:greenop}
		Let $(\ambient,d,\refmeas,\form,\domain)$ be an MMD space satisfying \hyperlink{ehi}{$\on{EHI}$},
		and $D$ a non-empty open subset of $\ambient$ such that the part Dirichlet form $(\form^D, \domain^0(D))$ on $D$ is transient.
		For a Borel measurable function $f \colon D \to [0,\infty]$, we define
		\[
		G_D f(x) := \begin{cases}
			\int_D \gren{D}(x,y)f(y)\,\refmeas(dy) & \textrm{if $x \in D$,} \\
			0 & \textrm{if $x \notin D$.}
		\end{cases}
		\] 
		By \cite[Theorem 4.2.6]{FOT}, if $f \colon D \to [0,\infty]$ is Borel measurable
		and $\int_D f G_D f\, d\refmeas<\infty$, then $G_D f$ is an
		$\form$-quasi-continuous $\refmeas$-version of the Green operator
		defined in \eqref{e:transient} for the part Dirichlet form $(\form^D,\domain^0(D))$
		on $D$ and $G_D f \in \domain^{0}(D)_{e}$.
	\end{definition}

	We see that the exceptional set $\mathcal{N}$ in Proposition \ref{p:goodgreen} can be taken to be the empty set
	if the diffusion process is defined from every starting point as given in Proposition \ref{p:feller}.
	
	\begin{lem} \label{l:green}
		Let an MMD space $(\ambient,d,\refmeas,\form,\domain)$ and a diffusion $\diff$ on $\ambient$ satisfy Assumption \ref{a:feller},
		and let $D$ be a non-empty open subset of $\ambient$ such that the part Dirichlet form $(\form^D,\domain^0(D))$ on $D$ is transient.
		Define $g_D^p \colon D \times D \to [0,\infty]$ by
		\begin{equation} \label{e:fep1}
			g^p_D(x,y):= \int_0^\infty p_t^D(x,y)\,dt, \quad x,y \in D,
		\end{equation}
		where $p^{D}_{t}(\cdot,\cdot)$ is the continuous heat kernel of $(D,\refmeas|_{D},\form^{D},\domain^{0}(D))$
		as given in Proposition \ref{p:feller}-\eqref{it:HKE-part-CHK-sFeller},
		and let $g_D(\cdot,\cdot)$ denote the Green function on $D$ from Proposition \ref{p:goodgreen},
		which is applicable by Remark \ref{r:ehi-hke}. Then, with $\mathcal{N}$ as in Proposition \ref{p:goodgreen},
		\begin{equation} \label{e:fep2}
			g^p_D(x,y) = \gren{D}(x,y) \qquad \textrm{for all $(x,y) \in (D \times D) \setminus \mathcal{N}_{\diag}$,}
		\end{equation}
		and Proposition \ref{p:goodgreen}-\eqref{it:goodgreen-sym},\eqref{it:goodgreen-cont},\eqref{it:goodgreen-odf},\eqref{it:goodgreen-excessive},\eqref{it:goodgreen-harm},\eqref{it:goodgreen-max}
		with $g^{p}_{D},\emptyset$ in place of $\gren{D},\mathcal{N}$ hold.
		Moreover, if $D$ is connected, then $g^{p}_{D}(x,y)\in(0,\infty]$ for any $x,y\in D$.
	\end{lem}
	
	\begin{proof}
		The occupation density formula \eqref{e:odf} for $g^p_D$ follows from Fubini's theorem as
		\begin{equation*}
			\expdiff_{x}\biggl[ \int_{0}^{\tau_{D}} f(X_{s}) ds \biggr]
			= \int_0^\infty \int_D f(y)p_t^D(x,y)f(y) \,\refmeas(dy) \,dt
			= \int_D f(y)g^p_D(x,y)f(y) \,\refmeas(dy).
		\end{equation*}
		By the transience of $\diff^{D}$, we have 
		\begin{equation} \label{e:gfin}
			g^p_D(x,y) <\infty \quad \textrm{for $\refmeas$-a.e.\ $x,y \in D$.}
		\end{equation}
		
		By the heat kernel estimate \hyperlink{hke}{$\on{HKE(\scdiff)}$}, the function 
		\begin{equation*}
			(x,y) \mapsto \int_{\delta}^\infty p_t^D(x,y) \,dt
		\end{equation*}
		converges uniformly on compact subsets of $\offdiag{D}$ as $\delta \downarrow 0$. Therefore it suffices to show that for each $\delta>0$ and $(x_0,y_0) \in \offdiag{D}$, the function $(x,y) \mapsto \int_{\delta}^\infty p_t^D(x,y) \,dt$ is continuous at $(x_0,y_0)$.
		Indeed, by the parabolic Harnack inequality \cite[Theorem 3.1]{BGK}, we can choose disjoint open neighborhoods $B_1$ and $B_2$ of $x_0,y_0$ and constants $C_1,C_2>0$ such that 
		\begin{equation*}
			\sup_{(x,y) \in B_1 \times B_2} p^D_t(x,y) \le C_1 \inf_{(x,y) \in B_1 \times B_2}p^D_{C_2^{-1}t}(x,y) \le  C_1 p^D_{C_2^{-1}t}(x',y')  \quad \mbox{for all  $t \ge \delta$,} 
		\end{equation*}
		where $(x',y') \in B_1 \times B_2$ is chosen using \eqref{e:gfin} such that $g^p_D(x',y')<\infty$.
		Combining the above estimate with the transience of $\diff^D$, and the dominated convergence theorem, we conclude that
		$(x,y) \mapsto \int_{\delta}^\infty p_t^D(x,y) \,dt$ is continuous at $(x_0,y_0)$.
		
		The equality \eqref{e:fep2} for $(x,y) \in \offdiag{D}$ follows from
		the continuity of $g^p_D,\gren{D}$ along with \eqref{e:odf} for $g^p_D,\gren{D}$.
		The equality $\gren{D}(x,y)=\expdiff_{x}[ \gren{D}(\diff^D_{\tau_{V}},y) ]$
		for any $x,y\in D$ and any open subset $V$ of $D$ with $y \not\in \overline{V}$
		follows from Proposition \ref{p:goodgreen}-\eqref{it:goodgreen-harm},
		the continuity of $g^p_D, g_D$ and the continuity of
		$V\ni z\mapsto\expdiff_{z}[ \gren{D}(\diff^D_{\tau_{V}},y) ]$
		from Lemma \ref{l:harmonicm}-\eqref{it:hmeas-continuous}.
		The $\diff^{D}$-excessiveness of $g_D^p(\cdot,y)$ for $y \in D$
		follows easily from \eqref{e:fep1} and \eqref{eq:AC-transition-density} for $p^{D}$,
		and then for each $y \in D\setminus\mathcal{N}$, since $\refmeas(\{y\})=0$ as observed in the paragraph of \eqref{e:hit2}
		and $g_D^p(\cdot,y)|_{D\setminus\mathcal{N}},\gren{D}(\cdot,y)|_{D\setminus\mathcal{N}}$ are
		$\diff^{D}|_{D\setminus\mathcal{N}}$-excessive by Proposition \ref{p:goodgreen}-\eqref{it:goodgreen-excessive}
		and equal on $(D\setminus\mathcal{N})\setminus\{y\}$, we have $g_D^p(y,y)=\gren{D}(y,y)$
		by \ref{eq:AC} of $\diff^{D}$ and \cite[Theorem A.2.17-(i),(iii)]{CF}.
		Lastly, if $D$ is connected, then $g^{p}_{D}$ is $(0,\infty]$-valued by \eqref{e:fep1}
		and the last claim in Proposition \ref{p:feller}-\eqref{it:HKE-part-CHK-sFeller}.
	\end{proof}
	
	Due to Lemma \ref{l:green}, if an MMD space $(\ambient,d,\refmeas,\form,\domain)$
	and a diffusion $\diff$ on $\ambient$ satisfy Assumption \ref{a:feller} and
	$D$ is a non-empty open subset of $\ambient$ such that the part Dirichlet form $(\form^D, \domain^0(D))$ on $D$
	is transient, we adopt the convention to redefine the $\gren{D}(\cdot,\cdot)$
	from Proposition \ref{p:goodgreen} to be equal to $g^p_D(\cdot,\cdot)$ from Lemma \ref{l:green}.
	In particular, $\gren{D}(x,\cdot)$ is $\diff^D$-excessive for all $x \in D$.
	
	In the next lemma, we show that the Green function has Dirichlet boundary condition in the sense of Definition \ref{d:dbdry}.
	
	\begin{lem}[Dirichlet boundary condition of Green function] \label{l:dbdy}
		Let $(\ambient,d,\refmeas,\form,\domain)$ be an MMD space satisfying \hyperlink{ehi}{$\on{EHI}$},
		and $D$ a non-empty open subset of $\ambient$ such that the part Dirichlet form
		$(\form^D, \domain^0(D))$ on $D$ is transient. Then for any $y_0 \in D$,
		the function $D \setminus \{y_0\} \ni x \mapsto \gren{D}(x,y_{0})$ belongs to
		$\domain^0_{\loc}(D, D \setminus \set{y_0})$ and is $\form$-harmonic on $D \setminus \set{y_0}$.
	\end{lem}
	
	\begin{proof} %***May be using \cite[Lemma 3.4]{BCM} along with \cite[Corollary 1.5.1.]{FOT} is better.
		The following argument is a variant of \cite[Proof of Lemma 4.10]{BM19}.
		
		By \cite[Theorems 1.5.4-(i) and 4.2.6]{FOT}, there exists a $(0,\infty)$-valued function
		$f_0 = f_{D,0} \in L^{1}(D,\refmeas|_{D})$ such that $\int_D f_0 G_D f_0 \, d\refmeas < \infty$
		and $G_D f_0 \in \domain^{0}(D)_{e}$. Let us adopt the convention that $f_0$ is extended to
		$\ambient$ by setting $f_0 := 0$ on $\ambient \setminus D$, and similarly for $G_D f$
		for any Borel measurable function $f\colon D\to[0,\infty]$.
		
		Let $y_0 \in D$, and let $K$ be any compact subset of $\ambient$
		such that $y_0 \not\in K$. Choose $\phi$ so that
		$\phi \in \mathcal{F} \cap \contfunc_{\mathrm{c}}(\ambient)$, $\phi$ is $[0,1]$-valued, $\phi = 1$ on $K$,
		and $y_0 \not\in \supp_{\ambient}[\phi]$. For each $r > 0$ with $B( y_0, 2r ) \subset D$
		and $r < \dist( y_0, \supp_{\ambient}[\phi] )$, consider the function
		\begin{equation} \label{e:deffr}
			g_r := \phi \min\{ 1/r, G_D( f_r ) \}, \quad \textrm{where $f_r := \biggl( \int_{B( y_0, r )} f_0 \, d\refmeas \biggr)^{-1} \one_{ B( y_0, r ) } f_0$.}
		\end{equation}
		Then $G_D( f_r)$ is an element of $\domain^{0}(D)_{e}$ $\form$-quasi-continuous
		on $D$ by \cite[Corollary 1.5.1 and Theorem 4.2.6]{FOT}, and hence is $\form$-quasi-continuous on $\ambient$ by
		\cite[Theorem 3.4.9]{CF}, \cite[Theorem 4.4.3]{FOT} and our convention that
		$g_r = 0$ on $\ambient \setminus D$. Since
		$\domain^{0}(D)_{e} \cap L^2( \ambient, \refmeas ) = \domain^0(D)$,
		it follows that $g_r \in \domain^{0}(D)$. Also, $G_D( f_r )$ and $g_r$ are
		continuous on $D \setminus \overline{ B( y_0, r ) }$ by the continuity
		of Green's function $\gren{D}$ on $D$ and dominated convergence.
		Note that for any $r_0>0$ such that  $B( y_0, 2r_0 ) \subset D$
		and $r_0 < \dist( y_0, \supp_{\ambient}[\phi] )$, the function $(x,y)\mapsto \gren{D}( x, y )$ stays bounded for $x \in D \setminus B( y_0, 2r_0 )$
		and $y \in B( y_0, r_0 )$ by the latter of the maximum principles \eqref{e:max}
		and the joint continuity of $\gren{D}$. Therefore, there exists $\delta \in (0,\infty)$
		such that $B( y_0, 2\delta ) \subset D$, $\delta < \dist( y_0, \supp_{\ambient}[\phi] )$
		and for any $r \in (0,\delta)$ we have
		\begin{equation*} 
			g_r = \phi \min\{ 1/r, G_D( f_r ) \} = \phi G_D (f_r) \in \domain^{0}(D) \cap L^{\infty}(\ambient,\refmeas).
		\end{equation*}
		Thus for all $r,s \in (0,\delta)$, by \cite[Theorem 1.4.2-(ii)]{FOT} and \eqref{e:FD} we have
		$\phi^{2}(G_{D}(f_{r})-G_{D}(f_{s})) \in \domain^{0}(D)$, and hence by \cite[(1.5.9)]{FOT} 
		\begin{equation} \label{e:db1}
			\form\bigl(G_{D}(f_{r})-G_{D}(f_{s}),\phi^{2}(G_{D}(f_{r})-G_{D}(f_{s}))\bigr)
			= \int_{\ambient} (f_{r}-f_{s}) \phi^{2}(G_{D}(f_{r})-G_{D}(f_{s})) \,d\refmeas =0.
		\end{equation}
		
		Now, as $r \downarrow 0$, $g_{r} = \phi G_{D}(f_{r})$ converges pointwise on $\ambient$
		to $\phi \gren{D}( \cdot, y_0 )$ (and uniformly on any compact subset of $D \setminus \{ y_0 \}$)
		by the joint continuity of $\gren{D}$, and it thus remains to prove that
		this convergence takes place also in $( \domain, \form_{1} )$.
		The convergence in $L^2( \ambient, \refmeas )$ is clear by dominated convergence
		because these functions are uniformly bounded and supported on
		$\supp_{\ambient}[\phi]$. These functions form an $\form$-Cauchy family
		as $r \downarrow 0$ since we can apply dominated convergence to the
		right-hand side of the equality
		\begin{equation*}
			\form( g_r - g_s, g_r - g_s )
			= \int_{ \supp_{\ambient}[\phi] } ( G_D( f_r ) - G_D( f_s ) )^{2} d\Gamma(\phi,\phi),
		\end{equation*}
		which is implied by the Leibniz rule \cite[Lemma 3.2.5]{FOT} for $\Gamma$,
		\eqref{e:db1} and the same calculation as in \eqref{e:ha1}.
		(\cite[Lemma 3.2.5]{FOT} is stated only for functions in $\domain \cap L^{\infty}(\ambient,\refmeas)$,
		but can be easily verified also for ones in $\domain_{e} \cap L^{\infty}(\ambient,\refmeas)$
		by extending \cite[Corollary 3.2.1]{FOT} from $u\in\domain$ to $u\in\domain_{e}$
		on the basis of \cite[Exercise 1.4.1, Lemma 2.1.4 and Theorem 2.3.3-(i)]{FOT} and applying it
		together with \cite[Exercise 1.4.1]{FOT} and $\domain_{e}\cap L^{2}(\ambient,\refmeas)=\domain$.)
	\end{proof}
	
	The following Dynkin--Hunt type formula is a basic ingredient in comparing the Green function on two domains.
	
	\begin{lem}[Dynkin--Hunt formula]\label{l:dhformula}
		Let $(\ambient,d,\refmeas,\form,\domain)$ be an MMD space satisfying \hyperlink{ehi}{$\on{EHI}$},
		and let $\diff$ be an $\refmeas$-symmetric diffusion on $\ambient$ whose Dirichlet form is $(\form,\domain)$.
		Let $D_1 \subset D_2$ be open subsets of $\ambient$ such that the part Dirichlet form
		$(\form^{D_2}, \domain^0(D_2))$ on $D_2$ is transient. Then there exists
		a properly exceptional set $\mathcal{N}_{D_2}$ for $\diff^{D_2}$ such that
		for all $(x,y) \in \offdiagp{D_1}$ with $x \not\in \mathcal{N}_{D_2}$,
		\begin{equation} \label{e:dh}
			\gren{D_2}(x,y) = \gren{D_1}(x,y)+ \expdiff_{x}\bigl[ \one_{\{\diff_{\tau_{D_1}} \in D_2 \}} \gren{D_2}(\diff_{\tau_{D_1}},y) \bigr].
		\end{equation}
		In addition, if the MMD space $(\ambient,d,\refmeas,\form,\domain)$ and the diffusion $\diff$ on $\ambient$ satisfy Assumption \ref{a:feller},
		then \eqref{e:dh} holds for all $(x,y) \in \offdiagp{D_1}$.
	\end{lem}
	
	\begin{proof}
		By the occupation density formula (Proposition \ref{p:goodgreen}-\eqref{it:goodgreen-odf}) and \cite[Lemma 4.5]{BCM},
		there exists a Borel properly exceptional set $\mathcal{N}_{D_2}$ for $\diff^{D_2}$ such that for all
		Borel measurable function $f \colon D_2 \to [0,\infty]$ and all $x \in D_1 \setminus \mathcal{N}_{D_2}$ we have
		\begin{equation}\label{e:odf-D1D2}
			\expdiff_{x}\biggl[ \int_0^{\tau_{D_i}} f(\diff_s) ds \biggr] = \int_{D_i} \gren{D_i}(x, y) f(y)\, \refmeas(dy),
			\quad \textrm{for $i=1,2$.}
		\end{equation}  
		Therefore for any such $f$ and $x$, we have 
		\begin{align} \label{e:dh1}
			&\int_{D_2} \gren{D_2}(x, z) f(z)\, m(dz) \nonumber \\
			&\overset{\eqref{e:odf-D1D2}}{=} \expdiff_{x}\biggl[ \int_0^{\tau_{D_2}} f(\diff_s)\, ds \biggr] = \expdiff_{x}\biggl[ \int_0^{\tau_{D_1}} f(\diff_s)\, ds \biggr] + \expdiff_{x}\biggl[ \int_{\tau_{D_1}}^{\tau_{D_2}} f(\diff_s)\, ds \biggr] \nonumber\\
			&\overset{\eqref{e:odf-D1D2}}{=} \int_{D_1} \gren{D_1}(x, z) f(z)\, \refmeas(dz) + \expdiff_{x}\biggl[ \one_{\{\diff_{\tau_{D_1}} \in D_2\}} \expdiff_{\diff_{\tau_{D_1}}}\biggl[ \int_0^{\tau_{D_2}} f(\diff_s)\, ds \biggr] \biggr]\nonumber  \\
			&\overset{\eqref{e:odf-D1D2}}{=} \int_{D_1} \gren{D_1}(x, z) f(z)\, \refmeas(dz) + \int_{D_2} \expdiff_{x} \bigl[ \one_{\{\diff_{\tau_{D_1}} \in D_2\}} \gren{D_2}(\diff_{\tau_{D_1}},z) \bigr] f(z) \, \refmeas(dz),
		\end{align}
		where we used the strong Markov property \cite[Theorem A.1.21]{CF} of $\diff$
		and Fubini's theorem in the third and fourth lines, respectively. Now for any 
		$y \in D_1 \setminus \{x\}$, setting $f := (\refmeas(B(y,r)))^{-1} \one_{B(y,r)}$
		and letting $r \downarrow 0$ in \eqref{e:dh1}, we obtain \eqref{e:dh} by the
		continuity of $\gren{D_1},\gren{D_2}$, the maximum principle for $\gren{D_2}$
		(Proposition \ref{p:goodgreen}-\eqref{it:goodgreen-cont},\eqref{it:goodgreen-max})
		and the dominated convergence theorem.
		
		If $(\ambient,d,\refmeas,\form,\domain)$ and $\diff$ satisfy Assumption \ref{a:feller},
		then we have \eqref{e:odf-D1D2} for any $x \in D_1$ by Lemma \ref{l:green},
		so that the above argument shows \eqref{e:dh} for any $(x,y) \in \offdiagp{D_1}$.
	\end{proof}
	
	For a MMD space $(\ambient,d,\refmeas, \form, \domain)$ satisfying \hyperlink{ehi}{$\on{EHI}$},
	and for a non-empty open subset $D \subset \ambient$ such that the part Dirichlet form
	$(\form^{D}, \domain^{0}(D))$ on $D$ is transient, we define (by a slight abuse of notation)
	\begin{equation} \label{e:defgdr}
		\gren{D}(x,r) := \inf_{y \in S(x,r)} \gren{D}(x,y)
		\qquad \textrm{for $x \in D$ and $r \in (0,\delta_D(x))$,}
	\end{equation}
	where $S(x,r) := \partial B(x,r)$ as defined in Notation \ref{ntn:intro}-\eqref{it:ball-diam-dist}.
	
	We collect various useful estimates on the Green function from \cite{BCM}.
	
	\begin{lem} \label{l:hit}
		Let $(\ambient,d,\refmeas,\form,\domain)$ be an MMD space satisfying \hyperlink{MD}{\textup{MD}} and \hyperlink{ehi}{$\on{EHI}$},
		and let $\diff$ be an $\refmeas$-symmetric diffusion on $\ambient$ whose Dirichlet form is $(\form,\domain)$.
		Let $D$ be a non-empty open subset of $\ambient$ such that the part Dirichlet form $(\form^{D}, \domain^0(D))$ on $D$ is transient.
		Then there exist $C_0, C_1, C_2, A_0, \theta \in (1,\infty)$ depending only on the constants
		associated with the assumptions \hyperlink{MD}{\textup{MD}} and \hyperlink{ehi}{$\on{EHI}$}
		such that the following hold:
		\begin{enumerate}[\rm(a)]\setlength{\itemsep}{0pt}\vspace{-5pt}
			\item\label{it:Green-func-bounds}  For all $x \in D$ and all $r \in (0,\delta_D(x)/A_0)$,
			\begin{equation} \label{e:gradial}
				\sup_{y \in S(x,r)} g_D(x,y) \le C_1 \inf_{y \in S(x,r)} g_D(x,y), \quad g_D(x,r) \le \Capa_D(B(x,r))^{-1} \le C_1 g_D(x,r).
			\end{equation}
			Furthermore,
			\begin{equation} \label{e:grcomp}
				g_D(x,R) \le g_D(x,r) \le C_2 \biggl(\frac{R}{r}\biggr)^\theta g_D(x,R) \quad \mbox{for all $x \in D$ and $0<r< R \le \delta_D(x)/A_1$.}
			\end{equation}
			\item\label{it:hitting-prob-bounds} For each $y \in D$ and each $R \in (0,\delta_D(y)/A_{0})$,
			\begin{equation} \label{e:hitp}
				C_0^{-1} \frac{\gren{D}(x,y)}{\gren{D}(y,R)} \le \lawdiff_{x}\bigl(\sigma_{\ol{B(y,R)}} < \sigma_{D^c}\bigr) \le C_0 \frac{\gren{D}(x,y)}{\gren{D}(y,R)} \quad \mbox{for $\form$-q.e.~$x \in D \setminus \ol{B(y,R)}$.}
			\end{equation}
			If $(\ambient,d,\refmeas,\form,\domain)$ and $\diff$ satisfy Assumption \ref{a:feller}, then \eqref{e:hitp} holds for all $x \in D \setminus \ol{B(y,R)}$. 
		\end{enumerate}
	\end{lem}
	
	\begin{proof}
		\begin{enumerate}[\rm(a)]\setlength{\itemsep}{0pt}
			\item The estimate \eqref{e:gradial} follows from \cite[Lemma 5.10 and Proposition 5.7]{BCM}
			and \eqref{e:grcomp} follows from \cite[Corollary 5.15]{BCM} and the maximum principle (Proposition \ref{p:goodgreen}-\eqref{it:goodgreen-max}).
			\item By Lemma \ref{l:chain}-\eqref{it:EHI-MD-RBC}, we can choose $K \in (1,\infty)$ so that $(\ambient,d)$ is $K$-relatively ball connected.
			Let $A_1 \in (1,\infty)$ be as given in \eqref{it:Green-func-bounds}. By \cite[Lemma 5.10]{BCM}
			and \eqref{it:Green-func-bounds}, there exist $A_1 \in (K,\infty)$ and $C_1 \in (1,\infty)$ such that 
			\begin{equation} \label{e:hitp1}
				\gren{D}(y,R) \le \Capa_D(B(y,R))^{-1} \le C_1	\gren{D}(y,R), \quad \gren{D}(y,R) \le \gren{D}(y,z) \le C_1	\gren{D}(y,R) 
			\end{equation}
			for all $y \in D$, all $R \in (0,A_1^{-1} \delta_D(y))$ and all $z \in S(y,R)$.
			%By   \cite[Corollary 5.15]{BCM} and increasing $A_1$ if necessary, there exist $\theta, c_1 \in (0,1)$ such that 
			% \begin{equation} \label{e:hitp2}
				% 	\frac{\gren{D}(x,r)}{\gren{D}(y,s)} \ge c_1 \left( \frac{s}{r}\right)^\theta \quad \mbox{for all $y \in D, 0<s < r \le  A_1^{-1} K \delta_D(y)$}.
				% \end{equation}
			Let  $y \in D$, $R \in (0,A_1^{-1} \delta_D(y))$, and let $\nu$ denote the equilibrium measure on $S(y,R)$ corresponding to $\Capa_{D}(B(y,R))$. \\
			\emph{Case 1}, $d(x,y)\ge 2KR$: In this case, $\gren{D}(x,\cdot)$ is $\form$-harmonic on $B(y,2KR)$ and hence by \eqref{e:hchain} and \eqref{e:rbc}, there exists $C_2 \in (1,\infty)$ such that 
			\begin{equation} \label{e:hitp3}
				C_2^{-1} \gren{D}(x,y) \le	\gren{D}(x,z) \le C_2 \gren{D}(x,y) \quad \mbox{for all $z \in S(y,R)$.}
			\end{equation}
			Therefore by \cite[Theorem 4.3.3]{FOT}, for $\form$-q.e.\ $x \in D \setminus B(y,KR)$,
			\begin{align}
				\lawdiff_{x}\bigl( \sigma_{\ol{B(y,R)}} < \sigma_{D^c} \bigr) &= \int_{S(y,R)} \gren{D}(x,z)\,\nu(dy) \overset{\eqref{e:hitp3}}{\le} C_2 \gren{D}(x,y) \Capa_D(B(y,R))\nonumber \\
				&\overset{\eqref{e:hitp1}}{\le} C_2 \frac{\gren{D}(x,y)}{\gren{D}(y,R)}, \label{e:hitp4} \\
				\lawdiff_{x}\bigl( \sigma_{\ol{B(y,R)}} < \sigma_{D^c} \bigr) &= \int_{S(y,R)} \gren{D}(x,z)\,\nu(dy) \overset{\eqref{e:hitp3}}{\ge} C_2^{-1} \gren{D}(x,y) \Capa_D(B(y,R))\nonumber \\
				&\overset{\eqref{e:hitp1}}{\ge} C_2^{-1} C_1^{-1} \frac{\gren{D}(x,y)}{\gren{D}(y,R)}. \label{e:hitp5}
			\end{align}
			
			\noindent \emph{Case 2}, $R \le d(x,y) < 2KR$: By \cite[Theorem 4.3.3]{FOT},
			for $\form$-q.e.\ $x \in D$ with $R \le d(x,y)\le KR$,
			\begin{align}
				\lawdiff_{x}\bigl( \sigma_{\ol{B(y,R)}} < \sigma_{D^c} \bigr) &\ge \lawdiff_{x}( \sigma_{B(y,R/(2K))} < \sigma_{D^c} )
				\overset{\eqref{e:hitp5}}{\ge}   C_2^{-1} C_1^{-1} \frac{\gren{D}(x,y)}{\gren{D}(y,R/(2K))} \nonumber \\ & \overset{\eqref{e:grcomp}}{\ge}  C_2^{-1} C_1^{-1} c_1 (2K)^{-\theta} \frac{\gren{D}(x,y)}{\gren{D}(y,R )}, \label{e:hitp6} \\
				\lawdiff_{x}\bigl( \sigma_{\ol{B(y,R)}} < \sigma_{D^c} \bigr) & \le 1 \overset{\eqref{e:grcomp}}{\le} c_1^{-1}K^{\theta} \frac{\gren{D}(x,y)}{\gren{D}(y,R )}. \label{e:hitp7}
			\end{align}
			By \eqref{e:hitp4}, \eqref{e:hitp5}, \eqref{e:hitp6}, and \eqref{e:hitp7}, we obtain \eqref{e:hitp}.
			
			If the MMD space $(\ambient,d,\refmeas,\form,\domain)$ and the associated diffusion $\diff$ satisfies Assumption \ref{a:feller}, then by
			Lemma \ref{l:harmonicm}-\eqref{it:hmeas-continuous} we obtain \eqref{e:hitp} for all $x \in D \setminus \ol{B(y,R)}$.
			\qedhere\end{enumerate}
	\end{proof}
	
	\subsection{Boundary Harnack principle} \label{ssec:BHP}
	
	In this work, we need to understand the behavior of Green function near the boundary of a uniform domain.
	The following scale-invariant boundary Harnack principle is useful to describe the behavior of Green function near the boundary of a uniform domain.
	Boundary Harnack principle has been obtained in increasing generality over a long period of time \cite{Kem,Anc78,Dah,Wu,JK,Aik,GS,Lie,BM19}.
	\begin{definition}[Boundary Harnack principle (BHP)] \label{d:bhp}
		Let $(\ambient,d,\refmeas,\form,\domain)$ be an MMD space and let $\unifdom$ be an open subset of $\ambient$.
		We say that $\unifdom$ satisfies the \textbf{(scale-invariant) boundary Harnack principle}, abbreviated as \textup{BHP},
		if there exist $A_0, A_1, C_1 \in (1,\infty)$ such that for all $\xi \in \partial \unifdom$,
		all $r \in (0,\diam(\unifdom)/A_1)$ and for any two non-negative $\form$-harmonic functions 
		$u,v$ on $\unifdom \cap B(\xi,A_{0} r)$ with Dirichlet boundary condition relative to $\unifdom$
		such that $v>0$ $\refmeas$-a.e.\ on $\unifdom \cap B(\xi,r)$, we have
		\begin{equation} \tag*{\textup{BHP}} \label{eq:BHP}
			\esssup_{x \in \unifdom \cap B(\xi,r)}	\frac{u(x)}{v(x)} 
			\le C_1  \essinf_{x \in \unifdom \cap B(\xi,r)} \frac{u(x)}{v(x)}.
			% \frac{v(x)}{v(x')} \qq \forall x,x' \in B_{\unifdom}(\xi,r).
		\end{equation}
	\end{definition}
	
	The elliptic Harnack inequality implies the boundary Harnack principle for uniform domains on any doubling metric space as shown in a recent work  \cite{Che}.
	This recent  work  \cite{Che} along with earlier works in more restrictive settings in \cite{GS, Lie, BM19} use an approach due to Aikawa \cite{Aik}.
	
	\begin{theorem}[Boundary Harnack principle for uniform domains; {\cite[Theorem 1.1]{Che}}] \label{t:bhp}
		Let $(\ambient,d,\refmeas,\form,\domain)$ be an MMD space satisfying \hyperlink{MD}{\textup{MD}}
		and \hyperlink{ehi}{$\on{EHI}$}, and let $\unifdom$ be a uniform domain in $(\ambient,d)$.
		Then $\unifdom$ satisfies \ref{eq:BHP}.
	\end{theorem}
	
	\begin{remark}\label{rmk:unifdom-bdry-non-empty-diam-pos}
		Note that $\partial \unifdom\not=\emptyset$ and $\diam(\unifdom)\in(0,\infty]$
		in the setting of Theorem \ref{t:bhp}; indeed, otherwise $\unifdom$ would be both open and
		closed in $\ambient$ and satisfy $\emptyset\not=\unifdom\not=\ambient$, which is impossible
		since $\ambient$ is connected by Lemma \ref{l:chain}-\eqref{it:EHI-MD-RBC} and \cite[Lemma 5.2-(a)]{BCM}.
	\end{remark}
	
	%(instead of \emph{length} uniform domains considered in Theorem \ref{t:bhp}).
	%In other words, Theorem \ref{t:bhp} can be generalized to uniform domains to metric spaces that need not contain any non-constant rectifiable curves.  
	
	The following oscillation lemma is a standard consequence of the boundary Harnack principle and follows from \cite[Proof of Theorem 2]{Aik}.
	It is an analogue of Moser's oscillation lemma for the elliptic Harnack inequality \cite[\textsection 5]{Mos} and has a similar proof.
	
	\begin{lem} \label{l:bhpholder}
		Let $(\ambient,d,\refmeas,\form,\domain)$ be an MMD space and let $\unifdom$ be an open subset of $\ambient$ satisfying \ref{eq:BHP}.
		Then there exist $A_0, A_1, C_0 \in (1,\infty)$ and $\gamma\in(0,\infty)$ such that for all
		$\xi \in \partial \unifdom$, all $0<r<R<\diam(\unifdom)/A_1$ and for any two non-negative continuous $\form$-harmonic functions
		$u,v$ on $\unifdom \cap B(\xi,A_0 R)$ with Dirichlet boundary condition relative to $\unifdom$
		such that $v(x)>0$ for any $x \in \unifdom \cap B(\xi,R)$, we have 
		\begin{equation} \label{eq:bhpholder}
			\osc_{\unifdom \cap B(\xi,r)} \frac{u}{v}
			\le C_0 \Bigl( \frac{r}{R} \Bigr)^\gamma \osc_{\unifdom\cap B(\xi,R)} \frac{u}{v}.
		\end{equation}
	\end{lem}
	
	Another important consequence of the boundary Harnack principle is the Carleson estimate.
	The proof is a variant of \cite[Proof of Theorem 2]{Aik08} where we use estimates on Green function from \cite{BM18, BCM} instead of known estimates of the Euclidean space. 
	The basic idea is that Carleson estimate for one harmonic function with Dirichlet boundary condition (say, the Green function at a suitably chosen point) along with boundary Harnack principle implies Carleson estimate in general.
	The Carleson estimate for Green function can be obtained by using the maximum principle and comparison estimates for the Green function obtained in \cite{BM18,BCM}.
	This is a modification of the argument in \cite[Proof of (4.28)]{GS}.
	
	\begin{prop}[Carleson estimate] \label{p:carleson}
		Let $(\ambient,d,\refmeas,\form,\domain)$ be an MMD space satisfying \hyperlink{MD}{\textup{MD}}
		and \hyperlink{ehi}{$\on{EHI}$}, and let $\unifdom$ be a uniform domain in $(\ambient,d)$.
		Then there exist $A_0, A_1, C_0 \in (1,\infty)$ such that for all $\xi \in \partial \unifdom$,
		all $R \in (0,\diam(\unifdom)/A_1)$ and any non-negative continuous $\form$-harmonic function $u$
		on $\unifdom \cap B(\xi,A_0R)$ with Dirichlet boundary condition relative to $\unifdom$,
		\begin{equation} \label{eq:carleson}
			\sup_{x \in B(\xi,R)} u(x)\le C u(\xi_{R/2}).
		\end{equation}
	\end{prop}
	
	\begin{proof}
		Let $u$ be an $\form$-harmonic function as in the statement of the proposition.
		Noting that $\unifdom$ satisfies \ref{eq:BHP} by Theorem \ref{t:bhp},
		let us choose $A_0, A_1, C_1$ as the constants in Definition \ref{d:bhp}.
		First, we note that there exist $C_2, A_3 \in (1,\infty)$ and $A_4 \in (A_1,\infty)$ such that 
		\begin{equation} \label{e:carl1}
			\sup_{\unifdom  \cap B(\xi,R)} \gren{\unifdom \cap B(\xi,A_3 R)}(\xi_{2A_0 R}, \cdot)
			\le C_2 \gren{\unifdom \cap B(\xi,A_3 R)}(\xi_{2A_0 R},\xi_{R/2}),  
		\end{equation}
		for all $\xi \in \partial \unifdom$ and all $R \in (0,A_4^{-1}\diam(\unifdom))$.
		This follows from the chaining using \hyperlink{ehi}{$\on{EHI}$}
		by a similar argument as given in the proof of Lemma \ref{l:chain}-\eqref{it:chain-unifdom},
		the maximum principle (Proposition \ref{p:goodgreen}-\eqref{it:goodgreen-max}) and
		the comparison of Green functions in \cite[Corollary 5.8]{BCM}.
		Then by \ref{eq:BHP} (Definition \ref{d:bhp}) from Theorem \ref{t:bhp}, we have
		\begin{equation} \label{e:carl2}
			\sup_{B(\xi,R)} \frac{u(\cdot)}{\gren{\unifdom \cap B(\xi,A_3 R)}(\xi_{2A_0 R},\cdot)}
			\leq C_{1} \frac{u(\xi_{R/2})}{\gren{\unifdom \cap B(\xi,A_3 R)}(\xi_{2A_0 R}, \xi_{R/2})}
		\end{equation}
		for all $\xi \in \partial \unifdom$ and all $R \in (0,A_4^{-1}\diam(\unifdom))$.
		Therefore by \eqref{e:carl1} and \eqref{e:carl2}, we conclude that for all $\xi \in \partial \unifdom$,
		all $R \in (0,A_4^{-1}\diam(\unifdom))$ and any non-negative continuous $\form$-harmonic function $u$
		on $\unifdom \cap B(\xi,A_0R)$ with Dirichlet boundary condition relative to $\unifdom$, we have
		\begin{equation*}
			\sup_{B(\xi,R)}u(\cdot)
			\leq C_{1} \frac{u(\xi_{R/2})}{\gren{\unifdom \cap B(\xi,A_3 R)}(\xi_{2A_0 R}, \xi_{R/2})} \sup_{B(\xi,R)} \gren{\unifdom \cap B(\xi,A_3 R)}(\xi_{2A_0 R},\cdot)
			\leq C_1 C_2 u(\xi_{R/2}).
			\qedhere\end{equation*}
	\end{proof}
	
	\subsection{Na\"im kernel}
	
	We introduce the Na\"im kernel and study some of its properties.
	For the remainder of the section we make the following running assumption.
	
	\begin{assumption} \label{a:ehi-bhp}
		Let $(\ambient,d,\refmeas,\form,\domain)$ be an MMD space satisfying \hyperlink{MD}{\textup{MD}}
		and \hyperlink{ehi}{$\on{EHI}$}, and let $\unifdom$ be a uniform domain in $(\ambient,d)$
		such that the part Dirichlet form $(\form^\unifdom, \domain^0(\unifdom))$ on $\unifdom$ is transient.
		Note that $\unifdom$ satisfies \ref{eq:BHP} by Theorem \ref{t:bhp} and that
		$\partial \unifdom\not=\emptyset$ and $\diam(\unifdom)\in(0,\infty]$ by Remark \ref{rmk:unifdom-bdry-non-empty-diam-pos}.
	\end{assumption}
	%  By the result of A.~Chen mentioned above, the assumption that $\unifdom$ satisfies the boundary Harnack principle is redundant but since \cite{Che} is not yet available, we made this additional assumption throughout this work. In the case of \emph{length} uniform domains in a length space, we can use Theorem \ref{t:bhp} instead of the upcoming work \cite{Che} to remove the assumption concerning the boundary Harnack principle.
	
	Recalling that $\gren{\unifdom}|_{\offdiag{\unifdom}}$ is $(0,\infty)$-valued by Remark \ref{r:ehi-hke} and Lemma \ref{l:green},
	for each $x_0 \in \unifdom$ we define $\naimker_{x_0}^\unifdom \colon \offdiagp{\unifdom \setminus \{x_0\}} \to (0,\infty)$ by
	\begin{equation} \label{e:defnaim}
		\naimker_{x_0}^\unifdom(x,y):=\frac{\gren{\unifdom}(x,y)}{\gren{\unifdom}(x_0,x) \gren{\unifdom}(x_0,y)}.
	\end{equation}
	
	The function $\naimker^{\unifdom}_{x_0}$ satisfies the following \emph{local H\"older regularity} and bounds.
	The proofs are variants of Moser's oscillation inequality \cite[\textsection 5]{Mos}.
	
	\begin{lem} \label{l:naimholder}
		Let an MMD space $(\ambient,d,\refmeas,\form,\domain)$ and a uniform domain $\unifdom$ in $(\ambient,d)$ satisfy Assumption \ref{a:ehi-bhp}.
		Then there exist $A, C_{1}, C_{2}, C_{3} \in (1,\infty)$ and $\gamma \in (0,\infty)$ such that the following estimates hold for any $x_0 \in \unifdom$:
		\begin{enumerate}[\rm(a)]\setlength{\itemsep}{0pt}\vspace{-5pt}
			\item For any $\eta \in \partial \unifdom$, $z \in \unifdom \setminus \{x_0\}$ and any $0 < r< R < (2A)^{-1} (d(\eta,x_0) \wedge d(z,x_0) \wedge \delta_{\unifdom}(z) )$,
			\begin{equation*}
				\osc_{ (B(\eta,r) \cap(\unifdom \setminus \{x_0\}) ) \times (B(z,r) \cap(\unifdom \setminus \{x_0\}) )} \naimker_{x_0}^\unifdom
				\le A \Bigl(\frac{r}{R}\Bigr)^\gamma \osc_{ \left(B(\eta,R) \cap(\unifdom \setminus \{x_0\}) \right) \times (B(z,R) \cap(\unifdom \setminus \{x_0\}) )}\naimker_{x_0}^\unifdom.
			\end{equation*}
			\item For any $(\eta,\xi) \in \offdiagp{\partial \unifdom}$ and any $0 < r< R < (2A)^{-1} \left(d(\eta,x_0) \wedge d(\eta,\xi) \wedge d(\xi,x_0) \right)$,
			\begin{equation*}
				\osc_{ (B(\eta,r) \cap(\unifdom \setminus \{x_0\}) ) \times (B(\xi,r) \cap(\unifdom \setminus \{x_0\}) )} \naimker_{x_0}^\unifdom
				\le A \Bigl(\frac{r}{R}\Bigr)^\gamma \osc_{ (B(\eta,R) \cap(\unifdom \setminus \{x_0\}) ) \times (B(\xi,R) \cap(\unifdom \setminus \{x_0\}) )}\naimker_{x_0}^\unifdom.
			\end{equation*}
			\item For any $\eta \in \partial \unifdom$, any $z \in \unifdom \setminus \{x_0\}$ and any $0 < R < (2A)^{-1} (d(\eta,x_0) \wedge d(z,x_0) \wedge \delta_{\unifdom}(z) )$,
			\begin{equation*}
				\sup_{ (B(\eta,R) \cap(\unifdom \setminus \{x_0\}) ) \times (B(z,R) \cap(\unifdom \setminus \{x_0\}) )} \naimker_{x_0}^\unifdom
				\le C_1 \frac{ \gren{\unifdom}(z,\eta_{R/2})}{\gren{\unifdom}(x_0,z) \gren{\unifdom}(x_0,\eta_{R/2})}
			\end{equation*}
			and
			\begin{equation*}
				\inf_{ (B(\eta,R) \cap(\unifdom \setminus \{x_0\}) ) \times (B(z,R) \cap(\unifdom \setminus \{x_0\}) )} \naimker_{x_0}^\unifdom
				\geq C_{1}^{-1} \frac{\gren{\unifdom}(z,\eta_{R/2})}{\gren{\unifdom}(x_0,z) \gren{\unifdom}(x_0,\eta_{R/2})}.
			\end{equation*}
			\item For any $(\eta,\xi) \in \offdiagp{\partial \unifdom}$ and any $0 < R < (2A)^{-1} \bigl(d(\eta,x_0) \wedge d(\eta,\xi) \wedge d(\xi,x_0) \bigr)$,
			\begin{equation*}
				\sup_{ (B(\eta,R) \cap(\unifdom \setminus \{x_0\}) ) \times ( B(\xi,R) \cap(\unifdom \setminus \{x_0\}) )} \naimker_{x_0}^\unifdom \le C_2 \frac{\gren{\unifdom}(\eta_{R/2},\xi_{R/2})}{\gren{\unifdom}(x_0,\eta_{R/2})\gren{\unifdom}(x_0,\xi_{R/2})},
			\end{equation*}
			and
			\begin{equation*}
				\inf_{ (B(\eta,R) \cap(\unifdom \setminus \{x_0\}) ) \times (B(\xi,R) \cap(\unifdom \setminus \{x_0\}) )} \naimker_{x_0}^\unifdom \ge C_2^{-1} \frac{\gren{\unifdom}(\eta_{R/2},\xi_{R/2})}{\gren{\unifdom}(x_0,\eta_{R/2})\gren{\unifdom}(x_0,\xi_{R/2})}.
			\end{equation*}
			\item For any $(x,\xi) \in (\unifdom \setminus \{x_0\}) \times \partial \unifdom$
			with $d(\xi,x_0) \le d(\xi,x)$ and any $0<r<R <A^{-1}d(\xi,x_0)$,
			\begin{equation} \label{e:naimbnd}
				\sup_{y \in \unifdom \cap B(\xi,R)}\naimker^\unifdom_{x_0}(x,y) \le C_{3} \naimker^\unifdom_{x_0}(x,\xi_{R/2}), \quad
				\inf_{y \in \unifdom \cap B(\xi,R)}\naimker^\unifdom_{x_0}(x,y) \ge C_{3}^{-1} \naimker^\unifdom_{x_0}(x,\xi_{R/2}),
			\end{equation}
			and
			\begin{equation} \label{e:naimequi}
				\osc_{y \in \unifdom \cap B(\xi,r)} \naimker^\unifdom_{x_0}(x,y) \le C_{3} \Bigl(\frac{r}{R}\Bigr)^{\gamma} \naimker^\unifdom_{x_0}(x,\xi_{R/2}).
			\end{equation}
		\end{enumerate}
	\end{lem}
	
	\begin{proof}
		Let $A \in (1,\infty)$ be the maximum of the constants $\delta^{-1}$ in \hyperlink{ehi}{EHI},
		$A_0$ and $A_1$ in Definition \ref{d:bhp}. Let $C_{\on{EHI}}$ and $C_{\on{BHP}}$ denote the
		corresponding constants $C_{H}$ and $C_1$, respectively. We will use \hyperlink{ehi}{EHI} and
		\ref{eq:BHP} several times in this proof with these constants $A,C_{\on{EHI}},C_{\on{BHP}}$.
		\begin{enumerate}[\rm(a)]\setlength{\itemsep}{0pt}\vspace{-5pt}
			\item
			For any $0<r< (2A)^{-1} \bigl(d(\eta,x_0) \wedge d(z,x_0) \wedge \delta_{\unifdom}(z)\bigr)$, define
			\begin{align*}
				M(r)&:= \sup_{ (B(\eta,r) \cap(\unifdom \setminus \{x_0\}) ) \times (B(z,r) \cap(\unifdom \setminus \{x_0\}) )} \naimker^{\unifdom}_{x_0},\\ m(r)&:=\inf_{ \left(B(\eta,r) \cap(\unifdom \setminus \{x_0\}) \right) \times \left(B(z,r) \cap(\unifdom \setminus \{x_0\}) \right)} \naimker^{\unifdom}_{x_0}.
			\end{align*}
			
			For any $(x_1,y_1), (x_2,y_2) \in (B(\eta,R/A) \cap(\unifdom \setminus \{x_0\}) ) \times (B(z,R/A) \cap(\unifdom \setminus \{x_0\}) )$, we have
			\begin{align} \label{e:naim1}
				\MoveEqLeft{\frac{M(R)\gren{\unifdom}(x_0,x_1) \gren{\unifdom}(x_0,y_1) -\gren{\unifdom}(x_1,y_1)}{\gren{\unifdom}(x_0,x_1) \gren{\unifdom}(x_0,y_1)}} \nonumber\\ 
				&\le  C_{\on{BHP}}\frac{M(R)\gren{\unifdom}(x_0,x_2) \gren{\unifdom}(x_0,y_1) -\gren{\unifdom}(x_2,y_1)}{\gren{\unifdom}(x_0,x_2) \gren{\unifdom}(x_0,y_1)} \nonumber \\
				& \le C_{\on{BHP}} C_{\on{EHI}}^2 \frac{M(R)\gren{\unifdom}(x_0,x_2) \gren{\unifdom}(x_0,y_2) -\gren{\unifdom}(x_1,y_2)}{\gren{\unifdom}(x_0,x_2) \gren{\unifdom}(x_0,y_2)};
			\end{align}
			here, for the first inequality we apply \ref{eq:BHP} to the functions
			$M(R)\gren{\unifdom}(x_0,\cdot) \gren{\unifdom}(x_0,y_1) -\gren{\unifdom}(\cdot,y_1),\gren{\unifdom}(x_0,\cdot) \gren{\unifdom}(x_0,y_1) \in \domain_{\on{loc}}^0(\unifdom,B(\eta,Ar) \cap \unifdom)$,
			which are non-negative and $\form$-harmonic on $B(\xi,Ar) \cap \unifdom$, and
			for the second inequality we apply \hyperlink{ehi}{$\on{EHI}$} to
			$M(R)\gren{\unifdom}(x_0,x_2) \gren{\unifdom}(x_0,\cdot) -\gren{\unifdom}(x_2,\cdot), \gren{\unifdom}(x_0,x_2) \gren{\unifdom}(x_0,\cdot) \in \domain_{\on{loc}}(B(z,R))$,
			which are non-negative and $\form$-harmonic on $B(z,R)$.
			
			Taking supremum over $(x_1,y_1)$ and infimum over $(x_2,y_2)$ in \eqref{e:naim1}, % over $\left(B(\eta,R/A) \cap(\unifdom \setminus \{x_0\}) \right) \times \left(B(z,r) \cap(\unifdom \setminus \{x_0\}) \right)$
			we obtain
			\begin{equation} \label{e:naim2}
				M(R)-m(R/A) \le C_{\on{BHP}} C_{\on{EHI}}^2  ( M(R)- M(R/A)).
			\end{equation}
			By considering $(x,y) \mapsto  \naimker^{\unifdom}_{x_0}(x,y)-m(R)=\frac{\gren{\unifdom}(x,y)-m(R)\gren{\unifdom}(x_0,x) \gren{\unifdom}(x_0,y)}{\gren{\unifdom}(x_0,x) \gren{\unifdom}(x_0,y)}$ and using a similar argument as the proof of \eqref{e:naim2}, we obtain
			\begin{equation} \label{e:naim3}
				M(R/A)-m(R) \le C_{\on{BHP}} C_{\on{EHI}}^2  ( m(R/A)- m(R)).
			\end{equation}
			Combining \eqref{e:naim2} and \eqref{e:naim3}, we obtain
			\begin{equation*}
				M(R/A)-m(R/A) \le \frac{ C_{\on{BHP}} C_{\on{EHI}}^2 -1}{ C_{\on{BHP}} C_{\on{EHI}}^2 +1} \left( M(R)-m(R)\right).
			\end{equation*}
			Iterating the above estimate, we obtain (a) with $\gamma= (\log A)^{-1}\log \frac{ C_{\on{BHP}} C_{\on{EHI}}^2 +1}{ C_{\on{BHP}} C_{\on{EHI}}^2 -1}$.
			\item For any $0<r< (2A)^{-1} \bigl(d(\eta,x_0) \wedge d(\xi,x_0) \wedge d(\eta,\xi)\bigr)$, define
			\begin{align*}
				M(r)&:= \sup_{ (B(\eta,r) \cap(\unifdom \setminus \{x_0\}) ) \times (B(\xi,r) \cap(\unifdom \setminus \{x_0\}) )} \naimker^{\unifdom}_{x_0},\\ m(r)&:=\inf_{ (B(\eta,r) \cap(\unifdom \setminus \{x_0\}) ) \times (B(\xi,r) \cap(\unifdom \setminus \{x_0\}) )} \naimker^{\unifdom}_{x_0}.
			\end{align*}
			
			For any $(x_1,y_1), (x_2,y_2) \in (B(\eta,R/A) \cap(\unifdom \setminus \{x_0\}) ) \times (B(\xi,R/A) \cap(\unifdom \setminus \{x_0\}) )$, we have
			\begin{align} \label{e:naim4}
				\MoveEqLeft{\frac{M(R)\gren{\unifdom}(x_0,x_1) \gren{\unifdom}(x_0,y_1) -\gren{\unifdom}(x_1,y_1)}{\gren{\unifdom}(x_0,x_1) \gren{\unifdom}(x_0,y_1)}} \nonumber\\ 
				&\le  C_{\on{BHP}}\frac{M(R)\gren{\unifdom}(x_0,x_2) \gren{\unifdom}(x_0,y_1) -\gren{\unifdom}(x_2,y_1)}{\gren{\unifdom}(x_0,x_2) \gren{\unifdom}(x_0,y_1)} \nonumber \\
				& \le C_{\on{BHP}}^2 \frac{M(R)\gren{\unifdom}(x_0,x_2) \gren{\unifdom}(x_0,y_2) -\gren{\unifdom}(x_1,y_2)}{\gren{\unifdom}(x_0,x_2) \gren{\unifdom}(x_0,y_2)};
			\end{align}
			here, for the first line we apply \ref{eq:BHP} to the functions
			$M(R)\gren{\unifdom}(x_0,\cdot) \gren{\unifdom}(x_0,y_1) -\gren{\unifdom}(\cdot,y_1),\gren{\unifdom}(x_0,\cdot) \gren{\unifdom}(x_0,y_1) \in \domain_{\on{loc}}^0(\unifdom,B(\eta,Ar) \cap \unifdom)$,
			which are non-negative and $\form$-harmonic on $B(\xi,Ar) \cap \unifdom$, and
			for the second inequality we apply \ref{eq:BHP} to
			$M(R)\gren{\unifdom}(x_0,x_2) \gren{\unifdom}(x_0,\cdot) -\gren{\unifdom}(x_2,\cdot), \gren{\unifdom}(x_0,x_2) \gren{\unifdom}(x_0,\cdot) \in \domain_{\on{loc}}^0(\unifdom,\unifdom \cap B(\xi,R))$,
			which are non-negative and $\form$-harmonic on $\unifdom \cap B(\xi,R)$.
			
			Taking supremum over $(x_1,y_1)$ and infimum over $(x_2,y_2)$ in \eqref{e:naim1}, % over $\left(B(\eta,R/A) \cap(\unifdom \setminus \{x_0\}) \right) \times \left(B(z,r) \cap(\unifdom \setminus \{x_0\}) \right)$
			we obtain
			\begin{equation} \label{e:naim5}
				M(R)-m(R/A) \le C_{\on{BHP}}^2  ( M(R)- M(R/A)).
			\end{equation}
			By considering $(x,y) \mapsto \naimker^{\unifdom}_{x_0}(x,y)-m(R)=\frac{\gren{\unifdom}(x,y)-m(R)\gren{\unifdom}(x_0,x) \gren{\unifdom}(x_0,y)}{\gren{\unifdom}(x_0,x) \gren{\unifdom}(x_0,y)}$ and using a similar argument as the proof of \eqref{e:naim2}, we obtain
			\begin{equation} \label{e:naim6}
				M(R/A)-m(R) \le C_{\on{BHP}}^2  ( m(R/A)- m(R)).
			\end{equation}
			Combining \eqref{e:naim2} and \eqref{e:naim3}, we obtain
			\begin{equation*}
				M(R/A)-m(R/A) \le \frac{ C_{\on{BHP}}^2 -1}{ C_{\on{BHP}}^2 +1} ( M(R)-m(R) ).
			\end{equation*}
			Iterating the above estimate, we obtain (a) with $\gamma= (\log A)^{-1}\log \frac{ C_{\on{BHP}}^2 +1}{ C_{\on{BHP}}^2 -1}$.
			\item Let $(x,y) \in (B(\eta,R) \cap(\unifdom \setminus \{x_0\}) ) \times (B(z,R) \cap(\unifdom \setminus \{x_0\}) )$,
			where $\eta,z,R$ are as given in the statement of the lemma. Then by applying \ref{eq:BHP} to the $\form$-harmonic functions
			$\gren{\unifdom}(\cdot,y)$ and $\gren{\unifdom}(x_0,\cdot)$ on $\unifdom \cap B(\eta,AR)$ and \hyperlink{ehi}{$\on{EHI}$}
			to the $\form$-harmonic functions $\gren{\unifdom}(\eta_{R/2},\cdot)$ and $\gren{\unifdom}(x_0,\cdot)$ on $B(z,AR)$, we obtain
			\begin{equation*}
				\naimker_{x_0}^\unifdom(x,y) \le C_{\on{BHP}} \frac{\gren{\unifdom}(\eta_{R/2},y)}{\gren{\unifdom}(x_0,\eta_{R/2}) \gren{\unifdom}(x_0,y)} \le  C_{\on{BHP}}C_{\on{EHI}}^2\frac{\gren{\unifdom}(\eta_{R/2},z)}{\gren{\unifdom}(x_0,\eta_{R/2}) \gren{\unifdom}(x_0,z)}.
			\end{equation*}
			This proves the first estimate, and the second one also follows from a similar argument.
			\item Let $(x,y) \in (B(\eta,R) \cap(\unifdom \setminus \{x_0\}) ) \times (B(\xi,R) \cap(\unifdom \setminus \{x_0\}) )$, where $\eta,\xi,R$ as given.
			Then by using \ref{eq:BHP} for the $\form$-harmonic functions $\gren{\unifdom}(\cdot,y)$ and $\gren{\unifdom}(x_0,\cdot)$ on $\unifdom \cap B(\eta,AR)$
			and for the $\form$-harmonic functions $\gren{\unifdom}(\eta_{R/2},\cdot)$ and $\gren{\unifdom}(x_0,\cdot)$ on $\unifdom \cap B(\xi,AR)$, we deduce
			\begin{equation*}
				\naimker_{x_0}^\unifdom(x,y) \le C_{\on{BHP}} \frac{\gren{\unifdom}(\eta_{R/2},y)}{\gren{\unifdom}(x_0,\eta_{R/2}) \gren{\unifdom}(x_0,y)} \le  C_{\on{BHP}}^2\frac{\gren{\unifdom}(\eta_{R/2},\xi_{R/2})}{\gren{\unifdom}(x_0,\eta_{R/2}) \gren{\unifdom}(x_0,\xi_{R/2})}
			\end{equation*}
			and
			\begin{equation*}
				\naimker_{x_0}^\unifdom(x,y) \ge C_{\on{BHP}}^{-1} \frac{\gren{\unifdom}(\eta_{R/2},y)}{\gren{\unifdom}(x_0,\eta_{R/2}) \gren{\unifdom}(x_0,y)} \ge  C_{\on{BHP}}^{-2}\frac{\gren{\unifdom}(\eta_{R/2},\xi_{R/2})}{\gren{\unifdom}(x_0,\eta_{R/2}) \gren{\unifdom}(x_0,\xi_{R/2})}.
			\end{equation*}
			\item By \ref{eq:BHP} applied to the $\form$-harmonic functions $\gren{\unifdom}(x,\cdot)$
			and $\gren{\unifdom}(x_0,x)\gren{\unifdom}(x_0,\cdot)$ on $\unifdom \cap B(\xi,AR)$ we obtain \eqref{e:naimbnd}.
			By Lemma \ref{l:bhpholder}, we have
			\begin{equation*}
				\osc_{y \in \unifdom \cap B(\xi,r)} \naimker^\unifdom_{x_0}(x,y)
				\le C_0 \Bigl(\frac{r}{R}\Bigr)^\gamma \osc_{y \in \unifdom \cap B(\xi,R)} \naimker^\unifdom_{x_0}(x,y)
				\le C_0 \Bigl(\frac{r}{R}\Bigr)^\gamma \sup_{y \in \unifdom \cap B(\xi,R)} \naimker^\unifdom_{x_0}(x,y),
			\end{equation*}
			which together with \eqref{e:naimbnd} yields \eqref{e:naimequi}.
			\qedhere\end{enumerate}
	\end{proof}
	
	Thanks to the H\"older regularity estimates obtained in Lemma \ref{l:naimholder},
	we can extended $\naimker^{\unifdom}_{x_0}$ to $\offdiagp{\ol{\unifdom} \setminus \{x_0\}}$ as shown below.
	
	\begin{prop} \label{p:naim}
		Let an MMD space $(\ambient,d,\refmeas,\form,\domain)$ and a uniform domain $\unifdom$ in $(\ambient,d)$ satisfy Assumption \ref{a:ehi-bhp}.
		Let $x_0 \in \unifdom$. Then the function $\naimker^\unifdom_{x_0}(\cdot,\cdot)$ defined in \eqref{e:defnaim} has a continuous extension to $\offdiagp{\ol{\unifdom} \setminus \{x_0\}}$, which is again denoted by $\naimker_{x_0}^\unifdom \colon \offdiagp{\ol{\unifdom} \setminus \{x_0\}} \to [0,\infty)$, and there exist $C_1,C_2,A_1 \in (1,\infty)$, $c_0 \in (0,1/4)$ and $\gamma \in (0,\infty)$ depending only on the constants associated with Assumption \ref{a:ehi-bhp} such that the following hold:
		\begin{align} \label{e:naimest}
			C_1^{-1} \frac{\gren{\unifdom}(\xi_r,\eta_r)}{\gren{\unifdom}(x_0,\xi_r) \gren{\unifdom}(x_0,\eta_r)} \le \naimker_{x_0}^\unifdom (\xi,\eta) \le C_1 \frac{\gren{\unifdom}(\xi_r,\eta_r)}{\gren{\unifdom}(x_0,\xi_r) \gren{\unifdom}(x_0,\eta_r)}
		\end{align}
		for all $(\xi,\eta) \in \offdiagp{\partial \unifdom}$ and all $0 < r \leq c_0 ( d(x_0,\xi) \wedge d(x_0,\eta) \wedge d(\xi,\eta) )$, and
		\begin{equation} \label{e:naimholder}
			\abs{\naimker^\unifdom_{x_0}(\xi,\eta) - \naimker^\unifdom_{x_0}(x,y)} \le C_2 \naimker^\unifdom_{x_0}(\xi,\eta) \biggl( \frac{d(\xi,x)^\gamma}{R^\gamma}+ \frac{d(\eta,y)^\gamma}{R^\gamma} \biggr)
		\end{equation}
		for all $(\xi,\eta) \in \offdiagp{\partial \unifdom}$, $0 < R < (2A_{1})^{-1} ( d(x_0,\xi) \wedge d(x_0,\eta) \wedge d(\xi,\eta) )$, $x \in \overline{\unifdom} \cap B(\xi,R)$ and $y \in \overline{\unifdom} \cap B(\eta,R)$.
		Furthermore $\naimker^\unifdom_{x_0}(\xi,\eta) = \naimker^\unifdom_{x_0}(\eta,\xi)$ for all $(\xi,\eta) \in \offdiagp{\overline{\unifdom} \setminus \{x_0\}}$.
	\end{prop}
	
	\begin{proof}
		The existence of a  continuous extension to $\offdiagp{\ol{\unifdom} \setminus \{x_0\}}$ of the function defined in \eqref{e:defnaim} follows from Lemma \ref{l:naimholder}. More precisely, the existence of a continuous extension at all points in $\partial \unifdom \times (\unifdom \setminus \{x_0\}) $ and $(\unifdom \setminus \{x_0\}) \times \partial \unifdom$ follows from Lemma \ref{l:naimholder}(a,c) along with the symmetry of Green function.
		On the other hand, the existence of a continuous extension at all points in $\offdiagp{\partial \unifdom}$ follows from Lemma \ref{l:naimholder}(b,d).
		
		The estimates \eqref{e:naimest} and \eqref{e:naimholder} are direct consequences of Lemma \ref{l:naimholder}(b,d). 
		The symmetry of $\naimker^\unifdom_{x_0}$ follows from the symmetry of $\gren{\unifdom}$ and the continuity of $\naimker^\unifdom_{x_0}$.
	\end{proof}
	
	\begin{definition} \label{dfn:Naim-kernel}
		Let an MMD space $(\ambient,d,\refmeas,\form,\domain)$ and a uniform domain $\unifdom$ in $(\ambient,d)$ satisfy Assumption \ref{a:ehi-bhp}.
		The function $\naimker^\unifdom_{x_0} \colon \offdiagp{\ol{\unifdom} \setminus \{x_0\}} \to [0,\infty)$ defined as the continuous extension of \eqref{e:defnaim} is called the \textbf{Na\"im kernel} of the domain $\unifdom$ with base point $x_0 \in \unifdom$.
	\end{definition}
	
	This function is essentially same as the one introduced by L.~Na\"im in \cite{Nai} where  she extends to function considered in \eqref{e:defnaim} to the Martin boundary instead of the topological boundary as considered above. Another difference from \cite{Nai} is the use of Martin topology and   fine topology of H.~Cartan instead of the topology arising from the metric. 
	
	\subsection{Martin kernel}
	
	We recall the definition of the closely related \emph{Martin kernel} introduced by R.~S.~Martin \cite{Mar}.
	\begin{definition}
		Let an MMD space $(\ambient,d,\refmeas,\form,\domain)$ and a uniform domain $\unifdom$ in $(\ambient,d)$ satisfy Assumption \ref{a:ehi-bhp}.
		Let $x_0 \in \unifdom$. We define $\martinker^\unifdom_{x_0} \colon \unifdom \times (\overline{\unifdom} \setminus \{x_0\}) \setminus \unifdom_{\diag} \to [0,\infty)$ by
		\begin{equation} \label{e:defMartin}
			\martinker^\unifdom_{x_0}(x,\xi) :=
			\begin{cases}
				\displaystyle \frac{\gren{\unifdom}(x,\xi)}{\gren{\unifdom}(x_0,\xi)}  & \textrm{if $\xi \in U \setminus \{ x_0, x \}$},\\
				\displaystyle \lim_{\unifdom \ni y \to \xi} \frac{\gren{\unifdom}(x,y)}{\gren{\unifdom}(x_0,y)}  & \textrm{if $\xi \in  \partial \unifdom$,}
			\end{cases}
		\end{equation} 
		where the limit in the second case exists by \ref{eq:BHP} and Lemmas \ref{l:dbdy} and \ref{l:bhpholder}.
		The function $\martinker^\unifdom_{x_0}$ is called the \textbf{Martin kernel} of $\unifdom$ with base point $x_0$.
	\end{definition}
	
	The following oscillation lemma is an analogue of Lemma \ref{l:naimholder}.
	
	\begin{lem} \label{l:martinholder}
		Let an MMD space $(\ambient,d,\refmeas,\form,\domain)$ and a uniform domain $\unifdom$ in $(\ambient,d)$ satisfy Assumption \ref{a:ehi-bhp}.
		Then there exist $C,A \in (1,\infty)$ and $\gamma \in (0,\infty)$ such that the following estimates hold for any $x_0 \in \unifdom$:
		\begin{enumerate}[\rm(a)]\setlength{\itemsep}{0pt}\vspace{-5pt}
			\item For any $z \in \unifdom$, any $\xi \in \partial \unifdom$ and any $0 <r<R < (2A)^{-1}(\delta_\unifdom(z) \wedge d(x_0,\xi))$,
			\begin{equation} \label{e:martinholder}
				\osc_{(\unifdom \cap B(z,r)) \times (\overline{\unifdom} \cap B(\xi,r)) } \martinker^\unifdom_{x_0}(\cdot,\cdot) \le C \Bigl( \frac{r}{R}\Bigr)^{\gamma} \osc_{(\unifdom \cap B(z,R)) \times (\overline{\unifdom} \cap B(\xi,R)) } \martinker^\unifdom_{x_0}(\cdot,\cdot).
			\end{equation}
			\item For any $z \in \unifdom$, any $\xi \in \partial \unifdom$ and any $0 <r<R < (2A)^{-1}(\delta_\unifdom(z) \wedge d(x_0,\xi))$,
			\begin{equation} \label{e:martin-carleson-1}
				\sup_{(\unifdom \cap B(z,R)) \times (\overline{\unifdom} \cap B(\xi,R)) } \martinker^\unifdom_{x_0}(\cdot,\cdot) \le C \martinker^\unifdom_{x_0}(z,\xi_{R/2}).
			\end{equation}
			\item For any $(\eta,\xi) \in \offdiagp{\partial \unifdom}$ and any $0<r<R< (2A)^{-1} (d(\xi,x_0)\wedge d(\eta,x_0)\wedge d(\xi,\eta))$,
			\begin{equation} \label{e:martin-carleson-2}
				\sup_{x \in \unifdom \cap B(\eta,R)} \osc_{y \in \ol{\unifdom} \cap B(\xi,r)} \martinker^\unifdom_{x_0}(x,y) \le C \Bigl( \frac{r}{R}\Bigr)^\gamma \martinker^\unifdom_{x_0}(\eta_{R/2}, \xi_{R/2}).
			\end{equation}
		\end{enumerate}
	\end{lem}
	
	\begin{proof}
		We omit the proofs of (a) and (b) as they are similar to that of Lemma \ref{l:bhpholder}.
		Both estimates follow from applying \hyperlink{ehi}{$\on{EHI}$} and \ref{eq:BHP}
		to the first and second arguments respectively of the Martin kernel.
		
		\noindent(c) By Lemma \ref{l:bhpholder} 
		\begin{equation*}
			\osc_{y \in \ol{\unifdom} \cap B(\xi,r)} \martinker^\unifdom_{x_0}(x,y) \lesssim  \Bigl( \frac{r}{R}\Bigr)^\gamma \osc_{y \in \ol{\unifdom} \cap B(\xi,R)} \martinker^\unifdom_{x_0}(x,y) \lesssim  \Bigl( \frac{r}{R}\Bigr)^\gamma \martinker^\unifdom_{x_0}(x,\xi_{R/2}) 
		\end{equation*}
		for all $x \in  \unifdom \cap B(\eta,R)$. By Carleson's estimate (Proposition \ref{p:carleson}), we have
		\begin{equation*}
			\sup_{x \in \unifdom \cap B(\eta,R)} \martinker^\unifdom_{x_0}(x,\xi_{R/2}) \lesssim  \martinker^\unifdom_{x_0}(\eta_{R/2}, \xi_{R/2}).
		\end{equation*}
		Combining the above two estimates, we obtain the desired result. 
	\end{proof}
	
	We discuss the $\form$-harmonicity and Dirichlet boundary condition of the Martin kernel $\martinker_{x_0}^\unifdom(\cdot,\xi)$, where $\xi \in \partial \unifdom$.
	
	\begin{lem} \label{l:martinharm}
		Let an MMD space $(\ambient,d,\refmeas,\form,\domain)$ and a uniform domain $\unifdom$ in $(\ambient,d)$ satisfy Assumption \ref{a:ehi-bhp}.
		For all $\xi \in \partial \unifdom$, the function $K_{x_0}(\cdot,\xi)\colon \unifdom \to [0,\infty)$ belongs to $\domain_{\loc}(\unifdom)$ and is $\form$-harmonic on $\unifdom$.
		Furthermore $K_{x_0}(\cdot,\xi)$ satisfies Dirichlet boundary condition relative to $\unifdom$ off $\xi$ in the following sense: for any open  subset $V$ of $\unifdom$ such that $\xi \notin \overline{V}$, $K_{x_0}(\cdot,\xi) \in \domain_{\on{loc}}^0(\unifdom,V)$.
	\end{lem}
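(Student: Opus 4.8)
The plan is to prove the three assertions about $\martinker_{x_0}(\cdot,\xi)$ — membership in $\domain_{\loc}(\unifdom)$, harmonicity in $\unifdom$, and the Dirichlet boundary condition on $\partial\unifdom\setminus\{\xi\}$ — by separating the two cases $\xi\in\unifdom$ and $\xi\in\partial\unifdom$, and in the boundary case by passing to a limit along a sequence $y_n\to\xi$ with $y_n\in\unifdom$. First I would dispose of the case $\xi\in\unifdom$: then $\martinker_{x_0}(x,\xi)=\gren{\unifdom}(x,\xi)/\gren{\unifdom}(x_0,\xi)$ is, up to the positive constant $\gren{\unifdom}(x_0,\xi)^{-1}$, just $x\mapsto\gren{\unifdom}(x,\xi)$, so harmonicity in $\unifdom\setminus\{\xi\}$ and membership in $\domain_{\on{loc}}(\unifdom\setminus\{\xi\})$ come from Proposition~\ref{p:goodgreen}(iv) (and Lemma~\ref{l:green}), while the Dirichlet boundary condition $\gren{\unifdom}(\cdot,\xi)\in\domain^0_{\loc}(\unifdom,\unifdom\setminus\{\xi\})$ is exactly Lemma~\ref{l:dbdy}; for an open $V\subset\unifdom$ with $\xi\notin\ol V$ one restricts, noting $\domain^0_{\loc}(\unifdom,\unifdom\setminus\{\xi\})\subset\domain^0_{\loc}(\unifdom,V)$. (Strictly the statement says ``harmonic in $\unifdom$'', which for $\xi\in\unifdom$ should be read as harmonic away from $\xi$ as in Proposition~\ref{p:goodgreen}(iv); the genuinely new content is the boundary case.)

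For $\xi\in\partial\unifdom$, the idea is to write $\martinker_{x_0}(\cdot,\xi)=\lim_{n}\martinker_{x_0}(\cdot,y_n)$ where $y_n\in\unifdom$, $y_n\to\xi$, and to show this convergence is locally uniform on $\unifdom$; then Lemma~\ref{l:harm-conv}(a) gives $\martinker_{x_0}(\cdot,\xi)\in\domain_{\on{loc}}(\unifdom)$ and harmonicity in $\unifdom$, and Lemma~\ref{l:harm-conv}(b) gives the Dirichlet boundary condition on any $V$ with $\xi\notin\ol V$. Locally uniform convergence on $\unifdom$: for a relatively compact open $W\Subset\unifdom$, cover $W$ by finitely many balls on which the elliptic Harnack inequality and the oscillation estimate of Lemma~\ref{l:martinholder}(a) (or Lemma~\ref{l:bhpholder} applied to $\gren{\unifdom}(x,\cdot)$ and $\gren{\unifdom}(x_0,\cdot)$, both harmonic in $\unifdom\cap B(\xi,A_0 R)$ with Dirichlet boundary data, for $R$ small) apply; these force $\osc_{\ol{\unifdom}\cap B(\xi,r)}(\gren{\unifdom}(x,\cdot)/\gren{\unifdom}(x_0,\cdot))\lesssim (r/R)^\gamma$, so $(\martinker_{x_0}(x,y))_{y\to\xi}$ is Cauchy uniformly for $x\in W$. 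For the Dirichlet boundary condition one also needs uniform boundedness of the $\martinker_{x_0}(\cdot,y_n)$ on the relevant sets $A\Subset\ol{\unifdom}$ with $\dist(A,\unifdom\setminus V)>0$; this is supplied by the Carleson estimate (Proposition~\ref{p:carleson}) together with Harnack chaining (Lemma~\ref{l:hchain}), which bound $\martinker_{x_0}(\cdot,y_n)$ on $A$ in terms of $\martinker_{x_0}(\eta_{R/2},\xi_{R/2})$-type quantities uniformly in $n$. One must check that the $\domain^0_{\loc}(\unifdom,V)$-representatives $\wh{h_n}$ required in Lemma~\ref{l:harm-conv}(b) exist for $\martinker_{x_0}(\cdot,y_n)=\gren{\unifdom}(\cdot,y_n)/\gren{\unifdom}(x_0,y_n)$; but this is precisely Lemma~\ref{l:dbdy} (with $y_0=y_n$, valid since $\xi\notin\ol V$ implies $y_n\notin\ol V$ for large $n$), so the hypotheses of Lemma~\ref{l:harm-conv}(b) are met.

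The main obstacle I expect is making the locally-uniform convergence argument clean near $\partial\unifdom$ when the compact set $A$ in the Dirichlet-boundary-condition statement is allowed to touch $\partial\unifdom$ (it is only required to be relatively compact in $\ol{\unifdom}$, not in $\unifdom$): there one cannot use the interior elliptic Harnack inequality but must use the boundary Harnack principle / Lemma~\ref{l:bhpholder} uniformly over boundary points of $A$ at the correct scales, and combine with the Carleson estimate to get the uniform bound. The bookkeeping of scales — ensuring a single $R=R(A)>0$ works for all the boundary balls covering $\ol A\cap\partial\unifdom$, that $A_0R<\diam(\unifdom)/A_1$, and that $\xi$ lies outside all these balls once $y_n$ is close enough — is where care is needed; but all the required estimates (boundary Harnack, its Hölder/oscillation form, Carleson, Harnack chaining) are already available in the excerpt, so no genuinely new analytic input is needed. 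Finally, symmetry considerations play no role here since $\martinker$ is not symmetric; one simply records that the pointwise limit defining $\martinker_{x_0}(\cdot,\xi)$ on $\partial\unifdom$ in \eqref{e:defMartin} is consistent with the locally uniform limit just constructed, which is immediate.
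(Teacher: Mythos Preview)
Your proposal is correct and follows essentially the same approach as the paper: approximate $\martinker_{x_0}(\cdot,\xi)$ by $\martinker_{x_0}(\cdot,y_n)$ with $y_n\to\xi$, establish locally uniform convergence, and invoke Lemma~\ref{l:harm-conv}(a),(b). The only difference is packaging: for the Dirichlet boundary condition the paper directly cites Lemma~\ref{l:martinholder}(c), which gives $\sup_{x\in\unifdom\cap B(\eta,R)}\osc_{y\in\ol{\unifdom}\cap B(\xi,r)}\martinker_{x_0}(x,y)\le C(r/R)^\gamma\martinker_{x_0}(\eta_{R/2},\xi_{R/2})$ and hence uniform convergence on any $A\Subset\ol{\unifdom}$ with $\dist(A,\unifdom\setminus V)>0$ in one stroke, whereas you propose to assemble this estimate on the spot from its ingredients (Lemma~\ref{l:bhpholder} plus the Carleson estimate and Harnack chaining); since Lemma~\ref{l:martinholder}(c) is proved precisely from these ingredients, the two arguments coincide.
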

	
	\begin{proof}
		Let $y_n \in \unifdom$ be a sequence with $\lim_{n \to \infty} y_n = \xi$.
		Define $h_n \colon \unifdom \setminus \{y_n\} \to [0,\infty)$ as $h_n:= \martinker^\unifdom_{x_0}(\cdot,y_n)$ for all $n \ge 1$. 
		
		If $K \subset \unifdom$ is compact then $K \subset \unifdom \setminus \{y_n\}$ for all but finitely many $n$.
		By Lemma \ref{l:martinholder}-(a),(b), the sequence $h_n$ converges uniformly on compact subsets of $\unifdom$
		and is bounded on compact sets. Therefore by Proposition \ref{p:goodgreen}-\eqref{it:goodgreen-harm} and Lemma \ref{l:harm-conv},
		the function $\martinker^\unifdom_{x_0}(\cdot,\xi)\colon\unifdom \to [0,\infty)$ belongs to $\domain_{\on{loc}}(\unifdom)$ and is $\form$-harmonic in $\unifdom$.
		
		Let $V$ be an open subset of $\unifdom$ such that $\xi \notin \overline{V}$ and let $A \subset V$
		be relatively compact in $\overline{\unifdom}$ with $\overline{A} \cap \overline{\unifdom \setminus V} = \emptyset$.
		Then by Lemma \ref{l:martinholder}-(c), $h_n$ converges uniformly to $\martinker^\unifdom_{x_0}(\cdot,\xi)$ on $A$.
		Therefore by Lemma \ref{l:harm-conv}-\eqref{it:harm-conv-dbdry},
		$\martinker^\unifdom_{x_0}(\cdot,\xi) \in \domain_{\on{loc}}^0(\unifdom,V)$.
	\end{proof}
	
	Next, we relate the Martin and Na\"im kernels. Due to Lemma \ref{l:martinharm} and the continuity of
	$\naimker^\unifdom_{x_0}$, the Na\"im kernel can be expressed in terms of the Martin kernel as
	\begin{equation} \label{e:naimb}
		\naimker_{x_0}^\unifdom(x,y) =
		\begin{cases}
			\displaystyle \frac{\martinker^\unifdom_{x_0}(x,y)}{\gren{\unifdom}(x_0,x)}  & \textrm{if $x \in \unifdom$,} \\[10pt]
			\displaystyle \lim_{\unifdom \ni z \to x} \frac{\martinker^\unifdom_{x_0}(z,y)}{\gren{\unifdom}(x_0,z)}  & \textrm{if $x \in \partial \unifdom$,}
		\end{cases}
	\end{equation}
	where the limit in the second case exists by \ref{eq:BHP} and Lemmas \ref{l:martinharm} and \ref{l:bhpholder}.
	We chose the approach based on Lemma \ref{l:naimholder} because the symmetry of $\naimker^\unifdom_{x_0}$ and the joint continuity
	are immediate through our approach while these properties need to be shown if we use \eqref{e:naimb}.
	The equality \eqref{e:naimb} is closer to the original approach to \emph{define} Na\"im kernel
	as the extension to the boundary is done for one variable at a time in \cite{Nai}.
	
	It is well known that any unbounded domain satisfying the boundary Harnack principle
	has a unique Martin kernel point at infinity. Following \cite[Chapter 4]{GS},
	we call the Martin kernel point at infinity the \textbf{$\form$-harmonic profile} of $\unifdom$.
	We recall the short argument to prove its uniqueness.
	
	\begin{lem}[Uniqueness of harmonic profile] \label{l:uniqueprofile}
		Let $(\ambient,d,\refmeas,\form,\domain)$ be an MMD space and let $\unifdom$ be an unbounded
		open subset of $\ambient$ satisfying \ref{eq:BHP} and $\partial \unifdom \not=\emptyset$.
		Let $h_1\colon \unifdom \to [0,\infty)$ and $h_2\colon \unifdom \to (0,\infty)$
		be two continuous functions such that $h_1,h_2 \in \domain_{\on{loc}}^0(\unifdom,\unifdom)$
		and $h_1,h_2$ are $\form$-harmonic on $\unifdom$. Then there exists $c\in [0,\infty)$
		such that $h_1(x)=ch_2(x)$ for all $x \in \unifdom$.
	\end{lem}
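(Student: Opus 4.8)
The plan is to prove uniqueness of the harmonic profile by a standard maximum-principle/Harnack argument using the boundary Harnack principle, realized along an exhaustion of $\unifdom$ by relatively compact open sets. First I would fix a base point $x_0 \in \unifdom$ and normalize $h_1(x_0)=h_2(x_0)=1$; the goal is then to show $h_1 \equiv h_2$. Define
\[
c_R := \sup_{x \in \unifdom \cap B(x_0,R)} \frac{h_1(x)}{h_2(x)}, \qquad R>0,
\]
which is finite for each $R$ because both functions are positive and continuous on the relatively compact set $\overline{\unifdom \cap B(x_0,R)}$ and, crucially, the boundary Harnack principle (Theorem \ref{t:bhp}, or more precisely its iterated/oscillation form in Lemma \ref{l:bhpholder} applied to the pair $h_1,h_2$, both of which are non-negative harmonic on $\unifdom$ with Dirichlet boundary condition along $\partial \unifdom$) keeps $h_1/h_2$ bounded above and below uniformly near $\partial \unifdom$ at every scale. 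The key point to extract from Lemma \ref{l:bhpholder} is the \emph{oscillation decay}: for $\xi \in \partial \unifdom$ and $0<r<R$,
\[
\osc_{\unifdom \cap B(\xi,r)} \frac{h_1}{h_2} \le C_0 \Big(\frac{r}{R}\Big)^\gamma \osc_{\unifdom \cap B(\xi,R)} \frac{h_1}{h_2}.
\]

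The core of the argument is then the following: set $c := \limsup_{x \to \infty} h_1(x)/h_2(x) \in (0,\infty]$ (finiteness will follow, or one can instead work with $c:=\inf_R c_R$ over an appropriate normalization); I claim $h_1/h_2$ is in fact constant. The standard trick is to consider $w := c\, h_2 - h_1$. If $c$ is chosen so that $w \ge 0$ on $\unifdom$ but $w$ is not identically zero, then $w$ is a non-negative harmonic function on $\unifdom$ with Dirichlet boundary condition on $\partial \unifdom$ (here I use that $h_1,h_2 \in \domain^0_{\loc}(\unifdom,\unifdom)$ and the linearity of the Dirichlet boundary condition, plus harmonicity is preserved under linear combinations). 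Apply the boundary Harnack principle to the pair $(w, h_2)$: this forces $w/h_2$ to be bounded away from $0$ on $\unifdom \cap B(\xi,r)$ for every boundary ball, i.e. $\inf_{\unifdom \cap B(\xi,r)} w/h_2 \gtrsim \sup_{\unifdom \cap B(\xi,r)} w/h_2$. Combined with the elliptic Harnack inequality in the interior and a Harnack chain argument (Lemma \ref{l:chain}, Remark \ref{r:chain}) connecting any interior point to a fixed region, one concludes that $\inf_{\unifdom} w/h_2 > 0$, equivalently $h_1/h_2 \le c - \varepsilon$ on all of $\unifdom$ for some $\varepsilon > 0$ — contradicting the definition of $c$ as the supremum (after verifying the supremum is attained as a limit, which is where one uses that $\unifdom$ is unbounded and the oscillation decay forces the sup to be approached). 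Reversing the roles of $h_1$ and $h_2$ gives the matching lower bound, so $h_1/h_2 \equiv c$, and the normalization gives $c=1$.

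To make the $\limsup$/$\sup$ bookkeeping clean I would instead argue as follows, which avoids worrying about whether the extremum is attained. For each $n$ let $U_n := \unifdom \cap B(x_0, 2^n)$ and let $c_n := \sup_{U_n} h_1/h_2$, a non-decreasing sequence; suppose for contradiction $c_\infty := \lim_n c_n$ satisfies $h_1/h_2 < c_\infty$ strictly on $\unifdom$ at some point is not enough — rather, one shows the sequence of functions $g_n := c_n h_2 - h_1 \ge 0$ on $U_n$, applies boundary Harnack principle plus interior Harnack chaining \emph{uniformly in the relevant scales} (this is where the scale-invariance in Theorem \ref{t:bhp} and the uniform-domain Harnack chain bound Lemma \ref{l:chain}(b) are essential), and deduces a quantitative gap $g_n \ge \delta_n h_2$ on $U_{n-1}$ with $\delta_n$ controlled below by a fixed fraction of $\osc_{U_n}(h_1/h_2)$; summing the resulting geometric-type decay of oscillations over the annuli $U_{n+1}\setminus U_n$ forces $\osc_\unifdom (h_1/h_2) = 0$.

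The main obstacle I anticipate is making the passage "bounded ratio near every boundary ball, plus interior Harnack inequality, implies globally bounded ratio" fully rigorous in the \emph{unbounded} setting: one must chain the boundary Harnack principle across infinitely many scales and the interior elliptic Harnack inequality across Harnack chains whose length grows only logarithmically (Lemma \ref{l:chain}(b)), and verify that the constants do not degenerate. Concretely, the delicate point is that the harmonic profile is a Martin-kernel point \emph{at infinity}, so one cannot simply invoke the boundary Harnack principle at a single boundary point; the argument must instead combine the comparison of $h_1,h_2$ near the finite boundary $\partial\unifdom$ with a separate control of their behavior as $x\to\infty$, and then glue these via interior chaining. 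Once the oscillation $\osc_{\unifdom \cap B(\xi,r)}(h_1/h_2)$ is shown to decay geometrically both as $r \downarrow 0$ (finite boundary, Lemma \ref{l:bhpholder}) and as one moves out toward infinity (which is essentially Lemma \ref{l:bhpholder} applied at the Martin point at infinity, or equivalently the statement that $\unifdom$ has a unique Martin boundary point at infinity — but since that is precisely what we are proving, the clean route is the contradiction argument with $g_n = c_n h_2 - h_1$ above rather than invoking Martin theory), the conclusion $h_1/h_2 \equiv \mathrm{const}$ follows, and the normalization at $x_0$ finishes the proof.
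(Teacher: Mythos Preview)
Your approach would eventually work, but you are making the argument far harder than it needs to be, and the ``main obstacle'' you identify is not an obstacle at all. The paper's proof is a two-line direct argument: fix any single $\xi \in \partial\unifdom$. Because $\unifdom$ is unbounded, $\diam(\unifdom,d)=\infty$, so the radius constraint $r<\diam(\unifdom,d)/A_1$ in Definition~\ref{d:bhp} and Lemma~\ref{l:bhpholder} is vacuous. Hence for any $R>A\,d(\xi,x_0)$ the boundary Harnack principle gives $\sup_{\unifdom\cap B(\xi,R)} h_1/h_2 \le C\,h_1(x_0)/h_2(x_0)$; letting $R\to\infty$ bounds the ratio globally on $\unifdom$. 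Then for any fixed $r>d(\xi,x_0)$ and $R>r$, Lemma~\ref{l:bhpholder} gives
\[
\osc_{\unifdom\cap B(\xi,r)}\frac{h_1}{h_2}\le C\Big(\frac{r}{R}\Big)^\gamma \osc_{\unifdom\cap B(\xi,R)}\frac{h_1}{h_2}\le C\Big(\frac{r}{R}\Big)^\gamma \sup_\unifdom \frac{h_1}{h_2},
\]
and sending $R\to\infty$ kills the left side; then $r\to\infty$ finishes.

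Your contradiction argument with $w=c\,h_2-h_1$ and the exhaustion $U_n$ is a detour: it would require you to show that the supremum of $h_1/h_2$ is approached either at the finite boundary or at infinity, and then rule out both cases. That can be done, but all of it is subsumed by the single observation that the balls $B(\xi,R)$ around one boundary point already exhaust $\unifdom$ and the BHP applies at every scale. In particular your worry that ``one cannot simply invoke the boundary Harnack principle at a single boundary point'' is precisely backwards: that is exactly what the paper does, and it works because the hypotheses on $h_1,h_2$ (Dirichlet boundary condition on all of $\partial\unifdom$, harmonic on all of $\unifdom$) are global, not local near $\xi$.
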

	
	\begin{proof}
		Let $A \in (1,\infty)$ be the largest among the constants $A_0, A_1$ in Definition \ref{d:bhp} and Lemma \ref{l:bhpholder}.
		Let $C$ be the largest among the constants $C_1, C_0$ in Definition \ref{d:bhp} and Lemma \ref{l:bhpholder} respectively.
		Let $\gamma$ be as given in Lemma \ref{l:bhpholder}.
		
		Let $\xi \in \partial \unifdom$ and $x_0 \in \unifdom$.
		For all $R \in (d(\xi,x_0),\infty)$, by Definition \ref{d:bhp} we have
		\begin{equation*}
			\sup_{B(\xi,R) \cap \unifdom} \frac{h_1(\cdot)}{h_2(\cdot)} \le C \frac{h_1(x_0)}{h_2(x_0)}.
		\end{equation*}
		Letting $R \to \infty$, we obtain 
		\begin{equation*}
			\osc_{\unifdom} \frac{h_1(\cdot)}{h_2(\cdot)} \le \sup_{\unifdom} \frac{h_1(\cdot)}{h_2(\cdot)} \le C \frac{h_1(x_0)}{h_2(x_0)}.
		\end{equation*}
		For any $d(\xi,x_0) < r < R <\infty$, by Lemma \ref{l:bhpholder} we have
		\begin{equation*}
			\osc_{B(\xi,r) \cap \unifdom} \frac{h_1(\cdot)}{h_2(\cdot)}
			\le C \Bigl(\frac{r}{R}\Bigr)^\gamma \osc_{B(\xi,R) \cap \unifdom} \frac{h_1(\cdot)}{h_2(\cdot)}
			\le C \Bigl(\frac{r}{R}\Bigr)^\gamma \sup_\unifdom \frac{h_1(\cdot)}{h_2(\cdot)}
			\le C^2 \Bigl(\frac{r}{R}\Bigr)^\gamma \frac{h_1(x_0)}{h_2(x_0)}.
		\end{equation*}
		Letting $R \to \infty$, we obtain $\osc_{B(\xi,r) \cap \unifdom} \frac{h_1(\cdot)}{h_2(\cdot)}=0$ for any
		$r \in (d(\xi,x_0),\infty)$. Letting $r \to \infty$, we obtain $\osc_{\unifdom} \frac{h_1(\cdot)}{h_2(\cdot)}=0$.
	\end{proof}
	
	We recall a standard construction of the harmonic profile \cite[Chapter 4]{GS}.
	
	\begin{prop}[Existence of harmonic profile] \label{p:hprofile}
		Let an MMD space $(\ambient,d,\refmeas,\form,\domain)$ and a uniform domain $\unifdom$ in $(\ambient,d)$ satisfy Assumption \ref{a:ehi-bhp},
		and assume that $\unifdom$ is unbounded. Then for any $x_0 \in \unifdom$ and a sequence $\{y_n\}_{n \in \mathbb{N}}$ in $\unifdom$ such that
		$\lim_{n \to \infty} d(x_0,y_n)=\infty$, the sequence $\martinker^\unifdom_{x_0}(\cdot,y_n) \colon \unifdom \setminus \{y_n\} \to (0,\infty)$
		converges uniformly on any bounded subset of $\unifdom$ to a continuous function $\hprof{x_0} \colon \unifdom \to (0,\infty)$
		such that $\hprof{x_0} \in \domain_{\on{loc}}^0(\unifdom,\unifdom)$, $\hprof{x_0}(x_0)=1$,
		$\hprof{x_0}$ is bounded on any bounded subset of $\unifdom$ and is $\form$-harmonic on $\unifdom$.
		Furthermore, the limit $\hprof{x_0}$ depends only on $\unifdom, x_0$ and not on the sequence $\{y_n\}_{n \in \mathbb{N}}$.
	\end{prop}
	
	\begin{proof}
		Let $A \in (1,\infty)$ be the largest among the constants $A_0, A_1$ in Definition \ref{d:bhp} and Lemma \ref{l:bhpholder}.
		Let $C$ be the largest among the constants $C_1, C_0$ in Definition \ref{d:bhp} and Lemma \ref{l:bhpholder} respectively.
		Let $\gamma$ be as given in Lemma \ref{l:bhpholder}.
		
		Let $\xi \in \partial \unifdom$ and let $A d(x_0,\xi)< r <R$. Then for any $n,k \in \bN$ such that $AR< d(\xi,y_n) \wedge d(\xi,y_k)$, by Lemma \ref{l:bhpholder} and Definition \ref{d:bhp} we estimate 
		\begin{align*}
			\sup_{\unifdom \cap B(\xi,r)} \abs{\frac{\martinker^\unifdom_{x_0}(\cdot,y_n)}{\martinker^\unifdom_{x_0}(\cdot,y_k)}-1} &=    \sup_{\unifdom \cap B(\xi,r)} \abs{\frac{\martinker^\unifdom_{x_0}(\cdot,y_n)}{\martinker^\unifdom_{x_0}(\cdot,y_k)}-\frac{\martinker^\unifdom_{x_0}(x_0,y_n)}{\martinker^\unifdom_{x_0}(x_0,y_k)} }
			\le 
			\osc_{\unifdom \cap B(\xi,r)} \frac{\martinker^\unifdom_{x_0}(\cdot,y_n)}{\martinker^\unifdom_{x_0}(\cdot,y_k)} \nonumber \\   &\le C \Bigl(\frac{r}{R}\Bigr)^\gamma  \osc_{\unifdom \cap B(\xi,r)}  \frac{\martinker^\unifdom_{x_0}(\cdot,y_n)}{\martinker^\unifdom_{x_0}(\cdot,y_k)} \nonumber \\
			&\le  C \Bigl(\frac{r}{R}\Bigr)^\gamma  \sup_{\unifdom \cap B(\xi,r)}  \frac{\martinker^\unifdom_{x_0}(\cdot,y_n)}{\martinker^\unifdom_{x_0}(\cdot,y_k)} \nonumber\\
			&\le C^2 \Bigl(\frac{r}{R}\Bigr)^\gamma \frac{\martinker^\unifdom_{x_0}(x_0,y_n)}{\martinker^\unifdom_{x_0}(x_0,y_k)}= C^2 \Bigl(\frac{r}{R}\Bigr)^\gamma
		\end{align*}
		By letting $R=(2A)^{-1} (d(\xi,y_n) \wedge d(\xi,y_k))$, we obtain that for all $n,k$ such that $d(\xi,y_n)\wedge d(\xi,y_k) > 2A^2 d(\xi,x_0)$, we have 
		\begin{equation} \label{e:hp1}
			\sup_{\unifdom \cap B(\xi,r)}  \abs{\frac{\martinker^\unifdom_{x_0}(\cdot,y_n)}{\martinker^\unifdom_{x_0}(\cdot,y_k)}-1} \le C^2 (2A)^\gamma r^{\gamma} (d(\xi,y_n) \wedge d(\xi,y_k))^{-\gamma}.
		\end{equation}
		By Carelson's estimate (Proposition \ref{p:carleson}) for any $\xi \in \partial \unifdom, r>0$, there exist  $C_1>0, N \in \bN$ such that 
		\begin{equation} \label{e:hp2}
			\sup_{\unifdom \cap B(\xi,r)} \martinker^{\unifdom}_{x_0} (\cdot,y_n) \lesssim  \martinker^{\unifdom}_{x_0} (\xi_{r/2},y_n) \quad \mbox{for all $n \ge N$}.
		\end{equation}
		By Harnack chaining along a uniform curve in $\unifdom$ between $\xi_{r/2}$ and $x_0$ and using \eqref{e:hchain}, there exist $N \in \bN$, $C_2=C_2(x_0,\xi,r)$ such that
		\begin{equation} \label{e:hp3}
			\martinker^\unifdom_{x_0}(\xi_{r/2},y_n) \le C_2 \quad \mbox{for all $n \ge N$.}
		\end{equation}
		Combining \eqref{e:hp1}, \eqref{e:hp2}, and \eqref{e:hp3}, we obtain
		\begin{align*}
			\lim_{n,k \to \infty}\sup_{\unifdom \cap B(\xi,r)}  \abs{\martinker^\unifdom_{x_0}(\cdot,y_n) - \martinker^\unifdom_{x_0}(\cdot,y_k) }  \le   \lim_{n,k \to \infty}    C_1 C_2  C^2 (2A)^\gamma r^{\gamma} (d(\xi,y_n) \wedge d(\xi,y_k))^{-\gamma}=0.
		\end{align*}
		Since $r\in(0,\infty)$ is arbitrary, letting $r \to \infty$, we conclude that the sequence $\{\martinker^{\unifdom}_{x_0}(\cdot,y_n)\}_{n \in \mathbb{N}}$
		converges uniformly on any bounded subset of $\unifdom$ to some $\hprof{x_0}\colon \unifdom \to (0,\infty)$,
		then $\hprof{x_0}$ is continuous by the continuity of $\martinker^{\unifdom}_{x_0}(\cdot,y_n)$
		and bounded on any bounded subset of $\unifdom$ by \eqref{e:hp2}, and Lemma \ref{l:harm-conv}
		implies that $\hprof{x_0}\in \domain_{\on{loc}}^0(\unifdom,\unifdom)$
		and that $\hprof{x_0}$ is $\form$-harmonic on $\unifdom$.
		
		The assertion that the limit $\hprof{x_0}$ depends only on $\unifdom,x_0$
		follows from $\hprof{x_0}(x_0)=1$ and Lemma \ref{l:uniqueprofile}.
	\end{proof}
	
	\section{Estimates for harmonic and elliptic measures} \label{sec:harm-ell-meas}
	
	To goal of this section is to estimate the harmonic measure of balls on the boundary of a uniform domain using ratio of Green functions. We restrict to the class of uniform domains that satisfy the following \textbf{capacity density condition}. 
	
	\subsection{The capacity density condition} \label{ssec:CDC}
	
	This is a slight variant of similar conditions considered in \cite{Anc,AH}.
	
	\begin{definition}[Capacity density condition (CDC)] \label{d:cdc}
		Let $(\ambient,d,\refmeas,\form,\domain)$ be an MMD space satisfying \hyperlink{MD}{\textup{MD}}
		and \hyperlink{ehi}{$\on{EHI}$}. Recalling Lemma \ref{l:chain}-\eqref{it:EHI-MD-RBC}, let
		$K \in (1,\infty)$ be such that $(\ambient,d)$ is $K$-relatively ball connected. 
		We say that a uniform domain $\unifdom$ in $(\ambient,d)$ satisfies the
		\textbf{capacity density condition}, abbreviated as \textup{CDC},
		if there exist $A_0 \in (8K,\infty)$ and $A_1, C \in (1,\infty)$
		such that for all $\xi \in \partial \unifdom$ and all $R \in (0,\diam(\unifdom)/A_1)$,
		\begin{equation} \tag*{\textup{CDC}} \label{eq:CDC}
			\Capa_{B(\xi,A_0 R)}(B(\xi,R)) \le C \Capa_{B(\xi,A_0 R)}(B(\xi,R)\setminus \unifdom).
		\end{equation}
	\end{definition}
	
	We note that the capacity density condition implies transience.
	
	\begin{remark} \label{r:transient}
		Let $(\ambient,d,\refmeas,\form,\domain)$ and $K$ be as in Definition \ref{d:cdc},
		and let $\unifdom$ be a uniform domain in $(\ambient,d)$ satisfying \ref{eq:CDC}. Then
		$\ambient \setminus \unifdom$ is not $\form$-polar by Remark \ref{rmk:unifdom-bdry-non-empty-diam-pos}
		and \cite[Theorems 2.1.6 and 4.4.3-(ii)]{FOT}, and hence the part Dirichlet form
		$(\form^{\unifdom},\domain^{0}(\unifdom))$ on $\unifdom$ is transient by \cite[Theorem 4.8 and Proposition 2.1]{BCM}.
	\end{remark}
	
	Due to Remark \ref{r:ehi-hke}, it would be convenient to assume the stronger \hyperlink{VD}{\textup{VD}}
	and \hyperlink{hke}{$\on{HKE}(\scdiff)$} instead of \hyperlink{MD}{\textup{MD}} and \hyperlink{ehi}{$\on{EHI}$}.
	Therefore, we make the following assumption.
	
	\begin{assumption} \label{a:hkecdcbhp}
		Let a scale function $\scdiff$, an MMD space $(\ambient,d,\refmeas,\form,\domain)$
		and a diffusion $\diff = (\Omega, \events, \{\diff_{t}\}_{t\in[0,\infty]},\{\lawdiff_{x}\}_{x \in \oneptcpt{\ambient}})$
		on $\ambient$ satisfy Assumption \ref{a:feller}. In particular, by Remark \ref{r:ehi-hke} and
		Lemma \ref{l:chain}-\eqref{it:EHI-MD-RBC}, $(\ambient,d)$ is $K$-relatively ball connected for some $K \in (1,\infty)$.
		Let $\unifdom$ be a uniform domain in $(\ambient,d)$ satisfying \ref{eq:CDC},
		set $\oneptcpt{\overline{\unifdom}}:=\overline{\unifdom}\cup\{\cemetery\}$,
		$\formref:=\formrefgen{\unifdom}$ and, recalling Theorem \ref{thm:hkeunif}-\eqref{it:hkeunif}, let
		$\diffref=\bigl(\Omega^{\on{ref}},\events^{\on{ref}},\{\diffref_{t}\}_{t\in[0,\infty]},\{\lawref_{x}\}_{x\in\oneptcpt{\overline{\unifdom}}}\bigr)$
		be a diffusion on $\overline{\unifdom}$ as in Assumption \ref{a:feller}
		for the MMD space $(\overline{\unifdom},d,\refmeas|_{\overline{\unifdom}},\formref,\domain(\unifdom))$.
		Then Assumption \ref{a:ehi-bhp} holds by Remarks \ref{r:ehi-hke} and \ref{r:transient}.
		In particular, $\unifdom$ satisfies \ref{eq:BHP} by Theorem \ref{t:bhp}, and
		$\partial \unifdom\not=\emptyset$ and $\diam(\unifdom)\in(0,\infty]$ by Remark \ref{rmk:unifdom-bdry-non-empty-diam-pos}.
	\end{assumption}
	
	Ancona \cite[Definition 2 and Lemma 3]{Anc} showed that the capacity density condition
	\ref{eq:CDC} in a Euclidean domain is equivalent to an estimate on the harmonic measure
	called the \emph{uniform $\Delta$-regularity}. Such a result can be extended to an arbitrary
	open set in any MMD space satisfying \hyperlink{MD}{\textup{MD}} and \hyperlink{ehi}{$\on{EHI}$}
	by using the estimates on hitting probabilities from \cite{BM18,BCM}.
	More precisely, we have the following relationships between hitting probabilities
	and \ref{eq:CDC}. Part \eqref{it:cdc1-anyA0} of the lemma below is meant to 
	justify our requirement $A_0 \in (8K,\infty)$ in Definition \ref{d:cdc}.
	
	\begin{lem} \label{l:cdc1}
		Let $(\ambient,d,\refmeas,\form,\domain)$ be a MMD space, and let $ D $ be an open subset of $\ambient$.
		\begin{enumerate}[\rm(a)]\setlength{\itemsep}{0pt}\vspace{-5pt}
			\item\label{it:hmeas-cdc1}Let $A_0,A_1 \in (1,\infty)$, $\gamma \in (0,1)$ and assume that for each $\xi \in \partial D$ and each $R \in (0,\diam(D)/A_1)$,
			\begin{equation} \label{e:delta1}
				\hmeas{D \cap B(\xi,A_0 R)}{x} (D \cap S(\xi,A_0 R)) \le 1 -\gamma \quad \textrm{for $\form$-q.e.\ $x\in B(\xi, R) \cap D$.}
			\end{equation}
			Then for all $\xi \in \partial D$ and all $R \in (0,\diam(D)/A_1)$,
			\begin{equation} \label{e:cdc1}
				\Capa_{B(\xi,A_0 R)}(B(\xi,R)) \le \gamma^{-2}\Capa_{B(\xi,A_0 R)}(B(\xi,R)\setminus D).
			\end{equation}
			\item\label{it:cdc1-consequences} Assume that $(\ambient,d,\refmeas,\form,\domain)$ satisfies \hyperlink{MD}{\textup{MD}}
			and \hyperlink{ehi}{$\on{EHI}$} and, recalling Lemma \ref{l:chain}-\eqref{it:EHI-MD-RBC},
			let $K \in(1,\infty)$ be such that $(\ambient,d)$ is $K$-relatively ball connected.
			Suppose that there exist $A_0 \in (8K,\infty)$ and $A_1, C \in (1,\infty)$ such that
			for all $\xi \in \partial D$ and all $R \in (0,\diam(D)/A_1)$,
			\begin{equation} \label{e:cdc3}
				\Capa_{B(\xi, A_0 R)}(B(\xi, R)) \le C \Capa_{B(\xi, A_0 R)}(B(\xi, R) \setminus D).
			\end{equation}
			Then the following hold:
			\begin{enumerate}[\rm(1)]\setlength{\itemsep}{0pt}\vspace{-5pt}
				\item\label{it:cdc1-anyA0}For any $\wh{A_0} \in (1,\infty)$, there exist $\wh{A_1},\wh{C} \in (1,\infty)$
				such that for all $\xi \in \partial D$ and all $R \in (0,\diam(D)/\wh{A_1})$,
				\begin{equation} \label{e:cdc4}
					\Capa_{B(\xi,\wh{A_0} R)}(B(\xi, R)) \le \wh{C} \Capa_{B(\xi, \wh{A_0}R)}(B(\xi,R) \setminus D).
				\end{equation}
				\item\label{it:cdc1-hmeas}There exist $\wh{A_0},\wh{A_1} \in (1,\infty)$ and $\gamma \in (0,1)$ such that
				for each $\xi \in \partial D$ and each $R \in (0,\diam(D)/\wh{A_1})$,
				\begin{equation} \label{e:delta2}
					\hmeas{D \cap B(\xi,\wh{A_0} R)}{x} (D \cap S(\xi,\wh{A_0} R)) \le 1 -\gamma
					\quad \textrm{for $\form$-q.e.\ $x\in B(\xi, R) \cap D$.}
				\end{equation}
				If in addition $(\ambient,d,\refmeas,\form,\domain)$ satisfies Assumption \ref{a:feller},
				then \eqref{e:delta2} holds for all $x\in B(\xi, R) \cap D$.
			\end{enumerate}
		\end{enumerate}
	\end{lem} 
	
	\begin{proof}
		\begin{enumerate}[(a1)]\setlength{\itemsep}{0pt}
			\item[\eqref{it:hmeas-cdc1}]Let $e:=e_{B(\xi,R)\setminus D, B(\xi,A_0R)} \in \domain^{0}(B(\xi,A_0R))_{e}$
			denote the equilibrium potential for $\Capa_{B(\xi,A_0 R)}(B(\xi,R)\setminus D)$.
			Then by \cite[Theorem 4.3.3]{FOT}, for $\form$-q.e.\ $x \in B(\xi,R) \cap D$,
			\begin{align*}
				\wt{e}(x)&= \lawdiff_{x}(\sigma_{B(\xi,R) \setminus D} < \sigma_{B(\xi,A_0R)^c})  \nonumber \\
				& \ge \lawdiff_{x}( \sigma_{D \cap S(\xi,A_0 R)} > \sigma_{D^c} ) = 1 - \lawdiff_{x}( \sigma_{D \cap S(\xi,A_0 R)} < \sigma_{D^c} ) \overset{\eqref{e:delta1}}{\ge} \gamma.
			\end{align*}
			Therefore $\gamma^{-1} \wt{e} \ge 1$ $\form$-q.e.\ on $B(\xi,A_0^{-1}r)$ and
			$\Capa_{B(\xi,A_0 R)}(B(\xi,R)) \le \form(\gamma^{-1}e, \gamma^{-1}e)=\gamma^{-2}\Capa_{B(\xi,A_0 R)}(B(\xi,R)\setminus D)$.
			\item[\eqref{it:cdc1-anyA0}]By \cite[Lemma 5.22]{BCM} and domain monotonicity of capacity, in order to show \eqref{e:cdc4},
			we may and do assume that $\wh{A_0} > A_0$. By \cite[Lemma 5.18]{BCM}
			there exist $C_2 \in (1,\infty)$ and $\wh{A_1} \in [A_1,\infty)$ such that
			for all $\xi \in \partial D $ and all $R \in (0,\diam(D)/\wh{A_1})$,
			\begin{equation} \label{e:cd1}
				\gren{B(\xi,A_0R)}(y,z) \le	\gren{B(\xi,\wh{A_0}R)}(y,z) \le C_1 \gren{B(\xi,A_0R)}(y,z) \quad \mbox{for all $y,z \in B(\xi,R)$.}
			\end{equation}
			Let $\xi \in \partial D $, $R \in (0,\diam( D )/\wh{A_1})$, and let
			$e_1,\nu$ be the equilibrium potential and measure for $\Capa_{B(\xi,\wh{A_1}R)}(B(\xi,R) \setminus  D )$
			such that $\Capa_{B(\xi,\wh{A_1}r)}(B(\xi,R) \setminus  D )=\form(e_1,e_1)$ and
			$e_1 =\int \gren{B(\xi,\wh{A_1}r)}(\cdot,z) \, \nu(dz)$. Define
			\begin{equation*}
				e := \int \gren{B(\xi,A_1r)}(\cdot,z) \, \nu(dz).
			\end{equation*}
			By \eqref{e:cd1}, for $\form$-q.e.\ $y \in B(\xi,R)\setminus D$, we have
			\begin{equation}
				e(y)=\int \gren{B(\xi,A_1R)}(y,z) \, \nu(dz) \ge C_1^{-1}\int \gren{B(\xi,\wh{A_1}R)}(y,z) \, \nu(dz) \ge C_1^{-1}. \nonumber
			\end{equation}
			Therefore 
			\begin{align*}
				\Capa_{B(\xi,A_1R)}(B(\xi,R) \setminus D) &\le \form(C_1 e,C_1 e)= C_1^2 \int e(z) \,\nu(dz) \le C_1^2 \int e_1(z) \,\nu(dz) \\
				& = C_1^2 \form(e_1,e_1)= C_1^2 \Capa_{B(\xi,\wh{A_1}R)}(B(\xi,R) \setminus D).
			\end{align*}
			The above estimate along with \eqref{e:cdc3} and \cite[Lemma 5.22]{BCM} implies \eqref{e:cdc4}.
			\item[\eqref{it:cdc1-hmeas}]By \cite[Lemma 5.9]{BCM}, there exist $\wh{A_0}, \wh{A_1}, C_1 \in (1,\infty)$ such that
			for all $\xi \in D$, all $R \in (0,\diam(D)/\wh{A_1})$ and all $x,y \in \ol{B(\xi,R)}$, we have
			\begin{equation} \label{e:cd2}
				\gren{B(\xi,\wh{A_0}r)}(x, y) \ge C_1^{-1}  \gren{B(\xi,\wh{A_0}r)}(\xi,r).
			\end{equation}
			By \eqref{it:cdc1-anyA0} and increasing $\wh{A_0}, \wh{A_1}$ if necessary, we may assume that \eqref{e:cdc4} holds.
			By further increasing $\wh{A_0}, \wh{A_1}$ if necessary and using \cite[Lemma 5.10]{BCM},
			we may assume that there exists $C_2 \in (1,\infty)$ such that for all
			$\xi \in \partial D$ and $R \in (0,\diam(D)/\wh{A_1})$,
			\begin{equation} \label{e:cd3}
				\gren{B(\xi,\wh{A_0}r)}(\xi,r) \le \Capa_{B(\xi,\wh{A_0}r)}\left( B(\xi,r) \right)^{-1} \le C_2	\gren{B(\xi,\wh{A_0}r)}(\xi,r).
			\end{equation}
			Let $\xi \in \partial D$, $R \in (0,\diam(D)/\wh{A_1})$ and let
			$e:=e_{B(\xi,R)\setminus D, B(\xi,\wh{A_0}R)},\nu$ denote the equilibrium potential
			and measure, respectively, for $\Capa_{B(\xi,\wh{A_0} R)}(B(\xi,R)\setminus D)$.
			By \cite[Theorem 4.3.3]{FOT}, for $\form$-q.e.\ $x \in B(\xi,R) \cap D$, we have
			\begin{align} \label{e:cd4}
				\wt{e}(x)&= \lawdiff_{x}\bigl( \sigma_{B(\xi,R) \setminus D} < \sigma_{B(\xi,\wh{A_0}R)^c} \bigr) = \int_{\ol{B(\xi,R) \setminus D}} \gren{B(\xi,\wh{A_0}R)^c}(x,y) \, \nu(dy) \nonumber \\
				&\overset{\eqref{e:cd2}}{\ge} C^{-1} \gren{B(\xi,\wh{A_0}r)}(\xi,r) \nu\bigl( \ol{B(\xi,R) \setminus D} \bigr) \nonumber  \\
				&= C^{-1} \gren{B(\xi,\wh{A_0}r)}(\xi,r) \Capa_{B(\xi, \wh{A_0}R)}(B(\xi,R) \setminus D) \nonumber \\
				&\overset{\eqref{e:cdc4}}{\ge} C^{-1} \wh{C}^{-1} \gren{B(\xi,\wh{A_0}r)}(\xi,r) \Capa_{B(\xi, \wh{A_0}R)}(B(\xi,R)) \overset{\eqref{e:cd3}}{\ge}  C^{-1}  \wh{C}^{-1} C_2^{-1}.
			\end{align}
			Setting $\gamma:=C^{-1} \wh{C}^{-1} C_2^{-1} \in (0,1)$, we conclude that
			\[
			\hmeas{D \cap B(\xi,\wh{A_0} R)}{x} (D \cap S(\xi,\wh{A_0} R)) \le \lawdiff_{x}\bigl( \sigma_{B(\xi,R) \setminus D} > \sigma_{B(\xi,\wh{A_0}R)^c} \bigr) \overset{\eqref{e:cd4}}{\le} 1 - \gamma.
			\]
			The final assertion under Assumption \ref{a:feller} follows from the continuity of
			$\form$-harmonic measure from Lemma \ref{l:harmonicm}-\eqref{it:hmeas-continuous}.
			\qedhere\end{enumerate}
	\end{proof}
	
	The estimate \eqref{e:delta2} in Lemma \ref{l:cdc1}-\eqref{it:cdc1-hmeas} above
	can be used repeatedly to obtain certain polynomial type decay rates on the harmonic measure. 
	
	\begin{lem}[Uniform $\Delta$-regularity] \label{l:Delta}
		Let an MMD space $(\ambient,d,\refmeas,\form,\domain)$ and a uniform domain $\unifdom$
		in $(\ambient,d)$ satisfy Assumption \ref{a:hkecdcbhp}. Then the following hold:
		\begin{enumerate}[\rm(a)]\setlength{\itemsep}{0pt}\vspace{-5pt}
			\item\label{it:Delta-hmeas} There exist $C_{1}, A_{1} \in (1,\infty)$ and $\delta \in (0,\infty)$
			such that for all $\xi \in \partial \unifdom$ and all $0<r<R < \diam(\unifdom)/A_1$,
			\begin{equation} \label{e:delta}
				\hmeas{\unifdom \cap B(\xi,R)}{x}(\unifdom \cap  S(\xi,R)) \le C_1 \Bigl( \frac{r}{R}\Bigr)^\delta \quad \textrm{for all $x \in \unifdom \cap B(\xi,r)$.}
			\end{equation}
			\item\label{it:Delta-hfunc} There exist $C_2, A_0, A_1 \in (1,\infty)$ and $\delta \in (0,\infty)$
			such that for all $\xi \in \partial \unifdom$, all $0<r<R<\diam(\unifdom)/A_1$ and
			all $(0,\infty)$-valued continuous $\form$-harmonic function $h$ on $\unifdom \cap B(\xi, A_0 R)$
			with Dirichlet boundary condition relative to $\unifdom$,
			\begin{equation} \label{e:hdelta}
				\frac{h(\xi_r)}{h(\xi_R)} \le C_2  \Bigl( \frac{r}{R}\Bigr)^\delta.
			\end{equation}
		\end{enumerate}
	\end{lem}
	
	\begin{proof}
		\begin{enumerate}[\rm(a)]\setlength{\itemsep}{0pt}
			\item By Lemma \ref{l:cdc1}-\eqref{it:cdc1-hmeas}, there exist $A_0,A_1 \in (1,\infty)$ and $\gamma \in (0,1)$ such that
			\begin{equation} \label{e:cc1}
				\hmeas{\unifdom \cap B(\xi, R)}{x} (\unifdom \cap S(\xi, R)) \le 1 -\gamma
			\end{equation}
			for all $\xi \in \partial \unifdom$, all $R \in (0,\diam(\unifdom)/A_1)$ and all $x\in B(\xi, A_0^{-1} R)$.
			By the strong Markov property, for all $i \in \mathbb{N}$, all $\xi \in \partial \unifdom$,
			all $R \in (0,\diam(\unifdom)/A_1)$ and all $x\in B(\xi, A_0^{-i} R)$,
			\begin{align*}
				\MoveEqLeft{\hmeas{\unifdom \cap B(\xi, R)}{x} (\unifdom \cap S(\xi, R))} \nonumber \\
				&\le \hmeas{\unifdom \cap B(\xi,A_0^{-i} R)}{x} (\unifdom \cap S(\xi,A_0^{-i} R)) \sup_{y \in \unifdom \cap S(\xi,A_0^{-i} R)} \hmeas{\unifdom \cap B(\xi,  R)}{y} (\unifdom \cap S(\xi, R)) \nonumber \\
				&\overset{\eqref{e:cc1}}{\le} (1 -\gamma) \sup_{y \in \unifdom \cap S(\xi,A_0^{-i+1} R)}  \hmeas{U\cap B(\xi, R)}{y} (\unifdom \cap S(\xi, R)).
			\end{align*}
			By repeatedly using the above estimate, we obtain
			\begin{equation*}
				\hmeas{\unifdom\cap B(\xi, R)}{x} (\unifdom \cap S(\xi,R)) \le (1-\gamma)^i
			\end{equation*}
			for all $i \in \mathbb{N}$, all $\xi \in \partial \unifdom$, all $R \in (0,\diam(\unifdom)/A_1)$
			and all $x\in B(\xi, A_0^{-i} R)$. This implies \eqref{e:delta}.
			\item 
			By \ref{eq:BHP} from Theorem \ref{t:bhp}, Proposition \ref{p:goodgreen} and
			Lemma \ref{l:dbdy}, it suffices to consider the case when $h$ is a Green function.
			More precisely, it suffices to show that there exist $C_3, A_0, A_1 \in (1,\infty)$
			and $\delta \in (0,\infty)$ such that for all $\xi \in \partial \unifdom$,
			all $0<r<R < \diam(\unifdom)/A_1$ and all $x_0 \in \unifdom$ such that $d(\xi,x_0)> A_0 R$, we have
			\begin{equation} \label{e:hdeltag}
				\frac{\gren{\unifdom}(\xi_r,x_0)}{\gren{\unifdom}(\xi_R,x_0)} \le C_3 \Bigl( \frac{r}{R}\Bigr)^\delta.
			\end{equation}
			Let us choose $A_0, A_1 \in (1,\infty)$ such that the conclusion of \eqref{it:Delta-hmeas},
			and \ref{eq:BHP} and Carleson's estimate (Proposition \ref{p:carleson}) hold. Then
			for all $\xi \in \partial \unifdom$, all $0<r<R < \diam(\unifdom)/A_1$ and
			all $x_0 \in \unifdom$ such that $d(\xi,x_0)> A_0 R$, we have
			\begin{align*}
				\gren{\unifdom}(\xi_r,x_0)&= \expdiff_{\xi_r}\bigl[ \gren{\unifdom}\bigl( \diff^\unifdom_{\tau_{\unifdom \cap B(\xi,R)}}, x_0 \bigr) \bigr] \quad \textrm{(by Lemma \ref{l:green})}\\
				&\le \Bigl( \sup_{\unifdom \cap S(\xi,R)} \gren{\unifdom}(\cdot,x_0) \Bigr) \hmeas{\unifdom \cap B(\xi,R)}{\xi_r}(\unifdom \cap S(\xi,R)) \\
				&\lesssim \gren{\unifdom}(\xi_R,x_0)\hmeas{\unifdom \cap B(\xi,R)}{\xi_r}(\unifdom \cap  S(\xi,R)) \quad \textrm{(by Carleson's estimate)} \\
				&\lesssim \gren{\unifdom}(\xi_R,x_0) \Bigl(\frac{r}{R}\Bigr)^\delta \quad \textrm{(by \eqref{e:delta}).}
				\qedhere\end{align*}
		\end{enumerate}
	\end{proof}
	
	\subsection{Two-sided bounds on harmonic measure} \label{ssec:hmeas}
	
	The following estimate of harmonic measure is the main result of this section.
	It is an extension of \cite[Lemmas 3.5 and 3.6]{AH} obtained for the Brownian motion
	and uniform domains satisfying the capacity density condition in Euclidean space,
	which in turn generalize similar results obtained by Jerison and Kenig for NTA domains
	in \cite[Lemma 4.8]{JK} and by Dahlberg for Lipschitz domains in \cite[Lemma 1]{Dah}.
	While it is possible to follow an iteration argument (called the `box argument')
	for proving upper bounds on harmonic measure from \cite[Proof of Lemma 3.6]{AH},
	our proof is new and avoids the use of such a complicated argument.
	
	\begin{theorem} \label{t:hmeas}
		Let an MMD space $(\ambient,d,\refmeas,\form,\domain)$ and a uniform domain $\unifdom$ in $(\ambient,d)$
		satisfy Assumption \ref{a:hkecdcbhp}. Then there exist $C,A \in (1,\infty)$ such that 
		\begin{equation} \label{e:harmonicm}
			C^{-1} \gren{\unifdom}(x_0, \xi_{r}) \Capa_{B(\xi,2r)}(B(\xi,r)) \le \hmeas{\unifdom}{x_0}(\partial \unifdom \cap B(\xi,r))  \le C  \gren{\unifdom}(x_0, \xi_{r})  \Capa_{B(\xi,2r)}(B(\xi,r)) 
		\end{equation}
		for all $\xi \in \partial \unifdom$, all $x_0 \in \unifdom$ and all $r \in (0,d(\xi,x_0)/A)$.
	\end{theorem}
	
	While it is possible to prove Theorem \ref{t:hmeas} by adapting the techniques of Aikawa and Hirata using the \emph{box argument} and the notion of \emph{capacitary width}, we follow
	a more probabilistic approach. Combining Theorem \ref{t:hmeas} with the $\form$-harmonicity of $\gren{U}(x_{0},\cdot)$ on $\unifdom \setminus \{x_{0}\}$, Harnack chaining (Lemma \ref{l:hchain}),
	Remark \ref{r:ehi-hke} and \cite[Lemma 5.23]{BCM}, we obtain the following volume doubling property of the harmonic measure.
	
	\begin{cor}	\label{c:asdouble}
		Let an MMD space $(\ambient,d,\refmeas,\form,\domain)$ and a uniform domain $\unifdom$ in $(\ambient,d)$
		satisfy Assumption \ref{a:hkecdcbhp}. Then there exist $C,A \in (1,\infty)$ such that
		\begin{equation} \label{e:harmonicd}
			\hmeas{\unifdom}{x_0}(\partial \unifdom \cap B(\xi, r)) \le C \hmeas{\unifdom}{x_0}(\partial \unifdom \cap B(\xi,r/2))   
		\end{equation}
		for all $\xi \in \partial \unifdom$, all $x_0 \in \unifdom$ and all $r \in (0,d(\xi,x_0)/A)$.
		In particular, $\supp_{\ambient}[\hmeas{\unifdom}{x_0}] = \partial \unifdom$.
	\end{cor}
	
	Thanks to the capacity density condition \ref{eq:CDC}, we can compare the Green function
	on the domain $\unifdom$ with that on a ball chosen at a suitable scale.
	The following is an analogue of a lemma of Aikawa and Hirata for uniform domains
	in Euclidean space \cite[Lemma 3.2]{AH}. Our proof follows an argument in
	\cite[Proof of Lemma 3.12]{BM18} to compare Green functions on different open sets.
	
	\begin{lem} \label{l:gbnd}
		Let an MMD space $(\ambient,d,\refmeas,\form,\domain)$ and a uniform domain $\unifdom$
		in $(\ambient,d)$ satisfy Assumption \ref{a:hkecdcbhp}. Then there exist $A_1 \in (1,\infty)$
		and $c_0 \in (0,1)$ such that for each $c \in (0,c_{0}]$ the following holds for some
		$C_1 \in (1,\infty)$: for all $\xi \in \partial \unifdom$ and all $r \in (0,\diam(\unifdom)/A_1)$,
		\begin{equation} \label{e:greenlocglob}
			C_1^{-1} \Capa_{B(\xi,2r)}(B(\xi,r))^{-1} \le \gren{\unifdom}(\xi_{r},c r) \le C_1 \Capa_{B(\xi,2r)}(B(\xi,r))^{-1}.
		\end{equation}
	\end{lem}
	
	\begin{proof}
		By Lemma \ref{l:Delta}-\eqref{it:Delta-hmeas}, there exist $A_1, A_0\in (4,\infty)$ such that
		for all $\xi \in \partial \unifdom$ and all $r \in (0,\diam(\unifdom)/A_1)$, we have
		\begin{equation} \label{e:glb1}
			\sup_{z \in \unifdom \cap B(\xi,2r)} \hmeas{\unifdom \cap B(\xi,A_0r)}{z}(\unifdom \cap S(\xi,A_0r))\le \frac{1}{2}.
		\end{equation}
		
		By \eqref{e:gradial} and \cite[Lemmas 5.20-(c), 5.22 and 5.23]{BCM}, there exist $c_0 \in (0,c_{\unifdom}/2)$ and $\wt{A}_1 \in (4,\infty)$
		such that for all $c \in (0,c_0]$ there exists $C_2 \in (1,\infty)$ satisfying the following estimate:
		for all $\xi \in \partial \unifdom$, all $r \in (0,\diam(\unifdom)/\wt{A}_1)$ and all $y \in S(\xi_r,cr)$, we have
		\begin{equation} \label{e:greenb}
			C_2^{-1} \Capa_{B(\xi,2r)}(B(\xi,r))^{-1}  \le   \gren{B(\xi_r,c_{\unifdom} r/2)}(\xi_r, y) \le \gren{B(\xi,A_0r)}(\xi_r, y) \le C_2 \Capa_{B(\xi,2r)}(B(\xi,r))^{-1}.
		\end{equation}
		%The existence of $c_0,C_2,C_3$ as above follows from \cite[Theorem 1.2]{GHL15} (The arguments in \cite{GHL15} work even if $\ambient$ is bounded). 
		Also by \eqref{e:gradial} and by reducing $c_0$ further if necessary, there exists $C_3 \in (1,\infty)$ such that 
		\begin{equation} \label{e:grb}
			\sup_{S(\xi_r,cr)} \gren{\unifdom}(\xi_r,\cdot)
				\le C_3 \inf_{S(\xi_r,cr)} \gren{\unifdom}(\xi_r,\cdot)
				= C_3 \gren{\unifdom}(\xi_r,cr)
		\end{equation}
		for all $c \in (0,c_0]$, all $\xi \in \partial \unifdom$ and all $r \in (0,\diam(\unifdom)/\wt{A}_1)$.
		On the other hand, for all $c \in (0,c_0]$, all $\xi \in \partial \unifdom$ and all $r \in (0,\diam(\unifdom)/A_1)$,
		by choosing $\eta \in  S(\xi_r, cr)$ satisfying
		\begin{equation}\label{e:glb2}
			\gren{\unifdom}(\xi_r, \eta)=\sup_{y \in S(\xi_r, cr)}\gren{\unifdom}(\xi_r,y),
		\end{equation}
		and by the Dynkin--Hunt formula (Lemma \ref{l:dhformula}) and
		the maximum principle (the latter equality in \eqref{e:max}), we obtain
		\begin{align} 
			\gren{\unifdom}(\xi_r, \eta)&= \gren{\unifdom \cap B(\xi,A_0r)}(\xi_r,\eta)+ \expdiff_{\eta}\bigl[\one_{ \{\tau_{\unifdom \cap B(\xi,A_0r)}<\infty, X_{\tau_{\unifdom \cap B(\xi,A_0r)}} \in \unifdom \}} \gren{\unifdom}(X_{\tau_{\unifdom \cap B(\xi,A_0r)}}, \xi_r) \bigr] \nonumber \\
			&\le \gren{\unifdom \cap B(\xi,A_0r)}(\xi_r,\eta) + \gren{\unifdom}(\xi_r,\eta) \lawdiff_{\eta}\bigl( \tau_{\unifdom \cap B(\xi,A_0r)}<\infty, X_{\tau_{\unifdom \cap B(\xi,A_0r)}} \in \unifdom \bigr) \nonumber \\
			&\le \gren{\unifdom \cap B(\xi,A_0r)}(\xi_r,\eta) + \frac{1}{2}\gren{\unifdom}(\xi_r,\eta) \quad \textup{(by \eqref{e:glb1}),} \nonumber
		\end{align}
		and hence
		\begin{equation}\label{e:glb3}
			\gren{B(\xi_{r},c_{\unifdom} r/2)}(\xi_r,\eta)
				%\le \gren{\unifdom \cap B(\xi,A_0r)}(\xi_r,\eta)
				\le \gren{\unifdom}(\xi_r, \eta)
				\le 2 \gren{\unifdom \cap B(\xi,A_0r)}(\xi_r,\eta)
				\le 2 \gren{B(\xi,A_0 r)}(\xi_r,\eta).
		\end{equation}
		Combining \eqref{e:greenb}, \eqref{e:glb3}, \eqref{e:glb2} and \eqref{e:grb},
		we obtain \eqref{e:greenlocglob}.
	\end{proof}
	
	\begin{proof}[Proof of Theorem \ref{t:hmeas}]
		We first show the lower bound on the harmonic measure which is considerably easier than the upper bound. \\
		\noindent\textbf{Lower bound on harmonic measure}: By Lemmas \ref{l:harmonicm}-\eqref{it:hmeas-prob1}
		and \ref{l:Delta}-\eqref{it:Delta-hmeas}, there exists $c_1 \in (0,1/2)$ such that for all
		$\xi \in \partial \unifdom$, all $r \in (0,\diam(\unifdom)/A_1)$ and all $y \in \unifdom \cap B(\xi,2c_1r)$,
		\begin{equation} \label{e:lbh1}
			\hmeas{\unifdom}{y}(B(\xi,r) \cap \partial \unifdom ) \ge 1- \hmeas{\unifdom \cap B(\xi,r)}{y}(\unifdom \cap S(\xi,r)) \ge \frac{1}{2}.
		\end{equation}
		By Lemmas \ref{l:hit}-\eqref{it:hitting-prob-bounds} and \ref{l:gbnd} and increasing $A_1$ if necessary,
		there exist $c_2 \in (0,c_1)$ and $C_1, C_2 \in (1,\infty)$ such that 
		\begin{equation}\label{e:lbh2}
			C_1^{-1} \frac{\gren{\unifdom}(x_0,\xi_{c_1r})}{\gren{\unifdom}(\xi_{c_1r}, c_2r)}
			\le \lawdiff_{x_0}\bigl( \sigma_{\ol{B(\xi_{c_1 r},c_2 r)}} < \sigma_{\unifdom^c} \bigr)
			\le C_1 \frac{\gren{\unifdom}(x_0,\xi_{c_1 r})}{\gren{\unifdom}(\xi_{c_1 r}, c_2 r)}
		\end{equation}
		and
		\begin{equation} \label{e:lbh3}
			C_2^{-1} \Capa_{B(\xi,2r)}(B(\xi,r))^{-1} \le \gren{\unifdom}(\xi_{c_1r}, c_2r)
			\le C_2 \Capa_{B(\xi,2r)}(B(\xi,r))^{-1}
		\end{equation}
		for all $\xi \in \partial \unifdom$, all $r \in (0,\diam(\unifdom)/A_1)$ and all $x_0 \in \unifdom \setminus B(\xi,2r)$.
		
		The lower bound on the harmonic measure is obtained by estimating the probability of the event
		that the diffusion $\diff$ first hits the set $\overline{B(\xi_{c_1 r},c_2 r)}$ before exiting $\unifdom$
		along $\partial \unifdom \cap B(\xi,r)$. Setting  $B_0:= \overline{B(\xi_{c_1 r}, c_2 r)}$,
		we estimate the harmonic measure as
		\begin{align} \label{e:hml}
			\hmeas{\unifdom}{x}&(\partial \unifdom \cap B(\xi,r)) \ge \lawdiff_{x}\bigl( \textrm{$\sigma_{B_0}<\sigma_{\unifdom^c}$, $\diff_{\sigma_{\unifdom^c}} \in \partial \unifdom \cap B(\xi,r)$} \bigr) \nonumber \\
			&= \lawdiff_{x}(\sigma_{B_0} < \sigma_{\unifdom^c}) \expdiff_{x}\bigl[ \hmeas{\unifdom}{\diff_{\sigma_{B_0}}}(\partial \unifdom \cap B(\xi,r)) \bigr] \quad \mbox{(by the strong Markov property of $\diff$)}\nonumber \\
			&\ge \lawdiff_{x}(\sigma_{B_0} < \sigma_{\unifdom^c})  \inf_{y \in B_0} \hmeas{\unifdom}{y}( \partial \unifdom \cap B(\xi,r)) \overset{\eqref{e:lbh1}}{\ge}\frac{1}{2} \lawdiff_{x}(\sigma_{B_0} < \sigma_{\unifdom^c}) \nonumber \\
			&\overset{\eqref{e:lbh2}}{\ge} (2C_1)^{-1} \frac{\gren{\unifdom}(x,\xi_{c_1 r})}{\gren{\unifdom}(\xi_{c_1 r}, c_2 r)} \overset{\eqref{e:lbh3}}{\ge} (2C_1C_2)^{-1} \gren{\unifdom}(x,\xi_{c_1r/2}) \Capa_{B(\xi,2r)}(B(\xi,r)) 
		\end{align}
		for all $\xi \in \partial \unifdom$, all $r \in (0,\diam(\unifdom)/A_1)$ and all $x \in \unifdom \setminus B(\xi,2r)$.
		On the other hand, by Lemma \ref{l:hchain} and increasing $A_1$ if necessary,
		there exist $A_0,C_3 \in (1,\infty)$ such that
		\begin{equation} \label{e:hm1a}
			C_3 \gren{\unifdom}(x,\xi_{r}) \ge \gren{\unifdom}(x,\xi_{c_1r/2}) \ge C_3^{-1}\gren{\unifdom}(x,\xi_{r})
		\end{equation}
		for all $\xi \in \partial \unifdom$, all $r \in (0,\diam(\unifdom)/A_{1})$ and all $x \in \unifdom \setminus B(\xi,A_0 r)$.
		Combining \eqref{e:hml} and \eqref{e:hm1a}, we obtain the desired lower bound.
		
		\noindent\textbf{Upper bound on harmonic measure}: Fix $\xi \in \partial \unifdom$ and
		$r \in (0,\diam(\unifdom)/A_{1})$, where $A_1 \in (1,\infty)$ may be increased from its current value
		in the course of this proof. We note that all the constants in the argument below are independent of
		the choice of $\xi,r$ and depend only on the constants involved in Assumption \ref{a:hkecdcbhp}.
		We consider two cases depending on whether or not $(B(\xi,4r) \setminus B(\xi,2r)) \cap \partial \unifdom$ is empty.
		
		\noindent\emph{Case 1: $(B(\xi,4r) \setminus B(\xi,2r)) \cap \partial \unifdom =\emptyset$.}
		In this case, we use the estimate 
		\begin{equation} \label{e:uhm1}
			\hmeas{\unifdom}{x}(B(\xi,r) \cap \partial \unifdom) \le \lawdiff_{x}\bigl( \sigma_{S(\xi, 3r ) \cap \unifdom}<\sigma_{\unifdom^c} \bigr).
		\end{equation}
		
		By Lemma \ref{l:hit}-\eqref{it:hitting-prob-bounds} and the same argument as the proof of Lemma \ref{l:gbnd}
		(by using \cite[Lemmas 5.10, 5.20-(a) and 5.23]{BCM}) and by increasing $A_1$ if necessary, there exist
		$c_1 \in (0,1)$ and $C_3, C_4 \in (1,\infty)$ such that 
		\begin{equation} \label{e:uhm5}
			\gren{\unifdom}(y,c_1r) \ge \gren{B(y,r)}(y,c_1r) \ge C_3^{-1} \Capa_{B(\xi,2r)}(B(\xi,r))^{-1} \ge C_4^{-1}\Capa_{B(\xi,2r)}(B(\xi,r))^{-1}
		\end{equation}
		and
		\begin{equation} \label{e:uhm4}
			\lawdiff_{x} \bigl( \sigma_{\ol{B(y,c_1r)}} < \sigma_{\unifdom^c} \bigr) \le C_3 
			\frac{\gren{\unifdom}(y,x_0)}{\gren{\unifdom}(y,c_1r)}
		\end{equation}
		for all $y \in \unifdom \cap S(\xi,3r)$ and $x_0 \in \unifdom \setminus B(\xi,4r)$.
		By Lemma \ref{l:chain}-\eqref{it:chain-unifdom}, the proof of Lemma \ref{l:hchain}
		and by increasing $A_0,A_1$ if needed, there exist $A_0, C_5 \in (1,\infty)$ such that
		for all $y \in \unifdom \cap S(\xi,3r)$ and $x_0 \in \unifdom \setminus B(\xi,A_0 r)$ we have 
		\begin{equation} \label{e:uhm6}
			\gren{\unifdom}(y,x_0) \le C_5 \gren{\unifdom}(\xi_r,x_0). 
		\end{equation}
		Choosing a maximal $c_1 r$-separated subset $\{y_i \mid 1 \le i \le N \}$ of
		$\unifdom \cap S(\xi,3r)$ on the basis of \hyperlink{MD}{\textup{MD}}, we have
		$\unifdom \cap S(\xi, 3r) \subset \bigcup_{i=1}^{N} B(y_i,c_1 r)$, where $N \in \mathbb{N}$
		has an upper bound that depends only on \hyperlink{MD}{\textup{MD}} and $c_1$.
		Therefore by \eqref{e:uhm1}, we obtain
		\begin{align} \label{e:uhm2}
			\hmeas{\unifdom}{x_0}(B(\xi,r) \cap \partial \unifdom)
			&\le \lawdiff_{x_0}\bigl( \sigma_{\bigcup_{i=1}^{N} B(y_i,c_1 r)}<\sigma_{\unifdom^c} \bigr)
			\le \sum_{i=1}^{N}\lawdiff_{x_0}\bigl( \sigma_{\ol{B(y_i, c_1r)}}<\sigma_{\unifdom^c} \bigr) \nonumber \\
			&\overset{\eqref{e:uhm4}}{\le} \sum_{i=1}^{N} C_3 \frac{\gren{\unifdom}(y,x_0)}{\gren{\unifdom}(y,c_1r)}
			\overset{\eqref{e:uhm6}}{\le} N C_3 \frac{\gren{\unifdom}(\xi_r,x_0)}{\gren{\unifdom}(y,c_1r)}
			\overset{\eqref{e:uhm5}}{\le} N C_3 C_4 \frac{\gren{\unifdom}(\xi_r,x_0)}{\gren{\unifdom}(y,c_1r)}
		\end{align}
		for all  $x_0 \in \unifdom \setminus B(\xi,A_0r)$.
		The desired upper bound in this case follows from \eqref{e:uhm2} and \eqref{e:uhm5}.

		\noindent\emph{Case 2: $(B(\xi,4r) \setminus B(\xi,2r)) \cap \partial \unifdom  \neq \emptyset$.}
		Let $\eta \in (B(\xi,4r) \setminus B(\xi,2r)) \cap \partial \unifdom$ and
		set $V:= \overline{\unifdom} \setminus (\partial \unifdom \setminus B(\xi,3r/2))$
		(note that $V$ is an open subset of $\overline{\unifdom}$). Recall from Assumption \ref{a:hkecdcbhp} that
		$\diffref=\bigl(\Omega^{\on{ref}},\events^{\on{ref}},\{\diffref_{t}\}_{t\in[0,\infty]},\{\lawref_{x}\}_{x\in\oneptcpt{\overline{\unifdom}}}\bigr)$
		is a diffusion on $\overline{\unifdom}$ as in Assumption \ref{a:feller} for the MMD space
		$(\overline{\unifdom},d,\refmeas|_{\overline{\unifdom}},\formref,\domain(\unifdom))$.
		Since $B(\eta,r/2) \cap \partial \unifdom$ is a subset of $\overline{\unifdom} \setminus V$
		and not $\formref$-polar by Lemmas \ref{l:Delta}-\eqref{it:Delta-hmeas} and
		\ref{l:harmonicm}-\eqref{it:hmeas-quasi-support},\eqref{it:hmeas-prob1},\eqref{it:hmeas-smooth-support},
		the part Dirichlet form of $(\formref,\domain(\unifdom))$ on $V$ is transient
		by the irreducibility of $(\overline{\unifdom},\refmeas|_{\overline{\unifdom}},\formref,\domain(\unifdom))$
		from Proposition \ref{p:feller}-\eqref{it:HKE-conn-irr-cons} and \cite[Proposition 2.1]{BCM}.
		Hence we have the Green function $\grenref{V}$ of $(\formref,\domainref)$
		on $V$ as given in Proposition \ref{p:goodgreen} and Lemma \ref{l:green}.
		
		By Lemma \ref{l:hit} and arguing similarly as \eqref{e:uhm5} and \eqref{e:uhm4}, along with increasing
		$A_1$ if needed, there exist $C_1, C_2 \in (1,\infty)$ and $c_1 \in (0,c_{\unifdom}/4)$ such that
		\begin{gather}
			\label{e:uhm3} \sup_{y \in S(\xi_r,c_1r)}\grenref{V}(\xi_r,y) \le C_1 \grenref{V}(\xi_r,c_1r),\\
			\label{e:uhm7} C_1^{-1} \frac{\grenref{V}(x,\xi_r)}{\grenref{V}(\xi_r,c_1r)} \le \lawref_{x}\bigl(\sigma_{\ol{B(\xi_r,c_1r)}} < \sigma_{\partial \unifdom \setminus B(\xi,3r/2)} \bigr) \le C_1 \frac{\grenref{V}(x,\xi_r)}{\grenref{V}(\xi_r,c_1r)},\\
			\label{e:uhm8} \grenref{V}(\xi_r,c_1r) \ge \gren{\unifdom}(\xi_r,c_1r) \ge C_2^{-1}\Capa_{B(\xi,2r)}(B(\xi,r))^{-1} 
		\end{gather}
		for all $x \in \overline{\unifdom} \setminus B(\xi_r,c_1r)$.
		By Harnack chaining in $V$ and \eqref{e:uhm3}, there exists $C_3 \in (1,\infty)$
		such that for all $z \in B(\xi,r)\cap \partial \unifdom$, we have
		\begin{equation} \label{e:uhm9}
			\grenref{V}(\xi_r,z) \ge C_3^{-1}\grenref{V}(\xi_r,c_1r). 
		\end{equation}
		Setting $B:=\overline{B(\xi_r,c_1r)}$, by the strong Markov property of $\diffref$ we have
		\begin{align} \label{e:hm1}
			\lawref_{x}(\sigma_{B} < \sigma_{\partial \unifdom \setminus B(\xi,3r/2)} ) &\ge \lawref_{x}\bigl( \diffref_{\sigma_{\partial \unifdom}} \in \partial \unifdom \cap B(\xi,r), \sigma_B \circ \theta_{\sigma_{\partial \unifdom}} < \sigma_{\partial \unifdom \setminus B(\xi,3r/2)} \circ \theta_{\sigma_{\partial \unifdom}} \bigr) \nonumber \\
			&\ge \hmeas{\unifdom}{x}(\partial \unifdom \cap B(\xi,r)) \inf_{z \in B(\xi,r) \cap \partial \unifdom} \lawref_z(\sigma_B < \sigma_{\partial \unifdom \setminus B(\xi,3r/2)})\nonumber \\
			&\ge C_1^{-1}C_3^{-1} \hmeas{\unifdom}{x}(\partial \unifdom \cap B(\xi,r)) \quad \mbox{(by \eqref{e:uhm9} and \eqref{e:uhm7})}
		\end{align}
		for all $x \in \overline{\unifdom} \setminus \overline{B(\xi_r,c_1r)}$. Now the proof of the desired upper bound in \eqref{e:harmonicm}
		is reduced to showing that there exists $C_{4} \in (1,\infty)$ such that, for suitably chosen $A_0 \in (1,\infty)$,
		\begin{equation} \label{eq:hm-proof-comp-green}
			\grenref{V}(x_0,\xi_r) \leq C_{4} \gren{\unifdom}(x_0,\xi_r) \qquad \textrm{for all $x_0 \in \unifdom \setminus B(\xi,A_0 r)$;}
		\end{equation}
		indeed, by combining \eqref{e:hm1}, \eqref{e:uhm7}, \eqref{e:uhm8} and \eqref{eq:hm-proof-comp-green} we obtain
		\begin{align*}
			\hmeas{\unifdom}{x_0}(\partial \unifdom \cap B(\xi,r))
			&\overset{\eqref{e:hm1}}{\le} C_1 C_3 \lawref_{x_0}(\sigma_{B} < \sigma_{\partial \unifdom \setminus B(\xi,3r/2)} )
			\overset{\eqref{e:uhm7}}{\le} C_1^2 C_3 \frac{\grenref{V}(x_0,\xi_r)}{\grenref{V}(\xi_r,c_1r)} \nonumber \\
			&\overset{\eqref{e:uhm8}}{\le} C_1^2 C_2 C_3 \grenref{V}(x_0,\xi_r) \Capa_{B(\xi,2r)}(B(\xi,r)) \nonumber \\
			&\overset{\eqref{eq:hm-proof-comp-green}}{\le} C_1^2 C_2 C_3 C_4 \gren{\unifdom}(x_0,\xi_r) \Capa_{B(\xi,2r)}(B(\xi,r)).
		\end{align*}
		%\quad \textrm{(by \eqref{e:hm8} and \eqref{e:hm9}),}
		
		To see \eqref{eq:hm-proof-comp-green}, recall the Dynkin--Hunt formula (Lemma \ref{l:dhformula}) that 
		\begin{align} \label{e:hm4}
			\grenref{V}(y,z) &= \gren{\unifdom}(y,z)+ \expref_{y}\bigl[\one_{\{\tau_{\unifdom}<\infty,\diffref_{\tau_{\unifdom}} \in V\}}  \grenref{V}(\diffref_{\tau_{\unifdom}},z)\bigr] \quad \textrm{for all $y \in \unifdom$, $z \in \unifdom \setminus \{y\}$.}
		\end{align}
		By Lemma \ref{l:green}, for any $x_0 \in \unifdom \setminus B(\xi,4r)$ and any $z \in V \cap \overline{B(\xi,d(\xi,\eta))}$ we have 
		\begin{equation} \label{e:hm2}
			\grenref{V}(z,x_0)= \expref_{z}\bigl[ \grenref{V}((\diffref)^V_{\tau_{V \cap B(\xi,d(\xi,\eta))}},x_0) \bigr] \le \sup_{ \unifdom \cap S(\xi,d(\xi,\eta))} \grenref{V}(\cdot,x_0).
		\end{equation}
		Therefore, we obtain for all $x_0 \in \unifdom \setminus B(\xi,4r)$ and all $y \in \unifdom \cap \overline{B(\xi,d(\xi,\eta))}$,
		\begin{align} \label{e:hm5}
			\grenref{V}(y,x_0) &\overset{\eqref{e:hm4}}{\le} \gren{\unifdom}(y,x_0)+ \lawref_{y}\bigl( \tau_{\unifdom}<\infty,\diffref_{\tau_{\unifdom}} \in V \bigr) \sup_{z \in V \setminus \unifdom} \grenref{V}(z,x_0) \nonumber \\
			&\overset{\eqref{e:hm2}}{\le} \gren{\unifdom}(y,x_0)+ \lawref_{y}\bigl( \tau_{\unifdom}<\infty,\diffref_{\tau_{\unifdom}} \in V \bigr) \sup_{z \in \unifdom \cap S(\xi,d(\xi,\eta))} \grenref{V}(z,x_0).
		\end{align}
		Next, we show that there exists $\delta \in (0,1)$ such that for all $y \in \unifdom \cap S(\xi,d(\xi,\eta))$,
		\begin{equation} \label{e:hm6}
			\lawref_{y} \bigl( \tau_{\unifdom}<\infty,\diffref_{\tau_{\unifdom}} \in V \bigr) \le 1- \delta.
		\end{equation}
		Indeed, by Lemma \ref{l:harmonicm}-\eqref{it:hmeas-continuous}, the function
		$h(y):= \lawref_{y}\bigl( \tau_{\unifdom} < \infty, \diffref_{\tau_{\unifdom}} \in V \bigr)$
		is continuous and $\formref$-harmonic on $\unifdom$. Then by Lemma \ref{l:Delta}-\eqref{it:Delta-hmeas},
		there exists $c_2 \in (0,1/4)$ such that
		\begin{equation} \label{e:hm7}
			h(y) \le \frac{1}{2} \qquad \textrm{for all $y \in \unifdom \setminus B(\xi,7r/4)$ with $\delta_{\unifdom}(y)<c_2 r$,}
		\end{equation}
		whereas by \eqref{e:hm7} and Harnack chaining for the $\formref$-harmonic function
		$1-h$ on $\unifdom$ using Lemma \ref{l:chain}-\eqref{it:chain-unifdom}, 
		there exists $\delta \in (0,1)$ such that $h(y) \le 1 - \delta$ for all
		$y \in \unifdom \cap S(\xi,d(\xi,\eta))$ with $\delta_{\unifdom}(y)\geq c_2 r$, proving \eqref{e:hm6}.
		In particular, taking supremum over $y \in \unifdom \cap S(\xi,d(\xi,\eta))$ in \eqref{e:hm5} and
		using \eqref{e:hm6}, for all $x_0 \in \unifdom \setminus B(\xi,4r)$ we obtain
		\begin{equation*}
			\sup_{y \in \unifdom \cap S(\xi,d(\xi,\eta))} \grenref{V}(y,x_0)
			\le \sup_{y \in \unifdom \cap S(\xi,d(\xi,\eta))} \gren{\unifdom}(y,x_0) + (1-\delta) \sup_{y \in \unifdom \cap S(\xi,d(\xi,\eta))} \grenref{V}(y,x_0),
		\end{equation*}
		which, together with $\sup_{y \in \unifdom \cap S(\xi,d(\xi,\eta))} \grenref{V}(y,x_0) < \infty$
		implied by the maximum principle (the latter of \eqref{e:max}), yields
		\begin{equation} \label{e:hm8}
			\sup_{y \in \unifdom \cap S(\xi,d(\xi,\eta))} \grenref{V}(y,x_0) \le \delta^{-1}\sup_{y \in \unifdom \cap S(\xi,d(\xi,\eta))} \gren{\unifdom}(y,x_0).
		\end{equation}
		On the other hand, by Carleson's estimate (Proposition \ref{p:carleson}), Harnack chaining using Lemma \ref{l:chain}-\eqref{it:chain-unifdom}
		and increasing $A_0,A_1$ if needed, there exists $C_5 \in (1,\infty)$ such that for all $x_0 \in \unifdom \setminus B(\xi,A_0 r)$,
		\begin{equation} \label{e:hm9}
			\sup_{y \in \unifdom \cap S(\xi,d(\xi,\eta))} \grenref{V}(y,x_0) \ge C_{5}^{-1}\grenref{V}(\xi_r,x_0),
			\quad \sup_{y \in \unifdom \cap S(\xi,d(\xi,\eta))} \gren{\unifdom}(y,x_0) \le C_{5} \gren{\unifdom}(\xi_r,x_0). 
		\end{equation}
		Combining \eqref{e:hm8} and \eqref{e:hm9}, we obtain \eqref{eq:hm-proof-comp-green}
		and thereby complete the proof.
	\end{proof}
	
	Under an additional assumption which for instance is satisfied for the Brownian motion on $\bR^n$ with $n \ge 2$,
	the capacity density condition \ref{eq:CDC} for a domain $\unifdom$ implies the uniform perfectness of its boundary $\partial \unifdom$,
	which is relevant to the stable-like heat kernel estimates for the boundary trace process in Theorem \ref{thm:shk-trace} below.
	
	\begin{definition} \label{d:cnd}
		Let $(\ambient,d,\refmeas,\form,\domain)$ be an MMD space satisfying \hyperlink{MD}{\textup{MD}}
		and \hyperlink{ehi}{$\on{EHI}$}. We say that $(\ambient,d,\refmeas,\form,\domain)$ satisfies
		the \textbf{capacity non-decreasing condition} if there exist $C,A \in (1,\infty)$ such that
		\begin{equation} \label{e:cnd1}
			\Capa_{B(x,2r)}(B(x,r)) \le C \Capa_{B(x,2R)}(B(x,R)) \quad 
			\textrm{for all $x \in \ambient$, $0<r< R <\diam(\ambient)/A$.}
		\end{equation}
	\end{definition}
	
	We remark that the number $2$ in \eqref{e:cnd1} can be replaced with any constant larger than $1$ due to \cite[Lemma 5.22]{BCM}.
	If $(\ambient,d,\refmeas,\form,\domain)$ satisfies the stronger \hyperlink{VD}{\textup{VD}}
	and \hyperlink{hke}{$\on{HKE(\scdiff)}$} for some scale function $\scdiff$, then
	by \cite[Theorem 1.2]{GHL15}, \eqref{e:cnd1} is equivalent to the following estimate: there exist $C,A \in (1,\infty)$ such that 
	\begin{equation} \label{e:cnd2}
		\frac{\scdiff(R)}{\refmeas(B(x,R))} \le C \frac{\scdiff(r)}{\refmeas(B(x,r))} \quad 
		\textrm{for all $x \in \ambient$ and all $0<r< R <\diam(\ambient)/A$.}
	\end{equation}
	The condition \eqref{e:cnd2} was called \emph{fast volume growth} in \cite[Definition 1.5]{JM}.
	The following lemma follows from Theorem \ref{t:hmeas} and Lemma \ref{l:Delta}-\eqref{it:Delta-hfunc}.
	We omit its proof as it is just a straightforward modification of the argument in \cite[Remark 2.17]{AHMT}.
	
	\begin{lem}[Cf.\ {\cite[Remark 2.17]{AHMT}}] \label{l:up}
		Let an MMD space $(\ambient,d,\refmeas,\form,\domain)$ and a uniform domain $\unifdom$
		in $(\ambient,d)$ satisfy Assumption \ref{a:hkecdcbhp},  and assume further that
		$(\ambient,d,\refmeas,\form,\domain)$ satisfies the capacity non-decreasing condition.
		Then $(\partial \unifdom,d)$ is uniformly perfect.
	\end{lem}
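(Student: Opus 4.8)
The plan is to show that $\partial\unifdom$ is uniformly perfect by arguing by contradiction: if the boundary fails to be uniformly perfect, then there is a point $\xi\in\partial\unifdom$ and a scale $r$ with an annulus $B(\xi,r)\setminus B(\xi,\varepsilon r)$ that meets $\partial\unifdom$ only at $\xi$ (for arbitrarily small $\varepsilon$), and this isolated-type behavior will contradict the two-sided harmonic measure estimate of Theorem \ref{t:hmeas} together with the doubling estimate of Corollary \ref{c:asdouble}. The quantitative heart of the matter is the following: by Corollary \ref{c:asdouble}, for $0<\rho<d(\xi,x_0)/A$ we have $\hmeas{\unifdom}{x}(\partial\unifdom\cap B(\xi,\rho))\le C\,\hmeas{\unifdom}{x}(\partial\unifdom\cap B(\xi,\rho/2))$, so iterating gives a polynomial lower bound: $\hmeas{\unifdom}{x}(\partial\unifdom\cap B(\xi,\varepsilon r))\ge c\,\varepsilon^{\kappa}\hmeas{\unifdom}{x}(\partial\unifdom\cap B(\xi,r))$ for some $\kappa>0$. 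On the other hand, if $(B(\xi,r)\setminus B(\xi,\varepsilon r))\cap\partial\unifdom=\emptyset$, then $\partial\unifdom\cap B(\xi,r)=\partial\unifdom\cap B(\xi,\varepsilon r)$, so these two harmonic measures are literally equal; combined with the iterated doubling bound this forces $c\,\varepsilon^{\kappa}\le 1$, which is fine — so the contradiction must instead come from pushing $\varepsilon\to 0$ while simultaneously using the \emph{lower} bound in \eqref{e:harmonicm} to see that $\hmeas{\unifdom}{x}(\partial\unifdom\cap B(\xi,\varepsilon r))\gtrsim \gren{\unifdom}(x,\xi_{\varepsilon r})\Cap_{B(\xi,2\varepsilon r)}(B(\xi,\varepsilon r))$ must decay, yet the equality $\partial\unifdom\cap B(\xi,\varepsilon r)=\partial\unifdom\cap B(\xi,r)$ keeps it bounded below.

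More precisely, here is how I would organize it. First, fix the base point $x_0$ with $\dist(x_0,\unifdom^c)\gtrsim\diam(\unifdom)$ (or work locally if $\unifdom$ is unbounded), and fix $\xi\in\partial\unifdom$ and $r$ small. Suppose toward a contradiction that for some small $\varepsilon$ the annulus $A(\xi,\varepsilon r,r):=B(\xi,r)\setminus\overline{B(\xi,\varepsilon r)}$ does not meet $\partial\unifdom$. Then $\partial\unifdom\cap B(\xi,r)=\partial\unifdom\cap B(\xi,\varepsilon r)$, hence for a fixed reference point $x=\xi_r$ (as in Lemma \ref{l:xir}) we have $\hmeas{\unifdom}{\xi_r}(\partial\unifdom\cap B(\xi,\varepsilon r))=\hmeas{\unifdom}{\xi_r}(\partial\unifdom\cap B(\xi,r))$. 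Apply the upper bound of Theorem \ref{t:hmeas} to the left side at scale $\varepsilon r$ and the lower bound to the right side at scale $r$ to get
\[
C^{-1}\gren{\unifdom}(\xi_r,\xi_r')\Cap_{B(\xi,2r)}(B(\xi,r))\le C\,\gren{\unifdom}(\xi_r,\xi_{\varepsilon r}^{\,\prime\prime})\Cap_{B(\xi,2\varepsilon r)}(B(\xi,\varepsilon r)),
\]
where $\xi_r',\xi_{\varepsilon r}^{\,\prime\prime}$ denote the corner points at the relevant scales. Now use the Carleson estimate (Proposition \ref{p:carleson}) and Harnack chaining (Lemma \ref{l:hchain}) to bound $\gren{\unifdom}(\xi_r,\xi_{\varepsilon r}^{\,\prime\prime})\lesssim\gren{\unifdom}(\xi_r,\xi_r)$-type quantity — in fact the key point is that $\gren{\unifdom}(\xi_r,\cdot)$ is harmonic on $\unifdom\cap B(\xi,r)$ with Dirichlet data on $\partial\unifdom\cap B(\xi,r)$ (using that this boundary piece equals $\partial\unifdom\cap B(\xi,\varepsilon r)$, it is actually harmonic all the way in), so by Lemma \ref{l:Delta}(b) it satisfies $\gren{\unifdom}(\xi_r,\xi_{\varepsilon r}^{\,\prime\prime})/\gren{\unifdom}(\xi_r,\xi_{r}')\lesssim\varepsilon^{\delta}$ for some $\delta>0$. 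Substituting, and then invoking the capacity non-increasing condition to control the ratio $\Cap_{B(\xi,2\varepsilon r)}(B(\xi,\varepsilon r))/\Cap_{B(\xi,2r)}(B(\xi,r))$ from above by a constant $C$, the displayed inequality becomes $1\lesssim\varepsilon^{\delta}$, which is false for $\varepsilon$ small enough. This contradiction shows the annulus must meet $\partial\unifdom$, i.e. $\partial\unifdom$ is uniformly perfect with constant depending only on the constants in Theorem \ref{t:hmeas}, Lemma \ref{l:Delta}(b) and the capacity non-increasing condition.

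The step I expect to require the most care is the interplay between the two ``endpoints'' of the annulus and making sure the harmonic function comparison is used at the right scales: one needs the harmonicity of $\gren{\unifdom}(\xi_r,\cdot)$ to persist on a ball around $\xi$ of radius comparable to $r$ (with Dirichlet data only on the tiny piece $\partial\unifdom\cap B(\xi,\varepsilon r)$), so that Lemma \ref{l:Delta}(b) genuinely applies and gives the decisive $\varepsilon^{\delta}$ gain — this is exactly the mechanism in \cite[Remark 2.56]{AHMT}, and the main task is to check that all the auxiliary estimates (Carleson, Harnack chaining, capacity comparison) transfer verbatim from the Euclidean setting to the present MMD framework using the results already established in this section. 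Since all of these ingredients are available — Theorem \ref{t:hmeas}, Corollary \ref{c:asdouble}, Lemma \ref{l:Delta}, Proposition \ref{p:carleson}, Lemma \ref{l:hchain}, and the capacity non-increasing condition — the adaptation is routine, which is why the lemma is stated with the proof omitted; I would simply record that it follows by the argument sketched above, referring to \cite[Remark 2.56]{AHMT} for the organization of the estimates.
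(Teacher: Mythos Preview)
Your approach is correct and matches what the paper indicates: the paper omits the proof, saying only that it follows from Theorem~\ref{t:hmeas}, Lemma~\ref{l:Delta}(a) and the Carleson estimate (Proposition~\ref{p:carleson}) by adapting \cite[Remark~2.56]{AHMT}; your use of Lemma~\ref{l:Delta}(b) packages exactly those ingredients. One concrete slip to fix: you cannot take the observation point to be $\xi_r$. Theorem~\ref{t:hmeas} at scale $r$ requires the starting point $x$ to satisfy $d(\xi,x)>Ar$, and Lemma~\ref{l:Delta}(b) applied to $h=\gren{\unifdom}(x,\cdot)$ on $\unifdom\cap B(\xi,A_0 r)$ requires $x\notin B(\xi,A_0 r)$ (otherwise $h$ has a pole there). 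Simply choose $x$ with $d(\xi,x)\ge\max(A,A_0)\,r$---for instance $x=\xi_{Mr}$ for a large fixed $M$, or the global base point $x_0$ when $\unifdom$ is bounded---and the displayed inequality and the subsequent $\varepsilon^\delta$ decay go through exactly as you describe. (Your first paragraph is an exploratory false start; the second paragraph is the real argument.)
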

	
	It is easy to see that Lemma \ref{l:up} is not true without the capacity non-decreasing condition.
	For instance, this can be seen by considering the unit interval $\unifdom=(0,1)$ for the Brownian motion on $\bR$.
	
	We provide some sufficient conditions for the capacity density condition below. 
	
	\begin{remark} \label{r:cdc}
		\begin{enumerate}[\rm(a)]\setlength{\itemsep}{0pt}
			\item\label{it:cdc-ecc} Let $(\ambient,d,\refmeas,\form,\domain)$ be an MMD space
			satisfying \hyperlink{MD}{\textup{MD}} and \hyperlink{ehi}{$\on{EHI}$},
			and let $\unifdom$ be an open subset of $\ambient$ satisfying the exterior corkscrew condition
			(see \cite[(3.2)]{JK} for the definition). Then the capacity estimates in \cite[Section 5]{BCM}
			imply the capacity density condition for $\unifdom$. In particular, non-tangentially
			accessible domains (see \cite[p.~93]{JK}) satisfy the capacity density condition.
			\item\label{it:cdc-AR} Let $(\ambient,d,\refmeas,\form,\domain)$ be an MMD space
			satisfying the heat kernel estimates \hyperlink{hke}{$\on{HKE}(\scdiff)$}
			with $\scdiff(r)=r^{d_{w}}$ for all $r \in [0,\infty)$ for some $d_{w} \in [2,\infty)$.
			Assume that $\refmeas$ is a $d_{f}$-Ahlfors regular measure for some
			$d_{f} \in (0,\infty)$, i.e., there exists $C \in (1,\infty)$ such that
			\begin{equation}\label{eq:ARmeas}
				C^{-1} r^{d_{f}} \le \refmeas(B(x,r)) \le C r^{d_{f}} \qquad \textrm{for all $x \in \ambient$ and all $r \in (0,2\diam(\ambient))$.}
			\end{equation}
			If $\unifdom$ is an open subset of $\ambient$ and its boundary $\partial \unifdom$ in $\ambient$
			admits a $p$-Ahlfors regular Borel measure for some $p \in (d_{f} - d_{w},\infty) \cap [0,\infty)$,
			then $\unifdom$ satisfies \ref{eq:CDC}; indeed, the desired lower bound on the capacity
			can be obtained by adapting the arguments in \cite[Proof of Theorem 5.9]{HeiK}.
			In particular, this shows that the uniform domains obtained by removing the bottom line or
			the outer square boundary of the Sierpi\'{n}ski carpet satisfy \ref{eq:CDC} with respect
			to the MMD space corresponding to the Brownian motion on the Sierpi\'{n}ski carpet.
			More generally, a similar statement holds also for any generalized Sierpi\'{n}ski carpet
			due to \cite[Lemma 3.7]{CC}.
		\end{enumerate}
	\end{remark}
	
	We recall a simple consequence of Lebesgue's differentiation theorem.
	We note that the condition \eqref{e:asympdoub} is satisfied by harmonic measure on $\partial \unifdom$
	due to Corollary \ref{c:asdouble}.
	
	\begin{lem}[Lebesgue's differentiation theorem] \label{l:differentiation}
		Let $(\ambient,d,\refmeas)$ be a metric measure space such that $(\ambient,d)$ is separable,
		$\refmeas(B(x,r))<\infty$ for some $r \in (0,\infty)$ for each $x\in \ambient$, and
		\begin{equation}\label{e:asympdoub}
			\limsup_{r \downarrow 0} \frac{\refmeas(B(x,2r))}{\refmeas(B(x,r))} < \infty \quad \textrm{for $\refmeas$-a.e.\ $x \in \ambient$.}
		\end{equation}
		Then for any locally integrable function $f\colon \ambient \to \mathbb{R}$ almost every point is a Lebesgue point of $f$; that is, 
		\begin{equation} \label{e:leb}
			\lim_{r \downarrow 0} \fint_{B(x,r)} \abs{f(y)-f(x)} \, d\refmeas(y)
		\end{equation}
		for $\refmeas$-a.e.\ $x \in \ambient$. In particular, for any $x \in \supp_{\ambient}[\refmeas]$ satisfying \eqref{e:leb},
		if $\varepsilon \in (0,\infty)$ and $\psi_{r} \colon \ambient \to \mathbb{R}$ is a Borel measurable function
		satisfying $\one_{B(x,r)} \le \psi_r \le \one_{B(x,2r)}$ for each $r \in (0,\varepsilon)$, then
		\begin{equation} \label{e:ratiodiff}
			\lim_{r \downarrow 0} \frac{\int_{\ambient} \psi_r f \,d\refmeas}{\int_{\ambient} \psi_r \,d\refmeas} = f(x).
		\end{equation}
	\end{lem}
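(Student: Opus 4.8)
The plan is to reduce \eqref{e:ratiodiff} to the Lebesgue point property \eqref{e:leb}, and then prove \eqref{e:leb} by the maximal function method, localised to regions on which the asymptotic doubling ratio is uniformly bounded. For the reduction: if $x$ satisfies \eqref{e:leb} and $\one_{B(x,r)} \le \psi_r \le \one_{B(x,2r)}$, then $\int \psi_r\,dm \ge m(B(x,r)) > 0$, so
\[
\abs{\frac{\int \psi_r f\,dm}{\int \psi_r\,dm} - f(x)} \le \frac{1}{m(B(x,r))}\int_{B(x,2r)}\abs{f-f(x)}\,dm = \frac{m(B(x,2r))}{m(B(x,r))}\fint_{B(x,2r)}\abs{f-f(x)}\,dm,
\]
and the right-hand side tends to $0$ as $r\downarrow 0$ because the last average vanishes by \eqref{e:leb} while $m(B(x,2r))/m(B(x,r))$ stays bounded by \eqref{e:asympdoub}. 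So only \eqref{e:leb} needs proof.

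For $k\in\bN$ set $E_k := \{x\in\ambient : m(B(x,2\rho))\le k\,m(B(x,\rho))\ \text{for all}\ 0<\rho\le (8k)^{-1}\}$. By \eqref{e:asympdoub} one has $\ambient = \bigcup_{k}E_k$ (for each $x$, choose $k$ exceeding the limsup by more than $1$ and large enough that $(8k)^{-1}$ lies below the corresponding scale), so it suffices to show that $m$-a.e.\ point of each $E_k$ is a Lebesgue point of $f$; moreover, since the property is local and $m$ is Radon, we may fix a relatively compact ball $B$, replace $f$ by $f\one_{2B}\in L^1(\ambient,m)$ (this does not change the Lebesgue point status of points of $B$), and prove that $m$-a.e.\ point of $E_k\cap B$ is a Lebesgue point. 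The key step is a restricted weak-type $(1,1)$ inequality: for $g\in L^1(\ambient,m)$ and $M_k g(x):=\sup_{0<\rho\le(8k)^{-1}}\fint_{B(x,\rho)}\abs{g}\,dm$ (defined for $x\in E_k$), one has $m(\{x\in E_k : M_k g(x)>\lambda\}) \le k^3\lambda^{-1}\norm{g}_{L^1(m)}$ for every $\lambda>0$. This is proved with the $5\rho$-covering lemma, which is valid in an arbitrary metric space: for each $x$ in the superlevel set choose $\rho_x\le(8k)^{-1}$ with $m(B(x,\rho_x))<\lambda^{-1}\int_{B(x,\rho_x)}\abs{g}\,dm$, extract a countable pairwise disjoint subfamily $\{B(x_i,\rho_{x_i})\}$ whose $5$-dilates still cover the superlevel set, and use $m(B(x_i,5\rho_{x_i}))\le m(B(x_i,8\rho_{x_i}))\le k^3 m(B(x_i,\rho_{x_i}))$ --- three applications of the doubling inequality defining $E_k$, legitimate since $x_i\in E_k$ and $4\rho_{x_i}\le(2k)^{-1}$ --- together with disjointness.

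With the weak-type bound in hand, \eqref{e:leb} on $E_k\cap B$ follows by the usual density argument. Given $\eps>0$, choose $g\in C_c(\ambient)$ with $\norm{f-g}_{L^1(m)}<\eps$ (possible as $m$ is Radon), put $h:=f-g$, and note that for $x\in E_k$, using continuity of $g$ and restricting to $r\le(8k)^{-1}$,
\[
\limsup_{r\downarrow 0}\fint_{B(x,r)}\abs{f-f(x)}\,dm \le \limsup_{r\downarrow 0}\fint_{B(x,r)}\abs{g-g(x)}\,dm + M_k h(x) + \abs{h(x)} = M_k h(x) + \abs{h(x)}.
\]
Hence $\{x\in E_k\cap B : \limsup_{r\downarrow 0}\fint_{B(x,r)}\abs{f-f(x)}\,dm > 2\lambda\}\subset\{x\in E_k : M_k h(x)>\lambda\}\cup\{x : \abs{h(x)}>\lambda\}$, whose $m$-measure is at most $(k^3+1)\lambda^{-1}\norm{h}_{L^1(m)}\le (k^3+1)\lambda^{-1}\eps$ by the weak-type estimate and Chebyshev's inequality. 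Letting $\eps\downarrow 0$ shows this set is $m$-null for every $\lambda>0$, so $m$-a.e.\ $x\in E_k\cap B$ is a Lebesgue point; a countable union over $k$ and over a countable cover of $\ambient$ by relatively compact balls (or an appeal to inner regularity of $m$) gives \eqref{e:leb}. The only genuinely delicate point is the absence of a global doubling hypothesis, which rules out a direct use of the Hardy--Littlewood maximal inequality; the remedy is precisely to localise both the maximal operator and the covering to the sets $E_k$, since the $5\rho$-covering lemma needs no doubling and doubling enters only through $x_i\in E_k$. A minor technical matter --- the Borel measurability of $E_k$ --- can be handled via the lower semicontinuity of $\rho\mapsto m(B(x,\rho))$ and the fact that it suffices to intersect over rational $\rho$, or by working throughout with the outer measure, which is all the ``$m$-a.e.'' assertions require.
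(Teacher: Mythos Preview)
Your argument is correct and matches the paper's approach: for \eqref{e:ratiodiff} the paper gives the same one-line reduction (yours is in fact slightly more careful, making the role of the asymptotic doubling ratio explicit), and for \eqref{e:leb} the paper simply cites \cite[(3.4.10) and Theorem 3.4.3]{HKST}, whose proof is exactly the localisation-to-uniformly-doubling-sets plus $5r$-covering maximal-function argument you spell out. One small bookkeeping slip worth fixing: with $E_k$ defined via doubling only for $\rho\le(8k)^{-1}$, iterating three times to control $m(B(x_i,8\rho_{x_i}))$ requires the doubling inequality at radius $4\rho_{x_i}$, which can be as large as $(2k)^{-1}$ and hence outside your declared range; remedy this by, e.g., requiring doubling on $E_k$ for all $\rho\le k^{-1}$ while still restricting $M_k$ to $\rho\le(8k)^{-1}$.
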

	
	\begin{proof}
		The assertion given in \eqref{e:leb} follows from \cite[Theorem 3.4.3 and (3.4.10)]{HKST}.
		If $x \in \supp_{\ambient}[\refmeas]$ satisfies \eqref{e:leb}, then
		\begin{align} \label{e:ld1}
			0 &\le \limsup_{r \downarrow 0} \frac{\int_{\ambient} \abs{\psi_r(y) f(y)- \psi_r(y) f(x)} \,\refmeas(dy)}{\int_{\ambient} \psi_r \,dm} \nonumber \\
			&\le \limsup_{r \downarrow 0} \frac{\int_{B(x,2r)} \abs{f(y)-  f(x)} \,\refmeas(dy)}{\refmeas(B(x,r))} \nonumber  \quad \textrm{(since $\one_{B(x,r)} \le \psi_r \le \one_{B(x,2r)}$)}\\
			&\le \biggl(\limsup_{r \downarrow 0} \frac{\refmeas(B(x,2r))}{\refmeas(B(x,r))}\biggr)\limsup_{r \downarrow 0} \fint_{B(x,2r)}   \abs{  f(y)-  f(x)} \,\refmeas(dy) \overset{\eqref{e:leb},\eqref{e:asympdoub}}{=}0.
		\end{align}
		The desired conclusion follows from \eqref{e:ld1} and the estimate
		\begin{equation*}
			\abs{\frac{\int_{\ambient} \psi_r f \,d\refmeas}{\int_{\ambient} \psi_r \,d\refmeas} - f(x)}
			\le \frac{\int_{\ambient} \abs{\psi_r(y) f(y)- \psi_r(y) f(x)} \,\refmeas(dy)}{\int_{\ambient} \psi_r \,d\refmeas}.
			\qedhere\end{equation*}
	\end{proof}
	
	The following proposition shows that the $\form$-harmonic measure $\hmeas{\unifdom}{x}$
	of a uniform domain $\unifdom$ in $(\ambient,d)$ is the distributional Laplacian of the
	Green function $\gren{\unifdom}(x,\cdot)$. In the proof, we use the following notation
	for the ($0$-order) hitting distribution with respect to a diffusion
	$\diffref=\bigl(\Omega^{\on{ref}},\events^{\on{ref}},\{\diffref_{t}\}_{t\in[0,\infty]},\{\lawref_{x}\}_{x\in\overline{\unifdom}\cup\{\cemetery\}}\bigr)$
	on $\overline{\unifdom}$ as in Assumption \ref{a:feller} for the MMD space
	$(\overline{\unifdom},d,\refmeas|_{\overline{\unifdom}},\formref,\domain(\unifdom))$,
	where $\formref:=\formrefgen{\unifdom}$: we define
	$H^{\on{ref}}_{\partial \unifdom}\widetilde{u}\in\domainref_{e}$ by
	\begin{equation} \label{e:defhref}
		H^{\on{ref}}_{\partial \unifdom}\widetilde{u}(x)
		:= \expref_{x}\bigl[\widetilde{u}(\diffref_{\sigma_{\partial \unifdom}}) \one_{\{\sigma_{\partial \unifdom}<\infty\}} \bigr]
		\mspace{25mu} \textrm{for $\formref$-q.e.\ $x \in \overline{\unifdom}$ for each $u \in \domainref_{e}$.}
	\end{equation}
	We will also use the fact that the strongly local part $\form^{(c)}$
	(recall \eqref{e:Beurling-Deny}) of any regular Dirichlet space
	$(\ambient,\refmeas,\form,\domain)$ satisfies the following strengthened strong locality:
	\begin{equation}\label{eq:strong-locality-strengthened}
		\form^{(c)}(u,v)=0 \quad \textrm{for any $u,v\in\domain_{e}$ with $(u-a)(v-b)=0$ $\refmeas$-a.e.\ on $\ambient$ for some $a,b\in\mathbb{R}$;}
	\end{equation}
	indeed, extending \cite[Corollary 3.2.1]{FOT} from $u\in\domain$ to $u\in\domain_{e}$
	by using \cite[Exercise 1.4.1, Lemma 2.1.4 and Theorem 2.3.3-(i)]{FOT}, and applying it together with
	\cite[Exercise 1.4.1 and Corollary 1.6.3]{FOT} and $\domain_{e}\cap L^{2}(\ambient,\refmeas)=\domain$,
	we can easily extend \cite[Theorem 4.3.8]{CF} from $u\in\domain \cap L^{\infty}(\ambient,\refmeas)$
	to $u\in\domain_{e}$, and then combining it with \cite[Lemmas 2.1.4, 3.2.3 and 3.2.4]{FOT}
	yields \eqref{eq:strong-locality-strengthened}.
	
	\begin{prop} \label{p:laplacian-green}
		Let $(\ambient,d,\refmeas,\form,\domain)$ be an MMD space satisfying Assumption \ref{a:feller},
		and let $\unifdom$ be a uniform domain in $(\ambient,d)$ such that the part Dirichlet form
		$(\form^{\unifdom},\domain^{0}(\unifdom))$ on $\unifdom$ is transient. Then for all $x \in \unifdom$
		and all $u \in \domainref\cap L^{\infty}(\overline{\unifdom},\refmeas|_{\overline{\unifdom}})$ such that
		$x \not\in \supp_{\refmeas|_{\overline{\unifdom}}}[u]$ and $\supp_{\refmeas|_{\overline{\unifdom}}}[u]$ is compact,
		\begin{equation}\label{eq:laplacian-green}
			\formref(\gren{\unifdom}(x,\cdot),u)= - \int_{\partial \unifdom} \widetilde{u} \, d\hmeas{\unifdom}{x}.
		\end{equation}
	\end{prop}
	
	\begin{proof}
		Note that the Green function $\gren{\unifdom}$ of $(\form,\domain)$ on $\unifdom$
		is also that of $(\formref,\domainref)$ on $\unifdom$, since the part Dirichlet form of
		$(\formref,\domain(\unifdom))$ on $\unifdom$ coincides with $(\form^{\unifdom},\domain^{0}(\unifdom))$
		as observed in the proof of Lemma \ref{l:harmonicm}-\eqref{it:hmeas-quasi-support}.
		Let $x \in \unifdom$, let $u \in \domainref\cap L^{\infty}(\overline{\unifdom},\refmeas|_{\overline{\unifdom}})$
		be such that $x \not\in \supp_{\refmeas|_{\overline{\unifdom}}}[u]$ and
		$\supp_{\refmeas|_{\overline{\unifdom}}}[u]$ is compact, and use \cite[Exercise 1.4.1]{FOT}
		to choose $\phi \in \domainref \cap \contfunc_{\mathrm{c}}(\overline{\unifdom})$ so that $\phi$
		is $[0,1]$-valued, $\phi=1$ on a neighborhood of $\supp_{\refmeas|_{\overline{\unifdom}}}[u]$
		and $x \notin \supp_{\overline{\unifdom}}[\phi]$. By the proof of Lemma \ref{l:dbdy}
		with $(\ambient,d,\refmeas,\form,\domain),D,y_{0}$ replaced by
		$(\overline{\unifdom},d,\refmeas|_{\overline{\unifdom}},\formref,\domain(\unifdom)),\unifdom,x$,
		under the convention of setting $\gren{\unifdom}(x,\cdot):=0$ on $\partial \unifdom$ we have
		$\phi \gren{\unifdom}(x,\cdot) \in \domainref$ and hence $\formref(\gren{\unifdom}(x,\cdot),u)$
		is canonically defined as $\formref(\phi \gren{\unifdom}(x,\cdot),u)$.
		
		Also by the proof of Lemma \ref{l:dbdy}, for some $\delta \in \bigl(0,\dist(x,\supp_{\overline{\unifdom}}[\phi])\bigr)$
		with $B(x,2\delta) \subset \unifdom$ we can construct a family $\{f_{r}\}_{r\in(0,\delta)}$
		of $[0,\infty)$-valued Borel measurable functions on $\unifdom$ such that
		$f_{r}^{-1}((0,\infty))=B(x,r)$, $\int_{\unifdom}f_{r}\,d\refmeas=1$,
		$\int_{\unifdom}f_{r}G_{\unifdom}f_{r}\,d\refmeas<\infty$ and
		$\phi G_{\unifdom}f_{r}\in \domain^{0}(\unifdom)$ for any $r \in (0,\delta)$
		and $\lim_{r\downarrow 0}\phi G_{\unifdom}f_{r}=\phi \gren{\unifdom}(x,\cdot)$
		in norm in $(\domainref,\formref_{1})$. Then for any $r \in (0,\delta)$,
		by \cite[Theorem 4.2.6 and (1.5.9)]{FOT} (see also \cite[Theorem 2.1.12-(i)]{CF}) we have
		\begin{equation}\label{eq:0-order-resolvent-properties}
			G_{\unifdom}f_{r} \in \domain^{0}(\unifdom)_{e}, \qquad
			f_{r}v \in L^{1}(\unifdom,\refmeas|_{\unifdom}) \quad \textrm{and} \quad
			\formref(G_{\unifdom}f_{r},v)=\int_{\unifdom}f_{r}v\,d\refmeas
		\end{equation}
		for any $v \in \domain^{0}(\unifdom)_{e}$, which in combination with \eqref{eq:strong-locality-strengthened},
		\eqref{e:extdirpart}, \eqref{eq:hit-dist-harm-test} and \eqref{eq:hit-dist-harm-var} yields
		\begin{align}
			&\formref(\phi G_{\unifdom}f_{r},u) = \formref(G_{\unifdom}f_{r},u) \quad \textrm{(by \eqref{eq:strong-locality-strengthened})} \nonumber \\
			&= \formref(G_{\unifdom}f_{r}, u - H^{\on{ref}}_{\partial \unifdom} \widetilde{u}) \quad \textrm{(by $G_{\unifdom}f_{r} \in \domain^{0}(\unifdom)_{e}$, \eqref{e:extdirpart} and \eqref{eq:hit-dist-harm-test})} \nonumber \\
			&= \int_{\unifdom} f_{r} (u - H^{\on{ref}}_{\partial \unifdom} \widetilde{u})\,d\refmeas \quad \textrm{(by \eqref{eq:0-order-resolvent-properties}, since $u - H^{\on{ref}}_{\partial \unifdom} \widetilde{u}\in \domain^{0}(\unifdom)_{e}$ by \eqref{eq:hit-dist-harm-var} and \eqref{e:extdirpart})} \nonumber \\
			&=-\int_{\unifdom} f_{r} H^{\on{ref}}_{\partial \unifdom} \widetilde{u}\,d\refmeas \quad \textrm{(by $\supp_{\refmeas|_{\overline{\unifdom}}}[u] \subset \supp_{\overline{\unifdom}}[\phi] \subset \overline{\unifdom} \setminus f_{r}^{-1}((0,\infty))$).}
			\label{e:gl1}
		\end{align}
		Now since $\abs{\widetilde{u}}\leq\norm{u}_{L^{\infty}(\overline{\unifdom},\refmeas|_{\overline{\unifdom}})}$
		$\formref$-q.e.\ on $\overline{\unifdom}$ by \cite[Lemma 2.1.4]{FOT}, we have
		$H^{\on{ref}}_{\partial \unifdom} \widetilde{u}(y)=\int_{\partial \unifdom}\widetilde{u}\,d\hmeas{\unifdom}{y}$
		for any $y\in \unifdom$ by Lemma \ref{l:harmonicm}-\eqref{it:hmeas-smooth-support},\eqref{it:hmeas-quasi-support},
		it is an $\mathbb{R}$-valued continuous function of $y\in \unifdom$ by Lemma \ref{l:harmonicm}-\eqref{it:hmeas-continuous},
		and therefore by letting $r\downarrow 0$ in \eqref{e:gl1} we obtain \eqref{eq:laplacian-green}.
	\end{proof}
	
	The Martin kernel can be viewed as the Radon--Nikodym derivative of the $\form$-harmonic measures at different starting points.
	A similar statement on non-tangentially accessible (NTA) domains in the Euclidean space was observed in \cite[Theorem 3.1]{KT}
	which is an easy consequence of the results in \cite{JK}. Jerison and Kenig \emph{defined}
	the Martin kernel as such a Radon--Nikodym derivative in \cite[Definition 1.3]{JK}. 
	For NTA domains in the Euclidean space the equivalence of our definition with
	\cite[Definition 1.3]{JK} follows from the uniqueness theorem in \cite[Theorem 5.5]{JK}.
	Our next result is a generalization of \cite[Theorem 3.1]{KT}.
	
	\begin{prop} \label{p:harmonic-mk}
		Let an MMD space $(\ambient,d,\refmeas,\form,\domain)$ and a uniform domain $\unifdom$ in $(\ambient,d)$
		satisfy Assumption \ref{a:hkecdcbhp}. Then for all $x,x_0 \in \unifdom$,
		\begin{equation} \label{e:harmonic-mk}
			\frac{d\hmeas{\unifdom}{x}}{d \hmeas{\unifdom}{x_0}}(\cdot) = \martinker^{\unifdom}_{x_0} (x,\cdot).
		\end{equation}
	\end{prop}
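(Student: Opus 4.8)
The plan is to combine a Lebesgue differentiation argument on $\partial\unifdom$ with a compactness‑plus‑uniqueness argument identifying the limiting density as the Martin kernel. By Lemma \ref{l:harmonicm}(c), $\hmeas{\unifdom}{x}\ll\hmeas{\unifdom}{x_0}$, so $\phi_x:=d\hmeas{\unifdom}{x}/d\hmeas{\unifdom}{x_0}\in L^1(\partial\unifdom,\hmeas{\unifdom}{x_0})$ exists. By the lower bound in Theorem \ref{t:hmeas} the measure $\hmeas{\unifdom}{x_0}$ has full support on $\partial\unifdom$, and by Corollary \ref{c:asdouble} it is asymptotically doubling there; hence Lemma \ref{l:differentiation}, applied on $(\partial\unifdom,d,\hmeas{\unifdom}{x_0})$ with $f=\phi_x$ and $\psi_r=\one_{B(\xi,r)\cap\partial\unifdom}$, gives for $\hmeas{\unifdom}{x_0}$-a.e.\ $\xi\in\partial\unifdom$
\[
\phi_x(\xi)=\lim_{r\downarrow0}\frac{\hmeas{\unifdom}{x}(B(\xi,r)\cap\partial\unifdom)}{\hmeas{\unifdom}{x_0}(B(\xi,r)\cap\partial\unifdom)}.
\]
Thus it suffices to show that for every $\xi\in\partial\unifdom$ and every $x\in\unifdom$ this right‑hand limit equals $\martinker^{\unifdom}_{x_0}(x,\xi)$.

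Fixing $\xi$, I would study the functions $u_r(y):=\hmeas{\unifdom}{y}(B(\xi,r)\cap\partial\unifdom)$ and their normalizations $v_r:=u_r/u_r(x_0)$ (with $u_r(x_0)>0$ by the lower bound in Theorem \ref{t:hmeas}). By Lemma \ref{l:harmonicm}(b) each $v_r$ is continuous, nonnegative and harmonic in $\unifdom$ with $v_r(x_0)=1$; moreover, for $r$ small, $v_r$ satisfies the Dirichlet boundary condition on $\partial\unifdom\setminus\{\xi\}$, since on $\unifdom\setminus\overline{B(\xi,2r)}$ the function $u_r$ is dominated by a constant multiple of the equilibrium potential of $\overline{B(\xi,r)}$ relative to $\unifdom$, which lies in $\domain^0_{\on{loc}}(\unifdom,V)$ for every open $V$ with $\overline V\cap\overline{B(\xi,2r)}=\emptyset$ (cf.\ Lemma \ref{l:eqmeas}). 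By elliptic Harnack chaining from $x_0$ (Remark \ref{r:chain}, Lemma \ref{l:hchain}) the family $\{v_r\}$ is uniformly bounded on compact subsets of $\unifdom$, and by the interior Hölder estimate for harmonic functions it is equicontinuous there, hence precompact for locally uniform convergence on $\unifdom$. Any subsequential limit $v$ as $r\downarrow0$ is, by Lemma \ref{l:harm-conv}(a), nonnegative and harmonic on $\unifdom$ with $v(x_0)=1$ (hence $v>0$ by the elliptic Harnack inequality), and passing the Dirichlet condition to the limit via Lemma \ref{l:harm-conv}(b) — using the boundary Harnack principle and Lemma \ref{l:bhpholder} to upgrade the locally uniform convergence to uniform convergence on subsets of $\overline\unifdom$ relatively compact away from $\xi$ — shows $v$ has the Dirichlet boundary condition on $\partial\unifdom\setminus\{\xi\}$.

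By Lemma \ref{l:martinharm} the Martin kernel $\martinker^{\unifdom}_{x_0}(\cdot,\xi)$ is itself a positive harmonic function on $\unifdom$ with Dirichlet boundary condition on $\partial\unifdom\setminus\{\xi\}$ and $\martinker^{\unifdom}_{x_0}(x_0,\xi)=1$. The key input is then that such a function is unique up to a multiplicative constant (equivalently, the Martin boundary point over $\xi$ is unique): this follows from the boundary Harnack principle (Theorem \ref{t:bhp}), Lemma \ref{l:bhpholder} and the Carleson estimate (Proposition \ref{p:carleson}) by the same oscillation argument that proves Lemma \ref{l:uniqueprofile}, with the Martin point at infinity replaced by the finite point $\xi$. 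Comparing values at $x_0$ gives $v\equiv\martinker^{\unifdom}_{x_0}(\cdot,\xi)$; since every subsequential limit is this same function, $v_r\to\martinker^{\unifdom}_{x_0}(\cdot,\xi)$ locally uniformly on $\unifdom$, so evaluating at $x$ yields $\lim_{r\downarrow0}\hmeas{\unifdom}{x}(B(\xi,r)\cap\partial\unifdom)/\hmeas{\unifdom}{x_0}(B(\xi,r)\cap\partial\unifdom)=\martinker^{\unifdom}_{x_0}(x,\xi)$ for every $\xi\in\partial\unifdom$ and $x\in\unifdom$. Together with the first step, $\phi_x=\martinker^{\unifdom}_{x_0}(x,\cdot)$ $\hmeas{\unifdom}{x_0}$-a.e., which is \eqref{e:harmonic-mk}.

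I expect the main obstacle to be the uniqueness statement in the last step: Theorem \ref{t:hmeas} by itself only yields the comparability $\phi_x\asymp\martinker^{\unifdom}_{x_0}(x,\cdot)$ (via the same chain of estimates, since $\hmeas{\unifdom}{x}(B(\xi,r)\cap\partial\unifdom)\asymp\gren{\unifdom}(x,\xi_r)\Cap_{B(\xi,2r)}(B(\xi,r))$ and $\gren{\unifdom}(x,\xi_r)/\gren{\unifdom}(x_0,\xi_r)\to\martinker^{\unifdom}_{x_0}(x,\xi)$), and comparability is not enough, because two positive harmonic functions with bounded ratio and a common value need not coincide; equality genuinely requires exploiting that the Dirichlet condition is imposed off a single boundary point, i.e.\ the one‑point Martin uniqueness coming from the boundary Harnack principle. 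A secondary point to handle carefully is verifying that $u_r$ (hence $v_r$) really satisfies the Dirichlet boundary condition away from $\overline{B(\xi,2r)}$ and that this condition, together with harmonicity, survives the limit $r\downarrow0$.
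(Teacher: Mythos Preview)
Your compactness-plus-uniqueness route is different from the paper's, which sidesteps Martin uniqueness entirely. The paper couples Proposition~\ref{p:laplacian-green} with the two-measure decomposition of Lemma~\ref{l:eqmeas}: for $A=B(\xi,r)\cap\partial\unifdom$, $B=B(\xi,2r)^c\cap\ol{\unifdom}$ one gets $\int_{\partial\unifdom}\wt e_{A,B}\,d\hmeas{\unifdom}{x}=\int_{\ol{\unifdom}\cap\partial B(\xi,2r)}\gren{\unifdom}(x,y)\,d\lambda^0_{A,B}(y)$, so the ratio at $x$ and $x_0$ is a $\gren{\unifdom}(x_0,\cdot)\,d\lambda^0_{A,B}$-average of $\martinker^\unifdom_{x_0}(x,y)$ over $y\in\ol{\unifdom}\cap\partial B(\xi,2r)$. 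The H\"older bound on $\martinker^\unifdom_{x_0}(x,\cdot)$ from Lemma~\ref{l:bhpholder} forces this average to $\martinker^\unifdom_{x_0}(x,\xi)$ as $r\downarrow0$, and Lemma~\ref{l:differentiation} finishes. No limiting harmonic function and no one-point uniqueness appear.

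Your proposal has a genuine gap precisely where you anticipate it: the uniqueness step is \emph{not} ``the same oscillation argument that proves Lemma~\ref{l:uniqueprofile}.'' In that lemma both $h_1,h_2$ vanish on all of $\partial\unifdom$, so the boundary Harnack principle applies at any $\xi\in\partial\unifdom$ with radius $R$ unrestricted, and letting $R\to\infty$ is what drives the oscillation of $h_1/h_2$ to zero. For a finite singular point $\xi_0$ neither $v$ nor $\martinker^\unifdom_{x_0}(\cdot,\xi_0)$ vanishes at $\xi_0$, so the BHP ball centred at any $\eta\neq\xi_0$ is confined to radius $\lesssim d(\eta,\xi_0)$; you can never exhaust $\unifdom$, and the argument yields only $\osc_{B(\eta,r)\cap\unifdom}v/\martinker^\unifdom_{x_0}(\cdot,\xi_0)\lesssim(r/d(\eta,\xi_0))^\gamma$, which gives continuous extension to $\partial\unifdom\setminus\{\xi_0\}$ but not constancy. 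What you actually need is minimality of $\martinker^\unifdom_{x_0}(\cdot,\xi_0)$ in the Martin sense, which under BHP is a known but separate theorem (e.g.\ \cite[Theorem~3]{Aik}) that the present paper neither states nor proves; you would have to import or reprove it. The secondary issue you flag is also real: pointwise domination of $u_r$ by an equilibrium potential does not yield $u_r\in\domain^0_{\loc}(\unifdom,V)$ in the sense of Definition~\ref{d:dbdy}; you would need an approximation of $\one_{B(\xi,r)\cap\partial\unifdom}$ by functions in $\domainref\cap C_c(\ol{\unifdom})$ together with Lemma~\ref{l:harm-conv}(b).
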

	
	\begin{proof}
		Let $\xi \in \partial \unifdom$, $r \in (0,\diam(\unifdom)/4)$, set
		$A:=A_{\xi,r}:=B(\xi,r)\cap \partial \unifdom$,
		$B:=B_{\xi,r}:=B(\xi,2r)^c \cap \overline{\unifdom}$
		and let $e_{A,B}$ denote the equilibrium potential for $A$ with respect to
		$(\overline{\unifdom},\refmeas|_{\overline{\unifdom}},\formref,\domainref)$
		with Dirichlet boundary condition on $B$. By Proposition \ref{p:laplacian-green}
		and Lemma \ref{l:eqmeas} there exist measures $\lambda^1_{A,B}, \lambda^0_{A,B}$
		supported on $\overline{A}$ and $\overline{\unifdom} \cap S(\xi,2r)$ respectively such that
		\begin{align} \label{e:mk1}
			0 < \int_{\partial \unifdom} \wt{e}_{A,B} \, d\hmeas{\unifdom}{x} &= - \formref(\gren{\unifdom}(x,\cdot), e_{A,B}) \nonumber \\
			&= - \biggl( \int_{\overline{A}} \gren{\unifdom}(x,y)\,d\lambda^1_{A,B}(y) - \int_{\overline{\unifdom} \cap S(\xi,2r)} \gren{\unifdom}(x,y)\,d\lambda^0_{A,B}(y) \biggr) \nonumber \\
			&= \int_{\overline{\unifdom} \cap S(\xi,2r)} \gren{\unifdom}(x,y)\,d\lambda^0_{A,B}(y).
		\end{align}
		Taking ratio of \eqref{e:mk1} for $x$ and for $x_0$ in place of $x$, we obtain 
		\begin{align} \label{e:mk2}
			\abs{\frac{\int_{\partial \unifdom} \wt{e}_{A,B} \, d\hmeas{\unifdom}{x}}{\int_{\partial \unifdom} \wt{e}_{A,B} \, d\hmeas{\unifdom}{x_0}}-\martinker_{x_0}(x,\xi)}
			&= \abs{ \frac{\int_{\overline{\unifdom} \cap S(\xi,2r)} \gren{\unifdom}(x,y)\,d\lambda^0_{A,B}(y)}{\int_{\overline{\unifdom} \cap S(\xi,2r)}  \gren{\unifdom}(x_0,y)\,d\lambda^0_{A,B}(y)}-\martinker_{x_0}(x,\xi)} \nonumber \\
			&\le \abs{ \frac{\int_{\overline{\unifdom} \cap S(\xi,2r)} \gren{\unifdom}(x_0,y) (\martinker_{x_0}(x,y)-\martinker_{x_0}(x,\xi))\,d\lambda^0_{A,B}(y)}{\int_{\overline{\unifdom} \cap S(\xi,2r)} \gren{\unifdom}(x_0,y)\,d\lambda^0_{A,B}(y)}} \nonumber \\
			&\le \frac{\int_{\overline{\unifdom} \cap S(\xi,2r)} \gren{\unifdom}(x_0,y) \abs{\martinker_{x_0}(x,y)-\martinker_{x_0}(x,\xi)}\,d\lambda^0_{A,B}(y)}{\int_{\overline{\unifdom} \cap S(\xi,2r)} \gren{\unifdom}(x_0,y)\,d\lambda^0_{A,B}(y)}.
		\end{align}
		By the boundary H\"older regularity of the Martin kernel implied by \ref{eq:BHP} and Lemma \ref{l:bhpholder},
		there exist $C_1, A_1 \in (1,\infty)$ and $\gamma \in (0,\infty)$ such that for all $x_0,x \in \unifdom$,
		all $\xi \in \partial \unifdom$, all $0< r < A_1^{-1}(d(x_0,\xi)\wedge d(x,\xi))$
		and all $y \in \overline{\unifdom}\cap B(\xi,r)$, we have
		\begin{equation} \label{e:mk3}
			\abs{\martinker_{x_0}(x,y)-\martinker_{x_0}(x,\xi)} \le C_1 \martinker_{x_0}(x,\xi) \biggl(\frac{r}{d(x_0,\xi)\wedge d(x,\xi)} \biggr)^\gamma.
		\end{equation}
		On the other hand, since $\hmeas{\unifdom}{x} \ll \hmeas{\unifdom}{x_{0}}$ by Lemma \ref{l:harmonicm}-\eqref{it:hmeas-AC}
		and $\hmeas{\unifdom}{x_{0}}$ satisfies \eqref{e:asympdoub} by Corollary \ref{c:asdouble}, it follows
		from Lemma \ref{l:differentiation} that for $\hmeas{\unifdom}{x_{0}}$-a.e.\ $\xi \in \partial \unifdom$,
		\begin{equation}\label{eq:differentiation-hmeas-apply}
			\lim_{r\downarrow 0}\frac{\int_{\partial \unifdom} \widetilde{e}_{A_{\xi,r},B_{\xi,r}} \, d\hmeas{\unifdom}{x}}{\int_{\partial \unifdom} \widetilde{e}_{A_{\xi,r},B_{\xi,r}} \, d\hmeas{\unifdom}{x_0}}
			=\frac{d\hmeas{\unifdom}{x}}{d\hmeas{\unifdom}{x_{0}}}(\xi).
		\end{equation}
		By \eqref{e:mk3} and \eqref{eq:differentiation-hmeas-apply}, for $\hmeas{\unifdom}{x_{0}}$-a.e.\ $\xi \in \partial \unifdom$
		we can let $r \downarrow 0$ in \eqref{e:mk2} to get
		$(d\hmeas{\unifdom}{x}/d\hmeas{\unifdom}{x_{0}})(\xi)=\martinker_{x_0}(x,\xi)$,
		completing the proof.
	\end{proof}
	
	% In particular, the harmonic measure is doubling at \emph{many scales}.
	% The lower bound is easier prove but relies on capacity density condition. The approach is obtain lower bound is to first hit the ball $B_U(\xi_{c_1r}, c_2r)$ whose hitting probability follows from bounds on capacity, then there is a good chance of exiting from $B_U(\xi,r)$ using the $\Delta$-regularity estimate (this uses Capacity density condition). The upper bound seems to be more general (does not rely on CDC) but  its proof is involved and is based on the box argument \cite[Lemma 3.6]{AH}.
	
	\subsection{The elliptic measure at infinity on unbounded domains} \label{ssec:elliptic-meas}
	
	On unbounded uniform domain the harmonic measure need not be doubling.
	For instance if $\partial U$ is unbounded and connected, then due to \cite[Exercise 13.1]{Hei} every doubling measure on $\partial U$ must necessarily be an infinite measure,
	and in particular there is no doubling \emph{probability} measure on $\partial U$.
	Nevertheless, as we will see there is a canonical \emph{doubling} measure on $\partial U$ obtained as a limit of scaled harmonic measures $\hmeas{\unifdom}{x}$ as $x \to \infty$.
	Propositions \ref{p:hprofile} and \ref{p:laplacian-green} suggest considering the limit of scaled harmonic measures $\gren{U}(x_0,x)^{-1} \hmeas{\unifdom}{x}\big|_{\overline{\unifdom}}$ as $x \to \infty$.
	Following \cite[Lemma 3.5]{BTZ}, we call this limit, denoted as $\nu^\unifdom_{x_0}$ below, the \textbf{$\form$-elliptic measure at infinity} of $\unifdom$ with base point $x_{0}$.
	Alternatively, the distributional Laplacian of the harmonic profile defines the elliptic measure at infinity on the boundary $\partial \unifdom$  as shown below. 
	
	\begin{prop}[Elliptic measure at infinity] \label{p:emeas}
		Let an MMD space $(\ambient,d,\refmeas,\form,\domain)$ and a uniform domain $\unifdom$ in $(\ambient,d)$
		satisfy Assumption \ref{a:hkecdcbhp}, and assume that $\unifdom$ is unbounded.
		Let $x_0 \in \unifdom$, and let $\{ x_n \}_{n \in \mathbb{N}} \subset U \setminus \{x_0\}$
		be any sequence satisfying $\lim_{n \to \infty} d(x_0,x_n) =\infty$.
		Let $\hprof{x_0}(\cdot)=\lim_{n \to \infty} K_{x_0}(\cdot,x_n)$ denote the $\form$-harmonic profile of $\unifdom$ with $\hprof{x_{0}}(x_{0})=1$.
		Then the sequence of the measures $\nu_{n} :=  \gren{\unifdom}(x_{0},x_{n})^{-1}\hmeas{\unifdom}{x_n}\big|_{\overline{\unifdom}}$
		converges in total variation on any compact subset of $\overline{\unifdom}$ to an $\formref$-smooth
		Radon measure $\nu^{\unifdom}_{x_0}$ on $\overline{\unifdom}$ with $\nu^\unifdom_{x_0}(\unifdom)=0$ and
		\begin{equation} \label{e:em0}
			\formref(\hprof{x_0}, u) = - \int_{\partial \unifdom} \wt{u} \, d\nu^\unifdom_{x_0}
		\end{equation}
		for all $u \in \domainref\cap L^{\infty}(\overline{\unifdom},\refmeas|_{\overline{\unifdom}})$
		such that $\supp_{\refmeas|_{\overline{\unifdom}}}[u]$ is compact. In particular,
		the measure $\nu^\unifdom_{x_0}$ does not depend on the choice of the sequence $(x_n)_{n \ge 1}$,
		and $\nu^{\unifdom}_{y} = (\hprof{x_0}(y))^{-1} \nu^{\unifdom}_{x_0}$ for any $y \in \unifdom$.
		Moreover, the following hold:
		\begin{enumerate}[\rm(a)]\setlength{\itemsep}{0pt}\vspace{-5pt}
			\item\label{it:emeas-AC} The measures $\nu^{\unifdom}_{x_0}$ and $\hmeas{\unifdom}{x_0}\big|_{\overline{\unifdom}}$ are mutually absolutely continuous.
			Furthermore, the Radon--Nikodym derivative $\frac{d\nu^\unifdom_{x_0}}{d \hmeas{\unifdom}{x_0}}\colon \partial \unifdom \to (0,\infty)$
			can be chosen to be a strictly positive continuous function satisfying the following estimates: there exist $C,A \in (1,\infty)$ such that
			for all $\xi \in \partial \unifdom$, all $R \in (0,A^{-1}d(x_{0},\xi))$ and all $\eta \in \partial \unifdom \cap B(\xi,R)$,
			\begin{equation}
				\label{e:embnd}
				C^{-1} \frac{\hprof{x_0}(\xi_{R})}{\gren{\unifdom}(x_0,\xi_{R})} \le	\frac{d\nu^\unifdom_{x_0}}{d\hmeas{\unifdom}{x_0}}(\eta) \le C  \frac{\hprof{x_0}(\xi_{R})}{\gren{\unifdom}(x_0,\xi_{R})}.
			\end{equation}
			\item\label{it:emeas-quasi-support} $\partial U$ is an $\formref$-quasi-support of $\nu^\unifdom_{x_0}$.
			\item\label{it:emeas-estimate} There exists $C \in (1,\infty)$ such that for all $\xi \in \partial \unifdom$ and all $R \in (0,\infty)$,
			\begin{equation} \label{e:em1}
				C^{-1} \hprof{x_0}(\xi_{R}) \Capa_{B(\xi,2R)}(B(\xi,R)) \le \nu^\unifdom_{x_0}(B(\xi,R) \cap \partial \unifdom ) \le C \hprof{x_0}(\xi_{R}) \Capa_{B(\xi,2R)}(B(\xi,R)).
			\end{equation}
			In particular, $\supp_{\overline{\unifdom}}[\nu^\unifdom_{x_0}] = \partial \unifdom$
			and $(\partial \unifdom, d, \nu^\unifdom_{x_0})$ satisfies \hyperlink{VD}{\textup{VD}}.
		\end{enumerate}
	\end{prop}
	
	\begin{proof} 
		Let   $u \in \domainref\cap L^{\infty}(\overline{\unifdom},\refmeas|_{\overline{\unifdom}})$
		be such that $\supp_{\refmeas|_{\overline{\unifdom}}}[u]$ is compact, and let
		$\{ x_n \}_{n \in \mathbb{N}} \subset \unifdom \setminus \{x_0\}$ be any sequence
		satisfying $\lim_{n \to \infty} d(x_0,x_n) =\infty$.
		Then there exist $\phi \in \domain \cap \contfunc_{\mathrm{c}}(\ambient)$ and $N \in \mathbb{N}$
		such that $x_n \notin \supp_{\ambient}[\phi]$ for all $n \ge N$, $\supp_{\refmeas}[u] \subset \supp_{\ambient}[\phi]$
		and $\phi \equiv 1$ on a neighborhood of $\supp_{\refmeas}[u]$.
		By Proposition \ref{p:laplacian-green} and the strong locality of $(\formref,\domainref)$, we have 
		\begin{equation} \label{e:em2}
			\int_{\partial \unifdom} \widetilde{u} \, d\nu_n=-\formref\biggl( \frac{\gren{\unifdom}(\cdot,x_n)}{\gren{\unifdom}(x_0,x_n)}, u \biggr) = -\formref\biggl( \phi(\cdot)\frac{\gren{\unifdom}(\cdot,x_n)}{\gren{\unifdom}(x_0,x_n)}, u \biggr),
		\end{equation}
		where we adopt the convention of extending $\gren{U}(\cdot,x_n)$ by $0$ on $U^c$.
		Similarly extending $\hprof{x_0}$ as $0$ on $U^c$, we see from Proposition \ref{p:hprofile}
		and Remark \ref{rmk:harm-conv}-\eqref{it:rmk:harm-conv-dbdry} that
		\begin{equation}\label{e:em3}
			\lim_{N\leq n \to \infty} \formref_1 \biggl( \phi(\cdot)\frac{\gren{\unifdom}(\cdot,x_n)}{\gren{\unifdom}(x_0,x_n)}-\phi \hprof{x_0}, \phi(\cdot)\frac{\gren{\unifdom}(\cdot,x_n)}{\gren{\unifdom}(x_0,x_n)}- \phi \hprof{x_0} \biggr)=0.
		\end{equation}
		Combining \eqref{e:em2}, \eqref{e:em3} and by the strong locality of $(\formref,\domainref)$, we obtain
		\begin{equation} \label{e:em2a}
			\lim_{n \to \infty}\int_{\partial \unifdom} \widetilde{u} \, d\nu_n
			= -\formref( \phi(\cdot) \hprof{x_0}, u ) = -\formref( \hprof{x_0}, u )
		\end{equation}
		for all $u \in \domainref\cap L^{\infty}(\overline{\unifdom},\refmeas|_{\overline{\unifdom}})$ such that
		$\supp_{\refmeas|_{\overline{\unifdom}}}[u]$ is compact. By Proposition \ref{p:harmonic-mk} and \eqref{e:naimb},
		\begin{equation} \label{e:em4}
			\frac{d\nu_n}{d\hmeas{\unifdom}{x_0}}(\cdot)
			= \frac{1}{\gren{\unifdom}(x_0,x_n)} \frac{d\hmeas{\unifdom}{x_n}}{d\hmeas{\unifdom}{x_0}}(\cdot)
			\overset{\eqref{e:harmonic-mk}}{=} \frac{\martinker^\unifdom_{x_0}(x_n,\cdot)}{\gren{\unifdom}(x_0,x_n)}
			\overset{\eqref{e:naimb}}{=} \naimker^\unifdom_{x_0}(x_n,\cdot).
		\end{equation}
		By \eqref{e:naimbnd} and the joint continuity of $\naimker^{\unifdom}_{x_0}$,
		the sequence $\naimker^{\unifdom}_{x_0}(x_n,\cdot)$ is uniformly bounded on every compact subset of $\partial \unifdom$.
		Similarly by \eqref{e:naimequi} and the joint continuity of $\naimker^{\unifdom}_{x_0}$,
		the sequence $\naimker^{\unifdom}_{x_0}(x_n,\cdot)$ is equicontinuous on every compact subset of $\partial \unifdom$.
		Therefore by the Arzel\`{a}--Ascoli theorem, we can choose a subsequence $\{x_{n_{k}}\}_{k\in\mathbb{N}}$
		so that $\naimker^\unifdom_{x_0}(x_{n_{k}},\cdot)$ converges uniformly on any compact subset of $\partial \unifdom$
		to a continuous function $\naimker^{\unifdom}_{x_0}(\infty,\cdot)\colon \partial \unifdom \to [0,\infty)$.
		Recalling that $\hmeas{\unifdom}{x_0}\big|_{\overline{\unifdom}}$ is $\formref$-smooth and
		$\hmeas{\unifdom}{x_0}(\unifdom)=0$ by Lemma \ref{l:harmonicm}-\eqref{it:hmeas-quasi-support},\eqref{it:hmeas-smooth-support},
		we can thus define an $\formref$-smooth Radon measure $\nu^{\unifdom}_{x_0}$ on $\overline{\unifdom}$ by
		\begin{equation}\label{e:em5}
			\nu^{\unifdom}_{x_0}(d\xi):=\naimker^{\unifdom}_{x_0}(\infty,\xi)\,\hmeas{\unifdom}{x_0}\big|_{\overline{\unifdom}}(d\xi),
		\end{equation}
		so that $\nu^{\unifdom}_{x_0}(\unifdom)=0$, the measures
		$\nu_{n_{k}}=\gren{\unifdom}(x_{0},x_{n_{k}})^{-1}\hmeas{\unifdom}{x_{n_{k}}}\big|_{\overline{\unifdom}}$ converge to
		$\nu^{\unifdom}_{x_0}$ in total variation on any compact subset of $\overline{\unifdom}$,
		and from \eqref{e:em2a} we obtain \eqref{e:em0} for all $u \in \domainref\cap L^{\infty}(\overline{\unifdom},\refmeas|_{\overline{\unifdom}})$
		such that $\supp_{\refmeas|_{\overline{\unifdom}}}[u]$ is compact. Combining \eqref{e:em0}
		with the uniqueness of $\hprof{x_{0}}$ in Proposition \ref{p:hprofile}, \cite[Exercise 1.4.1]{FOT}
		applied to the regular Dirichlet space $(\overline{\unifdom},\refmeas|_{\overline{\unifdom}},\formref,\domainref)$,
		and the outer and inner regularity of $\nu^{\unifdom}_{x_0}$ from \cite[Theorem 2.18]{Rud},
		we conclude that $\nu^{\unifdom}_{x_0}$ is independent of particular choices of $\{x_{n}\}_{n\in\mathbb{N}}$
		and its subsequence $\{x_{n_{k}}\}_{k\in\mathbb{N}}$ in the above argument, and so is
		$\naimker^{\unifdom}_{x_0}(\infty,\cdot)$ by \eqref{e:em5}, its continuity on $\partial \unifdom$ and
		$\supp_{\overline{\unifdom}}[\hmeas{\unifdom}{x_{0}}\big|_{\overline{\unifdom}}]=\partial \unifdom$ from Corollary \ref{c:asdouble}.
		Since the sequence $\{x_{n}\}_{n\in\mathbb{N}}$ in these results can be replaced with
		\emph{any} subsequence of $\{x_{n}\}_{n\in\mathbb{N}}$, it follows that
		$\{\naimker^{\unifdom}_{x_0}(x_{n},\cdot)\}_{n\in\mathbb{N}}$ converges to
		$\naimker^{\unifdom}_{x_0}(\infty,\cdot)$ uniformly on any compact subset of $\partial \unifdom$
		and hence that $\{\nu_{n}\}_{n \in \mathbb{N}}$ converges to $\nu^{\unifdom}_{x_0}$
		in total variation on any compact subset of $\overline{\unifdom}$ (without passing to a subsequence).
		
		Moreover, by Lemma \ref{l:uniqueprofile} and \eqref{e:em0}, we have 
		\begin{equation} \label{e:em6}
			\hprof{y} = (\hprof{x_0}(y))^{-1} \hprof{x_0}
			\quad \textrm{and} \quad
			\nu^{\unifdom}_{y} = (\hprof{x_0}(y))^{-1} \nu^{\unifdom}_{x_0}
			\quad \textrm{for all $y \in \unifdom$.}
		\end{equation}
		\begin{enumerate}[\rm(a)]\setlength{\itemsep}{0pt}
			\item Letting $A \in (1,\infty)$ be as in Lemma \ref{l:naimholder},
			by \eqref{e:em4}, \eqref{e:naimbnd} and the joint continuity of $\naimker^\unifdom_{x_0}(x_{n},\cdot)$,
			for all $\xi \in \partial \unifdom$, all $R \in (0,(2A)^{-1}d(\xi,x_{0}))$ and all $\eta \in \partial \unifdom \cap B(\xi,R)$ we have
			\begin{equation*}
				\frac{d\nu_n}{d\hmeas{\unifdom}{x_0}}(\eta)
				\overset{\eqref{e:em4}}{=} \naimker^\unifdom_{x_0}(x_n,\eta)
				\overset{\eqref{e:naimbnd}}{\asymp}\naimker^\unifdom_{x_0}(x_n,\xi_R)
				= \frac{\gren{U}(\xi_{R},x_n)}{\gren{U}(x_0,x_n)} \frac{1}{\gren{U}(x_0,\xi_{R})}
			\end{equation*}
			for all $n$ sufficiently large.
			Letting $n \to \infty$ and using Proposition \ref{p:hprofile}, we obtain the estimate \eqref{e:embnd}. 
			Since $\naimker^\unifdom_{x_0}(\infty,\cdot)$ is strictly positive on $\partial \unifdom$,
			we conclude from \eqref{e:em5} and $\hmeas{\unifdom}{x_0}(\unifdom)=0$ that $\nu^\unifdom_{x_0}$
			and $\hmeas{\unifdom}{x_0}\big|_{\overline{\unifdom}}$ are mutually absolutely continuous.
			\item By the mutual absolute continuity of $\nu^{\unifdom}_{x_0}$ and $\hmeas{\unifdom}{x_0}\big|_{\overline{\unifdom}}$, they have
			the same $\formref$-quasi-supports. Hence the desired conclusion follows from Lemma \ref{l:harmonicm}-\eqref{it:hmeas-quasi-support}.
			\item For $\xi \in \partial \unifdom$ and $R \in (0,\infty)$, we choose $y \in \unifdom \setminus B(\xi,2AR)$ and estimate
			\begin{equation} \label{e:em7}
				\nu^{\unifdom}_{y}(B(\xi,R)) \overset{\eqref{e:embnd}}{\asymp} \hmeas{\unifdom}{y}(B(\xi,R)) \frac{\hprof{y}(\xi_R)}{\gren{\unifdom}(y,\xi_R)}
				\overset{\eqref{e:harmonicm}}{\asymp} \hprof{y}(\xi_{R}) \Capa_{B(\xi,2R)}(B(\xi,R)).
			\end{equation}
			The estimate \eqref{e:em1} follows from \eqref{e:em6} and \eqref{e:em7}.
			
			The volume doubling property of $\nu^\unifdom_{x_0}$ follows from \eqref{e:em1} along with Proposition \ref{p:hprofile}, Lemma \ref{l:hchain}, and \cite[Lemma 5.23]{BCM}.
			\qedhere\end{enumerate}
	\end{proof} 
	
	\begin{remark} \label{r:emeas}
		The above proof of Proposition \ref{p:emeas} implies that for any $\xi \in \partial \unifdom$ the limit
		\begin{equation*}
			\lim_{\unifdom \times \unifdom \ni (z,x) \to (\infty,\xi)} \naimker_{x_0}(z,x)
		\end{equation*}
		exists, and therefore this limit was suggestively denoted as $\naimker_{x_0}(\infty,\xi)$ in the proof.
	\end{remark}
	
	It is natural to ask whether unbounded uniform domains satisfying \ref{eq:CDC} have unbounded boundaries.
	This is not true in general since the positive half-line $U=(0,\infty)$ is a uniform domain in $\bR$ satisfying \ref{eq:CDC}
	for the Brownian motion on $\bR$. On the other hand, such examples do not occur for the Brownian motion on $\bR^{\dimeuc}$ with $\dimeuc \ge 2$.
	More generally, unbounded uniform domains satisfying \ref{eq:CDC} have unbounded boundaries under the additional assumption of
	the capacity non-decreasing condition (recall Definition \ref{d:cnd}), as follows.
	
	\begin{lem}  \label{l:unbdd}
		Let an MMD space $(\ambient,d,\refmeas,\form,\domain)$ and a uniform domain $\unifdom$ in $(\ambient,d)$
		satisfy Assumption \ref{a:hkecdcbhp}, and assume that $\unifdom$ is unbounded and that
		$(\ambient,d,\refmeas,\form,\domain)$ satisfies the capacity non-decreasing condition.
		Then $\partial \unifdom$ is unbounded, i.e., $\diam(\partial \unifdom)=\infty$.
	\end{lem}
	
	\begin{proof}
		Let $x_0 \in \unifdom$ be fixed and let $\nu^\unifdom_{x_0}$ denote the $\form$-elliptic measure
		at infinity of $\unifdom$ with base point $x_{0}$ as given in Proposition \ref{p:emeas}.
		By Lemma \ref{l:Delta}-\eqref{it:Delta-hfunc}, \eqref{e:cnd1} and
		Proposition \ref{p:emeas}-\eqref{it:emeas-estimate}, there exists $A \in (1,\infty)$ such that
		\begin{equation*}
			\nu^\unifdom_{x_0}(B(\xi,R)) < \frac{1}{2} 	\nu^\unifdom_{x_0}(B(\xi,AR))
			\quad \textrm{for all $\xi \in \partial \unifdom$ and all $R \in (0,\infty)$.}
		\end{equation*}
		This implies that $\partial \unifdom \cap ( B(\xi,AR) \setminus B(\xi,R) ) \not= \emptyset$
		for all $\xi \in \partial \unifdom$ and all $R \in (0,\infty)$,
		which in turn implies that $\partial \unifdom$ is unbounded.
	\end{proof}
	
	\section{The boundary trace process} \label{sec:boundary-trace}
	
	Throughout this section, we always assume that a scale function $\scdiff$,
	a MMD space $(\ambient,d,\refmeas,\form,\domain)$, a diffusion
	$\diff = (\Omega, \events, \{\diff_{t}\}_{t\in[0,\infty]},\{\lawdiff_{x}\}_{x \in \oneptcpt{\ambient}})$
	on $\ambient$, a uniform domain $\unifdom$ in $(\ambient,d)$, and a diffusion
	$\diffref=\bigl(\Omega^{\on{ref}},\events^{\on{ref}},\{\diffref_{t}\}_{t\in[0,\infty]},\{\lawref_{x}\}_{x\in\oneptcpt{\overline{\unifdom}}}\bigr)$
	on $\overline{\unifdom}$ satisfy Assumption \ref{a:hkecdcbhp}.
	
	\subsection{The boundary measure and the corresponding PCAF} \label{ssec:bdry-meas-pcaf}
	
	To define the boundary trace process, we choose a reference measure on the boundary $\partial \unifdom$ as given in the following definition.
	
	\begin{definition} \label{d:bdrymeas}
		If $\unifdom$ is bounded, we choose $x_{0}=\wh{\xi}_{\diam(\unifdom)/5}$ using
		Lemma \ref{l:xir}, where $\wh{\xi} \in \partial\unifdom$ is chosen arbitrarily.
		If $\unifdom$ is unbounded, we choose an arbitrary point $x_{0} \in \unifdom$.
		We define a Radon measure $\bdrymeas$ on $\overline{\unifdom}$ with
		$\supp_{\overline{\unifdom}}[\mu] \subset \partial \unifdom$ by
		\begin{equation} \label{e:defmu}
			\bdrymeas := \begin{cases}
				\hmeas{\unifdom}{x_{0}}\big|_{\overline{\unifdom}} & \textrm{if $\unifdom$ is bounded,}\\
				\nu^{\unifdom}_{x_{0}} & \textrm{if $\unifdom$ is unbounded,}
			\end{cases}
		\end{equation}
		where $\hmeas{\unifdom}{x_{0}},\nu^{\unifdom}_{x_{0}}$ denote the $\form$-harmonic measure
		(Definition \ref{d:hmeas}, Lemma \ref{l:harmonicm}) and
		the $\formref$-elliptic measure at infinity (Proposition \ref{p:emeas}),
		respectively, of $\unifdom$ with base point $x_{0}$.
		
		In order to describe properties of $\bdrymeas$, we define
		$\wt{\scjump} \colon \partial \unifdom \times (0,\diam(\unifdom)/6)\to (0,\infty)$ by 
		\begin{equation} \label{e:defscale}
			\wt\scjump(\xi,r)=\begin{cases}
				\gren{\unifdom}(x_0,\xi_r) & \textrm{if $\unifdom$ is bounded,}\\
				\hprof{x_0}(\xi_r) & \textrm{if $\unifdom$ is unbounded},
			\end{cases}
		\end{equation}
		where $\xi_r$ is chosen as in Lemma \ref{l:xir}.
	\end{definition}
	
	Note that by \cite[Theorem 1.2]{GHL15} and \cite[Lemma 5.22]{BCM}, there exist $C,A \in (1,\infty)$ such that
	\begin{equation} \label{eq:bdry-cap-estimate}
		C^{-1} \frac{\refmeas(B(x,R))}{\scdiff(R)} \le \Capa_{B(x,2R)}(B(x,R)) \le C \frac{\refmeas(B(x,R))}{\scdiff(R)}
	\end{equation} 
	for all $x \in \ambient$ and all $R \in (0,\diam(\ambient)/A)$.
	Let us recall that the function $\wt\scjump$ is useful to estimate the measure $\bdrymeas$.
	Indeed, by Theorem \ref{t:hmeas}, Proposition \ref{p:emeas}-\eqref{it:emeas-estimate} and \eqref{eq:bdry-cap-estimate}, there exist $C,A \in (1,\infty)$ such that
	\begin{equation} \label{eq:bdry-scale-estimate}
		C^{-1} \frac{\refmeas(B(\xi,R))}{\scdiff(R)} \le \frac{\bdrymeas(B(\xi,R))}{\wt\scjump(\xi,R)} \le C \frac{\refmeas(B(\xi,R))}{\scdiff(R)}
		\quad \textrm{for all $\xi \in \partial \unifdom$ and $R \in (0,\diam(\unifdom)/A)$.}
	\end{equation}
	We record some basic estimates on  $\wt{\scjump}$ and show that $\wt{\scjump}$
	is comparable to a function $\scjump$ that has better continuity properties.
	
	\begin{lem} \label{l:scale}
		There exist $C_1,A_1 \in (1,\infty)$ and a regular scale function
		$\scjump \colon \partial \unifdom \times [0,\infty) \to [0,\infty)$ on $(\partial \unifdom,d)$
		with threshold $\diam(\unifdom)$ in the sense of Definition \ref{d:regscale} such that
		\begin{equation} \label{e:reg2}
			C_1^{-1} \wt{\scjump}(\xi,r) \le \scjump(\xi,r) \le C_1 \wt{\scjump}(\xi,r)
			\quad \textrm{for all $\xi \in \partial \unifdom$ and all $r \in (0,\diam(\unifdom)/A_1)$.}
		\end{equation}
	\end{lem}
	
	\begin{proof}
		First, we show that there exist $C,\beta_1,\beta_2 \in (0,\infty)$ and $A \in (4,\infty)$ such that
		for all $\eta,\xi \in \partial\unifdom$ and all $0<r \le R$ with $R \vee d(\xi,\eta)<\diam(\unifdom)/A$,
		\begin{equation} \label{e:Phireg}
			C^{-1} \Bigl( \frac{R}{d(\xi,\eta) \vee R} \Bigr)^{\beta_2} \Bigl( \frac{d(\xi,\eta) \vee R}{r} \Bigr)^{\beta_1}
			\le \frac{ \wt{\scjump}(\xi,R)}{\wt{\scjump}(\eta,r)} 
			\le C \Bigl( \frac{R}{d(\xi,\eta) \vee R} \Bigr)^{\beta_1} \Bigl( \frac{d(\xi,\eta) \vee R}{r} \Bigr)^{\beta_2}.
		\end{equation}
		Indeed, by Lemmas \ref{l:hchain} and \ref{l:Delta} and by the harmonicity and Dirichlet boundary conditions of
		$\gren{\unifdom}(x_0,\cdot)$ and $\hprof{x_0}$ in Propositions \ref{p:goodgreen}-\eqref{it:goodgreen-harm}, \ref{p:hprofile} and Lemma \ref{l:dbdy},
		there exist $C_1, C_2, A \in(1,\infty)$ and $\beta_1,\beta_2 \in(0,\infty)$ such that 
		\begin{equation} \label{e:rg1}
			C_1^{-1} \Bigl(\frac{R}{r}\Bigr)^{\beta_1}\le \frac{\wt{\scjump}(\xi,R)}{\wt{\scjump}(\xi,r)} \le C_1 \Bigl(\frac{R}{r}\Bigr)^{\beta_2} \quad \textrm{for all $\xi\in\partial\unifdom$ and $0<r \leq R<\diam(\unifdom)/A$,}
		\end{equation}
		and
		\begin{equation} \label{e:rg2}
			C_2^{-1} \le \frac{\wt\scjump(\xi,R\vee d(\xi,\eta))}{\wt\scjump(\eta,R\vee d(\xi,\eta))} \le C_2,  
		\end{equation}
		for all $\eta,\xi \in \partial\unifdom$ and $0<r \le R$ with $R \vee d(\xi,\eta)<\diam(\unifdom)/A$.
		The conclusion \eqref{e:Phireg} follows from \eqref{e:rg1} and \eqref{e:rg2} by using the expression
		\begin{equation*}
			\frac{\wt\scjump(\xi,R)}{\wt\scjump(\eta,r)}=\frac{\wt\scjump(\xi,R)}{\wt\scjump(\xi,R\vee d(\xi,\eta))}	\cdot\frac{\wt\scjump(\xi,R\vee d(\xi,\eta))}{\wt\scjump(\eta,R\vee d(\xi,\eta))}\cdot\frac{\wt\scjump(\eta,R\vee d(\xi,\eta))}{\wt\scjump(\eta,r)}.
		\end{equation*}
		By \eqref{e:rg1},  
		there exists $A_2 \in (1,\infty)$ such that for all $\xi \in \partial \unifdom$ and all $R \in (0,\diam(\unifdom)/A)$, 
		\begin{equation} \label{e:rg3}
			\wt{\scjump}(\xi,A_2^{-1}R) \le \frac{1}{2}\wt{\scjump}(\xi,R).
		\end{equation}
		
		Using \eqref{e:rg3}, we define $\scjump \colon \partial \unifdom \times [0,\infty) \to [0,\infty)$
		as follows: if $\unifdom$ is unbounded, we define 
		\begin{equation*}
			\scjump(\xi,A_2^k) := \wt{\scjump}(\xi,A_2^k) \quad \mbox{for $\xi \in \partial \unifdom$ and $k \in \mathbb{Z}$,}
		\end{equation*}
		and extend $\scjump(\xi,\cdot)$ by piecewise linear interpolation to $[0,\infty)$ for each $\xi \in \partial \unifdom$.
		Using \eqref{e:Phireg} and \eqref{e:reg2}, we get the estimate \eqref{e:Psireg1} in Definition \ref{d:regscale}.
		The fact that $\scjump(\xi,\cdot)$ is a homeomorphism follows from \eqref{e:rg3}.
		This concludes the proof when $\unifdom$ is unbounded.
		
		If $\unifdom$ is bounded, we define
		\begin{equation*}
			\scjump(\xi,A_{2}^{k} (2A)^{-1} \diam(\unifdom)) :=
			\begin{cases}
				\wt{\scjump}(\xi,A_{2}^{k} (2A)^{-1} \diam(\unifdom)) & \textrm{if $k\leq 0$,}\\
				A_{2}^{k\beta_{1}} \wt{\scjump}(\xi,(2A)^{-1} \diam(\unifdom)) &  \textrm{if $k > 0$} 
			\end{cases}
		\end{equation*}
		for $\xi \in \partial \unifdom$ and $k \in \mathbb{Z}$, and extend $\scjump(\xi,\cdot)$
		by piecewise linear interpolation to $[0,\infty)$ for each $\xi \in \partial \unifdom$.
		The conclusion follows from the same reasoning as the unbounded case.
	\end{proof}
	
	It will be convenient to use $\scjump$ in Lemma \ref{l:scale} instead of $\wt{\scjump}$ due to its better continuity property.
	So we set $\scjump$ to denote the function in Lemma \ref{l:scale} in the rest of this section.
	We apply the results in Subsection \ref{ss:capgood} to the measure $\mu$ to obtain the following proposition. 
	
	\begin{prop} \label{p:bdrymeas}
		%Let $(\ambient,d,\refmeas,\form,\domain),\unifdom,\formref,\diffref$ be as in Assumption \ref{a:hkecdcbhp}, and
		Let $\bdrymeas$ be the Radon measure on $\overline{\unifdom}$ defined in \eqref{e:defmu}. Then the following hold:
		\begin{enumerate}[\rm(a)]\setlength{\itemsep}{0pt}\vspace{-5pt}
			\item\label{it:bdrymeas-smooth} $\supp_{\overline{\unifdom}}[\bdrymeas] = \partial \unifdom$,
			and there exist $C_0,A_0,A_1 \in (1,\infty)$ such that
			\begin{equation} \label{eq:bdrymeas-cgood}
				\mspace{-15mu} C_{0}^{-1} \scjump(x,r)
				\leq \frac{\bdrymeas(B(x,r))}{\Capa_{B(x,A_{0}r)}(B(x,r))}
				\leq C_{0} \scjump(x,r)
				\quad \textrm{for all $(x,r) \in \partial \unifdom \times (0,\diam(\unifdom)/A_1)$.}
			\end{equation}
			In particular, $\bdrymeas$ is $\formref$-capacity good and $\formref$-smooth in the strict sense.
			\item\label{it:bdrymeas-pcaf-support} Let $\bdrypcaf=\{\bdrypcaf_{t}\}_{t\in[0,\infty)}$
			be a PCAF in the strict sense of $\diffref$ with Revuz measure $\bdrymeas$\textup{(,
				which exists by \eqref{it:bdrymeas-smooth} and \cite[Theorem 5.1.7]{FOT})}.
			Then the support of $\bdrypcaf$ is $\partial \unifdom$, i.e.,
			\begin{equation} \label{eq:bdrymeas-pcaf-support}
				\partial \unifdom = \bigl\{x \in \ol{\unifdom} \bigm| \lawref_{x}[ \textrm{$\bdrypcaf_{t} > 0$ for any $t \in (0,\infty)$} ] = 1 \bigr\}.
			\end{equation}
			In particular, $\partial \unifdom$ is an $\formref$-quasi-support of $\bdrymeas$.
		\end{enumerate}
	\end{prop}
	
	\begin{proof}
		By \eqref{eq:bdry-cap-estimate}, \eqref{eq:bdry-scale-estimate} and Lemma \ref{l:scale} we obtain
		$\supp_{\overline{\unifdom}}[\bdrymeas] = \partial \unifdom$ and \eqref{eq:bdrymeas-cgood}.
		In particular, $\bdrymeas$ is an $\formref$-capacity good Borel measure on $\overline{\unifdom}$
		in view of Theorem \ref{thm:hkeunif}-\eqref{it:hkeunif} and Definition \ref{d:goodmeas}
		and is therefore $\formref$-smooth in the strict sense by Lemma \ref{l:strict}, and
		\eqref{it:bdrymeas-pcaf-support} follows from Proposition \ref{prop:goodpcafsupp}.
	\end{proof}
	
	\begin{remark} \label{r:nonpolar}
		By the estimate in \eqref{e:sm5} along with \cite[Theorems 2.1.6 and 4.4.3-(ii)]{FOT},
		for the MMD space $(\overline{\unifdom}, d, \refmeas|_{\overline{\unifdom}},\formref, \domainref)$ we have
		\begin{equation} \label{eq:bdry-nonpolar}
			\Capa_1^{\on{ref}}(B(\xi,r)\cap \partial \unifdom)> 0 \quad \mbox{for all $\xi \in \partial \unifdom$ and all $r \in (0,\infty)$,}
		\end{equation}
		where $\Capa_1^{\on{ref}}(\cdot)$ denotes the $1$-capacity with respect to $(\overline{\unifdom},\refmeas|_{\overline{\unifdom}},\formref,\domainref)$.
	\end{remark}
	
	\subsection{The Doob--Na\"im formula} \label{ssec:doob-naim}
	
	Now we define the trace process and Dirichlet form on the boundary
	$\partial \unifdom$ as follows. Recall from Assumption \ref{a:hkecdcbhp} that
	$\diffref=\bigl(\Omega^{\on{ref}},\events^{\on{ref}},\{\diffref_{t}\}_{t\in[0,\infty]},\{\lawref_{x}\}_{x\in\oneptcpt{\overline{\unifdom}}}\bigr)$
	is a diffusion on $\overline{\unifdom}$ as in Assumption \ref{a:feller}
	for the MMD space $(\overline{\unifdom},d,\refmeas|_{\overline{\unifdom}},\formref,\domain(\unifdom))$.
	
	\begin{definition}[The boundary trace process and Dirichlet form] \label{d:bdry-trace}
		Set $\oneptcptp{\partial \unifdom}:=\partial \unifdom \cup \{\cemetery\}$, and
		let $\bdrymeas$ be the Radon measure on $\overline{\unifdom}$ defined in \eqref{e:defmu}.
		\begin{enumerate}[\rm(a)]\setlength{\itemsep}{0pt}\vspace{-5pt}
			\item\label{it:bdry-trace-process} Let
			$\minaugfilt^{\on{ref}}_{*}=\{\minaugfilt^{\on{ref}}_{t}\}_{t\in[0,\infty]}$
			denote the minimum augmented admissible filtration of $\diffref$,
			$\zeta^{\on{ref}}$ the life time of $\diffref$, and
			$\{ \shiftdiff^{\on{ref}}_{t} \}_{ t \in [0,\infty] }$ the shift operators of $\diffref$.
			Let $\bdrypcaf=\{\bdrypcaf_{t}\}_{t\in[0,\infty)}$ be a PCAF in the strict sense of $\diffref$
			with Revuz measure $\bdrymeas$ as considered in Proposition \ref{p:bdrymeas}-\eqref{it:bdrymeas-pcaf-support},
			with a defining set $\Lambda \in \minaugfilt^{\on{ref}}_{0}$ such that
			$\bdrypcaf_{t}(\omega)=0$ for any $(t,\omega)\in[0,\infty)\times(\Omega^{\on{ref}}\setminus\Lambda)$
			and $\{\zeta^{\on{ref}}=0\} \subset \Lambda$. Recalling \eqref{eq:time-changed-process} and
			\eqref{eq:time-changed-process-Hunt-sample-space}, we define the \textbf{boundary trace process}
			$\diffreftr=\bigl(\widecheck{\Omega}^{\on{ref}},\widecheck{\events}^{\on{ref}},\{\diffreftr_{t}\}_{t\in[0,\infty]},\{\lawref_{\xi}\}_{\xi \in \oneptcptp{\partial \unifdom}}\bigr)$
			of $\diffref$ on $\partial \unifdom$ as the time-changed process of $\diffref$ by $\bdrypcaf$,
			given for $(t,\omega)\in[0,\infty]\times\Omega^{\on{ref}}$ by
			\begin{align}
				\tau_{t}(\omega) &:= \inf\{ s \in (0,\infty) \mid \bdrypcaf_{s}(\omega)>t\}, \mspace{32mu}
				\diffreftr_{t}(\omega):=\diffref_{\tau_t(\omega)}(\omega), \mspace{32mu}
				\widecheck{\zeta}(\omega):=\bdrypcaf_{\infty}(\omega), \nonumber \\
				\widecheck{\Omega}^{\on{ref}}&:=\Lambda \cap \bigl\{ \textrm{$\diffreftr_{s} \in \oneptcptp{\partial \unifdom}$ for any $s\in[0,\infty)$} \bigr\} \cap \Bigl(\bigl\{\widecheck{\zeta}\in\{0,\infty\}\bigr\}\cup\Bigl\{\lim_{s\to\infty}\diffref_{s}=\cemetery\Bigr\}\Bigr), \nonumber \\
				\widecheck{\events}^{\on{ref}}&:=\minaugfilt^{\on{ref}}_{\infty}\big|_{\widecheck{\Omega}^{\on{ref}}}, \qquad
				\widecheck{\shiftdiff}^{\on{ref}}_{t}(\omega):=\shiftdiff^{\on{ref}}_{\tau_{t}(\omega)}(\omega),
				\label{eq:bdry-trace-process}
			\end{align}
			where $\bdrypcaf_{\infty}(\omega):=\lim_{s\to\infty}\bdrypcaf_{s}(\omega)$, so that by
			Propositions \ref{p:bdrymeas}-\eqref{it:bdrymeas-smooth} and \ref{prop:greeninv}-\eqref{it:time-change-Hunt-AC},
			$\diffreftr$ is a $\bdrymeas$-symmetric Hunt process on $\partial \unifdom$
			with life time $\widecheck{\zeta}$ and shift operators $\bigl\{\widecheck{\shiftdiff}^{\on{ref}}_{t}\bigr\}_{t\in[0,\infty]}$.
			\item\label{it:bdry-trace-Dirichlet-form} Recalling Definition \ref{d:traceDF}, we define
			\begin{equation} \label{eq:bdry-trace-domain}
				\domaintr := \biggl\{ \widetilde{u}|_{\partial \unifdom} \biggm| \textrm{$u \in \domainref_{e}$, $\int_{\partial \unifdom} \widetilde{u}^{2} \, d\bdrymeas < \infty$} \biggr\},
			\end{equation}
			where $\widetilde{u}$ denotes any $\formref$-quasi-continuous
			$\refmeas|_{\overline{\unifdom}}$-version of $u$ and we identify functions
			that coincide $\formref$-q.e.\ on $F$; since, for each $u,v \in \domainref_{e}$,
			$\widetilde{u}=\widetilde{v}$ $\formref$-q.e.\ on $\partial \unifdom$ if and only if
			$\widetilde{u}=\widetilde{v}$ $\bdrymeas$-a.e.\ on $\overline{\unifdom}$ by \cite[Theorem 3.3.5]{CF},
			and since $\supp_{\overline{\unifdom}}[\bdrymeas]=\partial \unifdom$
			by Proposition \ref{p:bdrymeas}-\eqref{it:bdrymeas-smooth}, we can canonically
			consider $\domaintr$ as a linear subspace of $L^{2}(\partial \unifdom,\bdrymeas)$.
			Then we further define a non-negative definite symmetric bilinear form
			$\formtr \colon \domaintr \times \domaintr \to \mathbb{R}$ by
			\begin{equation} \label{eq:bdry-trace-form}
				\formtr(\widetilde{u}|_{\partial \unifdom},\widetilde{v}|_{\partial \unifdom}) := \formref\bigl( H^{\on{ref}}_{\partial \unifdom} \widetilde{u}, H^{\on{ref}}_{\partial \unifdom} \widetilde{v} \bigr)
				\quad \textrm{for $u,v \in \domainref_{e}$ with $\widetilde{u}|_{\partial \unifdom},\widetilde{v}|_{\partial \unifdom} \in \domaintr$,}
			\end{equation}
			where $H^{\on{ref}}_{\partial \unifdom}\widetilde{u} \in \domainref_{e}$
			is defined for $\formref$-q.e.\ $x \in \overline{\unifdom}$ by
			\begin{equation} \label{eq:bdry-trace-hitdist}
				H^{\on{ref}}_{\partial \unifdom}\widetilde{u}(x)
				:= \expref_{x}\bigl[\widetilde{u}(\diffref_{\sigma_{\partial \unifdom}}) \one_{\{ \sigma_{\partial \unifdom} < \infty \}}\bigr],
				\quad \sigma_{\partial \unifdom}=\inf\{ t \in (0,\infty) \mid \diffref_{t} \in \partial \unifdom \}
			\end{equation}
			(recall Definition \ref{d:hmeas}), and call $(\formtr,\domaintr)$ the
			\textbf{boundary trace Dirichlet form} of $(\formref,\domainref)$ on $L^{2}(\partial \unifdom,\bdrymeas)$.
		\end{enumerate}
	\end{definition}
	
	As mentioned after Definition \ref{d:traceDF}, $(\formtr,\domaintr)$ is
	a regular symmetric Dirichlet form on $L^{2}(\partial \unifdom,\bdrymeas)$,
	a subset $\mathcal{N}$ of $\partial \unifdom$ is $\formtr$-polar
	if and only if $\mathcal{N}$ is $\formref$-polar, and
	$f|_{\partial \unifdom \setminus \mathcal{N}}$ is $\formtr$-quasi-continuous on $\partial \unifdom$
	for any $\formref$-quasi-continuous function $f \colon \overline{\unifdom} \setminus \mathcal{N} \to [-\infty,\infty]$
	defined $\formref$-q.e.\ on $\overline{\unifdom}$ for some $\formref$-polar $\mathcal{N} \subset \overline{\unifdom}$.
	Moreover, the extended Dirichlet space
	$\domaintr_{e}$ of $(\partial \unifdom,\bdrymeas,\formtr,\domaintr)$ and
	the values of $\formtr$ on $\domaintr_{e}\times\domaintr_{e}$ are identified as
	\begin{equation} \label{eq:bdrytrace-ExtDiriSp}
		\domaintr_{e} = \{ \widetilde{u}|_{\partial \unifdom} \mid u \in \domainref_{e} \}, \mspace{14mu}
		\formtr(\widetilde{u}|_{\partial \unifdom},\widetilde{v}|_{\partial \unifdom})
		= \formref\bigl(H^{\on{ref}}_{\partial \unifdom}\widetilde{u},H^{\on{ref}}_{\partial \unifdom}\widetilde{v}\bigr)
		\mspace{14mu} \textrm{for any $u,v \in \domainref_{e}$,}
	\end{equation}
	and the Dirichlet form of the boundary trace process $\diffreftr$ is $(\formtr,\domaintr)$.
	
	The goal of this subsection is to compute the \emph{Beurling--Deny decomposition}
	(recall \eqref{e:Beurling-Deny}) of the boundary trace Dirichlet form $(\formtr,\domaintr)$
	defined in \eqref{eq:bdry-trace-domain} and \eqref{eq:bdry-trace-form}.
	Let $\formtrsl,\jumpmeastr,\widecheck{\kappa}$ denote the strongly local part,
	the jumping measure and the killing measure, respectively, of
	$(\partial \unifdom,\bdrymeas,\formtr,\domaintr)$, so that we have
	$\jumpmeastr((\partial \unifdom \times \mathcal{N}) \cap \offdiagp{\partial \unifdom}) = 0 = \widecheck{\kappa}(\mathcal{N})$
	for any $\formtr$-polar $\mathcal{N} \in \Borel(\partial \unifdom)$ and
	\begin{equation} \label{e:Beurling-Deny-bdry-trace}
		\formtr(u,v) = \formtrsl(u,v)+ \frac{1}{2} \int_{\offdiagp{\partial \unifdom}} (\widetilde{u}(x)-\widetilde{u}(y))(\widetilde{v}(x)-\widetilde{v}(y))\, \jumpmeastr(dx\,dy)
		+ \int_{\partial \unifdom} \widetilde{u}(x)\widetilde{v}(x)\,\widecheck{\kappa}(dx)
	\end{equation}
	for any $u,v \in \domaintr_{e}$, where $\widetilde{u},\widetilde{v}$ denote
	$\formtr$-quasi-continuous $\bdrymeas$-versions of $u,v$ respectively.
	
	The following lemma, which is an easy consequence of the $\Delta$-regularity estimate
	shown in Lemma \ref{l:Delta}-\eqref{it:Delta-hmeas}, is the main ingredient to show that the killing
	measure $\widecheck{\kappa}$ is zero.
	
	\begin{lem} \label{l:hitbdy}
		It holds that
		\begin{equation} \label{eq:hitbdy}
			\lawref_{x}(\sigma_{\partial \unifdom} < \infty) = 1 \quad \textrm{for any $x \in \overline{\unifdom}$.}
		\end{equation}
	\end{lem}
	
	\begin{proof}
		First, since the reflected diffusion $\diffref$ has the property that
		\begin{equation*}
			\lawref_{x}( \diffref_{t} \in \unifdom ) = 1 \quad \textrm{for any $(t,x) \in (0,\infty) \times \overline{\unifdom}$}
		\end{equation*}
		by $\refmeas(\partial \unifdom) = 0$ from \eqref{eq:volume-doubling-unif-bdry-zero}
		and \ref{eq:AC} and the conservativeness of $\diffref$,
		it suffices to show the claim for $x \in \unifdom$.
		Then by the Markov property at any time $t>0$, $\diffref$ hits $\partial \unifdom$
		after time $t$ $\lawref_{x}$-a.s.\ for any $x \in \partial \unifdom$.
		In particular, we can work with the original diffusion $\diff$ on the
		ambient space $\ambient$ rather than the reflected diffusion $\diffref$ on $\ol{\unifdom}$.
		% 	Note also the following fact. We set
		%	\[
		%	\tau_{A} := \inf\{ t \geq 0 \mid X_{t} \notin A \}
		%	\]
		%	(note that here "$t \geq 0$", not "$t > 0$").
		
		%	We claim that any relatively compact open subset $D \subset \ambient$ with $\ambient \setminus D$ non-$\form$-polar,
		%	\begin{equation} \label{e:hb0}
			%	\lawdiff_{x}( \tau_{D} < \infty ) = 1 \quad \textrm{for any $x \in D$.}
			%	\end{equation}
		%	This follows by \cite[Proposition 3.2]{BCM} and \ref{eq:AC} for the part process $\diff^{D}$ on $D$.
		If $\unifdom$ is bounded, then \eqref{eq:hitbdy} follows by \ref{eq:CDC},
		Remark \ref{r:transient} and Lemma \ref{l:harmonicm}-\eqref{it:hmeas-prob1},\eqref{it:hmeas-quasi-support}.
		Assume that $\unifdom$ is unbounded, let $x \in \unifdom$, and choose $\xi \in \partial \unifdom$ and $R \in (d(x,\xi),\infty)$.
		Then by Lemmas \ref{l:harmonicm}-\eqref{it:hmeas-quasi-support},\eqref{it:hmeas-smooth-support},\eqref{it:hmeas-prob1} and \ref{l:Delta}-\eqref{it:Delta-hmeas},
		there exist $C_1, \delta \in (0,\infty)$ such that for all $K \in (1,\infty)$,
		\begin{align*}
			\lawref_{x}(\sigma_{\partial \unifdom} < \infty) &= \hmeas{\unifdom}{x}(\partial \unifdom)
			= \lawdiff_{x}(\sigma_{\ambient \setminus \unifdom}<\infty) \quad \textrm{(by Lemma \ref{l:harmonicm}-\eqref{it:hmeas-quasi-support},\eqref{it:hmeas-smooth-support})} \\
			&\ge \lawdiff_{x}( \sigma_{\ambient \setminus \unifdom} \le \tau_{B(\xi,KR)} )
			= 1 - \lawdiff_{x}( \tau_{B(\xi,KR) } < \tau_{\unifdom} ) \quad \textrm{(by Lemma \ref{l:harmonicm}-\eqref{it:hmeas-prob1})} \\
			&\ge 1 - \hmeas{\unifdom \cap B(\xi,KR)}{x}( \unifdom \cap S(\xi,KR) ) \quad \textrm{(by Lemma \ref{l:harmonicm}-\eqref{it:hmeas-smooth-support})} \\
			&\ge 1- C_1 K^{-\delta} \quad \textrm{(by Lemma \ref{l:Delta}-\eqref{it:Delta-hmeas}),}
		\end{align*}
		and we obtain $\lawref_{x}(\sigma_{\partial \unifdom} < \infty) = 1$ by letting $K \to \infty$.
	\end{proof}
	
	Our next result shows that the only non-vanishing term in the Beurling--Deny
	decomposition \eqref{e:Beurling-Deny-bdry-trace} is the jump part.
	Our main tools are Propositions \ref{prop:trace-slocal} and \ref{p:killing}.
	
	\begin{prop} \label{prop:bdry-trace-pure-jump}
		The boundary trace Dirichlet form $(\formtr,\domaintr)$ on $L^{2}(\partial \unifdom,\bdrymeas)$
		is of pure jump type, that is, $\formtrsl$ and $\widecheck{\kappa}$
		in \eqref{e:Beurling-Deny-bdry-trace} are identically zero.
	\end{prop}
	
	\begin{proof}
		The vanishing of the killing measure $\widecheck{\kappa}$ follows from Lemma \ref{l:hitbdy} and Proposition \ref{p:killing}.
		Alternatively, by \cite[Theorem 5.6.3]{CF} the killing measure is the supplementary Feller measure $V$ as defined in \cite[(5.5.7)]{CF},
		which in turn vanishes due to Lemma \ref{l:hitbdy}. 
		
		By \cite[Theorem 5.6.2]{CF}, for which we have given a new elementary proof in Proposition \ref{prop:trace-slocal}
		above, and Proposition \ref{p:bdrymeas}-\eqref{it:bdrymeas-pcaf-support},
		the strongly local part $\formtrsl$ of $(\formtr,\domaintr)$ is identified as the values
		of the $\formref$-energy measures on $\partial \unifdom$, and they are seen to vanish
		by applying \cite[Theorem 2.9]{Mur24} to the MMD space
		$(\ol{\unifdom},d,\refmeas|_{\ol{\unifdom}},\formref,\domainref)$, which satisfies
		\hyperlink{VD}{\textup{VD}} and \hyperlink{hke}{$\on{HKE(\scdiff)}$} by
		Theorem \ref{thm:hkeunif}-\eqref{it:hkeunif}, and the uniform domain $\unifdom$ in $(\overline{\unifdom},d)$.
		This concludes the proof that $(\formtr,\domaintr)$ is of pure jump type.
		
		The vanishing of the $\formref$-energy measures on $\partial \unifdom$
		of any $u \in \domainref_{e}$ can be seen more directly as follows.
		Let $u \in \domainref \cap L^{\infty}(\overline{\unifdom},\refmeas|_{\overline{\unifdom}})$.
		Then for any $f \in \domainref \cap \contfunc_{\mathrm{c}}(\overline{\unifdom})$,
		we easily see from the Leibniz rule \cite[Lemma 3.2.5]{FOT} for $\form$-energy measures that
		\begin{equation}\label{eq:energy-meas-unifdom-calc}
			d\Gamma_{\unifdom}(u,uf)-\frac{1}{2}d\Gamma_{\unifdom}(u^{2},f) = f\,d\Gamma_{\unifdom}(u,u),
		\end{equation}
		which together with \eqref{e:neumann} shows that
		\begin{equation}\label{eq:energy-meas-unifdom}
			\formref(u,uf)-\frac{1}{2}\formref(u^{2},f) = \int_{\unifdom}f\,d\Gamma_{\unifdom}(u,u).
		\end{equation}
		It follows from \eqref{eq:energy-meas-unifdom} and \eqref{e:EnergyMeas} that the
		$\formref$-energy measure of $u$ is given by $\Gamma_{\unifdom}(u,u)(\cdot \cap \unifdom)$
		and hence vanishes on $\partial \unifdom$, and the same holds also for any $u \in \domainref_{e}$
		by the definition of the $\formref$-energy measure of general $u \in \domainref_{e}$
		presented in Definition \ref{d:EnergyMeas}.
	\end{proof}

	The goal of this section is the Doob--Na\"im formula stated in Theorem \ref{t:dnformula}.
	We discuss relevant previous works and approaches of proving the Doob--Na\"im formula.
	As mentioned in the introduction, this was first shown by Doob \cite{Doo} in the setting of Green spaces
	introduced in \cite{BC}. They are locally Euclidean and hence the result does not apply to diffusions on fractals.
	Doob's work relies on existence of fine limits to define the Na\"im kernel and existence of `fine normal derivatives' \cite[\textsection 8]{Doo} shown by Na\"im \cite{Nai}.
	It is unclear to the authors whether these results of Na\"im can be extended to our setting and we leave it as an interesting direction for future work.
	M.~Silverstein \cite[Theorem 1.3]{Sil} showed the Doob--Na\"im formula for Markov chains on countable spaces using an excursion measure.
	While it is possible to construct similar excursions in our setting as discussed in \cite[Section 5.7]{CF},
	we choose a direct approach starting from the definition \eqref{eq:bdry-trace-form} of
	the boundary trace Dirichlet form $(\formtr,\domaintr)$ and performing a fairly simple computation.
	The joint continuity of the Na\"im kernel established by using \ref{eq:BHP} in Proposition \ref{p:naim}
	and the description of the Martin kernel as the Radon--Nikodym derivative of the harmonic measure
	in Proposition \ref{p:harmonic-mk} are important ingredients of our proof.
	
	For random walks on certain trees, the trace Dirichlet form on the boundary is amenable to explicit computations.
	This was first done by Kigami \cite[Theorem 5.6]{Kig} and was later shown to coincide with the Doob--Na\"im formula in \cite[Theorem 6.4]{BGPW}.
	Kigami \cite[Theorem 7.6]{Kig} also obtained stable-like heat kernel estimates for the trace process on the boundary.
	
	By extending the results of \cite{Doo,Fuk,Sil}, we show that the Na\"im kernel $\naimker^\unifdom_{x_0}$
	is the jump kernel of the boundary trace Dirichlet form $(\formtr,\domaintr)$ with respect to
	$\hmeas{\unifdom}{x_0} \times \hmeas{\unifdom}{x_0}$.
	
	\begin{theorem}[Doob--Na\"im formula] \label{t:dnformula}
		The jumping measure $\jumpmeastr$ in the Beurling--Deny decomposition
		\eqref{e:Beurling-Deny-bdry-trace} of the trace Dirichlet form
		$(\formtr,\domaintr)$ on $L^{2}(\partial \unifdom,\bdrymeas)$ is given by
		\begin{equation} \label{eq:dnformula-jumpmeas}
			d\jumpmeastr(\xi,\eta)= \naimker_{x_0}^\unifdom(\xi,\eta) \,d\hmeas{\unifdom}{x_0}(\xi) \,d\hmeas{\unifdom}{x_0}(\eta).
		\end{equation}
		Equivalently, 
		\begin{equation} \label{eq:dnformula}
			\formtr(u,v) = \frac{1}{2} \int_{\offdiagp{\partial \unifdom}} (\wt{u}(\xi)-\wt{u}(\eta))(\wt{v}(\xi)-\wt{v}(\eta))\, \naimker_{x_0}^\unifdom(\xi,\eta) \,d\hmeas{\unifdom}{x_0}(\xi) \,d\hmeas{\unifdom}{x_0}(\eta)  
		\end{equation}
		for all $u,v \in \domaintr_{e}$, where $\wt{u},\wt{v}$ denote $\formtr$-quasi-continuous $\bdrymeas$-versions of $u,v$ respectively.
	\end{theorem}
	
	\begin{proof} 
		Let $\xi,\eta \in \partial \unifdom$ be distinct and $r<d(\xi,\eta)/4$.
		Let $A=B(\xi,r)\cap \partial \unifdom$, $B= B(\xi,2r)^c \cap \overline{\unifdom}$ and
		$e_{A,B} \in \domainref$ denote the equilibrium potential for $\Capa^{\on{ref}}_{B}(A)$
		for the Dirichlet form $(\formref,\domainref)$ as given in Lemma \ref{l:eqmeas} such that
		\begin{equation*}
			\Capa^{\on{ref}}_{B}(A)=\formref(e_{A,B},e_{A,B}),\quad
			\wt{e}_{A,B} = 1 \textrm{ $\formref$-q.e.\ on $A$,} \quad
			\wt{e}_{A,B} = 0 \textrm{ $\formref$-q.e.\ on $\ol{\unifdom} \setminus B$,}
		\end{equation*}
		where $\wt{e}_{A,B}$ is a $\formref$-quasi-continuous $\refmeas|_{\overline{\unifdom}}$-version of $e_{A,B}$.
		Let $\lambda^1_{A,B}, \lambda^0_{A,B}$ denote the associated measures as given in Lemma \ref{l:eqmeas} supported in $\ol{A}$ and $\ol{\unifdom} \cap S(\xi,2r)$ respectively.
		By \eqref{e:mk1}, we have
		\begin{equation} \label{e:nj1}
			0 < \int_{\partial \unifdom} \wt{e}_{A,B}\, d \hmeas{\unifdom}{x_0} = \int_{\overline \unifdom \cap \partial B} \gren{\unifdom}(x_0,y) \, d\lambda^0_{A,B}(y).
		\end{equation}
		
		Let $u \in \domainref \cap \contfunc_{\mathrm{c}}(\overline{\unifdom})$ be such that
		$\one_{B(\eta,r)} \le u \le \one_{B(\eta,2r)}$.	Since $H^{\on{ref}}_{\partial \unifdom} u$
		is $\formref$-harmonic on $\unifdom$ by \eqref{eq:hit-dist-harm-test} and
		$H^{\on{ref}}_{\partial \unifdom}\widetilde{e}_{A,B}=\widetilde{e}_{A,B}$
		$\formref$-q.e.\ on $\partial \unifdom$ by \eqref{eq:hit-dist-harm-var}, we have
		\begin{align} \label{e:nj2}
			\formref\bigl(H^{\on{ref}}_{\partial \unifdom} u, H^{\on{ref}}_{\partial \unifdom}\wt{e}_{A,B} \bigr)
			&= \formref\bigl(H^{\on{ref}}_{\partial \unifdom}u, \wt{e}_{A,B}\bigr) \quad \textrm{(by \eqref{eq:hit-dist-harm-test} and \eqref{eq:hit-dist-harm-var}})\nonumber \\
			&= -\int_{\ol{\unifdom} \cap S(\xi,2r)} H^{\on{ref}}_{\partial \unifdom} u \,d\lambda^0_{A,B} \quad \textrm{(by \eqref{e:eqmeas} in Lemma \ref{l:eqmeas}-\eqref{it:eqmeas-outer})} \nonumber \\
			&=  -\int_{\ol{\unifdom} \cap S(\xi,2r)} \biggl( \int_{\partial \unifdom} u(z)\,d\hmeas{\unifdom}{y}(z) \biggr) \,d\lambda^0_{A,B}(y) \nonumber \\
			&\overset{\eqref{e:harmonic-mk}}{=} -\int_{\ol{\unifdom} \cap S(\xi,2r)} \biggl( \int_{\partial \unifdom} u(z) K_{x_0}(y,z)\,d\hmeas{\unifdom}{x_0}(z) \biggr) \,d\lambda^0_{A,B}(y).
		\end{align}
		Note that by \cite[Theorem 5.2.8]{CF}, 
		\begin{equation} \label{e:nj3}
			\textrm{$\widetilde{e}_{A,B}|_{\partial \unifdom} \in \domaintr$ and $\widetilde{e}_{A,B}|_{\partial \unifdom}$ is $\formtr$-quasi-continuous.}
		\end{equation}
		Therefore by the Beurling--Deny decomposition \eqref{e:Beurling-Deny-bdry-trace},
		\eqref{e:nj3} and Proposition \ref{prop:bdry-trace-pure-jump}, we obtain
		\begin{align} \label{e:nj4}
			\MoveEqLeft{\formref\bigl(H^{\on{ref}}_{\partial \unifdom} u, H^{\on{ref}}_{\partial \unifdom}\wt{e}_{A,B} \bigr)} \nonumber \\
			&= \formtr(u|_{\partial \unifdom},\wt{e}_{A,B}|_{\partial \unifdom}) \quad \textrm{(by \eqref{eq:bdry-trace-form})} \nonumber \\
			&= \frac{1}{2} \int_{\offdiagp{\partial \unifdom}} ( {u}(x)- {u}(y))(\wt{e}_{A,B}(x)-\wt{e}_{A,B}(y))\, \jumpmeastr(dx\,dy) \quad \mbox{(by \eqref{e:nj3} and \eqref{e:Beurling-Deny-bdry-trace})} \nonumber \\
			&= -\int_{\offdiagp{\partial \unifdom}} u(x)\wt{e}_{A,B}(y)\, \jumpmeastr(dx\,dy),
		\end{align}
		where the equality in the last line above holds since $u,\wt{e}_{A,B}$ have disjoint supports
		(note that $r<d(\xi,\eta)/4$) and $\jumpmeastr$ is symmetric. We thus obtain
		\begin{align} \label{e:nj5}
			\MoveEqLeft{\frac{\int_{\offdiagp{\partial \unifdom}} u(x) \wt{e}_{A,B}(y)\, \jumpmeastr(dx\,dy)}{\int_{\partial \unifdom} u\, d\hmeas{\unifdom}{x_0} \int_{\partial \unifdom} \wt{e}_{A,B} \, d \hmeas{\unifdom}{x_0}} } \nonumber \\&= \frac{-\formref(H^{\on{ref}}_{\partial \unifdom} u, H^{\on{ref}}_{\partial \unifdom}(\wt{e}_{A,B}) )}{\int_{\partial \unifdom} u\, d\hmeas{\unifdom}{x_0} \int_{\partial \unifdom} \wt{e}_{A,B} \, d \hmeas{\unifdom}{x_0}} \nonumber \quad \mbox{(by \eqref{e:nj4})}\\
			&= \frac{\int_{\ol{\unifdom} \cap S(\xi,2r)} \left( \int_{\partial \unifdom} u(z) K_{x_0}(y,z)\,d\hmeas{\unifdom}{x_0}(z) \right) \,d\lambda^0_{A,B}(y)}{\int_{\partial \unifdom} u\, d\hmeas{\unifdom}{x_0} \int_{\ol{\unifdom} \cap S(\xi,2r)} \gren{\unifdom}(x_0,y) \, d\lambda^0_{A,B}(y)} \quad \mbox{(by  \eqref{e:nj1} and \eqref{e:nj2})} \nonumber \\
			&\overset{\eqref{e:naimb}}{=} \int_{\ol{\unifdom} \cap S(\xi,2r)} \int_{\partial \unifdom} \naimker^\unifdom_{x_0}(y,z) \frac{u(z)}{\int_{\partial \unifdom} u\,d\hmeas{\unifdom}{x_0}} \, d\hmeas{\unifdom}{x_0}(z)  \frac{\gren{\unifdom}(x_0,y)}{\int_{\ol{\unifdom} \cap S(\xi,2r)} \gren{\unifdom}(x_0,\cdot) \, d\lambda^0_{A,B}} \,d\lambda^0_{A,B}(y).
		\end{align}
		Let $\rho$ be the metric on $\partial \unifdom \times \partial \unifdom$ defined by $\rho((x_1,y_1),(x_2,y_2)) := \max\{ d(x_1,x_2), d(y_1,y_2)\}$. 
		For $(x_1,x_2)\in \partial \unifdom \times \partial \unifdom$, let $B_\rho((x_1,x_2),r)$ denote the open ball of radius $r$ in the metric $\rho$ centered at $(x_1,x_2)$.
		By \cite[Lemma 4.5.4-(i)]{FOT} and using $\wt{e}_{A,B} = 1$ $\formref$-q.e.~on $A$, we have 
		\begin{equation} \nonumber
			u(x)\wt{e}_{A,B}(y) =1 \quad \mbox{for $\jumpmeastr$-a.e.\ $(x,y)\in (B(\eta,r)\times B(\xi,r)) \cap (\partial \unifdom \times \partial \unifdom)$.}
		\end{equation}
		Hence 
		\begin{equation}\label{e:nj6}
			\int_{\partial \unifdom} \int_{\partial \unifdom} u(x) \wt{e}_{A,B}(y)\, \jumpmeastr(dx\,dy) \ge \jumpmeastr(B_\rho((\eta,\xi),r)).
		\end{equation}
		By Corollary \ref{c:asdouble}, there exist $C_1 \in (1,\infty)$ and $A_1 \in (6,\infty)$
		such that for all $(\xi,\eta) \in \partial \unifdom \times \partial \unifdom$ and all $r \in \bigl(0,A_1^{-1}(d(x_0,\xi) \wedge d(x_0,\eta))\bigr)$,
		\begin{equation} \label{e:nj7}
			(\hmeas{\unifdom}{x_0} \times \hmeas{\unifdom}{x_0})(B_\rho((\eta,\xi),2r))
			\le C_1 (\hmeas{\unifdom}{x_0} \times \hmeas{\unifdom}{x_0})(B_\rho((\eta,\xi),r)).
		\end{equation}
		Since $\hmeas{\unifdom}{x_0}|_{\overline{\unifdom}}$ is $\formref$-smooth by
		Lemma \ref{l:harmonicm}-\eqref{it:hmeas-quasi-support},\eqref{it:hmeas-smooth-support},
		$\wt{e}_{A,B} \le \one_{B(\xi,2r)}$ $\formref$-q.e.~implies $\wt{e}_{A,B} \le \one_{B(\xi,2r)}$  $\hmeas{\unifdom}{x_0}$-a.e.~and hence
		\begin{equation} \label{e:nj8}
			\int_{\partial \unifdom} u\, d\hmeas{\unifdom}{x_0} \int_{\partial \unifdom} \wt{e}_{A,B} \, d \hmeas{\unifdom}{x_0}
			\le \int_{\partial \unifdom} \one_{B(\eta,2r)}\, d\hmeas{\unifdom}{x_0} \int_{\partial \unifdom} \one_{B(\xi,2r)} \, d \hmeas{\unifdom}{x_0}
			= (\hmeas{\unifdom}{x_0} \times \hmeas{\unifdom}{x_0})( B_\rho((\eta,\xi),2r)).
		\end{equation}
		Combining  \eqref{e:nj8}, \eqref{e:nj6} and \eqref{e:nj7}, we obtain
		\begin{equation} \label{e:nj9}
			{\frac{\jumpmeastr(B_\rho((\eta,\xi),r))}{(\hmeas{\unifdom}{x_0} \times \hmeas{\unifdom}{x_0})(B_\rho((\eta,\xi),r))}}
			\le C_1 \frac{\int_{\offdiagp{\partial \unifdom}} u(x) \wt{e}_{A,B}(y)\, \jumpmeastr(dx\,dy)}{\int_{\partial \unifdom} u\, d\hmeas{\unifdom}{x_0} \int_{\partial \unifdom} \wt{e}_{A,B} \, d \hmeas{\unifdom}{x_0}}
		\end{equation}
		for all $(\xi,\eta) \in \offdiagp{\partial \unifdom}$ and all
		$r \in \bigl(0,A_1^{-1}(d(x_0,\xi) \wedge d(x_0,\eta) \wedge d(\xi,\eta))\bigr)$.
		
		By using \eqref{e:naimholder} in Proposition \ref{p:naim} and increasing $A_1$ if necessary,
		there exist $C_2 \in (1,\infty)$ and $\gamma \in (0,\infty)$ such that 
		\begin{equation} \label{e:dn1}
			\abs{\frac{\int_{\offdiagp{\partial \unifdom}}  u(x) \wt{e}_{A,B}(y)\, \jumpmeastr(dx\,dy)}{\int_{\partial \unifdom} u\, d\hmeas{\unifdom}{x_0} \int_{\partial \unifdom} \wt{e}_{A,B} \, d \hmeas{\unifdom}{x_0}} - \naimker^\unifdom_{x_0}(\eta,\xi)}
			\le C_2 \naimker^\unifdom_{x_0}(\eta,\xi) \biggl( \frac{r}{d(x_0,\xi) \wedge d(x_0,\eta) \wedge d(\xi,\eta)} \biggr)^\gamma
		\end{equation}
		for all $(\eta,\xi) \in \offdiagp{\partial \unifdom}$ and all
		$r \in \bigl(0,A_1^{-1}(d(x_0,\xi) \wedge d(x_0,\eta) \wedge d(\xi,\eta))\bigr)$.
		By \eqref{e:nj9} and \eqref{e:dn1}, there exists $c_0 \in (0,A_1^{-1})$ such that
		for all $(\eta,\xi) \in \offdiagp{\partial \unifdom}$ and all
		$r \in \bigl(0,c_0 (d(x_0,\xi) \wedge d(x_0,\eta) \wedge d(\xi,\eta))\bigr]$, we have 
		\begin{equation} \label{e:dn2}
			{\frac{\jumpmeastr(B_\rho((\eta,\xi),r))}{(\hmeas{\unifdom}{x_0} \times \hmeas{\unifdom}{x_0})(B_\rho((\eta,\xi),r))}}
			\le  2C_1 \naimker^\unifdom_{x_0}(\eta,\xi).
		\end{equation}
		Using \eqref{e:dn2}, we will show the absolute continuity of $\jumpmeastr$ with respect to $\hmeas{\unifdom}{x_0} \times \hmeas{\unifdom}{x_0}$; that is
		\begin{equation} \label{e:dn3}
			\jumpmeastr \ll {\hmeas{\unifdom}{x_0} \times \hmeas{\unifdom}{x_0}}.
		\end{equation}
		By the inner regularity of $\jumpmeastr$ it suffices to prove that
		if $K \subset \offdiagp{\partial \unifdom}$ is compact and
		$(\hmeas{\unifdom}{x_0} \times \hmeas{\unifdom}{x_0})(K)=0$, then 
		\begin{equation} \label{e:dn4}
			\jumpmeastr(K)=0.
		\end{equation}
		If  $K \subset \offdiagp{\partial \unifdom}$ is compact and $(\hmeas{\unifdom}{x_0} \times \hmeas{\unifdom}{x_0})(K)=0$,
		then by the outer regularity of $\hmeas{\unifdom}{x_0} \times \hmeas{\unifdom}{x_0}$, for any $\varepsilon \in (0,\infty)$,
		there exists an open set $K_{\varepsilon} \subset \offdiagp{\partial \unifdom}$ such that
		$(\hmeas{\unifdom}{x_0} \times \hmeas{\unifdom}{x_0}) (K_{\varepsilon})<\varepsilon$.
		By the 5B-covering lemma \cite[Theorem 1.2]{Hei}, there exist balls
		$B_\rho((y_i,z_i), r_i) \subset K_{\varepsilon}$, $i \in I$ such that
		$(y_i,z_i) \in K$ and $0<r_i \le  c_0 (d(x_0,y_i) \wedge d(x_0,z_i) \wedge d(y_i,z_i))$
		for all $i \in I$, $\bigcup_{i \in I} B_\rho((y_i,z_i), r_i) \supset K$ and
		$B_\rho((y_i,z_i), r_i)/5)$, $i \in I$ are pairwise disjoint. Hence, we have
		\begin{align*}
			\jumpmeastr(K) & \le \sum_{i \in I} \jumpmeastr(B_\rho((y_i,z_i), r_i) ) \overset{\eqref{e:dn2}}{\le} \sum_{i\in I} 2C_1 \naimker^\unifdom_{x_0}(y_i,z_i) (\hmeas{\unifdom}{x_0} \times \hmeas{\unifdom}{x_0})(B_\rho((y_i,z_i),r_i)) \\
			&\le 2C_1^4 \sup_{K} \naimker^\unifdom_{x_0}(\cdot,\cdot) \sum_{i \in I} (\hmeas{\unifdom}{x_0} \times \hmeas{\unifdom}{x_0})\left(B_\rho((y_i,z_i),r_i/5)\right) \quad \mbox{(by \eqref{e:nj7})} \\
			&\le  2C_1^4 \sup_{K} \naimker^\unifdom_{x_0}(\cdot,\cdot) (\hmeas{\unifdom}{x_0} \times \hmeas{\unifdom}{x_0})(K_\varepsilon) \nonumber \\
			& \quad \textrm{(since $\bigcup_{i \in I} B_\rho((y_i,z_i), r_i) \subset K_\varepsilon$ and $B_\rho((y_i,z_i),  r_i)/5), i \in I$ are pairwise disjoint)} \\
			&\le 2C_1^4 \sup_{K} \naimker^\unifdom_{x_0}(\cdot,\cdot) \varepsilon.
		\end{align*}
		By letting $\varepsilon \downarrow 0$, we obtain \eqref{e:dn4} since
		$\sup_{K} \naimker^\unifdom_{x_0}(\cdot,\cdot)<\infty$ due to the continuity of
		$\naimker^\unifdom_{x_0}$ (Proposition \ref{p:naim}) and the compactness of $K$.
		This concludes the proof of \eqref{e:dn3}.
		
		By letting $r \downarrow 0$ in the H\"older continuity estimate \eqref{e:dn1} and
		using the asymptotic doubling property \eqref{e:nj7}  and the absolute continuity
		\eqref{e:dn3} of harmonic measures along with the Lebesgue differentiation theorem
		(\eqref{e:ratiodiff} in Lemma \ref{l:differentiation}), we obtain the desired conclusion.
	\end{proof}
	\begin{remark} \label{r:ac}
		The absolute continuity \eqref{e:dn3} can alternatively be obtained by using the identification of the jumping measure as the Feller measure in \cite[Theorem 5.6.3]{CF} along with \cite[p.~3143, equation before Example 2.1]{FHY}.
		However, we have chosen the more elementary approach using \eqref{e:nj5} because the proof of this identification presented in \cite[Sections 5.4--5.6]{CF} is quite involved.
	\end{remark}
	
	The following corollary of the Doob--Na\"im formula relates the jump density to
	the boundary reference measure $\bdrymeas$ and the function $\scjump(\cdot,\cdot)$.
	
	\begin{cor} \label{c:jpsi}
		%Let an MMD space $(\ambient,d,\refmeas,\form,\domain)$ and a uniform domain $\unifdom$ in $(\ambient,d)$ satisfy Assumption \ref{a:hkecdcbhp}.
		Define $\jumpkertr_{\bdrymeas} \colon \offdiagp{\partial \unifdom} \to (0,\infty)$ by
		\begin{equation} \label{e:defJmu}
			\jumpkertr_{\bdrymeas}(\xi,\eta):=
			\begin{cases}
				\naimker^\unifdom_{x_0}(\xi,\eta) & \textrm{if $\unifdom$ is bounded,} \\
				\displaystyle \naimker^\unifdom_{x_0}(\xi,\eta) \biggl( \frac{d\nu^{\unifdom}_{x_0}}{d\hmeas{\unifdom}{x_0}}(\xi)\frac{d\nu^{\unifdom}_{x_0}}{d\hmeas{\unifdom}{x_0}}(\eta)\biggr)^{-1} & \textrm{if $\unifdom$ is unbounded.}
			\end{cases}
		\end{equation}
		Then the jumping measure $\jumpmeastr$ of the trace Dirichlet form $(\formtr,\domaintr)$
		on $L^{2}(\partial \unifdom,\bdrymeas)$ is given by
		$\jumpmeastr(d\xi\,d\eta) = \jumpkertr_{\bdrymeas}(\xi,\eta) \,\bdrymeas(d\xi)\,\bdrymeas(d\eta)$,
		and there exist $C,A \in (1,\infty)$ such that for all $(\xi,\eta) \in \offdiagp{\partial \unifdom}$,
		\begin{equation} \label{e:jphi}
			\frac{C^{-1}}{\bdrymeas \bigl( B(\xi,d(\xi,\eta))\bigr) \scjump(\xi,d(\xi,\eta))}
			\le \jumpkertr_\bdrymeas(\xi,\eta)
			\le \frac{C}{\bdrymeas \bigl( B(\xi,d(\xi,\eta))\bigr) \scjump(\xi,d(\xi,\eta))}.
		\end{equation}
		%	where $r=d(\xi,\eta)/10$. 
		%	In particular, the jump kernel $J^{x_0}(\cdot,\cdot)$ with respect to the elliptic measure $\nu$ at infinity defined in Proposition \ref{p:emeas} satisfies the following estimate: there exists $C_1 \in (1,\infty)$ such that for all $\xi,\eta \in \partial \unifdom$ with $\xi \neq \eta$ we have
		%	\[
		%	C^{-1} \frac{1}{\nu(B(\xi,d(\xi,\eta))) h_\unifdom^{x_0}(\xi_r)} \le \jumpkertr(\eta,\xi) \le C \frac{1}{\nu(B(\xi,d(\xi,\eta))) h_\unifdom^{x_0}(\xi_r)},
		%	\]
		%	where $r= d(\xi,\eta)/10$.
	\end{cor}
	
	\begin{proof} 
		The jump kernel formula \eqref{e:defJmu} is a direct consequence of the Doob--Na\"im formula (Theorem \ref{t:dnformula})
		along with the mutual absolute continuity in Proposition \ref{p:emeas}-\eqref{it:emeas-AC}.
		
		The proof of \eqref{e:jphi} is based on the following two estimates:
		by \eqref{eq:bdry-scale-estimate} and Lemma \ref{l:scale}, there exist $C_1,A_1 \in (1,\infty)$ such that
		\begin{equation} \label{e:jp1}
			C_1^{-1} \frac{\scjump(\xi,R)}{\scdiff(R)} \refmeas(B(\xi,R)) \le  \bdrymeas(B(\xi,R)) \le C_1 \frac{\scjump(\xi,R)}{\scdiff(R)} \refmeas(B(\xi,R))
		\end{equation}
		for all $\xi \in \partial \unifdom$ and all $R \in (0,\diam(\unifdom)/A_1)$,
		and by \eqref{e:naimest} in Proposition \ref{p:naim} there exist $c_1 \in (0,1/4)$ and $C_2 \in (1,\infty)$
		such that for all $c_0 \in (0,c_1]$ and all $(\xi,\eta) \in \offdiagp{\partial \unifdom}$ with $c_{0}d(\xi,\eta) \leq c_{1}(d(x_0,\xi) \wedge d(x_0,\eta))$,
		\begin{equation} \label{e:jp3}
			C_2^{-1} \frac{\gren{\unifdom}(\xi_{c_{0}d(\xi,\eta)},\eta_{c_{0}d(\xi,\eta)})}{\gren{\unifdom}(x_0,\xi_{c_{0}d(\xi,\eta)}) \gren{\unifdom}(x_0,\eta_{c_{0}d(\xi,\eta)})}
				\le \naimker^\unifdom_{x_0}(\xi,\eta)
				\le C_2 \frac{\gren{\unifdom}(\xi_{c_{0}d(\xi,\eta)},\eta_{c_{0}d(\xi,\eta)})}{\gren{\unifdom}(x_0,\xi_{c_{0}d(\xi,\eta)}) \gren{\unifdom}(x_0,\eta_{c_{0}d(\xi,\eta)})}.
		\end{equation}
		
		We first estimate the factor $\gren{\unifdom}(\xi_{c_{0}d(\xi,\eta)},\eta_{c_{0}d(\xi,\eta)})$ in \eqref{e:jp3}.
		Let $c_0 \in (0,c_1]$ and $(\xi,\eta) \in \offdiagp{\partial \unifdom}$.
		By reducing $c_{0}$ if necessary and by Lemma \ref{l:gbnd}, \eqref{eq:bdry-cap-estimate}, \hyperlink{VD}{\textup{VD}} of $(\ambient,d,\refmeas)$ and \eqref{e:reg}, there exists $c_{2} \in (0,c_{0})$ independent of $(\xi,\eta)$ such that
		\begin{equation} \label{e:jp4}
			\gren{\unifdom}(\xi_{c_{0}d(\xi,\eta)},c_{2}d(\xi,\eta)) \asymp \Capa_{B(\xi,2c_{0}d(\xi,\eta))}\bigl(B(\xi,c_{0}d(\xi,\eta))\bigr)^{-1} \asymp \frac{\Psi(d(\xi,\eta))}{\refmeas\bigl(B(\xi,d(\xi,\eta))\bigr)}.
		\end{equation}
		Reducing $c_{2} \in (0,c_{0})$ if necessary, by \eqref{e:gradial} we have
		\begin{equation}\label{e:jp5}
			\sup_{z \in S(\xi_{c_{0}d(\xi,\eta)},c_{2}d(\xi,\eta))} \gren{\unifdom}(\xi_{c_{0}d(\xi,\eta)},z)
			\asymp \inf_{z \in S(\xi_{c_{0}d(\xi,\eta)},c_{2}d(\xi,\eta))} \gren{\unifdom}(\xi_{c_{0}d(\xi,\eta)},z).
		\end{equation}
		Take a uniform curve from $\xi_{c_{0}d(\xi,\eta)}$ to $\eta_{c_{0}d(\xi,\eta)}$ and choose a point $z_{\xi,\eta} \in S(\xi_{c_{0}d(\xi,\eta)},c_{2}d(\xi,\eta))$
		of it such that the subcurve from $z_{\xi,\eta}$ to $\eta_{c_{0}d(\xi,\eta)}$ is outside $B(\xi_{c_{0}d(\xi,\eta)},c_{2}d(\xi,\eta))$.
		Using a Harnack chain similar to (and simpler than) the proof of Lemma \ref{l:chain}-\eqref{it:chain-unifdom}, we obtain
		\begin{equation} \label{e:jp6}
		\begin{split}
		\gren{\unifdom}(\xi_{c_{0}d(\xi,\eta)},\eta_{c_{0}d(\xi,\eta)})
		\overset{\eqref{e:hchain}}{\asymp} \gren{\unifdom}(\xi_{c_{0}d(\xi,\eta)},z_{\xi,\eta})
		&\overset{\eqref{e:jp5}}{\asymp} \gren{\unifdom}(\xi_{c_{0}d(\xi,\eta)},c_{2}d(\xi,\eta)) \\
		&\overset{\eqref{e:jp4}}{\asymp}\frac{\Psi(d(\xi,\eta))}{\refmeas\bigl(B(\xi,d(\xi,\eta))\bigr)}.
		\end{split}
		\end{equation}
		
		Now we can deduce \eqref{e:jphi} from the above estimates.
		We first consider the case where $\unifdom$ is bounded.
		Covering $\partial \unifdom$ with balls of radii $c_3 \diam(\unifdom)$ for some sufficiently small $c_3 \in (0,1)$,
		using Lemma \ref{l:scale}, \hyperlink{VD}{\textup{VD}} of $(\ambient,d,\refmeas)$ and \eqref{e:reg}, and increasing $C_1$ if necessary, we can extend \eqref{e:jp1} to $R \in (0,\diam(\unifdom)]$, i.e.,
		\begin{equation} \label{e:jp2}
		\bdrymeas(B(\xi,R)) \asymp \frac{\scjump(\xi,R)}{\scdiff(R)}\refmeas(B(\xi,R))
		\quad \textrm{for all $\xi \in \partial \unifdom$ and all $R \in (0,\diam(\unifdom)]$.}
		\end{equation}
		Recalling our choice of $x_{0}=\widehat{\xi}_{\diam(\unifdom)/5}$ from Definition \ref{d:bdrymeas}, by reducing $c_0 \in (0,c_1]$ if necessary,
		for all $(\xi,\eta) \in \offdiagp{\partial \unifdom}$ we have $c_{0}d(\xi,\eta) \leq c_{1}(d(x_0,\xi) \wedge d(x_0,\eta))$, hence \eqref{e:jp3}, and also
		by Theorem \ref{t:hmeas}, Corollary \ref{c:asdouble}, \eqref{eq:bdry-cap-estimate}, \hyperlink{VD}{\textup{VD}} of $(\ambient,d,\refmeas)$ and \eqref{e:reg},
		\begin{equation} \label{e:jp7}
		\gren{\unifdom}(x_0,\xi_{c_{0}d(\xi,\eta)}) \asymp \hmeas{\unifdom}{x_0}\bigl(B(\xi,d(\xi,\eta))\bigr) \frac{\Psi(d(\xi,\eta))}{\refmeas\bigl(B(\xi,d(\xi,\eta))\bigr)}
		\end{equation}
		(and similarly for $\gren{\unifdom}(x_0,\eta_{c_{0}d(\xi,\eta)})$ by replacing $(\xi,\eta)$ with $(\eta,\xi)$).
		Combining \eqref{e:defJmu}, \eqref{e:jp3}, \eqref{e:defmu}, \eqref{e:jp7}, \eqref{e:jp6}, \eqref{e:jp2}, and \eqref{e:Psireg1} from Lemma \ref{l:scale},
		we obtain \eqref{e:jphi} in the case where $\unifdom$ is bounded.
		
		Lastly, assume that $\unifdom$ is unbounded.
		In this case, by Proposition \ref{p:emeas}-\eqref{it:emeas-estimate},
		\eqref{eq:bdry-cap-estimate}, \hyperlink{VD}{\textup{VD}} of $(\ambient,d,\refmeas)$
		and \eqref{e:reg}, for all $(\xi,\eta) \in \offdiagp{\partial \unifdom}$ we have
		\begin{equation} \label{e:jp8}
		\hprof{x_0}(\xi_{c_{0}d(\xi,\eta)}) \asymp \nu^{\unifdom}_{x_{0}}\bigl(B(\xi,d(\xi,\eta))\bigr) \frac{\Psi(d(\xi,\eta))}{\refmeas\bigl(B(\xi,d(\xi,\eta))\bigr)}.
		\end{equation}
		(and similarly for $\hprof{x_0}(\eta_{c_{0}d(\xi,\eta)})$ by replacing $(\xi,\eta)$ with $(\eta,\xi)$).
		Moreover, since $\jumpmeastr(d\xi\,d\eta) = \jumpkertr_{\bdrymeas}(\xi,\eta) \,\bdrymeas(d\xi)\,\bdrymeas(d\eta)$ is independent of $x_0$ and
		$\jumpkertr_{\bdrymeas}$ is continuous by Propositions \ref{p:naim} and \ref{p:emeas}-\eqref{it:emeas-AC}, we see by \eqref{e:defJmu}, \eqref{e:defmu}, \eqref{e:em6},
		and $\supp_{\overline{\unifdom}}[\nu^\unifdom_{x_0}] = \partial \unifdom$ from Proposition \ref{p:emeas}-\eqref{it:emeas-estimate} that for all $y \in \unifdom$,
		\begin{equation} \label{e:jp9}
		\jumpkertr_{y}(\xi,\eta)
			:= \naimker^\unifdom_{y}(\xi,\eta) \biggl( \frac{d\nu^{\unifdom}_{y}}{d\hmeas{\unifdom}{y}}(\xi)\frac{d\nu^{\unifdom}_{y}}{d\hmeas{\unifdom}{y}}(\eta)\biggr)^{-1}
			= (\hprof{x_0}(y))^{2} \jumpkertr_{\bdrymeas}(\xi,\eta)
			\quad \textrm{for all $(\xi,\eta) \in \offdiagp{\partial \unifdom}$.}
		\end{equation}
		Now for each $(\xi,\eta) \in \offdiagp{\partial \unifdom}$, choosing $y \in \unifdom$
		so that $d(\xi,\eta) < d(y,\xi)/A$ for some large enough $A \in (1,\infty)$, and
		recalling \eqref{e:em6}, \eqref{e:defmu}, \eqref{e:defscale} and Lemma \ref{l:scale},
		we see from \eqref{e:embnd}, \eqref{e:jp3}, \eqref{e:jp8}, \eqref{e:jp6},
		\eqref{e:jp1}, and \eqref{e:Psireg1} from Lemma \ref{l:scale}, all with $x_0$ replaced by $y$,
		that \eqref{e:jphi} with $\jumpkertr_{y},(\hprof{x_0}(y))^{-1}\bdrymeas,(\hprof{x_0}(y))^{-1}\scjump$
		in place of $\jumpkertr_\bdrymeas,\bdrymeas,\scjump$ holds, which together with
		\eqref{e:jp9} shows \eqref{e:jphi} in the case where $\unifdom$ is unbounded.
	\end{proof}
	
	\begin{remark} \label{rmk:bdry-meas-doubling}
		\begin{enumerate}[\rm(a)]\setlength{\itemsep}{0pt}
			\item\label{it:bdry-meas-doubling} The estimates \eqref{e:jp2} and \eqref{e:jp1} along with \hyperlink{VD}{\textup{VD}} of $(\ambient,d,\refmeas)$,
			Lemma \ref{l:scale} and \eqref{e:reg} show that $(\partial \unifdom,d,\bdrymeas)$ is \hyperlink{VD}{\textup{VD}}.
			\item\label{it:bdry-trace-jumpker-hprofile} If $\unifdom$ is unbounded, we can use \eqref{e:em5} and \eqref{e:defJmu} to derive another formula for
			$\jumpkertr_\bdrymeas(\xi,\eta)$ in terms of the Green function $\gren{\unifdom}(\cdot,\cdot)$ and the harmonic profile $\hprof{x_0}$ as follows:
			\begin{align} \label{e:jkformula}
				\jumpkertr_\bdrymeas(\xi,\eta)& \overset{\eqref{e:defJmu}}{=} \naimker^\unifdom_{x_0}(\xi,\eta) \biggl( \frac{d\nu^{\unifdom}_{x_0}}{d\hmeas{\unifdom}{x_0}}(\xi)\frac{d\nu^{\unifdom}_{x_0}}{d\hmeas{\unifdom}{x_0}}(\eta)\biggr)^{-1} \nonumber \\
				& \overset{\eqref{e:em5}}{=} \frac{\naimker^\unifdom_{x_0}(\xi,\eta)}{\naimker^\unifdom_{x_0}(\infty, \xi ) \naimker^\unifdom_{x_0}(\infty,\eta)}  \nonumber \\
				&= \lim_{\substack{(z,x,y) \to (\infty,\xi,\eta),\\ z,x,y \in \unifdom}} \frac{\naimker^\unifdom_{x_0}(x,y)}{\naimker^\unifdom_{x_0}(z, x ) \naimker^\unifdom_{x_0}(z,y)} \quad \textrm{(by Remark \ref{r:emeas} and Proposition \ref{p:naim})} \nonumber \\
				&= \lim_{\substack{(z,x,y) \to (\infty,\xi,\eta),\\ z,x,y \in \unifdom}} \frac{\gren{\unifdom}(x,y)}{\martinker^\unifdom_{x_0}(x,z) \martinker^\unifdom_{x_0}(y,z)} \quad \textrm{(by \eqref{e:defnaim} and \eqref{e:defMartin})} \nonumber \\
				&= \lim_{\substack{(x,y) \to (\xi,\eta), \\ x,y \in \unifdom}} \frac{\gren{\unifdom}(x,y)}{ \hprof{x_0}(x)\hprof{x_0}(y)} \quad \textrm{(by Proposition \ref{p:hprofile} and \eqref{e:hp1}).}
			\end{align}
			We note that the existence of the limit in \eqref{e:jkformula} follows from
			\ref{eq:BHP} by using arguments similar to the proof of Proposition \ref{p:naim}.
		\end{enumerate}
	\end{remark} 
	
	\subsection{Stable-like heat kernel estimates for the trace process} \label{ssec:shk-trace}
	
	The following exit time lower estimate is a key ingredient in the proof of
	the stable-like heat kernel estimates for the boundary trace process.
	It is deduced from \hyperlink{hke}{$\on{HKE(\scdiff)}$} for
	$(\overline{\unifdom},d,\refmeas|_{\overline{\unifdom}},\formref,\domainref)$
	obtained in \cite{Mur24} (Theorem \ref{thm:hkeunif}-\eqref{it:hkeunif}) and
	the invariance of the Green functions under the operation of taking trace
	Dirichlet forms (Proposition \ref{prop:greeninv}).
	
	\begin{prop} \label{p:bdry-trace-exit}
		There exist $C_1,A_1 \in (1,\infty)$ such that all $\xi \in \partial \unifdom$
		and all $r \in (0,\diam(\partial U)/2)$,
		\begin{equation} \label{e:bdry-trace-exit}
			\expref_{\xi}\bigl[\widecheck{\tau}_{B(\xi,r)}\bigr]
			\ge C_1^{-1} \scjump(\xi,r),
		\end{equation}
		where $\widecheck{\tau}_{B(\xi,r)} := \inf\bigl\{ t \in [0,\infty) \bigm| \diffreftr_{t} \not \in B(\xi,r) \bigr\}$.
	\end{prop}
	
	\begin{proof}
		Recall that $(\overline{\unifdom},\refmeas|_{\overline{\unifdom}},\formref,\domainref)$
		is irreducible by Theorem \ref{thm:hkeunif}-\eqref{it:hkeunif} and Proposition \ref{p:feller}-\eqref{it:HKE-conn-irr-cons}.
		By Remark \ref{r:nonpolar}, this irreducibility and \cite[Proposition 2.1]{BCM}, for any $\xi \in \partial \unifdom$ and any $r \in (0,\diam(\partial \unifdom)/2)$
		the part Dirichlet form of $(\formref,\domainref)$ on $\overline{\unifdom} \setminus (\partial \unifdom \cap B(\xi,r)^c)$ is transient.
		By Theorem \ref{thm:hkeunif}-\eqref{it:hkeunif}, \cite[Theorem 1.2]{GHL15},
		\hyperlink{VD}{\textup{VD}} of $(\ambient,d,\refmeas)$, \eqref{e:reg}
		and the domain monotonicity of the Green functions, there exist
		$A_0,C_2 \in (1,\infty)$ such that for all $x \in \overline{\unifdom}$
		and all $r \in (0,\diam(\unifdom)/2)$, we have 
		\begin{equation} \label{e:ext1}
			\grenref{\overline{\unifdom} \cap B(x,r)}(x,y) \ge C_2^{-1} \frac{\scdiff(r)}{\refmeas(B(x,r))} \quad \textrm{for all $y \in B(x,A_0^{-1}r)$.}
		\end{equation}
		The domain monotonicity of the Green functions also yields
		\begin{equation} \label{e:ext2}
			\grenref{\ol \unifdom \setminus \left( \partial \unifdom \cap B(\xi,r)^c\right)}(\cdot,\cdot) 	 \ge \grenref{\overline{\unifdom} \cap B(\xi,r)}(\cdot,\cdot) 
		\end{equation}
		for all $\xi \in \partial \unifdom$ and all $r \in (0,\diam(\partial \unifdom)/2)$. 
		Therefore, noting that Proposition \ref{prop:greeninv} is applicable by
		Proposition \ref{p:bdrymeas}-\eqref{it:bdrymeas-smooth} and applying \eqref{e:odtr}
		in Proposition \ref{prop:greeninv} with $f \equiv 1$, for all
		$\xi \in \partial \unifdom$ and all $r \in (0,\diam(\partial \unifdom)/2)$, we have
		\begin{align} \label{e:ext3}
			\expref_{\xi}[\widecheck{\tau}_{B(\xi,r)}] &= \int_{\partial \unifdom \cap B(\xi,r)} \grenref{\ol \unifdom \setminus ( \partial \unifdom \cap B(\xi,r)^c)} (\xi,\eta) \, \bdrymeas(d\eta)
			\overset{\eqref{e:ext2}}{\ge} \int_{\partial \unifdom \cap B(\xi,r)} \grenref{\overline{\unifdom} \cap B(\xi,r)} (\xi,\eta) \, \bdrymeas(d\eta) \nonumber \\
			&\ge C_2^{-1} \frac{\scdiff(r)}{\refmeas(B(\xi,r))} \bdrymeas(\partial \unifdom \cap B(\xi,r)) \quad \textrm{(by \eqref{e:ext1}).}
		\end{align} 
		The exit time lower estimate \eqref{e:bdry-trace-exit} follows by combining \eqref{e:ext3} with \eqref{e:jp1} or \eqref{e:jp2}.
	\end{proof}
	
	Given the jump kernel estimate (Corollary \ref{c:jpsi}) and the exit time lower estimate
	(Proposition \ref{p:bdry-trace-exit}) for the boundary trace process, by Theorem \ref{t:shkchar} we obtain
	the stable-like heat kernel estimates for it, as stated in the following main theorem of this subsection.
	
	\begin{theorem}\label{thm:shk-trace}
		Let a scale function $\scdiff$, a MMD space $(\ambient,d,\refmeas,\form,\domain)$,
		a uniform domain $\unifdom$ in $(\ambient,d)$, and a diffusion
		$\diffref=\bigl(\Omega^{\on{ref}},\events^{\on{ref}},\{\diffref_{t}\}_{t\in[0,\infty]},\{\lawref_{x}\}_{x\in\oneptcpt{\overline{\unifdom}}}\bigr)$
		on $\overline{\unifdom}$ satisfy Assumption \ref{a:hkecdcbhp}, and assume that
		$(\partial \unifdom,d)$ is uniformly perfect. Let $\bdrymeas$ be the Radon measure
		on $\overline{\unifdom}$ defined in \eqref{e:defmu}, and let $\scjump$ be the
		regular scale function on $(\partial \unifdom,d)$ given by \eqref{e:defscale} and
		Lemma \ref{l:scale}. Then the NLMMD space $(\partial \unifdom,d,\bdrymeas,\formtr,\domaintr)$
		is of pure jump type and satisfies \hyperlink{VD}{\textup{VD}} and
		\hyperlink{shk}{$\on{SHK}(\scjump)$}, and consequently the following hold:
		\begin{enumerate}[\rm(a)]\setlength{\itemsep}{0pt}\vspace{-5pt}
			\item\label{it:shk-trace-irr-conserv} $(\partial \unifdom,\bdrymeas,\formtr,\domaintr)$
			is irreducible and conservative.
			\item\label{it:shk-trace-CHK} A (unique) continuous heat kernel
			$\hkreftr=\hkreftr_{t}(\xi,\eta)\colon (0,\infty) \times \partial \unifdom \times \partial \unifdom \to [0,\infty)$
			of $(\partial \unifdom,\bdrymeas,\formtr,\domaintr)$ exists and satisfies
			\eqref{eq:shk-trace-upper-intro} and \eqref{eq:shk-trace-lower-intro}
			for any $(t,\xi,\eta) \in (0,\infty) \times \partial \unifdom \times \partial \unifdom$
			for some $C_{1} \in (1,\infty)$.
			\item\label{it:shk-trace-Feller} The boundary trace process
			$\diffreftr=\bigl(\widecheck{\Omega}^{\on{ref}},\widecheck{\events}^{\on{ref}},\{\diffreftr_{t}\}_{t\in[0,\infty]},\{\lawref_{\xi}\}_{\xi \in \oneptcptp{\partial \unifdom}}\bigr)$
			of $\diffref$ on $\partial \unifdom$ as defined in Definition \ref{d:bdry-trace}-\eqref{it:bdry-trace-process}
			is a conservative Hunt process on $\partial \unifdom$, and its Markovian transition function is given by
			$\lawref_{\xi}(\diffreftr_{t} \in d\eta)= \hkreftr_{t}(\xi,\eta) \, \bdrymeas(d\eta)$
			for any $(t,\xi) \in (0,\infty) \times \partial \unifdom$ and
			has the Feller property and the strong Feller property.
			\item\label{it:shk-trace-domain} Let $\jumpkertr_{\bdrymeas} \colon \offdiagp{\partial \unifdom} \to (0,\infty)$
			be as given in \eqref{e:defJmu}. Then $\domaintr$ considered as a linear subspace of
			$L^{2}(\partial \unifdom,\bdrymeas)$ is identified as
			\begin{equation}\label{eq:shk-trace-domain}
				\domaintr=\biggl\{u\in L^{2}(\partial \unifdom,\bdrymeas) \biggm| \int_{\partial \unifdom}\int_{\partial \unifdom} (u(x)-u(y))^{2} \jumpkertr_{\bdrymeas}(x,y)\, \bdrymeas(dx)\,\bdrymeas(dy) < \infty \biggr\}.
			\end{equation}
		\end{enumerate}
	\end{theorem}
	
	\begin{proof}
		$\diffreftr$ is a $\bdrymeas$-symmetric Hunt process on $\partial \unifdom$ whose
		Dirichlet form is $(\formtr,\domaintr)$ as noted after \eqref{eq:bdry-trace-process} and
		after \eqref{eq:bdrytrace-ExtDiriSp}, and $(\partial \unifdom,d,\bdrymeas,\formtr,\domaintr)$
		is a NLMMD space of pure jump type by \cite[Theorem 5.2.13-(i)]{CF} and Proposition \ref{prop:bdry-trace-pure-jump}
		and satisfies \hyperlink{VD}{\textup{VD}} by Remark \ref{rmk:bdry-meas-doubling}-\eqref{it:bdry-meas-doubling},
		the jump kernel estimate \hyperlink{jphi}{$\on{J}(\scjump)$} by Corollary \ref{c:jpsi},
		and the exit time lower estimate \hyperlink{exit}{$\on{E}(\scjump)_{\geq}$}
		by Proposition \ref{p:bdry-trace-exit}. Thus by Theorem \ref{t:shkchar},
		$(\partial \unifdom,d,\bdrymeas,\formtr,\domaintr)$ satisfies \hyperlink{shk}{$\on{SHK}(\scjump)$} and
		the claims \eqref{it:shk-trace-irr-conserv}, \eqref{it:shk-trace-CHK} and \eqref{it:shk-trace-domain} hold.
		
		It thus remains to prove \eqref{it:shk-trace-Feller}. Let $(\widecheck{P}_{t})_{t>0}$
		denote the Markovian transition function of $\diffreftr$, which satisfies 
		$\widecheck{P}_{t}(\xi,\cdot) \ll \bdrymeas$ for any $(t,\xi) \in (0,\infty) \times \partial \unifdom$
		by Propositions \ref{p:bdrymeas}-\eqref{it:bdrymeas-smooth} and
		\ref{prop:greeninv}-\eqref{it:time-change-Hunt-AC}, and define a Markovian
		transition function $(\widecheck{Q}_{t})_{t>0}$ on $\partial \unifdom$ by
		$\widecheck{Q}_{t}(\xi,d\eta):=\hkreftr_{t}(\xi,\eta)\,\bdrymeas(d\eta)$,
		$(t,\xi)\times(0,\infty)\times \partial \unifdom$, so that by Theorem \ref{t:shkchar}-\eqref{it:shkchar-Feller},
		$(\widecheck{Q}_{t})_{t>0}$ has the Feller property and the strong Feller property and satisfies
		$\widecheck{Q}_{t}(\xi,\partial \unifdom)=1$ for any $(t,\xi) \in (0,\infty)\times \partial \unifdom$.
		We show $(\widecheck{P}_{t})_{t>0} = (\widecheck{Q}_{t})_{t>0}$ by applying the argument
		in the proof of Proposition \ref{p:feller}-\eqref{it:HKE-part-CHK-sFeller}.
		Since the Dirichlet form of $\diffreftr$ is $(\formtr,\domaintr)$,
		we have $\widecheck{P}_{t}f = \widecheck{Q}_{t}f$ $\bdrymeas$-a.e.\ on $\partial \unifdom$
		for any $f \in L^{2}(\partial \unifdom,\bdrymeas)$ and any $t \in (0,\infty)$.
		Now let $f \in \contfunc_{\mathrm{c}}(\partial \unifdom)$. Then for any $s,t \in (0,\infty)$
		and any $\xi \in \partial \unifdom$, by the Markov property of $\diffreftr$,
		$\widecheck{P}_{t} f = \widecheck{Q}_{t} f$ $\bdrymeas$-a.e.\ on $\partial \unifdom$
		and $\widecheck{P}_{s}(\xi,\cdot) \ll \bdrymeas$ we obtain
		\begin{equation*}
			\widecheck{P}_{t}( \widecheck{P}_{s} f )(\xi)
			= (\widecheck{P}_{t+s} f)(\xi)
			= \widecheck{P}_{s}( \widecheck{P}_{t} f )(\xi)
			= \widecheck{P}_{s}( \widecheck{Q}_{t} f )(\xi),
		\end{equation*}
		and letting $s \downarrow 0$ yields
		\begin{equation} \label{eq:diffreftr-transition-function-equal}
			( \widecheck{P}_{t} f )(\xi) = ( \widecheck{Q}_{t} f )(\xi)
		\end{equation}
		by the dominated convergence theorem since
		$\lim_{s \downarrow 0}(\widecheck{P}_{s} f)(\eta) = f(\eta)$ for any $\eta \in \partial \unifdom$
		and $\lim_{s \downarrow 0}\widecheck{P}_{s}( \widecheck{Q}_{t} f )(\xi) = (\widecheck{Q}_{t} f)(\xi)$
		by the sample-path right-continuity of $\diffreftr$, $f \in \contfunc_{\mathrm{c}}(\partial \unifdom)$,
		and $\widecheck{Q}_{t}f \in \contfunc(\partial \unifdom)$ implied by the strong Feller property of
		$\widecheck{Q}_{t}$. We thus conclude from the validity of \eqref{eq:diffreftr-transition-function-equal}
		for any $f \in \contfunc_{\mathrm{c}}(\partial \unifdom)$ that
		$\widecheck{P}_{t}( \xi, \cdot ) = \widecheck{Q}_{t}( \xi, \cdot )$
		for any $(t,\xi) \in (0,\infty) \times \partial \unifdom$, proving \eqref{it:shk-trace-Feller}.
	\end{proof}
	
	\begin{remark} \label{r:scale}
		Let an MMD space $(\ambient,d,\refmeas,\form,\domain)$ and a uniform domain $\unifdom$ in $(\ambient,d)$ satisfy the assumptions of Theorem \ref{thm:shk-trace}.
		Let $\Capa,\Capa^{\on{ref}},\Capa^{\on{tr}}$ denote the capacities for the   spaces $(\ambient,d,m,\form,\domain)$, $(\ol{\unifdom},d,\restr{m}{\ol{\unifdom}},\formref,\domainref)$, and $(\partial \unifdom, d, \bdrymeas, \formtr,\domaintr)$  as defined in \eqref{e:defCapD} respectively.  
		Using the Poincar\'e inequality in \cite[Definition 7.5]{CKW} for lower bound on capacity across annuli and \cite[Proposition 2.3-(5)]{CKW} for a matching upper bound we obtain the following estimate: there exist $C,A \in (1,\infty)$ such that for all $\xi \in \partial \unifdom, 0<r<\diam(\partial \unifdom,d)/A$, we obtain
		\begin{equation} \label{e:captr}
			C^{-1}	 \frac{\bdrymeas(B(\xi,r))}{\scjump(\xi,r)} \le \Capa^{\on{tr}}_{B(\xi,2r)\cap \partial \unifdom} (B(\xi,r)\cap \partial \unifdom) \le C \frac{\bdrymeas(B(\xi,r))}{\scjump(\xi,r)}
		\end{equation}
		On the other hand, by \cite[Theorem 1.2]{GHL15}, Theorem \ref{thm:hkeunif}-\eqref{it:hkeunif} and \cite[Lemma 5.22]{BCM},
		there exist $C,A \in (1,\infty)$ such that 
		\begin{equation} \label{e:capref}
			C^{-1} \frac{\refmeas(B(\xi,r))}{\scdiff(r)} \le \Capa^{\on{ref}}_{B(x,2r)\cap \ol \unifdom} (B(x,r)\cap \ol \unifdom) \le C \frac{\refmeas(B(x,r))}{\scdiff(r)}
		\end{equation}
		for all $x \in \ol{\unifdom}$ and all $r \in (0,\diam(\unifdom)/A)$, and 
		\begin{equation} \label{e:capamb}
			C^{-1} \frac{\refmeas(B(x,r))}{\scdiff(r)} \le \Capa_{B(x,2r)} (B(x,r)) \le C \frac{\refmeas(B(x,r))}{\scdiff(r)}
		\end{equation}
		for all $x \in \ambient$ and all $r \in (0,\diam(\ambient)/A)$.
		By combining \eqref{e:captr}, \eqref{e:capref}, \eqref{e:capamb} and \eqref{eq:bdry-scale-estimate},
		there exists $A \in (1,\infty)$ such that 
		\begin{equation} \label{e:capamb-capref-captr-comp}
			\Capa_{B(\xi,2r)} (B(\xi,r)) \asymp \Capa^{\on{ref}}_{B(\xi,2r)\cap \ol \unifdom} (B(\xi,r)\cap \ol \unifdom) \asymp \Capa^{\on{tr}}_{B(\xi,2r)\cap \partial \unifdom} (B(\xi,r)\cap \partial \unifdom) 
		\end{equation}
		for all $\xi \in \partial \unifdom$ and all $r \in (0,\diam(\partial \unifdom)/A)$.
	\end{remark}
	
	By Lemma \ref{l:up} and Remark \ref{r:cdc}-\eqref{it:cdc-ecc}, Theorem \ref{thm:shk-trace}
	applies to the reflected Brownian motion on any non-tangentially accessible domain on $\mathbb{R}^{\dimeuc}$ with $\dimeuc \ge 2$.
	Theorem \ref{thm:shk-trace} applies also to the Brownian motion on the Sierpi\'{n}ski carpet
	and the uniform domain $\unifdom$ in it formed by removing either the bottom line or the outer square boundary
	(by \cite[Proposition 4.4]{Lie22}, \cite[Proposition 2.4]{CQ} and Remark \ref{r:cdc}-\eqref{it:cdc-AR});
	note that in this case the reflected Dirichlet form on $\unifdom$ coincides with
	the Dirichlet form of the original Brownian motion on the Sierpi\'{n}ski carpet
	by Theorem \ref{thm:hkeunif}-\eqref{it:formref-energy-meas-zero}.
	
	Another related direction of research is the Calder\'on's inverse problem.
	In our setting, we can phrase it as follows: Does the Dirichlet form of the
	boundary trace process determine the Dirichlet form of the underlying reflected diffusion?
	We refer to \cite{SU} for further context, background, and a solution to this problem
	for a class of Dirichlet forms in $\mathbb{R}^{\dimeuc}$. 
	
	\subsection{Extension to the case with weak capacity density condition} \label{ss:wCDC}
	
	All our results, except those in this subsection, were available prior to the arXiv submission of
	the recent preprint \cite{CC24b} by Cao and Chen, and they consider there,
	as already described in Remark \ref{rmk:differences-CaoChan-intro}, a slightly
	more general framework than ours specified in Assumption \ref{a:hkecdcbhp}.
	The purpose of this subsection is to show that our methods apply with minor modifications to a more general setting and to compare our results with those in \cite{CC24b}.
	To this end, we introduce a weaker variant of the capacity density condition in Definition \ref{d:cdc} with the condition imposed on a smaller range of radii.
	
	\begin{definition}[Weak capacity density condition (wCDC)] \label{dfn:wCDC}
		Let $(\ambient,d,\refmeas,\form,\domain)$ be an MMD space satisfying \hyperlink{MD}{\textup{MD}}
		and \hyperlink{ehi}{$\on{EHI}$}. Recalling Lemma \ref{l:chain}-\eqref{it:EHI-MD-RBC}, let
		$K \in (1,\infty)$ be such that $(\ambient,d)$ is $K$-relatively ball connected. 
		We say that a uniform domain $\unifdom$ in $(\ambient,d)$ satisfies the
		\textbf{weak capacity density condition}, abbreviated as \ref{eq:wCDC},
		if $\#(\partial \unifdom)\geq 2$ and there exist $A_0 \in (8K,\infty)$ and $A_1, C \in (1,\infty)$
		such that for all $\xi \in \partial \unifdom$ and all $R \in (0,\diam(\partial \unifdom)/A_1)$,
		\begin{equation} \tag*{\textup{wCDC}} \label{eq:wCDC}
			\Capa_{B(\xi,A_0 R)}(B(\xi,R)) \le C \Capa_{B(\xi,A_0 R)}(B(\xi,R)\setminus \unifdom).
		\end{equation}
	\end{definition}

\begin{remark} \label{rmk:wCDC-vacuous}
The condition $\#(\partial \unifdom)\geq 2$, which is equivalent to $\diam(\partial \unifdom) \in (0,\infty]$,
is needed in Definition \ref{dfn:wCDC} to prevent \ref{eq:wCDC} from being vacuous.
It is not strictly necessary, though; indeed, all the discussions of this subsection
on replacing \ref{eq:CDC} by \ref{eq:wCDC} remain applicable to the case where
$\#(\partial \unifdom) = 1$ and \ref{eq:wCDC} with ``$R \in (0,\cdcthres{\unifdom}/A_1)$''
in place of ``$R \in (0,\diam(\partial \unifdom)/A_1)$'' holds for some $\cdcthres{\unifdom} \in (0,\diam(\unifdom)]$,
as long as $\cdcthres{\unifdom}$ is used instead of $\diam(\partial \unifdom)$.
\end{remark}

The only difference of \ref{eq:wCDC} from \ref{eq:CDC} is that the range of radii $R$
is $(0,\diam(\partial\unifdom)/A_1)$ instead of $(0,\diam(\unifdom)/A_1)$; note that
$(0,\diam(\partial\unifdom)/A_1)\subset(0,\diam(\unifdom)/A_1)$ by the trivial inequality
$\diam(\partial\unifdom)\leq\diam(\unifdom)$. By obvious minor modifications of our arguments,
\begin{enumerate}[\rm(a)]\setlength{\itemsep}{0pt}\vspace{-1pt}
\item Lemma \ref{l:cdc1} with $\diam(\partial D)$ in place of $\diam(D)$ holds,
\end{enumerate}
and we obtain the following slightly weaker versions of the results in Section \ref{sec:harm-ell-meas} and Subsection \ref{ssec:bdry-meas-pcaf}
under the setting of Assumption \ref{a:hkecdcbhp} with \ref{eq:CDC} replaced by \ref{eq:wCDC}:
\begin{enumerate}[\rm(a)]\addtocounter{enumi}{1}\setlength{\itemsep}{0pt}\vspace{-2pt}
\item Lemmas \ref{l:Delta} and \ref{l:gbnd} with $\diam(\partial \unifdom)$ in place of $\diam(\unifdom)$ hold.
\item Theorem \ref{t:hmeas} and Corollary \ref{c:asdouble} with ``$r \in \bigl(0,(\diam(\partial \unifdom) \wedge d(\xi,x_0))/A\bigr)$''
	in place of ``$r \in (0,d(\xi,x_0)/A)$'' hold.
\item Lemma \ref{l:up} holds, with the capacity non-decreasing condition \eqref{e:cnd1}
	required only for $x \in \partial \unifdom$ and $0<r<R<\diam(\partial \unifdom)/A$.
\item\label{it:wCDC-emeas} Propositions \ref{p:harmonic-mk}, \ref{p:emeas} and Remark \ref{r:emeas}, with ``$R \in (0,\infty)$'' in
	Proposition \ref{p:emeas}-\eqref{it:emeas-estimate} replaced by ``$R \in (0,2\diam(\partial \unifdom))$'', hold.
\item \eqref{eq:bdry-scale-estimate}, Lemma \ref{l:scale}, Proposition \ref{p:bdrymeas} and \eqref{eq:bdry-nonpolar}
	with $\diam(\partial \unifdom)$ in place of $\diam(\unifdom)$ hold.
\end{enumerate}

Next, we discuss how the results of Subsection \ref{ssec:doob-naim} are affected by replacing \ref{eq:CDC} with \ref{eq:wCDC}.
Lemma \ref{l:hitbdy} need not be true anymore and hence the killing measure $\widecheck{\kappa}$ of the boundary trace Dirichlet form
need not be zero in Proposition \ref{prop:bdry-trace-pure-jump}, whereas the vanishing of the strongly local part
in Proposition \ref{prop:bdry-trace-pure-jump} holds with the same proof. The expression \eqref{eq:dnformula-jumpmeas}
for the jumping measure of the boundary trace Dirichlet form in Theorem \ref{t:dnformula} holds with the same proof under \ref{eq:wCDC},
and Corollary \ref{c:jpsi} and Remark \ref{rmk:bdry-meas-doubling} also hold under \ref{eq:wCDC},
with $\diam(\unifdom)$ needed to be replaced by $\diam(\partial \unifdom)$ and
\eqref{e:jp2} needed to be shown for $R \in (0,\diam(\partial \unifdom)]$ also when
$\diam(\partial \unifdom) < \infty = \diam(\unifdom)$ in the proof of Corollary \ref{c:jpsi}.

We note that Lemma \ref{l:hitbdy} and the proof of the vanishing of the killing measure $\widecheck{\kappa}$
in Proposition \ref{prop:bdry-trace-pure-jump} hold if the uniform domain $\unifdom$ is bounded.
Therefore it remains only to determine $\widecheck{\kappa}$ when $\unifdom$ is unbounded.
In fact, it follows from the extension of Proposition \ref{p:emeas} to the case with \ref{eq:wCDC}
that \emph{the killing measure $\widecheck{\kappa}$ is a constant multiple of
the $\form$-elliptic measure at infinity}, as we prove in the following proposition.
Although the notion of elliptic measure at infinity dates back to \cite[Corollary 3.2]{KT},
its role as the killing measure of the boundary trace Dirichlet form seems new to the best of our knowledge.

\begin{prop} \label{prop:killingmeas}
Assume the setting of Assumption \ref{a:hkecdcbhp} with \ref{eq:CDC} replaced by \ref{eq:wCDC},
and that $\unifdom$ is unbounded. Let $x_{0} \in \unifdom$. Then the killing measure $\widecheck{\kappa}$
in the Beurling--Deny decomposition \eqref{e:Beurling-Deny-bdry-trace} of the trace Dirichlet form
$(\formtr,\domaintr)$ on $L^{2}(\partial \unifdom,\bdrymeas)$ is given by
\begin{equation} \label{eq:killingmeas}
	\widecheck{\kappa} = \lawref_{x_{0}}(\sigma_{\partial \unifdom} = \infty) \nu^{\unifdom}_{x_{0}},
\end{equation}
where $\nu^{\unifdom}_{x_{0}}$ denotes the $\form$-elliptic measure at infinity of $\unifdom$ with
base point $x_{0}$ as obtained in the extension of Proposition \ref{p:emeas} to the present setting.
\end{prop}

\begin{proof}
If $\partial \unifdom$ is unbounded, then both sides of \eqref{eq:killingmeas} are zero by Lemma \ref{l:hitbdy} and Proposition \ref{prop:bdry-trace-pure-jump}. 
Hence it suffices to consider the case where $\diam(\partial \unifdom)<\infty$.
Then we have $\nu^{\unifdom}_{x_{0}}(\partial \unifdom)<\infty$ by the extension of
Proposition \ref{p:emeas} to the present situation mentioned in \eqref{it:wCDC-emeas} above,
and we can choose $v \in \domainref \cap \contfunc_{\mathrm{c}}(\overline{\unifdom})$ so that
$v$ is identically one on a neighborhood of $\partial \unifdom$. Recalling \eqref{eq:bdry-trace-hitdist},
we define an $\formref$-quasi-continuous function $q \colon \overline{\unifdom} \to [0,1]$ by
$q(x):=1-H^{\on{ref}}_{\partial \unifdom}v(x)=\lawref_{x}(\sigma_{\partial \unifdom} = \infty)$,
so that $q=0$ $\formref$-q.e.\ on $\partial \unifdom$ by \eqref{eq:hit-dist-harm-var},
$q\vert_{\unifdom} \in \domain_{\loc}(\unifdom)$ and $q$ is continuous on $\unifdom$ and $\form$-harmonic on $\unifdom$
by Lemma \ref{l:harmonicm}-\eqref{it:hmeas-quasi-support},\eqref{it:hmeas-continuous}.
Moreover, for each open subset $A$ of $\unifdom$ with $\overline{A}$ compact, we can choose
$\varphi \in \domainref \cap \contfunc_{\mathrm{c}}(\overline{\unifdom})$ so that
$\varphi\vert_{A}=\one_{A}$, and then we have $\varphi q=0$ $\formref$-q.e.\ on $\partial \unifdom$,
$\varphi q \in \domainref$ by $H^{\on{ref}}_{\partial \unifdom}v \in \domainref_{e}$, \cite[Exercise 1.1.10]{CF} and
$\domainref_{e}\cap L^{2}(\overline{\unifdom},\refmeas\vert_{\overline{\unifdom}})=\domainref$,
and thus the $\refmeas\vert_{\unifdom}$-equivalence class of $\varphi q\vert_{\unifdom}$ belongs to
$\domain^{0}(\unifdom)$ since the part Dirichlet form of $(\formref,\domainref)$ on $\unifdom$ coincides with
$(\form^{\unifdom},\domain^{0}(\unifdom))$ as observed in the proof of Lemma \ref{l:harmonicm}-\eqref{it:hmeas-quasi-support}.
It follows in view of \eqref{eq:dbdry} that $q\vert_{\unifdom} \in \domain^{0}_{\loc}(\unifdom,\unifdom)$,
and therefore from Lemma \ref{l:uniqueprofile} that
\begin{equation} \label{e:kmeas1}
	q\vert_{\unifdom}(\cdot) = \lawref_{x_{0}}(\sigma_{\partial \unifdom} = \infty) \hprof{x_{0}}(\cdot).
\end{equation}
Now, recalling Remark \ref{rmk:harm-conv}-\eqref{it:rmk:harm-conv-dbdry},
for any $u \in \domainref \cap L^{\infty}(\overline{\unifdom},\refmeas|_{\overline{\unifdom}})$
such that $\supp_{\refmeas\vert_{\overline{\unifdom}}}[u]$ is compact, we have
\begin{align} \label{e:kmeas2}
\formref\bigl( H^{\on{ref}}_{\partial \unifdom} \widetilde{u}, H^{\on{ref}}_{\partial \unifdom} v \bigr)
	&= \formref\bigl( \widetilde{u}, H^{\on{ref}}_{\partial \unifdom} v \bigr) \quad
	\textrm{(by \eqref{eq:hit-dist-harm-var} and \eqref{eq:hit-dist-harm-test})} \nonumber \\
&= \formref\bigl( \widetilde{u}, - \lawref_{x_{0}}(\sigma_{\partial \unifdom} = \infty) \hprof{x_{0}} \bigr) \quad
	\textrm{(by $H^{\on{ref}}_{\partial \unifdom}v=\one_{\overline{\unifdom}}-q$ and \eqref{e:kmeas1})} \nonumber \\
&= \lawref_{x_{0}}(\sigma_{\partial \unifdom} = \infty) \int_{\partial \unifdom} \widetilde{u}\,d\nu^{\unifdom}_{x_{0}} \quad
	\textrm{(by \eqref{e:em0}),}
\end{align}
where for the second equality above we also used the strong locality of $\formref$.
By \eqref{e:kmeas2}, \eqref{eq:bdry-trace-form}, the Beurling--Deny decomposition \eqref{e:Beurling-Deny-bdry-trace},
and the strong locality of the strongly local part $\formtrsl$ of the boundary trace
Dirichlet form $(\formtr,\domaintr)$, we obtain
\begin{equation*}
\lawref_{x_{0}}(\sigma_{\partial \unifdom} = \infty) \int_{\partial \unifdom} \widetilde{u}\,d\nu^\unifdom_{x_{0}}
	= \formref\bigl( H^{\on{ref}}_{\partial \unifdom} \widetilde{u}, H^{\on{ref}}_{\partial \unifdom} v \bigr)
	= \formtr(\widetilde{u}\vert_{\partial \unifdom},v\vert_{\partial \unifdom})
	= \int_{\partial \unifdom} \widetilde{u}\,d\widecheck{\kappa}
\end{equation*}
for all $u \in \domainref \cap L^{\infty}(\overline{\unifdom},\refmeas\vert_{\overline{\unifdom}})$
such that $\supp_{\refmeas\vert_{\overline{\unifdom}}}[u]$ is compact, which proves \eqref{eq:killingmeas}.
\end{proof}

Finally, we describe the changes required in Subsection \ref{ssec:shk-trace} if we replace \ref{eq:CDC} with \ref{eq:wCDC}.
Proposition \ref{p:bdry-trace-exit} holds with the same proof under \ref{eq:wCDC}.
By Propositions \ref{prop:bdry-trace-pure-jump}, \ref{prop:killingmeas} and
the discussion in the paragraph before Proposition \ref{prop:killingmeas},
if $\diam(\partial \unifdom)=\infty$ or $\diam(\unifdom)<\infty$ or
$\lawref_{x_{0}}(\sigma_{\partial \unifdom} = \infty) = 0$, then
the killing measure $\widecheck{\kappa}$ vanishes and therefore
Theorem \ref{thm:shk-trace} still holds with the same proof in this case.
On the other hand, if $\diam(\partial \unifdom) < \infty = \diam(\unifdom)$ and
$\lawref_{x_{0}}(\sigma_{\partial \unifdom} = \infty) > 0$, then
$\widecheck{\kappa} = \lawref_{x_{0}}(\sigma_{\partial \unifdom} = \infty) \nu^{\unifdom}_{x_{0}}$
is not zero by Proposition \ref{prop:killingmeas}, so that the boundary trace Dirichlet form
$(\formtr,\domaintr)$ on $L^{2}(\partial \unifdom,\bdrymeas)$ is neither of pure jump type nor conservative.
In this case, we can still prove a slight variant of Theorem \ref{thm:shk-trace}
as in Theorem \ref{thm:shk-trace-wCDC} below. Note also the following lemma
characterizing precisely when $\lawref_{x_{0}}(\sigma_{\partial \unifdom} = \infty) > 0$
under the assumption that $\diam(\partial \unifdom) < \infty = \diam(\unifdom)$.

\begin{lem}
	Assume the setting of Assumption \ref{a:hkecdcbhp} with \ref{eq:CDC} replaced by \ref{eq:wCDC},
	and that $\diam(\partial \unifdom) < \infty = \diam(\unifdom)$. Let $x_{0} \in \unifdom$.
	Then $\lawref_{x_{0}}(\sigma_{\partial \unifdom} = \infty) > 0$ if and only if
	$(\overline{\unifdom},\refmeas|_{\overline{\unifdom}},\formref,\domainref)$ is transient.
\end{lem}

\begin{proof}
	Recall that Proposition \ref{p:feller} is applicable to
	$(\overline{\unifdom},d,\refmeas|_{\overline{\unifdom}},\formrefgen{\unifdom},\domain(\unifdom))$
	by Theorem \ref{thm:hkeunif}-\eqref{it:hkeunif}.
	If $(\overline{\unifdom},\refmeas|_{\overline{\unifdom}},\formref,\domainref)$ is not transient,
	then it is irreducible and recurrent by Proposition \ref{p:feller}-\eqref{it:HKE-conn-irr-cons}
	and \cite[Proposition 2.1.3-(iii)]{CF}, which together with \eqref{eq:bdry-nonpolar} and
	\ref{eq:AC} of $\diffref$ guarantees that \cite[Theorem 4.7.1-(iii) and Exercise 4.7.1]{FOT}
	can be applied to $\diffref$ and $\partial \unifdom$ and yield
	$\lawref_{x_{0}}(\sigma_{\partial \unifdom} = \infty) = 0$,
	proving the ``only if'' part.
	
	Conversely, assume that $(\overline{\unifdom},\refmeas|_{\overline{\unifdom}},\formref,\domainref)$ is transient,
	so that by combining \cite[Theorem 3.5.2]{CF} with the Markov property of $\diffref$ and \ref{eq:AC}
	and the conservativeness of $\diffref$ from Proposition \ref{p:feller}-\eqref{it:HKE-Feller} we obtain
	\begin{equation} \label{eq:transient-conserv}
	\lawref_{x_{0}}\Bigl( \textrm{$\zeta^{\on{ref}} = \infty$ and $\lim_{t \to \infty} \diffref_{t}  = \cemetery$} \Bigr)
		= \lawref_{x_{0}}( \zeta^{\on{ref}} = \infty ) = 1.
	\end{equation}
	Now suppose that $\lawref_{x_{0}}(\sigma_{\partial \unifdom} = \infty) = 0$. Then by \eqref{e:kmeas1}
	we would have $\lawref_{x}(\sigma_{\partial \unifdom} = \infty) = 0$ for any $x \in \unifdom$,
	but since $\lawref_{x_{0}}(\diffref_{n} \in \unifdom) = 1$ for any $n \in \mathbb{N}$
	by $\refmeas(\partial \unifdom) = 0$ from \eqref{eq:volume-doubling-unif-bdry-zero}
	and \ref{eq:AC} and the conservativeness of $\diffref$,
	we would see from the Markov property of $\diffref$ that
	\begin{equation*}
	\lawref_{x_{0}}(\sigma_{\partial \unifdom} \circ \shiftdiff^{\on{ref}}_{n} < \infty)
		= \expref_{x_{0}}\bigl[ \lawref_{\diffref_{n}}(\sigma_{\partial \unifdom} < \infty) \bigr]
		= 1 \quad \textrm{for any $n \in \mathbb{N}$}
	\end{equation*}
	and hence that
	$\lawref_{x_{0}}\bigl( \bigcap_{n \in \mathbb{N}} \{ \sigma_{\partial \unifdom} \circ \shiftdiff^{\on{ref}}_{n} < \infty \} \bigr) = 1$.
	This contradicts \eqref{eq:transient-conserv} by the assumed compactness of $\partial \unifdom$
	and thereby proves that $\lawref_{x_{0}}(\sigma_{\partial \unifdom} = \infty) > 0$.
\end{proof}
\begin{theorem}\label{thm:shk-trace-wCDC}
Assume the setting of Assumption \ref{a:hkecdcbhp} with \ref{eq:CDC} replaced by \ref{eq:wCDC},
that $\diam(\partial \unifdom) < \infty = \diam(\unifdom)$, and that $(\partial \unifdom,d)$ is uniformly perfect.
Let $x_{0} \in \unifdom$ and assume that $\lawref_{x_{0}}(\sigma_{\partial \unifdom} = \infty) > 0$.
Let $\bdrymeas$ be the Radon measure on $\overline{\unifdom}$ defined in \eqref{e:defmu},
and let $\scjump$ be the regular scale function on $(\partial \unifdom,d)$
given by \eqref{e:defscale} and Lemma \ref{l:scale}. Then
Theorem \ref{thm:shk-trace}-\eqref{it:shk-trace-irr-conserv},\eqref{it:shk-trace-CHK},\eqref{it:shk-trace-Feller},\eqref{it:shk-trace-domain}
with ``conservative'' in \eqref{it:shk-trace-irr-conserv},\eqref{it:shk-trace-Feller}
removed, $C_{1}$ in \eqref{eq:shk-trace-upper-intro} replaced by $C_{1}e^{-\lambda t}$ and
$C_{1}^{-1}$ in \eqref{eq:shk-trace-lower-intro} replaced by $C_{1}^{-1}e^{-\lambda t}$ hold,
where $\lambda := \lawref_{x_{0}}(\sigma_{\partial \unifdom} = \infty)$.
\end{theorem}

\begin{proof}
$\diffreftr$ is a $\bdrymeas$-symmetric Hunt process on $\partial \unifdom$ whose
Dirichlet form is $(\formtr,\domaintr)$ as noted after \eqref{eq:bdry-trace-process}
and after \eqref{eq:bdrytrace-ExtDiriSp}. We see by \eqref{e:Beurling-Deny-bdry-trace},
$\formtrsl \equiv 0$ from Proposition \ref{prop:bdry-trace-pure-jump},
$\widecheck{\kappa} = \lambda \bdrymeas$, $\lambda > 0$ and \cite[Theorem 5.2.17]{CF} that
$(\partial \unifdom,d,\bdrymeas,\formtr-\lambda \langle \cdot,\cdot \rangle_{L^{2}(\ambient,\bdrymeas)},\domaintr)$
is a NLMMD space of pure jump type, and it satisfies \hyperlink{VD}{\textup{VD}}
by Remark \ref{rmk:bdry-meas-doubling}-\eqref{it:bdry-meas-doubling} and
\hyperlink{jphi}{$\on{J}(\scjump)$} by Corollary \ref{c:jpsi}. It also follows
from \cite[Theorem 5.2.17]{CF} and \cite[Theorems A.2.11 and 4.2.8]{FOT} that
any $\bdrymeas$-symmetric Hunt process on $\partial \unifdom$ whose Dirichlet form is 
$(\formtr-\lambda \langle \cdot,\cdot \rangle_{L^{2}(\ambient,\bdrymeas)},\domaintr)$
has the Markovian transition function $e^{\lambda t}\lawref_{\xi}(\diffreftr_{t} \in d\eta)$
for all $t \in (0,\infty)$ for $\formtr$-q.e.\ $\xi \in \partial \unifdom$ and has the
expectations of the exit times from all open balls in $(\partial \unifdom,d)$
no less than those for $\diffreftr$ for $\formtr$-q.e.\ starting point $\xi \in \partial \unifdom$.
In particular, Proposition \ref{p:bdry-trace-exit} implies that
$(\partial \unifdom,d,\bdrymeas,\formtr-\lambda \langle \cdot,\cdot \rangle_{L^{2}(\ambient,\bdrymeas)},\domaintr)$
satisfies \hyperlink{exit}{$\on{E}(\scjump)_{\geq}$}. Thus Theorem \ref{t:shkchar} applies to
$(\partial \unifdom,d,\bdrymeas,\formtr-\lambda \langle \cdot,\cdot \rangle_{L^{2}(\ambient,\bdrymeas)},\domaintr)$,
its parts \eqref{it:shkchar-irr-conserv},\eqref{it:shkchar-CHK},\eqref{it:shkchar-domain} respectively yield
Theorem \ref{thm:shk-trace}-\eqref{it:shk-trace-irr-conserv},\eqref{it:shk-trace-CHK},\eqref{it:shk-trace-domain}
with the stated changes, and its part \eqref{it:shkchar-Feller} implies the
Feller property and the strong Feller property of the Markovian transition function
$\hkreftr_{t}(\xi,\eta) \, \bdrymeas(d\eta)$ on $\partial \unifdom$.
The remaining assertion in Theorem \ref{thm:shk-trace}-\eqref{it:shk-trace-Feller} is proved
in exactly the same way as the second paragraph of the proof of Theorem \ref{thm:shk-trace}.
\end{proof}

We conclude this subsection with the following remark summarizing
some advantages of our results in comparison to those of \cite{CC24b}.

\begin{remark}\label{r:compare}
	\begin{enumerate}[(a)]\setlength{\itemsep}{0pt}\vspace{-5pt}
		\item\label{it:compare-jumpker} We prove an exact formula for the jump kernel (Theorem \ref{t:dnformula})
			as opposed to estimates in \cite[Theorem 7.1-(a)]{CC24b}. An advantage of this
			exact formula is that we are able to obtain the H\"older continuity of
			the jump kernel (see \eqref{e:naimholder}). Although we do not pursue
			this direction in this paper, it is known that continuity estimates
			for the jump kernel have applications to the regularity theory for the
			Poisson-type equations for the corresponding non-local operator;
			see, e.g., \cite[Definition 2.1.22, Lemmas 2.2.6, 2.2.10 and Theorem 2.4.1]{FR}.
			We note that the proof of the H\"older continuity estimate \eqref{e:naimholder}
			of the jump kernel $\jumpkertr_{\bdrymeas} = \naimker^\unifdom_{x_0}$
			for bounded domains easily extends to the jump kernel
			$\jumpkertr_\bdrymeas(\xi,\eta) = \naimker^\unifdom_{x_0}(\xi,\eta) \bigl( \frac{d\nu^{\unifdom}_{x_0}}{d\hmeas{\unifdom}{x_0}}(\xi)\frac{d\nu^{\unifdom}_{x_0}}{d\hmeas{\unifdom}{x_0}}(\eta) \bigr)^{-1}$
			for unbounded domains by using its expression \eqref{e:jkformula}.
		\item\label{it:compare-killingmeas} In the case where the uniform domain $\unifdom$ is unbounded, we identify
			the killing measure $\widecheck{\kappa}$ of the boundary trace Dirichlet form
			as the escape probability $\lawref_{x_{0}}(\sigma_{\partial \unifdom} = \infty)$
			to infinity of the reflected diffusion $\diffref$ times the $\form$-elliptic
			measure $\nu^{\unifdom}_{x_0}$ at infinity (Proposition \ref{prop:killingmeas}),
			in contrast to the corresponding result \cite[Theorem 7.1-(c)]{CC24b}
			proving only two-sided estimates on $\widecheck{\kappa}$.
		\item\label{it:compare-em-unique} In our construction of the rescaled limit of harmonic measures in
			Proposition \ref{p:emeas}, we show the convergence as the base point
			tends to infinity and provide an independent characterization \eqref{e:em0}
			of the limit. This is in contrast with the subsequential limit obtained in
			\cite[Proof of Theorem 5.7]{CC24b}, where it is not addressed
			whether or not the limit depends on the subsequence. 
		\item\label{it:compare-estimates} In both \eqref{it:compare-jumpker} and
			\eqref{it:compare-killingmeas} above, our exact formulas
			easily imply the estimates obtained in \cite[Theorem 7.1-(a),(c)]{CC24b}.
			These estimates do not seem to follow easily from the established identification of the jumping
			and killing measures of trace Dirichlet forms as the Feller and supplementary
			Feller measures in \cite[Theorem 5.6.3]{CF}.
		\item\label{it:compare-str-local-part} We provide a simple proof of the
			identification \eqref{eq:trace-slocal} of the strongly local part of trace
			Dirichlet forms in \cite[Theorem 5.6.2]{CF} (Proposition \ref{prop:trace-slocal}),
			whereas \cite[(1.7)]{CC24b} just applies it to show that the strongly local part
			of the boundary trace Dirichlet form is identically zero.
		\item\label{it:compare-support-PCAF} We show that the $\form$-harmonic measure
			$\hmeas{\unifdom}{x_0}$ and the $\form$-elliptic measure $\nu^{\unifdom}_{x_0}$
			at infinity are $\formref$-smooth in the strict sense and that the support
			of the PCAF in the strict sense of $\diffref$ with Revuz measure
			$\hmeas{\unifdom}{x_0}$ or $\nu^{\unifdom}_{x_0}$ coincides with
			$\partial \unifdom$(, which is also the topological supports of
			$\hmeas{\unifdom}{x_0}$ and $\nu^{\unifdom}_{x_0}$) (Proposition \ref{p:bdrymeas};
			see also Lemma \ref{l:strict} and Proposition \ref{prop:goodpcafsupp}).
			Moreover, we prove that the boundary trace process of $\diffref$ is
			a Hunt process on $\partial \unifdom$ whose transition density is
			the continuous heat kernel $\hkreftr=\hkreftr_{t}(\xi,\eta)$
			of the boundary trace Dirichlet form for \emph{any} starting point
			$\xi \in \partial \unifdom$ (Theorem \ref{thm:shk-trace}-\eqref{it:shk-trace-Feller};
			see also Proposition \ref{prop:greeninv}-\eqref{it:time-change-Hunt-AC}).
			The validity of these properties \emph{without removing properly exceptional
			sets from the set of starting points of the associated Markov process} does not
			follow directly from the general theory of regular symmetric Dirichlet forms
			presented in \cite{FOT,CF}, and is not discussed in \cite{CC24b}.
	\end{enumerate}
\end{remark}

\subsection{Examples} \label{ssec:examples}
	
	\begin{example}[Molchanov--Ostrowski diffusion on the upper half space] \label{x:mo-cs}
		We consider the Molchanov--Ostrowski diffusion \cite{MO} on the closed upper half-space $\ambient = \{(x,y): x\in \bR^{\dimeuc},\,y \in [0,\infty)\}=\bR^{\dimeuc} \times [0,\infty)$ is induced by the Dirichlet form $(\form,\domain)$ given by
		\begin{equation*}
			\form(u,u):= \int_{\mathbb{R}^{\dimeuc}} \int_{0}^{\infty} \abs{\grad u}^2(x,y) \abs{y}^{1-\alpha}\,dy \,dx
		\end{equation*}
		on $L^2(\mathbb{R}^{\dimeuc}\times[0,\infty),\abs{y}^{1-\alpha}\,dy \,dx )$, where $\alpha \in (0,2)$. 
		The function $w:\bR^{\dimeuc+1} \to [0,\infty)$ given by $w(x,y)= \abs{y}^{1-\alpha}$ for $x \in \bR^{\dimeuc}, y \in \bR$ is a Muckenhoupt $A_2$ weight. The weighted Lebesgue measure in this case is known to satisfy the doubling property and Poincar\'e inequality \cite[Theorem 1.5]{FKS}.  By the characterization of Gaussian heat kernel estimates due to Grigor'yan \cite{Gri91} and Saloff-Coste \cite{Sal} in terms of the doubling property and Poincar\'e inequality, we have Gaussian heat kernel estimates in this example.
		Then the open upper half-space $\unifdom= \bR^{\dimeuc} \times (0,\infty)$ is a uniform domain on $\ambient$. Using chain rule and scaling property of Lebesgue measure, it is easy to see that if $u \in \domain, r>0$, then $u_r(x,y):= u(rx,ry) \in \domain$ and $\form(u_r,u_r)= r^{\alpha-\dimeuc} \form(u,u)$ which in turn implies that the corresponding Green function satisfies 
		\begin{equation}\label{e:gscale}
			\gren{\unifdom}((rx_1,ry_1),(rx_2,ry_2)) = 	r^{\alpha-\dimeuc}\gren{\unifdom}((x_1,y_1),(x_2,y_2))  
		\end{equation}
		for all $(x_1,y_1),(x_2,y_2) \in \mathbb{R}^{\dimeuc} \times (0,\infty)$ and all $r \in (0,\infty)$.
		Similarly, it is easy to see that the Dirichlet energy is invariant under
		the Euclidean isometries in the $\mathbb{R}^{\dimeuc}$-direction.
		This implies that the Green function inherits these properties; that is,
		\begin{equation} \label{e:gtran}
			\gren{\unifdom}((x+x_1,y_1),(x+x_2,y_2))=	\gren{\unifdom}((x_1,y_1),(x_2,y_2))
		\end{equation}
		and
		\begin{equation} \label{e:grot}
			\gren{\unifdom}((x_1,y_1),(x_2,y_2))=\gren{\unifdom}((Ax_1,y_1),(Ax_2,y_2))
		\end{equation}
		for any $(x_1,y_1),(x_2,y_2) \in \mathbb{R}^{\dimeuc} \times (0,\infty)$,
		any $x \in \mathbb{R}^{\dimeuc}$ and any orthogonal matrix $A \in O(\dimeuc)$.
		Let us fix a base point $x_0=(0,\ldots,0,1) \in \unifdom$.
		Since $L_\alpha (y^\alpha) \equiv 0$ where $L_\alpha$ is given by \eqref{e:cs-generator},
		the harmonic profile is given by
		\begin{equation} \label{e:mohprof}
			\hprof{x_0}(x,y)= y^\alpha \quad \textrm{for all $(x,y) \in \mathbb{R}^{\dimeuc} \times (0,\infty)$.}
		\end{equation}
		Let $\abs{\cdot}$ denote the Euclidean norm on $\mathbb{R}^{\dimeuc} \equiv \partial \unifdom$.
		By \eqref{e:jkformula}, the corresponding jump kernel $\jumpkertr_{\bdrymeas}(\xi,\eta)$ can be computed
		for all pairs of distinct points $\xi, \eta \in \partial \unifdom \equiv \mathbb{R}^{\dimeuc}$ as
		\begin{align} \label{e:mojkernel}
			\jumpkertr_\bdrymeas(\xi,\eta)&= \lim_{r \downarrow 0} \frac{\gren{\unifdom}((\xi,r),(\eta,r))}{\hprof{x_0}((\xi,r)), \hprof{x_0}((\eta,r))}
			\overset{\eqref{e:mohprof}}{=} \lim_{r \downarrow 0} r^{-2\alpha} \gren{\unifdom}((\xi,r),(\eta,r))  \nonumber\\
			&\overset{\eqref{e:gtran}}{=} \lim_{r \downarrow 0} r^{-2\alpha} \gren{\unifdom}((\xi-\eta,r),(\mathbf{0},r))
			\overset{\eqref{e:gscale}}{=} \lim_{r \downarrow 0} r^{-\dimeuc-\alpha} \gren{\unifdom}((r^{-1}(\xi-\eta),1),(\mathbf{0},1)) \nonumber \\
			&= \abs{\xi-\eta}^{-\dimeuc- \alpha} \lim_{s \downarrow 0}  s^{-\dimeuc-\alpha} \gren{\unifdom}((s^{-1}\abs{\xi-\eta}^{-1}(\xi-\eta),1),(\mathbf{0},1))
			= c_1 \abs{\xi-\eta}^{-\dimeuc- \alpha}, 
		\end{align}
		where $c_1 \in (0,\infty)$ does not depend on the choice $\xi, \eta$ due to
		the rotation invariance of the Green function in \eqref{e:grot}.
		Since the Dirichlet form is invariant under translations in $\mathbb{R}^{\dimeuc}$-direction,
		by Proposition \ref{p:emeas} the elliptic measure $\mu=\nu^\unifdom_{x_0}$
		is a constant multiple of the Lebesgue measure $\lambda$ on $\mathbb{R}^{\dimeuc} \equiv \partial \unifdom$
		say $\mu=c_2 \lambda$, where $c_2 \in (0,\infty)$. This along with \eqref{e:mojkernel} implies that
		the jumping measure of the boundary trace Dirichlet form is given by 
		\begin{equation*}
			\jumpkertr_\bdrymeas(\xi,\eta) \, \bdrymeas(d\xi) \bdrymeas(d\eta)= c_1 c_2^2 \abs{\xi-\eta}^{-\dimeuc- \alpha} \lambda(d\xi) \lambda(d\eta),
		\end{equation*}
		which allows us to recover the extension theorem of Caffarelli and Silvestre \cite{CS}
		up to identifying the multiplicative constant as a special case of our Doob--Na\"im formula.
	\end{example}
	
	\begin{example}[Reflected Brownian motion on the orthant]
		Let $\unifdom:=(0,\infty)^{\dimeuc}$ denote the open orthant in $\mathbb{R}^{\dimeuc}$.
		We consider the reflected Brownian motion on $\overline{\unifdom}$.
		We choose $x_0=(1,\ldots,1) \in \unifdom$ as the base point.
		One can check that the harmonic profile is
		\begin{equation} \label{e:hporth}
			\hprof{x_0}(y)= \prod_{i=1}^\dimeuc y_i \quad \textrm{for all $y=(y_1,\ldots,y_\dimeuc) \in \unifdom$.}
		\end{equation}
		In this case the space-time scaling function
		$\scjump \colon \partial \unifdom \times [0,\infty) \to [0,\infty)$
		for the boundary trace Dirichlet form can be chosen as
		\begin{equation*}
			\scjump(\xi,r)= \hprof{x_0}((\xi_1+r,\xi_2+r,\ldots,\xi_\dimeuc+r))=\prod_{i=1}^\dimeuc (\xi_i+r)
			\quad \textrm{for all $\xi=(\xi_1,\ldots,\xi_\dimeuc) \in \partial \unifdom$.}
		\end{equation*}
		Next, we describe our two-sided estimates on the corresponding elliptic measure
		at infinity $\bdrymeas=\nu^{\unifdom}_{x_0}$ and on the jump kernel $\jumpkertr_\bdrymeas$
		of the boundary trace Dirichlet form with respect to $\bdrymeas$.
		By Proposition \ref{p:emeas}-\eqref{it:emeas-estimate}, there exists
		$C \in (0,\infty)$ such that for all $(\xi_1,\ldots,\xi_\dimeuc) \in \partial \unifdom$ and all $r \in (0,\infty)$,
		\begin{equation*}
			C^{-1} r^{\dimeuc-2} \prod_{i=1}^\dimeuc (\xi_i+r)
			\le \bdrymeas(B(\xi,r))
			\le C r^{\dimeuc-2} \prod_{i=1}^\dimeuc (\xi_i+r).
		\end{equation*}
		Similarly, by Corollary \ref{c:jpsi}, there exists $C \in (0,\infty)$ such that for any
		pair of distinct points $\xi=(\xi_1,\ldots,\xi_\dimeuc), \eta=(\eta_1,\ldots,\eta_\dimeuc) \in \partial \unifdom$,
		\begin{equation*}
			C^{-1} d(\xi,\eta)^{2-\dimeuc} \prod_{i=1}^{\dimeuc} (\xi_i+d(\xi,\eta))^{-2}
			\le \jumpkertr_\bdrymeas(\xi,\eta)
			\le C d(\xi,\eta)^{2-\dimeuc} \prod_{i=1}^{\dimeuc} (\xi_i+d(\xi,\eta))^{-2},
		\end{equation*}
		where $d(\xi,\eta):=\abs{\xi - \eta}$ denotes the Euclidean distance between $\xi,\eta$.
		
		An interesting feature of this example is that expected exit time of the boundary trace process
		started at the origin from a ball of radius $r$ centered at origin grows like $r^{\dimeuc}$.
		In particular, this provides examples of jump process with (anchored) exit time exponent arbitrary large.
		Such examples are known to exist on fractals but this example shows that such behavior
		can happen also in smooth settings, which seems to be a new observation.
		More generally for any $\xi=(\xi_1,\ldots,\xi_\dimeuc) \in \partial \unifdom$,
		setting $I_{\xi} := \{i \in \{1,\ldots,\dimeuc\} \mid \xi_i =0\}$, we have
		\begin{equation*}
			\lim_{r \downarrow 0} \frac{\scjump(\xi,r)}{r^{\#I_{\xi}}} = \prod_{i \in \{1,\ldots,\dimeuc\} \setminus I_{\xi}} \xi_i \in (0,\infty),
			\qquad \lim_{r \to \infty} \frac{\scjump(\xi,r)}{r^\dimeuc} =1.
		\end{equation*}
		The space-time scaling exponent for the boundary trace process starting at $\xi \in \partial \unifdom$
		is $\#I_{\xi}$ at very small scales and $\dimeuc$ at large scale.
		We note that $\#I_{\xi}$ can be any integer between $1$ and $\dimeuc$ depending on $\xi$.
	\end{example}
	
	\begin{example}[Exterior of a parabola] \label{x:parabola}
		We consider the Brownian motion on $\ambient=\mathbb{R}^{2}$ and the domain
		$\unifdom= \{(x,y) \in \mathbb{R}^{2} \mid y < x^2\}$ given by the sub-level set
		of the square function. The harmonic profile with base point $x_0=(0,-3/4)$
		is given in \cite[p.~6]{GS} as
		\begin{equation}
			\hprof{x_0}(x,y)= \sqrt{2 \biggl(\sqrt{x^2 + \Bigl( \frac{1}{4}-y \Bigr)^2} + \frac{1}{4}-y \biggr)} -1
			\quad \textrm{for all $(x,y) \in \unifdom$.}
		\end{equation}
		In this case, the space-time scaling function
		$\scjump \colon \partial \unifdom \times [0,\infty) \to [0,\infty)$
		for the boundary trace Dirichlet form is given by
		\begin{align} \label{e:hpep}
			\scjump((x,x^2),r) &= \hprof{x_0}\biggl(x+ \frac{2xr}{\sqrt{1+4x^2}} , x^2-\frac{r}{\sqrt{1+4x^2} } \biggr), \nonumber \\
			&= \sqrt{2 \biggl( \sqrt{ r^2+ \Bigl( \frac{1}{4}+x^2 \Bigr)^2 + \frac{r}{2} \sqrt{1+4x^2}} + \frac{1}{4}-x^2 + \frac{r}{\sqrt{1+4x^2}} \biggr) } - 1
		\end{align}
		for all $(x,x^2) \in \partial \unifdom$ and all $r \in (0,\infty)$. From the expression \eqref{e:hpep} for $\scjump(\xi,r)$, it is immediate that $r \mapsto \scjump(\xi,r)$ is an increasing homeomorphism from $[0,\infty)$ to itself for any $\xi \in \partial \unifdom$.
		By Proposition \ref{p:emeas}-\eqref{it:emeas-estimate}, there exists $C>0$ such that the corresponding elliptic measure at infinity $\bdrymeas= \nu^{\unifdom}_{x_0}$ satisfies 
		\begin{equation*}
			C^{-1} \scjump((x,x^2),r) \le \bdrymeas(B((x,x^2),r)) \le C \scjump((x,x^2),r) 
		\end{equation*}
		for all $(x,x^2) \in \partial \unifdom$ and all $r \in (0,\infty)$.
		Similarly, by Corollary \ref{c:jpsi}, there exists $C \in (0,\infty)$ such that for any pair of distinct points $\xi, \eta \in \partial \unifdom$, we have 
		\begin{equation*}
			C^{-1}\scjump(\xi,d(\xi,\eta))^{-2}
			\le \jumpkertr_\bdrymeas(\xi,\eta)
			\le C \scjump(\xi,d(\xi,\eta))^{-2},
		\end{equation*}
		where $d(\xi,\eta):=\abs{\xi - \eta}$ denotes the Euclidean distance between $\xi,\eta$.
		
		From \eqref{e:hpep}, it follows that for any $\xi \in \partial \unifdom$,
		there exist $c_1(\xi),c_2(\xi) \in (0,\infty)$ such that 
		\begin{equation*}
			\lim_{r \downarrow 0} \frac{\scjump(\xi,r)}{r} = c_1(\xi),
			\qquad \lim_{r \to \infty} \frac{\scjump(\xi,r)}{\sqrt{r}}= c_2(\xi).
		\end{equation*}
		In other words, the boundary trace process behaves like a Cauchy process at small scales while it is similar to a $1/2$-stable process at very large scales.
	\end{example}
	
	\begin{example}[Ahlfors--Beurling example:  quasi-conformal image of Brownian motion] \label{x:ba-qc}
		%	\strut\\\noindent% This line should be removed when the file is sent to the publisher.
		This example is essentially due to Ahlfors and Beurling \cite{BA} and was later revisited in \cite{CFK}. 
		We consider the reflected Brownian motion on the closed two-dimension upper half-space
		$\ambient=\mathbb{R} \times [0,\infty)$ and let $\refmeas$ denote the restriction of the Lebesgue measure on $\ambient$.
		We consider the domain $\unifdom = \mathbb{R} \times (0,\infty)$.
		Let $\lambda$ denote the one-dimensional Lebesgue measure on $\partial \unifdom \equiv \mathbb{R}$.
		By \cite[Theorem 3]{BA}, there exists a homeomorphism $F \colon \ambient \to \ambient$ with the following properties:
		\begin{enumerate}[\rm(a)]\setlength{\itemsep}{0pt}\vspace{-5pt}
			\item The boundary correspondence  $F|_{\partial \unifdom}$ is singular in the sense that the measures $\lambda$ and the push-forward measure
			$(F|_{\partial \unifdom})_*( \lambda)$ are singular.
			\item The function $F|_{\unifdom}\colon \unifdom \to \unifdom$ is a $\contfunc^{1}$-bijection. Writing $F(x,y)=(F_1(x,y),F_2(x,y))$ in coordinates,
			where $F_1 \colon \ambient \to \bR$ and $F_2 \colon \ambient \to [0,\infty)$, let $DF$ denote the differential on $\unifdom$, that is
			\begin{equation*}
				DF := \begin{bmatrix}
					\frac{\partial F_1}{\partial x} &  \frac{\partial F_2}{\partial x} \\
					\frac{\partial F_1}{\partial y} &  \frac{\partial F_2}{\partial y} \\
				\end{bmatrix}.
			\end{equation*}
			There exists $C \in(0,\infty)$ such that the map $F$ satisfies the quasiconformality condition
			\begin{equation} \label{e:qc}
				0<	\on{Tr}(DF^T(z) DF(z)) \le C \det(DF(z)) \quad \textrm{for all $z \in \unifdom$,}
			\end{equation}
			where $\on{Tr}, \det$ denote the trace and determinant of a matrix.
		\end{enumerate}
		We define the positive definite matrix valued function $\mathcal{A}\colon \unifdom \to \bR^{2 \times 2}$ given by
		\[
		\mathcal{A}(z)= \det(DF(w))^{-1}  DF(w)^{T}  DF(w), \quad \mbox{where $w=F^{-1}(z)$}.
		\]
		We note immediately from \eqref{e:qc}  that $\det(\mathcal{A}(z)) \equiv 1$ on $\unifdom$ and the eigenvalues of $\mathcal{A}(z)$ are bounded from above by $C$ and below by $C^{-1}$ for all $z \in \unifdom$. In particular, $\mathcal{A}(\cdot)$ defines a uniformly elliptic divergence form operator $f \mapsto \div(\mathcal{A}(\cdot) \nabla f)$.
		The Dirichlet form corresponding to the image of the reflected Brownian motion on $\ambient$ under the homeomorphism $F$ is given by $\form(f,f)= \int_\bR \int_{0}^\infty \abs{ \nabla(f \circ F)}^2(x,y) \,dy \,dx$ where $f$ varies over all functions such that $f \circ F$ belongs to the $W^{1,2}$ Sobolev space on $\ambient$.
		We identify the gradient $\nabla f$ as the column vector $[\frac{\partial f}{\partial x},  \frac{\partial f}{\partial y} ]^T$. The Dirichlet energy $\mathcal{E}$ can rewritten as (see also \cite[p. 919]{CFK})
		\begin{equation*}
			\form(f,f)= \int_{\bR} \int_{(0,\infty)} (\nabla f(x,y))^T \mathcal{A}(x,y) \nabla f(x,y) \,dy \,dx.
		\end{equation*}
		By a time change we can assume that the domain $\domain$ of the form is $W^{1,2}$ space on $\ambient$, so that $(\form,\domain)$ is a Dirichlet form on $L^2(\ambient,\refmeas)$.
		The Gaussian heat kernel bound for this Dirichlet form follows from the characterization in terms of doubling property of $m$ and the Poincar\'e inequality due to Grigor'yan and Saloff-Coste \cite{Gri91,Sal} and the uniform ellipticity condition on $\mathcal{A}(\cdot)$ mentioned before.
		
		The elliptic measure at infinity $\bdrymeas$ can be easily seen to be a positive multiple of the measure
		$(F|_{\partial \unifdom})_*( \lambda)$ and hence singular with respect to the Lebesgue measure, and
		the harmonic profile can be identified as the function $G_2 \colon \ambient \to [0,\infty)$,
		where $G_1, G_2 \colon \ambient \to \bR$ are such that $F^{-1}(x,y)= (G_1(x,y),G_2(x,y))$. Hence the space-time scaling of the boundary trace process is given by 
		\begin{equation*}
			\scjump((x,0),r)= G_2(x,r) \quad \textrm{for all $(x,0) \in \partial \unifdom$ and all $r \in (0,\infty)$.}
		\end{equation*}
		By Proposition \ref{p:emeas}-\eqref{it:emeas-estimate}, there exists $C \in (1,\infty)$
		such that the corresponding elliptic measure at infinity $\bdrymeas= \nu^{\unifdom}_{x_0}$ satisfies
		\begin{equation*}
			C^{-1} G_2(x,r) \le \bdrymeas(B((x,0),r)) \le C G_2(x,r) 
		\end{equation*}
		for all $(x,0) \in \partial \unifdom$ and all $r \in (0,\infty)$.
		Similarly, by Corollary \ref{c:jpsi}, there exists $C \in (1,\infty)$ such that for any pair of distinct points $(u,0), (v,0) \in \partial \unifdom$, we have 
		\begin{equation*}
		C^{-1}G_2(u,\abs{u-v})^{-2}	\le \jumpkertr_\bdrymeas((u,0),(v,0)) \le C G_2(u,\abs{u-v})^{-2}.
		\end{equation*}
	\end{example}

Our results apply also to some \emph{inner uniform domains} (cf.\ \cite[Definition 3.6]{GS})
as they can be viewed as uniform domains by a suitable change of the metric.
We illustrate this with the case of a slit domain in the following example.

	\begin{example}[A slit domain]
		We consider the reflected Brownian motion on the slit domain
		$\unifdom = \mathbb{R}^2 \setminus  \{(x,0) \mid x \in (-\infty,0]\}$. 
		In this case, we equip $\unifdom$ with the inner metric
		$d_{\unifdom} \colon \unifdom \times \unifdom \to [0,\infty)$,
		where $d_{\unifdom}(z_0,z_1)$ is defined as the infimum of the (Euclidean)
		length of paths $\gamma \colon [0,1] \to \unifdom$ joining $z_0$ and $z_1$;
		that is $\gamma(0) = z_0, \gamma(1) = z_1$ and $\gamma$ is continuous.
		Therefore $d_{\unifdom}(z_0,z_1)$ is either $\abs{z_0}+\abs{z_1}$ or $\abs{z_0-z_1}$,
		depending on whether or not the straight line from $z_0$ to $z_1$ in $\mathbb{R}^{2}$
		intersects $\{(x,0) \mid x \in (-\infty,0]\}$.
		Let $(\ambient,d_{\unifdom})$ denote the completion of $(\unifdom,d_{\unifdom})$
		and let $\refmeas$ denote the Borel measure on $\ambient$ given by
		$\refmeas(A) = \lambda(A \cap \unifdom)$, where $\lambda$ is the Lebesgue measure on $\mathbb{R}^{2}$
		($\unifdom$ is viewed as an open subset of $\ambient$ as usual).
		Since every $d_{\unifdom}$-Cauchy sequence is Cauchy with respect to the
		Euclidean metric, the identity map on $\unifdom$ extends uniquely
		to a continuous map $p \colon \ambient \to \mathbb{R}^{2}$. The boundary
		$\partial \unifdom$ of $\unifdom$ in $(\ambient,d_{\unifdom})$ is then given as
		\begin{equation*}
		\partial \unifdom = p^{-1} \bigl( \{(x,0) \mid x \in (-\infty,0]\} \bigr) = \ambient \setminus \unifdom.
		\end{equation*}
		The set $\unifdom$ is not a uniform domain in $\mathbb{R}^{2}$ with respect to
		the Euclidean metric, but is a uniform domain in $(\ambient,d_{\unifdom})$.
		
		Let $(\form,\domain)$ denote the Dirichlet form on $L^{2}(\mathbb{R}^{2},\lambda)$
		of the Brownian motion on $\mathbb{R}^{2}$.
		The corresponding reflected Dirichlet form $(\formrefgen{\unifdom},\domain(\unifdom))$
		(recall Definition \ref{dfn:reflected-form}) is a strongly local
		regular symmetric Dirichlet form on $L^{2}(\ambient,\refmeas)$
		(but it is \emph{not} regular on $L^{2}(\mathbb{R}^{2},\lambda)$) and
		the MMD space $(\ambient,d_{\unifdom},\refmeas,\formrefgen{\unifdom},\domain(\unifdom))$
		satisfies Gaussian heat kernel estimates due to the results
		\cite[Theorems 3.30 and 3.34]{GS} by Gyrya and Saloff-Coste.
		The harmonic profile with base point $z_0 = (1,0) \in \unifdom$ is given by
		\begin{equation*}
		\hprof{z_0}(x,y)= \biggl( \frac{x+\sqrt{x^2+y^2}}{2} \biggr)^{1/2}.
		\end{equation*}
		In this case, the space-time scaling function
		$\scjump \colon \partial \unifdom \times [0,\infty) \to [0,\infty)$
		for the boundary trace Dirichlet form is given by 
		\begin{equation*}
		\scjump(\xi,r)=\hprof{z_0}(x_{\xi},r)
			=\Biggl(\frac{x_{\xi}+\sqrt{x_{\xi}^2+r^2}}{2}\Biggr)^{1/2}
			\asymp \frac{r}{\sqrt{\abs{x_{\xi}}}} \wedge \sqrt{r}
		\end{equation*}
		for all $(\xi,r) \in \partial \unifdom \times (0,\infty)$,
		where $x_{\xi} \in (-\infty,0]$ is given by $p(\xi)=(x_{\xi},0)$.  
		By Proposition \ref{p:emeas}-\eqref{it:emeas-estimate}, there exists
		$C \in (0,\infty)$ such that the corresponding elliptic measure
		at infinity $\bdrymeas = \nu^{\unifdom}_{z_0}$ satisfies 
		\begin{equation*}
		C^{-1} \scjump(\xi,r) \le \bdrymeas(B_{d_{\unifdom}}(\xi,r)) \le C \scjump(\xi,r)
			\quad \textrm{for all $(\xi,r) \in \partial \unifdom \times (0,\infty)$.}
		\end{equation*}
		Similarly, by Corollary \ref{c:jpsi}, there exists $C \in (0,\infty)$ such that
		\begin{equation*}
		C^{-1}\scjump(\xi,d_{\unifdom}(\xi,\eta))^{-2}
			\le \jumpkertr_\bdrymeas(\xi,\eta)
			\le C \scjump(\xi,d_{\unifdom}(\xi,\eta))^{-2}
			\quad \textrm{for all $(\xi,\eta) \in \offdiagp{\partial \unifdom}$.}
		\end{equation*}
		By Theorem \ref{thm:shk-trace}-\eqref{it:shk-trace-CHK}, we have stable-like
		heat kernel estimates \eqref{eq:shk-trace-upper-intro} and \eqref{eq:shk-trace-lower-intro}
		for the boundary trace Dirichlet form with respect to the metric $d_{\unifdom}$.
	\end{example}

	\noindent Research Institute for Mathematical Sciences, Kyoto University,
	Kitashirakawa-Oiwake-cho, Sakyo-ku, Kyoto 606-8502, Japan.\\
	nkajino@kurims.kyoto-u.ac.jp\smallskip\\
	
	\noindent Department of Mathematics, University of British Columbia,
	Vancouver, BC V6T 1Z2, Canada.\\
	mathav@math.ubc.ca
	
\end{document}